\documentclass[11pt,reqno]{amsart}
\usepackage{xifthen,setspace}
\usepackage{bm,bbm}
\usepackage{esvect}
\usepackage{amsmath,mathabx} 
\usepackage{style}
\usepackage{relsize} 
\usepackage{enumitem}
\usepackage{xcolor}
\newtheorem{assumption}{Assumption}
\newtheorem{example}{Example}
\newtheorem{defi}{Definition}

\newcommand{\wbtu}{{\tilde{\beta}_N}}

\newcommand{\p}{{\mathbb{P}}}
\newcommand{\e}{{\mathbb{E}}}

\newcommand{\bs}{{\vec{\sigma}}}
\newcommand{\os}{{\overline{{\sigma}}}}

\newcommand{\bt}{{\vec{\tau}}}

\definecolor{acnew}{rgb}{0.1, 0.4, 0.7}

\usepackage[numbers, comma, sort]{natbib}

\allowdisplaybreaks 

\title[Ising Models on Inhomogeneous Random Graphs]{Ising Models on Inhomogeneous Random Graphs: Inference, Local Asymptotic Minimaxity, and Limit of Experiments}

\author[Mukherjee]{Somabha Mukherjee} 
\address{Department of Statistics and Data Science, National University of Singapore, {\tt somabha@nus.edu.sg}}

\author[Bhowal]{Sanchayan Bhowal}
\address{Department of Statistics, Stanford University, {\tt sbhowal@stanford.edu}}

\author[Chatterjee]{Anirban Chatterjee}
\address{Department of Statistics, University of Chicago, {\tt anirbanc@uchicago.edu}}

\author[Bhattacharya]{Bhaswar B. Bhattacharya} 
\address{Department of Statistics and Data Science, University of Pennsylvania, {\tt bhaswar@wharton.upenn.edu}}

\begin{document}

\begin{abstract} 
In this paper, we develop an inferential framework with sharp asymptotic optimality guarantees for Ising models on inhomogeneous random graphs in the subcritical parameter regime. We begin by characterizing the asymptotic distribution of the maximum likelihood (ML) estimate of the natural parameter, based on a single sample from the underlying model, covering both sparse and dense network regimes. Next, to overcome the computational intractability of the ML method, we propose a simple closed-form estimate obtained from a one-step approximation to the likelihood equation. We show that this estimate attains the same asymptotic distribution and variance as the ML estimate, thereby yielding a computationally efficient and asymptotically valid confidence interval for the natural parameter. We complement these inferential results by establishing a H\'ajek--Le Cam-type local asymptotic minimax theorem, showing that the proposed estimate achieves the smallest possible asymptotic maximum risk, both in rate and in leading constant, over shrinking neighborhoods of the true parameter. We also derive the corresponding limit of experiments. Specifically, we show that in the sparse regime the limiting experiment is Gaussian, while the dense regime leads to a non-Gaussian limiting experiment governed by spectral properties of the underlying graphon that generates the inhomogeneous random graph. To the best of our knowledge, these are among the first sharp asymptotic optimality results for network-dependent data. Finally, we study goodness-of-fit testing for the natural parameter, deriving the local power of the likelihood ratio test and minimax detection rates. A highlight of our method is that the proposed tests adapt to the unknown graphon,  through a direct plug-in estimation of the rejection threshold in the sparse regime and a multiplier-bootstrap calibration in the dense regime. Our analysis relies on new fluctuation results for the sufficient statistic (Hamiltonian) and for the random partition function of Ising models on inhomogeneous random graphs, which are of independent interest. 
\end{abstract}

\keywords{Asymptotic inference, local asymptotic minimaxity, limits of experiments, minimax testing, network dependent data, random graphs. }

\maketitle



\section{Introduction}

The proliferation of dependent network data in recent decades has propelled the development of statistical models that can accommodate complex dependence structures while remaining mathematically tractable for rigorous inference. Markov random fields, specifically the Ising model, provide a natural framework for modeling dependencies with an underlying network structure. Originally introduced in statistical physics to model ferromagnetism \cite{ising1925}, the Ising model has since become a fundamental modeling tool across a wide range of disciplines, including social networks, image processing, computer vision, pattern recognition, spatial statistics, disease mapping, computational biology, neural networks, and finance, among others \cite{montanari,isingimage7,geman,patrec,sudipto,green,hopfield,isingfin}. This can be viewed as a discrete exponential family with binary outcomes, where the sufficient statistic is a quadratic form designed to capture correlations arising from pairwise interactions.  Formally, given an interaction matrix $J_N := (( J_N(i, j) ))_{1 \leq i, j \leq N}$ and a binary vector $\bs=(\sigma_1, \sigma_2, \cdots, \sigma_N) \in \{-1, 1\}^N$, the Ising model with parameter $\beta$ encodes the dependence among the coordinates of $\bs$ as follows: 
	\begin{equation}
		\mathbb P_{\beta}(\bs)= \frac{1}{2^N Z_N(\beta)} e^{ - \beta H_N(\bs)  } , 
		\label{model_def}
	\end{equation} 
	where $H_N(\bs) = - \sum_{1 \leq i , j \leq N} J_N(i, j) \sigma_i \sigma_j$ is the sufficient statistic (Hamiltonian) and $ Z_N(\beta)$ is the partition function (normalizing constant), which is determined by the condition $\sum_{\bs \in \{-1, 1\}^N}\mathbb P_{\beta}(\bs)=1$. Specifically, 	\begin{align}\label{eq:ZN}
Z_N(\beta) = \frac{1}{2^N}\sum_{\bs \in \{-1,+1\}^N} e^{-\beta H_N(\bs)} . 
\end{align}	
  The parameter $\beta \geq 0$ controls the strength of dependence between neighboring nodes, with larger values corresponding to stronger alignment or interaction. 
The interaction matrix $J_N$ is typically chosen as an appropriately scaled adjacency matrix of the underlying network, which encodes the pairwise interactions.

The central statistical task in the Ising model is to estimate the parameter $\beta$ given a single sample $\bs$ from the distribution in \eqref{model_def}. This problem has a long history, going back to Pickard's foundational works on asymptotic inference for lattice Ising models \cite{pickard1976,pickard1977,pickard1979}, as well as classical results on consistency and optimality of the maximum likelihood (ML) estimates in spatial lattice settings \cite{gidas1988consistency, comets1992consistency,pickard1987inference}. However, parameter estimation by the ML method is computationally challenging, in general, because the partition function $Z_N(\beta)$ appearing in the likelihood involves a sum over exponentially many configurations $\bs \in \{-1, 1\}^N$. To circumvent this difficulty, Chatterjee~\cite{chspin} proposed using the maximum pseudolikelihood (MPL) estimate \cite{besag1974spatial, besag1975statistical}, a computationally efficient alternative to maximum likelihood based on conditional distributions. Specifically, \cite[Theorem 1.1]{chspin} gives general conditions on the interaction matrix under which the MPL estimate of $\beta$ is $\sqrt{N}$-consistent. This result was later extended by Bhattacharya and Mukherjee \cite{bhattacharya2018inference} to obtain estimation rates of the MPL estimate for Ising models on general weighted graphs. This revealed the following phase-transition phenomenon: For many common choices of $J_N$, there exists a critical parameter value at which the convergence rate of the MPL estimate changes sharply as $\beta$ crosses the threshold. The parameter range below the critical threshold is usually referred to as the {\it subcritical} (high-temperature) regime, while the range above the threshold is known as the {\it supercritical} (low-temperature) regime. The above results, however, establish only convergence rates for the estimates and therefore cannot be used for inferential tasks, such as constructing confidence intervals or conducting hypothesis tests. These require deriving the limiting distribution of the estimates, which is known only in a handful of cases \cite{cometsgidas,xu2023inference}. 
In particular, the asymptotic distribution of the ML estimate is known for all parameter values only for the Curie-Weiss model \cite{cometsgidas}, where $J_N$ is the scaled adjacency matrix of the complete graph, and for dense regular graphs \cite{xu2023inference}. The latter work also derives experimental limits (in the sense of Le Cam \cite{lecam1972limits}) for Ising models on dense regular graphs.

A natural question that emerges from the aforementioned results is whether one can develop an inferential framework for Ising models with optimality guarantees paralleling those of the classical independent-sampling paradigm. In this paper, we obtain a series of results in this direction for Ising models on inhomogeneous random graphs in the subcritical regime. We begin by recalling the relevant definitions and by formally introducing the model.

\subsection{Ising Models on Inhomogeneous Random Graphs} 

Inhomogeneous random graphs provide a canonical framework for modeling network data with latent node heterogeneity \cite{bickel2009nonparametric,bickel2011method,crane2018probabilistic,bollobas2007phase,hoff2002latent}. A natural way to sample such a network is through a graphon \cite{Borgs2008,lovasz2012large}, a symmetric measurable function $W : [0,1]^2 \to [0,1]$ (here, symmetry means $W(x,y) = W(y,x)$, for all $x,y \in [0,1]$). Given a graphon $W$ and a sparsity parameter  $\theta_N$ controlling the edge density, an inhomogeneous random graph can be sampled as follows:

\begin{defn}[Sparse graphon model] Given a sparsity parameter $\theta_N \in (0, 1)$ and a graphon $W$ with $\int_{[0, 1]^2} W(x, y) \mathrm d x \mathrm dy > 0$, the {\it $W$-random graph with sparsity $\theta_N$} on the vertex set $[N] := \{1, 2, \ldots, N\}$, hereafter denoted by $G(N, \theta_N, W)$, is obtained by connecting the vertices $i$ and $j$ with probability $\theta_N W(\frac{i}{N}, \frac{j}{N})$, independently for all $1\leq i \leq j \leq N$. 
\label{defn:W} 
\end{defn}

The model above encompasses many well-known network models, including the classical Erd\H{o}s--R\'{e}nyi random graph model (where $W\equiv 1$ is a constant function), the stochastic block model \citep{bickel2009nonparametric,holland1983stochastic} (where 
$W$ is a block function), and the $\beta$-model \cite{chatterjee2011random}, among others. Over the years, the sparse graphon model has emerged as a fundamental tool in modern network analysis, with applications in community detection, subgraph count statistics, and nonparametric estimation of graph parameters, among others. Consequently, it is a natural candidate for modeling networks underlying Markov random fields, particularly in the Ising model \cite{Borgs2012,basak2017universality,bhattacharya2018inference}. In this case, the underlying network $G_N \sim G(N, \theta_N, W)$ is generated from the sparse graphon model (as in Definition \ref{defn:W}) and the interaction matrix in \eqref{model_def} is chosen to be $J_N = \frac{1}{2N\theta_N} A_{G_N}$, where $A_{G_N} = \{A_{G_N}(i, j)\}_{1 \leq i, j \leq N}$ is the adjacency matrix of the graph $G_N$. In this case, the sufficient statistic (Hamiltonian) in the model \eqref{model_def} can be expressed as: 
\begin{align}\label{eq:sufficientstatistics}
H_N(\bs) := -\frac{1}{2N\theta_N} \sum_{1\le i,j \le N} A_{G_N}\left(i,j\right) \sigma_i \sigma_j ,
\end{align}
where $\{A_{G_N}(i,j)\}_{1\le i\le j\le N}$ are independent Bernoulli variables with parameters $\{ \theta_N W(\frac{i}{N},\frac{j}{N})\}_{1\le i\le j\le N}$, respectively. Observe that the normalization of $A_{G_N}$ by $N\theta_N$ ensures that each vertex of the network has $O(1)$ contribution to the sufficient statistic, with high probability.

In the context of parameter estimation, Ising model on inhomogeneous random graphs was first considered in Bhattacharya and Mukherjee \cite{bhattacharya2018inference} in the dense regime, where $\theta_N \asymp 1$ (see Section \ref{sec:aNbN} for the asymptotic notation). From the results in \cite{bhattacharya2018inference} it is evident that the model \eqref{model_def} with sufficient statistic as in \eqref{eq:sufficientstatistics} undergoes a phase transition at
$\beta = \frac{1}{\|W\|_{\mathrm{op}}}$, where $\|W\|_{\mathrm{op}}$ is the operator norm of the graphon $W$ (see \eqref{eq:Wmaxeigenvalue} for the definition).  Specifically, in the subcritical regime $0 < \beta < \frac{1}{\|W\|_{\mathrm{op}}}$, the estimation rate for $\beta$ is $1/\sqrt{\theta_N}$, while in the supercritical regime $\beta > \frac{1}{\|W\|_{\mathrm{op}}}$, the estimation rate is $\sqrt{N}$. Thus, whenever $N \theta_N \gg 1$,  the estimation rate exhibits an abrupt transition, as $\beta$ crosses the critical value, from a rate slower than the usual $\sqrt{N}$ parametric rate to the parametric rate itself. The slow rate in the subcritical regime is particularly interesting, and a longstanding gap in the literature has been to develop inferential results in this setting. In this paper, we fill this gap and augment the resulting inferential theory with sharp asymptotic optimality results, which we summarize in the next section.

\subsection{Summary of Results}   

In this section, we provide an overview of our results. Throughout, we assume that $\beta$ is in the subcritical regime and $\bs$ is a sample from the model \eqref{model_def} with $H_N(\bs)$ as in \eqref{eq:sufficientstatistics}.

\begin{itemize}
   
    \item \textit{Asymptotic Distribution of the ML Estimate}: We begin by deriving the asymptotic distribution of the ML estimate of $\beta$, under an appropriate sparsity condition (Section \ref{sec:mlestimate}). Our results reveal two different fluctuation regimes: In the sparse regime $(\theta_N \ll 1)$, the ML estimate has fluctuations of order $\sqrt{\theta_N}$ and a Gaussian limit. On the other hand, in the dense regime $(\theta_N \asymp 1)$, the ML estimate has $O(1)$ fluctuations and a `defective' limit. Specifically, it has a non-Gaussian limiting distribution with positive probability bounded away from one, while placing the remaining mass at infinity. This, in particular, means that the ML estimate is inconsistent in the dense regime. Indeed, in this regime, consistent estimation of $\beta$ is impossible (see Remark \ref{estimation}).

    \item \textit{Efficient Inference}: Next, in Section \ref{sec:estimate} we propose a simple closed-form estimate of $\beta$, based on a one-step approximation to the likelihood equation, that has the same limiting distribution (with the same asymptotic variance) as the ML estimate. Using this result, in Section \ref{sec:CN}, we construct a confidence interval for $\beta$ that has valid asymptotic coverage. Thus, whenever consistent estimation is possible, we have a closed-form method for valid asymptotic inference that attains the same statistical benchmark as the ML estimate. In contrast, as already mentioned, the ML estimate is computationally infeasible in general, while the MPL estimate, although efficiently computable, still requires solving an equation numerically.

\item \textit{Local Asymptotic Minimaxity}:  One of the sharpest optimality results for the ML estimate in the i.i.d. setting is the H\'ajek--Le Cam local asymptotic minimax theorem \cite[Chapter 8]{Vaart_1998}, which states that the ML estimate attains the smallest possible asymptotic maximum risk, both in rate and leading constant, over shrinking neighborhoods of the true parameter. In Section \ref{sec:minmaxl2}, we prove an analogue of this theorem for estimating $\beta$ in the Ising model \eqref{model_def} on an inhomogeneous random graph, in the sparse regime. Specifically, we show that our proposed estimate of $\beta$ is locally asymptotically minimax over neighborhoods of size $\sqrt{\theta_N}$ throughout the subcritical regime.

  \item \textit{Limit of Experiments}: In Section \ref{sec:limexp}, we complement the preceding results by deriving the limit of experiments for Ising models on inhomogeneous random graphs. This establishes a sharp asymptotic standard, in the sense that any procedure in the original sequence of experiments is asymptotically constrained by the optimal risk achievable in the limiting experiment. Specifically, we show that in the sparse regime $(\theta_N \ll 1)$, the limiting experiment is Gaussian. In contrast, in the dense regime $(\theta_N \asymp 1)$, the limiting experiment is non-Gaussian: its log-likelihood ratio decomposes into two independent components, one Gaussian and the other a possibly infinite linear combination of independent $\chi_1^2$ random variables. To the best of our knowledge, this is the first instance of a non-Gaussian experimental limit in a setting where the limiting likelihood ratio does not admit an explicit density.

    \item \textit{Minimax Hypothesis Testing}: In Section \ref{sec:gof} we consider the problem of goodness-of-fit testing for the parameter $\beta$. To begin with, we consider the likelihood ratio test for $\beta=\beta_0$, for any $\beta_0$ in the subcritical regime, versus $\beta \ne \beta_0$, which reduces to a test based on the sufficient statistic $H_N(\bs)$. We establish consistency and derive the limiting local power of this test at the scale $\sqrt{\theta_N}$ in both the sparse and dense regimes. A distinctive feature of our test is that it can be implemented without prior knowledge of the graphon $W$. This is important because, although $H_N(\bs)$ is observable, its distribution, and consequently the rejection threshold of the resulting test, depends on $W$, which is generally unknown in practice. We show that this threshold can nevertheless be consistently estimated from the observed network $G_N$, yielding a test that adapts to the unknown choice of $W$ (which plays the role of a nuisance parameter in this case). In the sparse regime, where the limiting distribution is Gaussian, this can be achieved by direct plug-in estimation of the asymptotic variance (Section \ref{sec:sparseUMP}). The dense regime is more challenging, since the limiting distribution is non-Gaussian. In this case, we develop a multiplier bootstrap procedure that combines external randomness with spectral properties of $G_N$ to obtain a consistent estimate of the required quantile (Section \ref{sec:densealpha}). Finally, in Section \ref{sec:minimax} we show that the detection rate of the likelihood ratio test is minimax optimal, that is, all tests are asymptotically powerless in detecting separations which are of smaller order than $\sqrt{\theta_N}$.

\end{itemize}

The key technical ingredient behind the above results is the limiting distribution of the sufficient statistic (Hamiltonian) $H_N(\bs)$. Establishing this limit requires, in turn, a precise understanding of the fluctuations of the (random) partition function $Z_N(\beta)$. These results are presented in Sections \ref{sec:fluct_ham6} and \ref{sec:fluctpart78}, respectively, and are both of independent interest. In particular, our results for the Hamiltonian are new even for the Erd\H{o}s--R\'enyi model (which corresponds to $W\equiv 1$), while our results for the partition function extend the recent results of Kabluchko et al.~\cite{kabluchko2021fluctuations} for the Erd\H{o}s--R\'enyi model to the inhomogeneous random graph setting.

\subsection{Related Work} 

Following the seminal work of Chatterjee \cite{chspin}, the problem of parameter estimation in Ising models has been extended in several directions. One such direction is the inclusion of an additional external-field parameter $h \in \mathbb{R}$ for capturing global node-level effects. For this model, Ghosal and Mukherjee~\cite{ghosal2020joint} obtained general conditions for $\sqrt{N}$-consistent joint MPL estimation of $(\beta,h)$. This was recently extended by Chen et al.~\cite{skjointestimation} to the spin-glass setting, specifically, the Sherrington--Kirkpatrick model, where the entries of the interaction matrix are i.i.d. Gaussian. Very recently, Deb \cite{deb2025pivotal} developed a general technique for proving fluctuation results for conditionally centered functionals of Markov random fields. In particular, this yields central limit theorems for the MPL estimate of $\beta$ (at the $\sqrt{N}$-scale) for a broad class of interaction matrices. Detection problems for Ising models with node-specific external fields were studied in \cite{mukherjee2018global,deb2024detecting}. The practical scope of the Ising model can be further expanded by considering regression formulations that incorporate node-level covariates, which can also be viewed as logistic regression models for network-dependent observations (see \cite{daskalakis2019regression,daskalakis2020logistic,mukherjee2024high,miles2026inference} and the references therein). A variational Bayes procedure based on the pseudolikelihood for estimation in a two-parameter Ising model was also recently developed by Kim et al.~\cite{kim2024statistically}.

Understanding asymptotic properties of Ising models on Erd\H{o}s-R\'enyi random graphs also has a long history in statistical physics, beginning with work of Bovier and Gayrard \cite{BovierGayrard1993}. They showed that the model exhibits a phase transition at $\beta = 1$ (note that in this case $\|W\|_{\mathrm{op}} = 1$) and established the concentration of the magnetization $\os$.\footnote{Technically, \cite{BovierGayrard1993} considered the directed Erd\H{o}s--R\'enyi model, while we consider the undirected model. The differences between the two formulations are minor, and results for one model can be easily transferred to the other. } 
Later, Kabluchko et al.~\cite{kabluchko2019fluctuations} established central limit theorems for $\os$ in the subcritical regime $\beta<1$, under the condition $N^{\frac{2}{3}}\theta_N \gg 1$. This sparsity condition was relaxed to include the full diverging degree regime $N\theta_N \gg 1$ in \cite{kabluchko2020fluctuations}, which also obtained the fluctuations of $\os$ at the critical point $\beta=1$ (see also \cite{KabluchkoLoeweSchubert2022,apostel2026fine} for related results). Kabluchko et al.~\cite{kabluchko2021fluctuations} also derived fluctuations of the partition function $Z_N(\beta)$ in the subcritical regime under the condition $N\theta_N \gg 1$, which is a key technical ingredient for our work. For general non-negative interaction matrices (not necessarily arising from random graph models), one of the sharpest available results is due to Deb and Mukherjee~\cite{debmukh}, which establishes fluctuations of the magnetization for Ising models on $d$-regular graphs, when $d \gg \sqrt{N}$ in the entire ferromagnetic regime and when $d \gg N^{\frac{1}{3}}$ in the subcritical regime. These techniques have since found a range of applications, including to fluctuations of random-field Ising models \cite{lee2025fluctuations} and high-dimensional Bayesian linear regression \cite{lee2025clt}. Very recently, fluctuations of the partition function for the Ising model on Erd\H{o}s--R\'enyi graphs have also been obtained in the bounded-degree regime $N\theta_N \asymp 1$ \cite{coja2026fluctuations,prodromidis2026distribution}.

There is also a parallel line of work in machine learning and theoretical computer science that focuses on learning the underlying graphical structure of an Ising model, or more generally a Markov random field, from multiple i.i.d. samples. This is often referred to as the structure learning problem, for which a variety of efficient algorithms and information-theoretic lower bounds have been developed over the years (see, for example, \cite{AnandkumarTanHuangWillsky2012, RavikumarWainwrightLafferty2010, Bresler2015, VuffrayMisraLokhovChertkov2016, SanthanamWainwright2012, HamiltonKoehlerMoitra2017, KlivansMeka2017, dagan2021learning} and references therein).  Related problems of goodness-of-fit and independence testing for Ising models from multiple samples were studied by Daskalakis et al. \cite{DaskalakisDikkalaKamath2019}. More recently, Neykov and Liu \cite{NeykovLiu2019} and Cao et al. \cite{CaoNeykovLiu2022} investigated testing graph properties, such as connectivity and the presence of cycles or cliques, from multiple samples drawn from an Ising model on the graph. In a different direction, Berthet et al. \cite{BerthetRigolletSrivastava2019} proposed an algorithm for recovering block structure in a mean-field Ising model using multiple observations. In contrast to this literature, we assume that the graph structure is observed and focus on parameter estimation from a single network snapshot. This setting is motivated by applications such as disease mapping, elections, and social network analysis, where multiple independent samples are rarely available.

\subsection{Asymptotic Notations}
\label{sec:aNbN}

Throughout the paper we will use the following asymptotic notations: For two sequences $a_N$ and $b_N$ we will write $a_N \lesssim b_N$ if for all $N$ large enough $a_N \leq C _1 b_N$, for some constant $C_1 > 0$. Similarly, $a_N \gtrsim b_N$ will mean  $a_N \geq C_2 b_N$ and $a_N \asymp b_N$ will mean $C_2 b_N \leq a_N \leq C_1 b_N$, for $N$ large enough and constants $C_1, C_2 > 0$. 
Subscripts in the above notation, for example $\lesssim_{\square}$, $\gtrsim_{\square}$, and $\asymp_{\square}$ denote that the hidden constants may depend on the subscripted parameters. Moreover, $a_N \ll b_N$, $a_N\gg b_N$, and $a_N \sim b_N$ will mean $a_N/b_N \rightarrow 0$, $a_N/b_N \rightarrow \infty$, and $a_N/b_N \rightarrow 1$, as $N \rightarrow \infty$, respectively.

\section{Estimation, Inference, and Local Asymptotic Minimaxity } 
\label{sec:estimation}

In this section we describe our main results on parameter estimation and their optimality properties.

\subsection{Preliminaries }

We begin by recalling the relevant spectral properties of a graphon. To this end, denote by $L^{2}[0,1] $ the collection of all square integrable functions on $[0, 1]$, that is, 
$$L^{2}[0,1] := \left\{f: [0, 1] \rightarrow \R: \|f \|_2^2 := \int_0^1 f(x)^2 \mathrm d x < \infty \right\} . $$
A graphon $W: [0, 1]^2 \rightarrow [0, 1]$ defines an operator $T_{W}: L^{2}[0,1]\rightarrow L^{2}[0, 1]$ as follows:  
\begin{equation}\label{eq:TW}
(T_{W}f)(x)=\int_0^1 W (x, y)f(y) \mathrm d y, 
\end{equation} 
for each $f\in L^{2}[0,1]$ (see \cite[Chapter 7.5]{lovasz2012large}). Note that $T_{W}$ is a symmetric Hilbert--Schmidt operator, 
hence, it  is compact and has a discrete spectrum. In particular, it has a countable
multiset of non-zero real eigenvalues contained in the interval $[-1, 1]$, which we denote by 
$\mathrm{Spec}(W)$, such that
\begin{align}\label{eq:eigenvalue_l2_sum}
    \sum_{\lambda \in \mathrm{Spec}(W)} \lambda^2=\int_{[0, 1]^2 } W(x,y)^2 \mathrm d x \mathrm d y := \| W \|_2^2 <\infty.
\end{align}
The operator norm of $W$ is defined as: 
\begin{align}\label{eq:Wmaxeigenvalue}
\|W\|_{\mathrm{op}} = \sup_{f: \|f\|_2 = 1} \| T_W f\|_2 = \lambda_{\max}(W) , 
\end{align} 
where $\lambda_{\max}(W)$ is the largest eigenvalue of $T_W$ (for the second equality in \eqref{eq:Wmaxeigenvalue}, see \cite[Exercise 7.19]{lovasz2012large}).

Given a graphon $W$ and a finite graph $F=(V(F), E(F))$, the homomorphism density of $F$ in $W$ is defined as:
    \begin{align}\label{eq:tHW}
        t(F, W):=\int_{[0,1]^{|V(F)|}} \prod_{(i, j) \in E(F)} W(x_i,x_j) \prod_{i \in V(F)} \mathrm{d}x_i. 
    \end{align}
    The homomorphism density can be interpreted as the probability that a $W$-random graph with $\theta_N=1$ on $|V(F)|$ vertices contains the graph $F$. For our results, the homomorphism densities of cycles, which can also be expressed as moments of the spectrum of $W$, will play a key role. To this end, for $r \geq 3$, denote by $C_r$ the cycle with $r$ vertices. Then \eqref{eq:tHW} and \cite[Chapter 7.5]{lovasz2012large} give, 
    \begin{align}\label{eq:cycle} 
    t(C_r, W) = \int_{[0, 1]^r} W(x_1, x_2) W(x_2, x_3) \cdots W(x_{r}, x_1) \mathrm d x_1 \mathrm d x_2 \cdots \mathrm d x_r = \sum_{\lambda \in \mathrm{Spec}(W)} 
    \lambda^r, 
    \end{align}
for $r \geq 3$.

Throughout we will assume that $\beta$ is in the subcritical regime, that is, $0 < \beta < \frac{1}{\|W\|_{\mathrm{op}}}$. For ease of reference, we list this and the other standing assumptions below. 

\begin{assumption} Throughout we will assume the following on the parameter $\beta$, the graphon $W$, and the sparsity parameter $\theta_N$. 

\begin{enumerate} 

\item  $\beta \in (0, \frac{1}{\|W\|_{\mathrm{op}}})$, where $\|W\|_{\mathrm{op}}$ is the operator norm of the graphon $W$ (as defined in \eqref{eq:Wmaxeigenvalue}). 

\item The graphon $W: [0, 1]^2 \rightarrow [0, 1]$ and its diagonal map $\mathrm{diag}_W : [0, 1] \rightarrow [0, 1]$, defined as $\mathrm{diag}_W(x) := W(x, x)$, are both Riemann integrable. Moreover, $\int_{[0, 1]^2} W(x, y) \mathrm d x \mathrm d y > 0$. 
 
\item $N^{\frac{2}{3}}\theta_N \gtrsim 1$ and $\theta_N \rightarrow \theta \in [0, 1]$. 

\end{enumerate} 
\label{assumption}
\end{assumption}


Assumption~\ref{assumption} (2) ensures that the limiting edge density of $G_N \sim G(N, \theta_N, W)$ is well defined. To see this, recall that the edge-density of $G_N$ is given by 
$$\rho(G_N) := \frac{1}{N^2} \sum_{1 \leq i, j \leq N} A_{G_N}(i, j), $$ 
 where $A_{G_N}$ is the adjacency matrix of $G_N$. Consequently, 
 $$\E \rho(G_N) := \frac{\theta_N}{N^2} \sum_{1 \leq i \ne j \leq N} W \left( \frac{i}{N}, \frac{j}{N} \right) + O\left(\frac{1}{N}\right).$$ 
The Riemann integrability of $W$ implies that $\frac{1}{N^2} \sum_{1 \leq i \ne j \leq N} W ( \frac{i}{N}, \frac{j}{N} ) \rightarrow \int_{[0, 1]^2} W(x, y) \mathrm d x \mathrm dy$. Hence, under Assumption~\ref{assumption} (2),  
$$\frac{1}{\theta_N} \E \rho(G_N) \rightarrow \int_{[0, 1]^2} W(x, y) \mathrm d x \mathrm dy > 0. $$
Similarly, the Riemann integrability of the diagonal map $\mathrm{diag}_W$ implies that $\frac{1}{N} \sum_{i=1}^N W(\frac{i}{N}, \frac{i}{N}) \rightarrow \int_0^1 W(x, x) \mathrm d x$, which ensures the expected density of self-loops has a well-defined limit.

Assumption~\ref{assumption} (3) is a technical requirement on the sparsity of the underlying network. It is needed to derive the asymptotic distribution of the sufficient statistic~\eqref{eq:sufficientstatistics}, which is a key technical ingredient in our analysis. Finally, the assumption $\theta_N \to \theta \in [0,1]$ ensures that both the dense and sparse regimes are incorporated within the same notational framework. Specifically, $\theta = 0$ corresponds to the {\it sparse} regime (where $\theta_N \ll 1$ and the number of edges in $G_N$ is $o(N^2)$) and $\theta \in (0,1]$ corresponds to the {\it dense} regime (where $\theta_N \asymp 1$ and the number of edges is $\asymp N^2$). In particular, recall that in this parametrization taking $W \equiv 1$ gives the Erd\H{o}s-R\'enyi model $G(N, \theta_N)$.

\subsection{Asymptotic Distribution of the Maximum Likelihood Estimate }  
\label{sec:mlestimate}

Given a sample $\bs = (\sigma_1, \sigma_2, \ldots, \sigma_N)$ from the model \eqref{model_def} with sufficient statistic as in \eqref{eq:sufficientstatistics}, the maximum likelihood (ML) estimate $\hat{\beta}_N$ of $\beta$ is defined as:  
\begin{align}\label{eq:betaML}
\hat{\beta}_N := \arg\min_{\beta \in (0, \infty) } \left\{ \beta H_N(\sigma) + \psi_N(\beta) \right\} , 
\end{align}
where $ \psi_N(\beta) := \log Z_N(\beta)$, if it exists. Note that 
\begin{align}\label{eq:HNderivative}
\psi_N'(\beta) = - \E_\beta [H_N(\bs)| A_{G_N}] \quad \text{ and } \quad \psi_N''(\beta) = \mathrm{Var}_\beta [H_N(\bs)| A_{G_N}] > 0. 
\end{align} 
Hence, the function $\ell_N(\beta) := \beta H_N(\sigma) + \psi_N(\beta)$ is strictly convex, which means if $\ell_N(\beta)$ has a minimizer in $(0, \infty)$, then it is unique and $\hat{\beta}_N$ in \eqref{eq:betaML} is well-defined. Moreover, in this case, the ML estimate $\hat{\beta}_N$ can be obtained by solving the gradient equation:
\begin{align}\label{eq:mleeq7}
    \ell_N'(\beta) =0 \Longleftrightarrow H_N(\bs) = \E_\beta [H_N(\bs)| A_{G_N}] . 
\end{align}
On the other hand, if $\ell_N(\beta)$  does not have a minimizer in $(0,\infty)$, then by convention, we define $\hat{\beta}_N := +\infty$. In the following theorem, we derive the limiting distribution of $\hat{\beta}_N$ in the subcritical regime.

\begin{thm}
        \label{mle}
        Suppose Assumption \ref{assumption} holds. Then, for $\hat{\beta}_N$ as defined above the following hold: 
        \begin{enumerate} 
        
                    \item[$(1)$]  (Sparse regime) If $\theta = 0$, 
                  \begin{align*} 
                      \frac{1}{\sqrt{\theta_N}}( \hat{\beta}_N-\beta )\xrightarrow{D} \cN\left(0,\frac{2}{\int_{[0,1]^2} W(x,y)\mathrm{d}x\mathrm{d}y}\right).
                  \end{align*}
                  
            \item[$(2)$] (Dense regime)  If $\theta \in (0,1]$, then
                \begin{align*}
       \hat{\beta}_N \xrightarrow{D} \begin{cases}
                     F^{-1}(V_\beta^+)  & \text{ with probability } \P(V_\beta>0), \\
            \infty &  \text{ with probability } \P(V_\beta < 0) ,
             \end{cases}
    \end{align*}    
                  where
                  \begin{align}\label{eq:Vbeta}
                      V_{\beta}:=\beta\sigma_W^2 + \sigma_WZ_0+\frac{1}{2} \sum_{\lambda \in \mathrm{Spec}(W)} \lambda \left(\frac{Z_{\lambda}^2}{1-\beta \lambda}-1\right),
                  \end{align}
                  with 
                  \begin{itemize} 
                  \item $Z_0, \{Z_\lambda\}_{\lambda \in \mathrm{Spec}(W)}$ i.i.d. $\mathcal N(0, 1)$ random variables, 
                  \item $\sigma_W^2:=\frac{1}{2\theta} \int_{[0,1]^2} W(x,y)(1-\theta W(x,y))\mathrm{d}x\mathrm{d}y$,  
                  \item $F : [0,\frac{1}{\|W\|_{\mathrm{op}}}) \rightarrow [0,\infty)$ given by $F(x) := \E V_x = x\sigma_W^2 + \frac{1}{2} \sum_{ \lambda \in \mathrm{Spec}(W) } \lambda (\frac{1}{1-x \lambda }-1 )$, which is a strictly increasing bijection,
                  \item $y^+ := \max\{y,0\}$ for a real number $y$.
            \end{itemize} 

        \end{enumerate}
    \end{thm}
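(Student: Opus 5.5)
The ML estimate is characterized through the likelihood equation \eqref{eq:mleeq7}. Since $\ell_N$ is strictly convex, for any fixed $t>0$ we have the exact equivalence
\begin{align*}
\{\hat\beta_N \le t\} = \{\ell_N'(t)\ge 0\} = \{ H_N(\bs) + \psi_N'(t) \ge 0\} = \{ -H_N(\bs) \le -\psi_N'(t)\}.
\end{align*}
Note that $-\psi_N'(t)=\E_t[-H_N(\bs)\mid A_{G_N}]$. So the distribution of $\hat\beta_N$ is determined by the joint behaviour of two quantities. The first is the sufficient statistic $-H_N(\bs)$ under $\P_\beta$. The second is the random curve $t\mapsto -\psi_N'(t)$. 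The plan is to compute the limit of the difference $D_N(t):=-H_N(\bs)+\psi_N'(t)$ after centring and scaling, and then read off the limit of $\hat\beta_N$ from $\P(\hat\beta_N\le t)=\P(D_N(t)\le 0)$.

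Write $-H_N(\bs) = \frac{1}{2N\theta_N}\sum_{i,j}A_{G_N}(i,j)\sigma_i\sigma_j$. The diagonal and mean part contribute $\frac{1}{2N\theta_N}\sum_i A_{G_N}(i,i)$ plus a deterministic-to-leading-order term. I will invoke the fluctuation results for the Hamiltonian (Section~\ref{sec:fluct_ham6}) and for the log partition function and its derivative (Section~\ref{sec:fluctpart78}). The expected structure is
\begin{align*}
-H_N(\bs) - c_N = \sigma_W Z_0 + \tfrac12\sum_\lambda \lambda\Big(\tfrac{Z_\lambda^2}{1-\beta\lambda}-1\Big) + o_P(1) \quad(\theta>0),
\end{align*}
with a deterministic centring $c_N$. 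Here the Gaussian part $\sigma_W Z_0$ comes from the randomness of the graph edges (the partition-function fluctuation). The $\chi^2$ part comes from the quadratic form of spins along the eigendirections of $A_{G_N}/(N\theta_N)$, whose eigenvalues converge to $\mathrm{Spec}(W)$.

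Similarly, I expect $-\psi_N'(t) - c_N = t\sigma_W^2 + \frac12\sum_\lambda\lambda(\frac{1}{1-t\lambda}-1) + \sigma_W Z_0' + o_P(1)$, where $Z_0'$ is the same graph-driven Gaussian. This is obtained by differentiating the partition-function expansion, $\log Z_N(t)\approx \frac{t^2\sigma_W^2}{2}+t\,\sigma_W Z_0' - \frac12\sum\log(1-t\lambda)+\ldots$, with the derivative justified by convexity: pointwise limits of convex functions give derivative limits.

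The crucial point is that the edge-randomness Gaussian cancels between the two quantities, while the annealed-versus-quenched tilt produces the shift $\beta\sigma_W^2$. Under $\P_\beta$, the statistic $-H_N$ acquires the mean $\beta\sigma_W^2+\sigma_W Z_0'$, and its own conditional fluctuation adds an independent $\sigma_W Z_0$ from the non-diagonal linear part. Then
\begin{align*}
D_N(t) \to V_\beta - F(t) \quad \text{in distribution},
\end{align*}
so $\P(\hat\beta_N\le t)\to \P(V_\beta\ge F(t))$ for every $t$ in the subcritical range. Since $F$ is a strictly increasing bijection with $F(0)=0$, this gives $\hat\beta_N\le t$ iff $F^{-1}(V_\beta^+)\le t$ on $\{V_\beta>0\}$. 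The remaining mass $\P(V_\beta<0)$ escapes, which corresponds to $\hat\beta_N=\infty$ or to estimates beyond the critical point. For those, I will check that $\P(D_N(t)\le0)\to\P(V_\beta \ge F(t))$ still holds as $t\uparrow 1/\|W\|_{\mathrm{op}}$, where $F(t)\to\infty$, giving the tightness statement. I also need $\P(V_\beta=0)=0$, which holds because $Z_0$ has a density.

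For the sparse case $\theta=0$, the same argument applies with rescaling. Now $\sigma_W^2\sim\frac{1}{2\theta_N}\int W$ blows up, so the Gaussian dominates the bounded $\chi^2$ part. Put $t=\beta+x\sqrt{\theta_N}$. Then
\begin{align*}
\sqrt{\theta_N}\,D_N(t)\to \sqrt{\tfrac{\int W}{2}}\,Z_0 - x\,\tfrac{\int W}{2},
\end{align*}
because $\sqrt{\theta_N}\,\psi_N''\approx\sqrt{\theta_N}\,\sigma_W^2$ and the mean-zero fluctuation scales as $\sqrt{\theta_N}\,\sigma_W$. Hence $\P(\hat\beta_N\le \beta+x\sqrt{\theta_N})\to \P(Z_0\ge x\sqrt{\int W/2})$, which gives the stated $\mathcal N(0,2/\int W)$ limit. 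This needs uniformity of the expansion of $\psi_N'$ in $t$ over an $O(\sqrt{\theta_N})$ window, which follows from bounding $\psi_N'''$ on a compact subcritical interval.

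The main obstacle is the joint limit in the second paragraph. One must show that the graph-driven Gaussian appearing in $-H_N$ under the quenched measure $\P_\beta(\cdot\mid A_{G_N})$ is exactly the one appearing in $\psi_N'(t)$, so that it cancels uniformly in $t$. At the same time, the spin-driven fluctuations must be asymptotically independent with the $\chi^2$ structure. I would first try to express $-H_N(\bs)+\psi_N'(t)$ through the conditional Laplace transform $\E_\beta[e^{s(-H_N)}\mid A_{G_N}]=Z_N(\beta+s)/Z_N(\beta)$. The joint fluctuation theorem for $\{\log Z_N(t)\}_t$ then yields the conditional characteristic function of $-H_N+\psi_N'(t)$ in the limit.
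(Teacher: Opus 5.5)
Your skeleton matches the paper's: read $\{\hat\beta_N\le t\}$ off the score at $t$, then combine the limit of $-H_N(\bs)$ with that of $\psi_N'(t)=\mathbb{E}_t[-H_N(\bs)\mid A_{G_N}]$. In the dense regime, however, there is a genuine gap exactly where the atom at $\infty$ comes from. Your ``exact equivalence'' $\{\hat\beta_N\le t\}=\{\ell_N'(t)\ge 0\}$ is false under the convention $\hat\beta_N=+\infty$ when $\ell_N$ has no minimizer in $(0,\infty)$.

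Consider the event $\{-H_N(\bs)<\psi_N'(0)\}$, where $\psi_N'(0)=\frac{1}{2N\theta_N}\sum_i A_{G_N}(i,i)\to\frac12\int_0^1W(x,x)\,\mathrm{d}x$. On it the score is positive on all of $(0,\infty)$, so $\ell_N'(t)>0$ although $\hat\beta_N=+\infty$. This event has limiting probability $\mathbb{P}(V_\beta<0)>0$. Fix your sign slips first: $\psi_N'(t)=+\mathbb{E}_t[-H_N\mid A_{G_N}]$, and the limit is $\mathbb{P}(V_\beta\le F(t))$, not $\mathbb{P}(V_\beta\ge F(t))$. Your argument then gives $\lim_N\mathbb{P}(\hat\beta_N\le t)=\mathbb{P}(V_\beta\le F(t))$. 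This tends to $\mathbb{P}(V_\beta<0)$ as $t\downarrow 0$ and to $1$ as $t\uparrow 1/\|W\|_{\mathrm{op}}$, since $F(t)\to\infty$. So the atom would sit at $0$ and nothing escapes to the right. Your claim that this mass escapes ``beyond the critical point'' is therefore unsupported. In fact $\mathbb{P}(t<\hat\beta_N<\infty)\to 0$ for $t\ge 1/\|W\|_{\mathrm{op}}$; the atom corresponds to the unconstrained root being $\le 0$.

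What is missing is the separate root-existence analysis of Lemma~\ref{lem:mlesolution}. A root in $(0,\infty)$ exists iff $\psi_N'(0)<-H_N(\bs)<\lim_{b\to\infty}\psi_N'(b)=\frac{1}{2N\theta_N}\sum_{i,j}A_{G_N}(i,j)$. The upper constraint fails with vanishing probability, so $\mathbb{P}(\hat\beta_N=\infty)\to\mathbb{P}(V_\beta<0)$. Then for subcritical $t$, $\mathbb{P}(\hat\beta_N>t)=\mathbb{P}(\hat\beta_N=\infty)+\mathbb{P}(-H_N(\bs)>\psi_N'(t))+o(1)$, and monotonicity handles $t\ge 1/\|W\|_{\mathrm{op}}$. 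In the sparse regime the bad event is negligible because $-H_N(\bs)\asymp\theta_N^{-1}$, so there your equivalence is harmless.

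Second, the dense-regime mechanism you describe is wrong, and the ``main obstacle'' is illusory. By Theorem~\ref{thm:ZN}(1), $\log Z_N(t)-\log\mathbb{E}Z_N(t)=O_P((N\theta_N)^{-1/2})$. So $\log Z_N(t)$ has no $O(1)$ random term $t\sigma_WZ_0'$, and $\psi_N'(t)$ converges in probability to the constant $\frac12\int_0^1W(x,x)\,\mathrm{d}x+F(t)$. The Gaussian $\sigma_WZ_0$ belongs to the quenched law of $-H_N(\bs)$: $Z_N(\beta+s)/Z_N(\beta)=\mathbb{E}_\beta[e^{-sH_N(\bs)}\mid A_{G_N}]$ converges in probability to the deterministic MGF of $U_\beta$. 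Heuristically, $\sigma_W^2$ is the variance of the spin quadratic form built from the centred matrix $A_{G_N}-\mathbb{E}A_{G_N}$, and the tilt supplies $\beta\sigma_W^2$. Your picture, with a graph-driven $Z_0'$ plus an independent spin-driven $Z_0$, would double the Gaussian variance. That contradicts your own first display and Theorem~\ref{thm:HNsigma}(2).

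Hence no cancellation argument is needed. $D_N(t)\to V_\beta-F(t)$ is just Theorem~\ref{thm:HNsigma}(2) plus Slutsky. The limit of $\psi_N'(t)$ comes from conditional-moment convergence (Lemma~\ref{l2ratio762}(2)) applied at parameter $t$, or from your convexity remark. Likewise, in the sparse case no bound on $\psi_N'''$ is needed. Convexity of $h\mapsto\psi_N(\beta+h\sqrt{\theta_N})$ together with \eqref{ref2707}, or the conditional MGF under $\mathbb{P}_{\beta+h\sqrt{\theta_N}}$ (Proposition~\ref{mlelemd4y}), gives $\sqrt{\theta_N}\,\{\psi_N'(\beta+h\sqrt{\theta_N})-\psi_N'(\beta)\}\to\frac{h}{2}\int_{[0,1]^2}W(x,y)\,\mathrm{d}x\,\mathrm{d}y$ in probability directly. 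Finally, $\mathbb{P}(V_\beta=0)=0$ cannot rest on $Z_0$ when $\sigma_W=0$ (e.g.\ Curie--Weiss); use the $\chi^2$ part instead.
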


The proof of Theorem \ref{mle} is given in Appendix \ref{sec:mlepr8}. 
The first step in the proof is to use the gradient equation \eqref{eq:mleeq7} and the monotonicity of the function $\psi_N'$ to express the distribution of $\hat{\beta}_N$ in terms of $H_N(\bs)$. Specifically, for any positive sequence $\{a_N\}_{N \geq 1}$, 
\begin{align*} 
\P_\beta ( \hat{\beta}_N \leq a_N ) = \P_\beta ( \psi_N'(\hat{\beta}_N) \leq \psi_N' ( a_N ) ) = \P_\beta\left(H_N(\bs)\geq \E_{a_N}\left[H_N(\bs) | A_{G_N} \right]\right) , 
 \end{align*} 
since $\psi_N'$ is an increasing function and $\psi_N'(\hat{\beta}_N) = - H_N(\bs)$, whenever $\hat{\beta}_N$ is finite (recall \eqref{eq:mleeq7}). Hence, to derive the asymptotic distribution of $\hat{\beta}_N$, one needs to analyze the fluctuations of $H_N(\bs)$ under $\P_\beta$ and the perturbed measure $\P_{a_N}$, for a suitable choice of $a_N$ (which depends on whether we are in the sparse or the dense regime). To this end, we invoke our results on the asymptotic distribution of $H_N(\bs)$ (presented in Section~\ref{sec:fluct_ham6}), combined with a change of measure argument which enables us to transfer distributional results from $\P_\beta$ to the perturbed measure $\P_{a_N}$.

\begin{remark}\label{estimation}
Theorem \ref{mle} shows that the distribution of the ML estimate $\hat{\beta}_N$ is very different depending on whether we are in the sparse or the dense regime. 
\begin{itemize}

\item In the sparse regime (where $\theta_N \ll 1$), $\hat{\beta}_N$ converges to the true value $\beta$ at rate $\sqrt{\theta_N}$ and the limiting distribution is Gaussian. This, in particular, means that the ML estimate is finite with high probability and is $1/\sqrt{\theta_N}$-consistent. 

\item In contrast, in the dense regime (where $\theta_N \asymp 1$), the ML estimate is finite only with positive probability, and $\hat{\beta}_N$ itself (without any centering or scaling) converges to a non-Gaussian limiting distribution. In particular, this implies that the ML estimate is inconsistent in this regime. This aligns with the result in \cite[Theorem~3.3]{bhattacharya2018inference}, which shows that no estimate of $\beta$ can be consistent in the subcritical phase for Ising models on dense graphs. 
\end{itemize} 
 To the best of our knowledge, prior to Theorem \ref{mle} the only cases where the asymptotic distribution of the ML estimate is known are for the Curie-Weiss model  \cite{cometsgidas} (where $G_N$ is the complete graph on $N$ vertices) and Ising model on dense regular graphs \cite{xu2023inference}. Even in these cases, there is a technical issue regarding the finiteness of the ML estimate that appears to have gone unnoticed. We discuss this further in Example \ref{example1_26} below. 
\end{remark}

To illustrate the results in Theorem \ref{mle} we now compute the limiting distribution of the ML estimate in two examples (1) when the underlying graph is generated from a 2-block stochastic block model (SBM) and (2) when the underlying graph is generated from a rank one graphon model. These include various well-known graphon models commonly used in practice and will be our running examples throughout the paper.

\begin{example}\label{example1_26} (Block models) Ising models with an underlying block-type structure have been studied in a series of recent papers (see \cite{bianchi2026ising} and the references therein). Here, for illustration we consider the Ising model \eqref{model_def} with the underlying network generated from the 2-block graphon:
\begin{align}\label{eq:Wpq} 
W(x, y) =   \begin{cases}
                     p  & \text{ for } (x, y) \in [0, \frac{1}{2}]^2 \bigcup [\frac{1}{2}, 1]^2, \\
                     q  & \text{ for } (x, y) \in [0, \frac{1}{2}] \times [\frac{1}{2}, 1] \bigcup [\frac{1}{2}, 1] \times [0, \frac{1}{2} ], 
                         \end{cases} 
\end{align}  
where $p, q \in [0, 1]$. This is the graphon corresponding to balanced stochastic block model with within and between block connection probabilities $p$ and $q$, respectively. In this case, $\int_{[0,1]^2} W(x,y) \mathrm dx \mathrm dy = \frac{1}{2}(p+q)$. Moreover, a direct computation shows that the operator $T_W$ (recall \eqref{eq:TW}) has only 2 non-zero eigenvalues which are given by $\lambda_{\pm} =
\frac{1}{2} (p \pm q)$. Suppose $G_N \sim G(N, \theta_N, W)$ is a random graph sampled from this model and $\bs$ is a sample from the Ising model \eqref{model_def} on $G_N$ with $0 < \beta < \frac{2}{(p+q)}$ and $N^{\frac{2}{3}}\theta_N \gg 1$. Then, from Theorem \ref{mle} we have the following: 
    \begin{itemize}
                \item If $\theta = 0$, then
                  \begin{align}\label{eq:sparseexample}
                      \frac{1}{\sqrt{\theta_N}} (\hat{\beta}_N-\beta)\xrightarrow{D} \cN\left(0, \frac{4}{p+q} \right).
                  \end{align} 
            \item If $\theta \in (0,1]$,
                \begin{align}\label{eq:dexample}
       \hat{\beta}_N \xrightarrow{D} \begin{cases}
                     F^{-1}(V_\beta^+)  & \text{ with probability } \P(V_\beta>0), \\
            \infty &  \text{ with probability } \P(V_\beta < 0) ,
             \end{cases}
    \end{align}    
                  where
                  \begin{align*}
                      V_{\beta} =\beta\sigma_W^2 + \sigma_W Z_0+ \frac{\lambda_{+}}{2}   \left(\frac{Z_{\lambda_{+}}^2}{1-\beta \lambda_{+}}-1\right) + \frac{\lambda_{-} }{2}  \left(\frac{Z_{\lambda_{-}}^2}{1-\beta \lambda_{-}}-1\right),
                  \end{align*}
                  where
                 $Z_0, Z_{\lambda_{+}}, Z_{\lambda_{-}}$ are i.i.d. $\mathcal N(0, 1)$,                   $\sigma_W^2 =\frac{1}{4 \theta} \left( p(1-\theta p) + q(1-\theta q)\right) $, and   $F(x) =  \E V_x$.
                                  \end{itemize} 
The following are some important special cases of this example: 
\begin{itemize} 
\item {\it Erd\H{o}s-R\'enyi model $G(N, \theta_N)$}: In this case $W \equiv 1$, which corresponds to taking $p=q=1$ in \eqref{eq:Wpq}. Then in the sparse case (that is, $\theta=0$),  \eqref{eq:sparseexample} simplifies to $\frac{1}{\sqrt{\theta_N}} (\hat{\beta}_N-\beta)\xrightarrow{D} \cN\left(0, 2 \right)$. Moreover, in the dense case (that is, $\theta \in (0, 1]$) \eqref{eq:dexample} holds with 
                  \begin{align}\label{eq:dexampleGN}
                      V_{\beta} =\frac{\beta(1-\theta)}{2\theta} + \sqrt{\frac{1-\theta}{2\theta}}Z_0+ \frac{1}{2}\left(\frac{Z_1^2}{1-\beta}-1\right) , 
                  \end{align}
                  where $Z_0 , Z_1$ are i.i.d. $\cN(0, 1)$ and $F(x) = \frac{x (1-(1-\theta)x)}{2\theta(1-x)}$.

\item {\it Curie-Weiss Model}: This is the Ising model on the complete graph, which corresponds to taking $W \equiv 1$ and $\theta=1$. In this case, \eqref{eq:dexampleGN} simplifies to $$V_{\beta} = \frac{1}{2} \left(\frac{Z_1^2}{1-\beta}-1 \right),$$ where $Z_1 \sim \cN(0, 1)$ and  $F(x) = \frac{x }{2(1-x)}$. Note that $F^{-1}$ is invertible on $[0,\infty)$ and $F^{-1}(y) = \frac{2y}{1+2y}$. Hence, 
$$F^{-1}(V_\beta^+) = \left(1-\frac{1-\beta}{Z_1^2}\right)\bm 1\left\{ Z_1^2>1-\beta\right\} .$$  
Then by \eqref{eq:dexample}, 
\begin{align}\label{eq:MLdistribution} 
\hat{\beta}_N \xrightarrow{D}  \p(V_\beta >0) \left(1-\frac{1-\beta}{Z_1^2}\right)\bm 1\left\{ Z_1^2>1-\beta \right\}  + \p(V_\beta <0) ~\delta_{\infty}. 
\end{align}  
Recall that the asymptotic distribution of the ML estimate in the Curie--Weiss model is classically known from the work of \cite{cometsgidas}. However, in \cite[Theorem 1.4(b)]{cometsgidas}, only the first term in \eqref{eq:MLdistribution} (without the indicator) appears in the limiting distribution. There are two reasons for this discrepancy: First, in our setting the ML estimate is constrained to be nonnegative, which introduces the additional indicator term $\mathbf{1}\{Z_1^2>1-\beta\}$ in the first term of the limiting distribution. Second, as already noted in \cite{ellis1992}, the issue of nonexistence of the ML estimate is not addressed in \cite{cometsgidas}, an issue that appears to have gone unnoticed in several subsequent papers as well. This explains the absence of the mass at infinity in the result of \cite{cometsgidas}. In contrast, the actual limiting distribution does assign positive mass to infinity, because the function $\ell_N(\beta)$ fails to have a minimizer in $(0,\infty)$ with positive probability.

\item {\it Random bipartite graph}:  This corresponds to taking $p=0$ and $q=1$ in \eqref{eq:Wpq}. Hence, in the sparse case \eqref{eq:sparseexample} simplifies to $\frac{1}{\sqrt{\theta_N}} (\hat{\beta}_N-\beta)\xrightarrow{D} \cN\left(0, 4 \right)$. Moreover, since $\lambda_{+} = \frac{1}{2}$ and $\lambda_{-} = -\frac{1}{2}$, in the dense case
\eqref{eq:dexample} holds with 
                  \begin{align*}
                      V_{\beta} =\frac{\beta(1-\theta)}{4 \theta} + \sqrt{\frac{1-\theta}{4 \theta}}Z_0 + \frac{Z_1^2}{4 -  2  \beta } - \frac{Z_2^2}{4 + 2 \beta}  , 
                  \end{align*}
                  where $Z_0 , Z_1, Z_2$ are i.i.d. $\cN(0, 1)$ and $F(x) = \frac{x(1-\theta)}{4 \theta} + \frac{x}{4 -  x^2 }$. 
\end{itemize} 
\end{example}

\begin{example}[Rank one graphon model] 
\label{rankOne}
Consider the rank one graphon: $W(x, y) = f(x) f(y)$, where $f: [0, 1] \rightarrow [0, 1]$ is continuous almost everywhere. This may be viewed as a graphon analogue of a Chung--Lu-type random graph model \cite{chung2002connected}. In this case, the operator $T_W$ has rank 1 with 
$\lambda_{\mathrm{max}}(W) = \mu_2:= \| f \|_2^2$.  Suppose $G_N \sim G(N, \theta_N, W)$ is a random graph sampled from this model and $\bs$ be a sample from the Ising model \eqref{model_def} on $G_N$ with $0 < \beta < \frac{1}{\mu_2}$ and $N^{\frac{2}{3}}\theta_N \gg 1$. Then, from Theorem \ref{mle} we have the following: 

\begin{itemize}
    \item If $\theta=0$,
    \begin{equation*}
        \frac{1}{\sqrt{\theta_N}}( \hat{\beta}_N-\beta )\xrightarrow{D} \cN\left(0,\frac{2}{\mu_1^2}\right) , 
    \end{equation*} 
    where $\mu_1:=\int_0^1 f(x) \mathrm dx$. 
    \item If $\theta \in (0,1]$, 
                \begin{align*}
       \hat{\beta}_N \xrightarrow{D} \begin{cases}
                     F^{-1}(V_\beta^+)  & \text{ with probability } \P(V_\beta>0), \\
            \infty &  \text{ with probability } \P(V_\beta < 0) ,
             \end{cases}
    \end{align*}    
                  where
\begin{equation*}
V_\beta=\frac\beta{2\theta}(\mu_1^2-\theta \mu_2^2)+Z_0\sqrt{\frac1{2\theta}(\mu_1^2-\theta \mu_2^2)} +\frac{\mu_2}{2}\left(\frac{Z_1^2}{1-\beta \mu_2}-1\right),
\end{equation*}
where $F(x)=\frac{x \mu_1^2}{2\theta}+\frac{x^2 \mu_2^3}{2(1-x\mu_2)}$ and $Z_0,Z_1$ are i.i.d. $\cN(0,1)$.
\end{itemize}
\end{example}

\subsection{A Simple Efficient Estimate} 
\label{sec:estimate}

Although the maximum likelihood (ML) estimate possesses several desirable theoretical properties, for the Ising model it is generally computationally intractable. This intractability arises from the presence of the normalizing constant $Z_N$ in the likelihood function, which entails a summation over an exponentially large number of terms. In this section, we show that a one-step approximation to the likelihood equation leads to a simple closed-form estimate of $\beta$ that attains the same asymptotic variance as the ML estimate in the entire subcritical regime.  To motivate the construction of our estimate, first recall the definition of $\psi_N'$ from \eqref{eq:HNderivative}. By a first-order Taylor expansion (ignoring error terms), 
$$- \E_\beta [H_N(\bs)| A_{G_N}] = \psi_N'(\beta) \approx \psi_N'(0) + \beta \psi_N''(0) = \frac{1}{2N\theta_N} \sum_{i=1}^N A_{G_N}\left(i,i \right) + \frac{\beta}{2N^2 \theta_N^2}\sum_{1 \leq i \ne j \leq N} A_{G_N}(i,j) , 
$$
since $\psi_N'(0) = \frac{1}{2N\theta_N} \sum_{i=1}^N A_{G_N}\left(i,i \right) $ and $\psi_N''(0) = \mathrm{Var}_0 [H_N(\bs)| A_{G_N}] = \frac{1}{2N^2 \theta_N^2}\sum_{1 \leq i \ne j \leq N} A_{G_N}(i,j)$.  Now, ignoring the lower-order contributions from the diagonal terms $\sum_{i=1}^N A_{G_N}\left(i,i \right)$ and substituting the above approximation in the RHS of \eqref{eq:mleeq7} leads to the following simple estimate of $\beta$: 
\begin{align}\label{newestimate}
    \wbtu := - \frac{2N^2\theta_N^2}{\sum_{1 \leq i,j \leq N}A_{G_N}(i,j)} H_N(\bs). 
\end{align}

The following theorem shows that the above estimate has the same asymptotic distribution as the ML estimate throughout the subcritical regime $G_N$ is sparse (that is, $\theta_N \ll 1$). 
Recall (from Theorem \ref{mle} (2) and Remark \ref{estimation}) that consistent estimation of $\beta$ is only possible when $\theta_N \ll 1$. Hence, the asymptotic performance of \eqref{newestimate} matches that of the ML estimate, that is,  the asymptotic relative efficiency of $\wbtu$ with respect to the ML estimate $\hat{\beta}_N$ is 1, whenever consistent estimation is possible.

\begin{thm}\label{clttilde}
Suppose Assumption \ref{assumption} holds and $\theta_N \ll 1$. Then,
     \begin{align}\label{eq:cltestimate}
                      \frac{1}{\sqrt{\theta_N}}(\wbtu-\beta)\xrightarrow{D} \cN\left(0,\frac{2}{\int_{[0,1]^2} W(x,y)\mathrm{d}x\mathrm{d}y}\right).
     \end{align} 
\end{thm}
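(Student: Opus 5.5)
Since $\wbtu$ is the product of $-H_N(\bs)$ and a deterministic-looking normalization, the plan is to reduce Theorem \ref{clttilde} to a central limit theorem for the Hamiltonian under $\P_\beta$ in the sparse regime, together with a law of large numbers for the edge count. Write $E_N := \sum_{1\le i,j\le N} A_{G_N}(i,j)$ and $w := \int_{[0,1]^2} W(x,y)\,\mathrm dx\,\mathrm dy$. Then
\begin{align*}
\wbtu - \beta = \frac{2N^2\theta_N^2}{E_N}\Big(-H_N(\bs) - \frac{\beta E_N}{2N^2\theta_N^2}\Big).
\end{align*}
By Assumption \ref{assumption}(2) and a Chebyshev bound on independent Bernoulli sums, $E_N/(N^2\theta_N) \to w$ in probability. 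Indeed, $\mathrm{Var}(E_N)\lesssim N^2\theta_N$, so the relative error is $O_P((N^2\theta_N)^{-1/2}) = o(1)$. Hence $2N^2\theta_N^2/E_N = (2\theta_N/w)(1+o_P(1))$, and it suffices to prove
\begin{align*}
\frac{1}{\sqrt{\theta_N}}\Big(-\theta_N H_N(\bs) - \frac{\beta E_N}{2N^2\theta_N}\Big) \xrightarrow{D} \cN\Big(0, \frac{w}{2}\Big).
\end{align*}
Indeed, multiplying by $2/w$ then gives variance $\frac{4}{w^2}\cdot\frac{w}{2} = \frac{2}{w}$, as claimed.

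The key input is the fluctuation result for the Hamiltonian from Section \ref{sec:fluct_ham6}, which I would invoke in its sparse-regime form. I expect it to state that, jointly over the randomness of $G_N$ and $\bs$, $\sqrt{\theta_N}\,(-H_N(\bs) - \E_\beta[-H_N(\bs)\mid A_{G_N}])$ is asymptotically Gaussian with variance of order $w/2$. The conditional Gibbs fluctuation is of order $1$ in $H_N$, hence of order $\theta_N$ after the $\theta_N$ scaling, so it is negligible. The graph-randomness fluctuation, however, comes at scale $\sqrt{\theta_N}$. It is governed by the centred Bernoulli quadratic form $\frac{1}{N\theta_N}\sum_{i<j}(A_{G_N}(i,j)-\theta_N W)\sigma_i\sigma_j$, whose variance is $\approx \frac{1}{N^2\theta_N}\sum_{i<j}W \approx \frac{w}{2\theta_N}$.

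The main obstacle is to show that centring $H_N$ at $\beta E_N/(2N^2\theta_N^2)$ rather than at $\E_\beta[H_N\mid A_{G_N}]$ leaves exactly the right fluctuation. First, $\psi_N'(\beta) - \psi_N'(0) - \beta\psi_N''(0)$ should be $o_P(\theta_N^{-1/2})$; I would obtain this from the partition function expansion in Section \ref{sec:fluctpart78}. Second, the diagonal term $\frac{1}{2N\theta_N}\sum_i A_{G_N}(i,i)$ is $O_P(1)$, so after multiplying by $\theta_N/\sqrt{\theta_N}=\sqrt{\theta_N}$ it vanishes. The difficulty is that the first point is delicate: in the sparse limit $\psi_N'(\beta)$ contains $O(1)$ deterministic terms such as $\frac12\sum_\lambda \lambda(\frac{1}{1-\beta\lambda}-1)$, together with spectral corrections. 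All of these are $O(1)$, however, and hence $o(\theta_N^{-1/2})$ because $\theta_N\to 0$, which is precisely where the sparse assumption enters.

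The remaining care is in the Gaussian limit itself. I must check that the random centring $\beta E_N/(2N^2\theta_N^2)$ is correlated with the Hamiltonian fluctuation in such a way that the combined variance is $w/2$ and not something inflated by $\beta$. I plan to do this with a change of measure argument: under $\P_\beta$, $\sigma_i\sigma_j$ for a typical edge has mean $O(1/(N\theta_N))$, so the edge-indicator noise enters $-H_N$ with weights $\sigma_i\sigma_j$ that are nearly symmetric. The edge-noise term $\sum(A-\E A)\sigma_i\sigma_j$ is then asymptotically uncorrelated with $\sum (A-\E A)$, the $\beta$-dependent part of the centring's noise has smaller order, and the variance is $w/2$. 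I would confirm this against the limiting variance in the sparse-case result of Section \ref{sec:fluct_ham6} and then apply Slutsky's theorem.
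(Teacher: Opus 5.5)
Your outer reduction is exactly the paper's: the law of large numbers for $E_N$, the reduction to $\frac{1}{\sqrt{\theta_N}}\bigl(-\theta_N H_N-\frac{\beta E_N}{2N^2\theta_N}\bigr)\xrightarrow{D}\mathcal N(0,w/2)$, and then Slutsky. The middle of the proposal, however, misreads the key input and misplaces the difficulty. Theorem \ref{thm:HNsigma}(1) centres $\theta_N H_N$ at the \emph{deterministic} quantity $-\frac{\beta}{2N^2}\sum_{i,j}W(\frac iN,\frac jN)$, not at $\theta_N\mathbb{E}_\beta[H_N\mid A_{G_N}]$. Note that $\mathbb{E}E_N=\theta_N\sum_{i,j}W(\frac iN,\frac jN)$ exactly and $\mathrm{Var}(E_N)\lesssim N^2\theta_N$. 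Hence the gap between your random centring and the theorem's is
\[
D_N=\frac{\beta}{2N^2\theta_N^{3/2}}\bigl(E_N-\mathbb{E}E_N\bigr)=O_P\Bigl(\frac{1}{N\theta_N}\Bigr)=o_P(1).
\]
This is the only step you need to add, and it is what the paper does. Because the noise in the centring is negligible in size, you need no correlation computation, no change of measure, and no expansion of $\psi_N'(\beta)$.

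The heuristic in your second paragraph is also false. Conditionally on $A_{G_N}$, $\mathrm{Var}_\beta(H_N\mid A_{G_N})=\psi_N''(\beta)$, and already $\psi_N''(0)=\frac{1}{2N^2\theta_N^2}\sum_{i\ne j}A_{G_N}(i,j)\approx\frac{w}{2\theta_N}$. So the Gibbs fluctuation of $\theta_N H_N$ is of order $\sqrt{\theta_N}$, not $\theta_N$. Indeed, with $S_N=-H_N-\frac{\beta}{2N^2\theta_N}\sum_{i,j}W(\frac iN,\frac jN)$, the paper shows $\mathbb{E}[e^{t\sqrt{\theta_N}S_N}\mid A_{G_N}]\to e^{t^2w/4}$ in probability. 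The entire $\mathcal N(0,w/2)$ limit is therefore Gibbs fluctuation given the graph, and graph randomness enters only through the centring, at the negligible order above. Your heuristic also contradicts your own stated key input: if the Gibbs fluctuation were $O(1)$, then $\sqrt{\theta_N}(-H_N-\mathbb{E}_\beta[-H_N\mid A_{G_N}])$ would tend to $0$.

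Finally, even if the conditional centring were needed, your plan to get $\psi_N'(\beta)-\psi_N'(0)-\beta\psi_N''(0)=o_P(\theta_N^{-1/2})$ ``from the partition function expansion'' is unsupported as stated. Theorem \ref{thm:ZN} gives weak limits of $\log Z_N(\beta)$ at each fixed $\beta$, and these do not by themselves control the derivative. For $\theta_N\asymp N^{-2/3}$, $\log Z_N$ even has non-vanishing random fluctuations. The paper extracts this information through the ratio $Z_N(\beta+t\sqrt{\theta_N})/Z_N(\beta)$, using Lemmas \ref{var_sum}, \ref{lem:linearization3}, \ref{thetanperturb6} and \ref{l2ratio762}, which is precisely the proof of Theorem \ref{thm:HNsigma}(1). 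So cite that theorem as stated and add the $D_N$ bound.
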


The proof of Theorem \ref{clttilde} is given in Appendix \ref{sec:clttildepf}. The key step in the proof is the following asymptotic expansion of the sufficient statistic $H_N(\bs)$, for $\beta \in (0, \frac{1}{\|W\|_{\mathrm{op}}})$ and $\theta_N \ll 1$: 
   \begin{equation*} 
     \theta_N H_N(\bs) = - \frac{\beta}{2N^2 }\sum_{1 \leq i,j \leq N}W\left(\frac{i}{N},\frac{j}{N}\right) + O_P ( \sqrt{\theta_N} ) , 
              \end{equation*}
which we prove in Theorem \ref{thm:HNsigma} (where Gaussian fluctuations of $H_N(\bs)$ are established). Consequently, recalling \eqref{newestimate} and replacing $\sum_{1 \leq i,j \leq N} W(\frac{i}{N},\frac{j}{N})$ with its empirical analogue based on $G_N$, one can obtain the result in Theorem \ref{clttilde}.

To illustrate the finite-sample performance of $\wbtu$, we compare its simulated histogram with the limiting normal density from Theorem \ref{clttilde} for the following choices of $W$:
\begin{itemize}

\item[(a)] $W \equiv 1$ (which corresponds to the Erd\H{o}s-R\'enyi model), 

\item[(b)] $W(x,y) = \sin(|x-y|)$, 

\item[(c)] $W$ is the graphon corresponding to the random bipartite graph (which corresponds to taking $p=0$ and $q=1$ in \eqref{eq:Wpq}),  

\item[(d)] the rank one graphon $W(x,y) = x y$. 
\end{itemize} 
Note that for (a) and (d), $\|W\|_{\mathrm{op}} = 1$, for (b) $\|W\|_{\mathrm{op}} \le \|W\|_2 \leq 1$, and for (c) $\|W\|_{\mathrm{op}} = \frac{1}{2}$.  Figure \ref{fig:beta_tilde_comparisonI} shows the histogram of $\wbtu$ (computed over 1000 iterations) when $G_N \sim G(N, \theta_N, W)$ for the above four choices of $W$, with $N = 500$ and $\theta_N = N^{-0.6}$. The value $\beta$ is chosen to be $0.5$ for (a), (b), and (d) and equal to 0.25 for (c). This ensures the models are in the subcritical regime in all the four cases. The limiting normal densities as predicted by Theorem \ref{clttilde} are overlaid in red in the figures. We observe that, in all four settings, the empirical distributions align closely with the asymptotic theory.

\begin{figure}[htbp]
   \centering
    \begin{subfigure}[b]{0.48\textwidth}
        \centering
        \resizebox{\linewidth}{!}{
\begin{tikzpicture}[x = 1pt, y = 1pt]
\definecolor{fillColor}{RGB}{255,255,255}
\path[use as bounding box,fill=fillColor,fill opacity=0.00] (0,0) rectangle (252.94,252.94);
\begin{scope}
\path[clip] ( 34.16, 30.69) rectangle (247.44,230.29);
\definecolor{drawColor}{gray}{0.92}

\path[draw=drawColor,line width= 0.3pt,line join=round] ( 34.16, 71.93) --
	(247.44, 71.93);

\path[draw=drawColor,line width= 0.3pt,line join=round] ( 34.16,136.28) --
	(247.44,136.28);

\path[draw=drawColor,line width= 0.3pt,line join=round] ( 34.16,200.63) --
	(247.44,200.63);

\path[draw=drawColor,line width= 0.3pt,line join=round] ( 53.22, 30.69) --
	( 53.22,230.29);

\path[draw=drawColor,line width= 0.3pt,line join=round] (102.99, 30.69) --
	(102.99,230.29);

\path[draw=drawColor,line width= 0.3pt,line join=round] (152.76, 30.69) --
	(152.76,230.29);

\path[draw=drawColor,line width= 0.3pt,line join=round] (202.53, 30.69) --
	(202.53,230.29);

\path[draw=drawColor,line width= 0.6pt,line join=round] ( 34.16, 39.76) --
	(247.44, 39.76);

\path[draw=drawColor,line width= 0.6pt,line join=round] ( 34.16,104.11) --
	(247.44,104.11);

\path[draw=drawColor,line width= 0.6pt,line join=round] ( 34.16,168.45) --
	(247.44,168.45);

\path[draw=drawColor,line width= 0.6pt,line join=round] ( 78.11, 30.69) --
	( 78.11,230.29);

\path[draw=drawColor,line width= 0.6pt,line join=round] (127.87, 30.69) --
	(127.87,230.29);

\path[draw=drawColor,line width= 0.6pt,line join=round] (177.64, 30.69) --
	(177.64,230.29);

\path[draw=drawColor,line width= 0.6pt,line join=round] (227.41, 30.69) --
	(227.41,230.29);
\definecolor{drawColor}{RGB}{0,0,0}
\definecolor{fillColor}{RGB}{173,216,230}

\path[draw=drawColor,line width= 0.6pt,fill=fillColor,fill opacity=0.70] ( 43.85, 39.76) rectangle ( 56.78, 47.69);

\path[draw=drawColor,line width= 0.6pt,fill=fillColor,fill opacity=0.70] ( 56.78, 39.76) rectangle ( 69.70, 52.64);

\path[draw=drawColor,line width= 0.6pt,fill=fillColor,fill opacity=0.70] ( 69.70, 39.76) rectangle ( 82.63, 77.41);

\path[draw=drawColor,line width= 0.6pt,fill=fillColor,fill opacity=0.70] ( 82.63, 39.76) rectangle ( 95.56, 97.23);

\path[draw=drawColor,line width= 0.6pt,fill=fillColor,fill opacity=0.70] ( 95.56, 39.76) rectangle (108.48,140.84);

\path[draw=drawColor,line width= 0.6pt,fill=fillColor,fill opacity=0.70] (108.48, 39.76) rectangle (121.41,195.34);

\path[draw=drawColor,line width= 0.6pt,fill=fillColor,fill opacity=0.70] (121.41, 39.76) rectangle (134.34,207.23);

\path[draw=drawColor,line width= 0.6pt,fill=fillColor,fill opacity=0.70] (134.34, 39.76) rectangle (147.26,216.15);

\path[draw=drawColor,line width= 0.6pt,fill=fillColor,fill opacity=0.70] (147.26, 39.76) rectangle (160.19,168.58);

\path[draw=drawColor,line width= 0.6pt,fill=fillColor,fill opacity=0.70] (160.19, 39.76) rectangle (173.12,121.02);

\path[draw=drawColor,line width= 0.6pt,fill=fillColor,fill opacity=0.70] (173.12, 39.76) rectangle (186.04, 74.44);

\path[draw=drawColor,line width= 0.6pt,fill=fillColor,fill opacity=0.70] (186.04, 39.76) rectangle (198.97, 58.59);

\path[draw=drawColor,line width= 0.6pt,fill=fillColor,fill opacity=0.70] (198.97, 39.76) rectangle (211.90, 47.69);

\path[draw=drawColor,line width= 0.6pt,fill=fillColor,fill opacity=0.70] (211.90, 39.76) rectangle (224.82, 41.74);

\path[draw=drawColor,line width= 0.6pt,fill=fillColor,fill opacity=0.70] (224.82, 39.76) rectangle (237.75, 40.75);
\definecolor{drawColor}{RGB}{255,0,0}

\path[draw=drawColor,line width= 1.4pt,line join=round] ( 45.37, 42.24) --
	( 47.18, 42.74) --
	( 48.99, 43.34) --
	( 50.80, 44.04) --
	( 52.61, 44.85) --
	( 54.42, 45.79) --
	( 56.23, 46.88) --
	( 58.04, 48.13) --
	( 59.85, 49.56) --
	( 61.66, 51.18) --
	( 63.47, 53.01) --
	( 65.28, 55.08) --
	( 67.09, 57.40) --
	( 68.90, 59.99) --
	( 70.71, 62.85) --
	( 72.51, 66.02) --
	( 74.32, 69.50) --
	( 76.13, 73.29) --
	( 77.94, 77.42) --
	( 79.75, 81.88) --
	( 81.56, 86.68) --
	( 83.37, 91.80) --
	( 85.18, 97.25) --
	( 86.99,103.01) --
	( 88.80,109.05) --
	( 90.61,115.36) --
	( 92.42,121.90) --
	( 94.23,128.65) --
	( 96.04,135.54) --
	( 97.85,142.55) --
	( 99.66,149.62) --
	(101.47,156.69) --
	(103.28,163.70) --
	(105.09,170.59) --
	(106.90,177.29) --
	(108.71,183.74) --
	(110.52,189.87) --
	(112.33,195.61) --
	(114.14,200.91) --
	(115.95,205.70) --
	(117.76,209.93) --
	(119.57,213.55) --
	(121.38,216.51) --
	(123.19,218.78) --
	(125.00,220.33) --
	(126.81,221.15) --
	(128.62,221.21) --
	(130.43,220.53) --
	(132.24,219.11) --
	(134.05,216.97) --
	(135.86,214.13) --
	(137.66,210.63) --
	(139.47,206.50) --
	(141.28,201.81) --
	(143.09,196.60) --
	(144.90,190.93) --
	(146.71,184.86) --
	(148.52,178.47) --
	(150.33,171.81) --
	(152.14,164.95) --
	(153.95,157.95) --
	(155.76,150.89) --
	(157.57,143.82) --
	(159.38,136.80) --
	(161.19,129.87) --
	(163.00,123.10) --
	(164.81,116.52) --
	(166.62,110.17) --
	(168.43,104.07) --
	(170.24, 98.26) --
	(172.05, 92.76) --
	(173.86, 87.57) --
	(175.67, 82.72) --
	(177.48, 78.20) --
	(179.29, 74.01) --
	(181.10, 70.15) --
	(182.91, 66.62) --
	(184.72, 63.40) --
	(186.53, 60.48) --
	(188.34, 57.84) --
	(190.15, 55.48) --
	(191.96, 53.37) --
	(193.77, 51.49) --
	(195.58, 49.83) --
	(197.39, 48.37) --
	(199.20, 47.09) --
	(201.01, 45.98) --
	(202.81, 45.01) --
	(204.62, 44.17) --
	(206.43, 43.46) --
	(208.24, 42.84) --
	(210.05, 42.32) --
	(211.86, 41.88) --
	(213.67, 41.50) --
	(215.48, 41.19) --
	(217.29, 40.93) --
	(219.10, 40.71) --
	(220.91, 40.53) --
	(222.72, 40.38) --
	(224.53, 40.26) --
	(226.34, 40.16);
\end{scope}
\begin{scope}
\path[clip] (  0.00,  0.00) rectangle (252.94,252.94);
\definecolor{drawColor}{gray}{0.30}

\node[text=drawColor,anchor=base east,inner sep=0pt, outer sep=0pt, scale=  0.88] at ( 29.21, 36.73) {0.0};

\node[text=drawColor,anchor=base east,inner sep=0pt, outer sep=0pt, scale=  0.88] at ( 29.21,101.08) {0.1};

\node[text=drawColor,anchor=base east,inner sep=0pt, outer sep=0pt, scale=  0.88] at ( 29.21,165.42) {0.2};
\end{scope}
\begin{scope}
\path[clip] (  0.00,  0.00) rectangle (252.94,252.94);
\definecolor{drawColor}{gray}{0.30}

\node[text=drawColor,anchor=base,inner sep=0pt, outer sep=0pt, scale=  0.88] at ( 78.11, 25.68) {-2.5};

\node[text=drawColor,anchor=base,inner sep=0pt, outer sep=0pt, scale=  0.88] at (127.87, 25.68) {0.0};

\node[text=drawColor,anchor=base,inner sep=0pt, outer sep=0pt, scale=  0.88] at (177.64, 25.68) {2.5};

\node[text=drawColor,anchor=base,inner sep=0pt, outer sep=0pt, scale=  0.88] at (227.41, 25.68) {5.0};
\end{scope}

\begin{scope}
\path[clip] (  0.00,  0.00) rectangle (252.94,252.94);
\definecolor{drawColor}{RGB}{0,0,0}

\node[text=drawColor,rotate= 90.00,anchor=base,inner sep=0pt, outer sep=0pt, scale=  1.10] at ( 13.08,130.49) {Density};
\end{scope}
\begin{scope}
\path[clip] (  0.00,  0.00) rectangle (252.94,252.94);
\definecolor{drawColor}{RGB}{0,0,0}

\node[text=drawColor,anchor=base,inner sep=0pt, outer sep=0pt, scale=  1.32] at (140.80,238.35) {\small{Sparse Erd\H{o}s-R\'enyi}};
\end{scope}
\end{tikzpicture}}
        \small{(a)}
    \end{subfigure}
    \hfill
    \begin{subfigure}[b]{0.48\textwidth}
        \centering
        \resizebox{\linewidth}{!}{
\begin{tikzpicture}[x=1pt,y=1pt]
\definecolor{fillColor}{RGB}{255,255,255}
\path[use as bounding box,fill=fillColor,fill opacity=0.00] (0,0) rectangle (252.94,252.94);
\begin{scope}
\path[clip] ( 38.56, 30.69) rectangle (247.44,230.29);
\definecolor{drawColor}{gray}{0.92}

\path[draw=drawColor,line width= 0.3pt,line join=round] ( 38.56, 66.74) --
	(247.44, 66.74);

\path[draw=drawColor,line width= 0.3pt,line join=round] ( 38.56,120.69) --
	(247.44,120.69);

\path[draw=drawColor,line width= 0.3pt,line join=round] ( 38.56,174.64) --
	(247.44,174.64);

\path[draw=drawColor,line width= 0.3pt,line join=round] ( 38.56,228.60) --
	(247.44,228.60);

\path[draw=drawColor,line width= 0.3pt,line join=round] ( 60.70, 30.69) --
	( 60.70,230.29);

\path[draw=drawColor,line width= 0.3pt,line join=round] (115.57, 30.69) --
	(115.57,230.29);

\path[draw=drawColor,line width= 0.3pt,line join=round] (170.43, 30.69) --
	(170.43,230.29);

\path[draw=drawColor,line width= 0.3pt,line join=round] (225.30, 30.69) --
	(225.30,230.29);

\path[draw=drawColor,line width= 0.6pt,line join=round] ( 38.56, 39.76) --
	(247.44, 39.76);

\path[draw=drawColor,line width= 0.6pt,line join=round] ( 38.56, 93.71) --
	(247.44, 93.71);

\path[draw=drawColor,line width= 0.6pt,line join=round] ( 38.56,147.67) --
	(247.44,147.67);

\path[draw=drawColor,line width= 0.6pt,line join=round] ( 38.56,201.62) --
	(247.44,201.62);

\path[draw=drawColor,line width= 0.6pt,line join=round] ( 88.13, 30.69) --
	( 88.13,230.29);

\path[draw=drawColor,line width= 0.6pt,line join=round] (143.00, 30.69) --
	(143.00,230.29);

\path[draw=drawColor,line width= 0.6pt,line join=round] (197.87, 30.69) --
	(197.87,230.29);
\definecolor{drawColor}{RGB}{0,0,0}
\definecolor{fillColor}{RGB}{173,216,230}

\path[draw=drawColor,line width= 0.6pt,fill=fillColor,fill opacity=0.70] ( 48.05, 39.76) rectangle ( 60.71, 41.63);

\path[draw=drawColor,line width= 0.6pt,fill=fillColor,fill opacity=0.70] ( 60.71, 39.76) rectangle ( 73.37, 48.18);

\path[draw=drawColor,line width= 0.6pt,fill=fillColor,fill opacity=0.70] ( 73.37, 39.76) rectangle ( 86.03, 53.79);

\path[draw=drawColor,line width= 0.6pt,fill=fillColor,fill opacity=0.70] ( 86.03, 39.76) rectangle ( 98.69, 78.11);

\path[draw=drawColor,line width= 0.6pt,fill=fillColor,fill opacity=0.70] ( 98.69, 39.76) rectangle (111.35,105.23);

\path[draw=drawColor,line width= 0.6pt,fill=fillColor,fill opacity=0.70] (111.35, 39.76) rectangle (124.01,156.68);

\path[draw=drawColor,line width= 0.6pt,fill=fillColor,fill opacity=0.70] (124.01, 39.76) rectangle (136.67,186.61);

\path[draw=drawColor,line width= 0.6pt,fill=fillColor,fill opacity=0.70] (136.67, 39.76) rectangle (149.33,221.21);

\path[draw=drawColor,line width= 0.6pt,fill=fillColor,fill opacity=0.70] (149.33, 39.76) rectangle (161.99,181.93);

\path[draw=drawColor,line width= 0.6pt,fill=fillColor,fill opacity=0.70] (161.99, 39.76) rectangle (174.65,153.87);

\path[draw=drawColor,line width= 0.6pt,fill=fillColor,fill opacity=0.70] (174.65, 39.76) rectangle (187.31,104.30);

\path[draw=drawColor,line width= 0.6pt,fill=fillColor,fill opacity=0.70] (187.31, 39.76) rectangle (199.97, 67.82);

\path[draw=drawColor,line width= 0.6pt,fill=fillColor,fill opacity=0.70] (199.97, 39.76) rectangle (212.63, 48.18);

\path[draw=drawColor,line width= 0.6pt,fill=fillColor,fill opacity=0.70] (212.63, 39.76) rectangle (225.29, 40.69);

\path[draw=drawColor,line width= 0.6pt,fill=fillColor,fill opacity=0.70] (225.29, 39.76) rectangle (237.95, 43.50);
\definecolor{drawColor}{RGB}{255,0,0}

\path[draw=drawColor,line width= 1.4pt,line join=round] ( 57.78, 41.20) --
	( 59.56, 41.51) --
	( 61.33, 41.88) --
	( 63.10, 42.32) --
	( 64.87, 42.84) --
	( 66.65, 43.45) --
	( 68.42, 44.16) --
	( 70.19, 44.99) --
	( 71.96, 45.95) --
	( 73.74, 47.05) --
	( 75.51, 48.31) --
	( 77.28, 49.74) --
	( 79.05, 51.37) --
	( 80.83, 53.22) --
	( 82.60, 55.29) --
	( 84.37, 57.60) --
	( 86.14, 60.17) --
	( 87.92, 63.02) --
	( 89.69, 66.15) --
	( 91.46, 69.59) --
	( 93.23, 73.33) --
	( 95.01, 77.39) --
	( 96.78, 81.76) --
	( 98.55, 86.45) --
	(100.32, 91.44) --
	(102.09, 96.74) --
	(103.87,102.31) --
	(105.64,108.15) --
	(107.41,114.23) --
	(109.18,120.51) --
	(110.96,126.96) --
	(112.73,133.53) --
	(114.50,140.19) --
	(116.27,146.87) --
	(118.05,153.53) --
	(119.82,160.10) --
	(121.59,166.52) --
	(123.36,172.74) --
	(125.14,178.69) --
	(126.91,184.30) --
	(128.68,189.52) --
	(130.45,194.29) --
	(132.23,198.55) --
	(134.00,202.26) --
	(135.77,205.37) --
	(137.54,207.84) --
	(139.32,209.64) --
	(141.09,210.75) --
	(142.86,211.16) --
	(144.63,210.86) --
	(146.40,209.86) --
	(148.18,208.16) --
	(149.95,205.80) --
	(151.72,202.79) --
	(153.49,199.18) --
	(155.27,195.00) --
	(157.04,190.31) --
	(158.81,185.15) --
	(160.58,179.60) --
	(162.36,173.70) --
	(164.13,167.52) --
	(165.90,161.12) --
	(167.67,154.57) --
	(169.45,147.92) --
	(171.22,141.24) --
	(172.99,134.58) --
	(174.76,127.99) --
	(176.54,121.51) --
	(178.31,115.20) --
	(180.08,109.09) --
	(181.85,103.22) --
	(183.63, 97.60) --
	(185.40, 92.26) --
	(187.17, 87.21) --
	(188.94, 82.48) --
	(190.71, 78.06) --
	(192.49, 73.95) --
	(194.26, 70.16) --
	(196.03, 66.68) --
	(197.80, 63.49) --
	(199.58, 60.60) --
	(201.35, 57.99) --
	(203.12, 55.63) --
	(204.89, 53.53) --
	(206.67, 51.65) --
	(208.44, 49.99) --
	(210.21, 48.52) --
	(211.98, 47.23) --
	(213.76, 46.11) --
	(215.53, 45.13) --
	(217.30, 44.29) --
	(219.07, 43.56) --
	(220.85, 42.93) --
	(222.62, 42.40) --
	(224.39, 41.95) --
	(226.16, 41.57) --
	(227.94, 41.24) --
	(229.71, 40.97) --
	(231.48, 40.75) --
	(233.25, 40.56) --
	(235.02, 40.41);
\end{scope}
\begin{scope}
\path[clip] (  0.00,  0.00) rectangle (252.94,252.94);
\definecolor{drawColor}{gray}{0.30}

\node[text=drawColor,anchor=base east,inner sep=0pt, outer sep=0pt, scale=  0.88] at ( 33.61, 36.73) {0.00};

\node[text=drawColor,anchor=base east,inner sep=0pt, outer sep=0pt, scale=  0.88] at ( 33.61, 90.68) {0.05};

\node[text=drawColor,anchor=base east,inner sep=0pt, outer sep=0pt, scale=  0.88] at ( 33.61,144.64) {0.10};

\node[text=drawColor,anchor=base east,inner sep=0pt, outer sep=0pt, scale=  0.88] at ( 33.61,198.59) {0.15};
\end{scope}
\begin{scope}
\path[clip] (  0.00,  0.00) rectangle (252.94,252.94);
\definecolor{drawColor}{gray}{0.30}

\node[text=drawColor,anchor=base,inner sep=0pt, outer sep=0pt, scale=  0.88] at ( 88.13, 19.68) {-5};

\node[text=drawColor,anchor=base,inner sep=0pt, outer sep=0pt, scale=  0.88] at (143.00, 19.68) {0};

\node[text=drawColor,anchor=base,inner sep=0pt, outer sep=0pt, scale=  0.88] at (197.87, 19.68) {5};
\end{scope}

\begin{scope}
\path[clip] (  0.00,  0.00) rectangle (252.94,252.94);
\definecolor{drawColor}{RGB}{0,0,0}

\node[text=drawColor,rotate= 90.00,anchor=base,inner sep=0pt, outer sep=0pt, scale=  1.10] at ( 13.08,130.49) {Density};
\end{scope}
\begin{scope}
\path[clip] (  0.00,  0.00) rectangle (252.94,252.94);
\definecolor{drawColor}{RGB}{0,0,0}

\node[text=drawColor,anchor=base,inner sep=0pt, outer sep=0pt, scale=  1.32] at (143.00,238.35) {\small{$W(x,y)=\sin(|x-y|)$}};
\end{scope}
\end{tikzpicture}}
        \small{(b)}
    \end{subfigure}
        \hfill
    \begin{subfigure}[b]{0.48\textwidth}
        \centering
        \resizebox{\linewidth}{!}
        {
\begin{tikzpicture}[x=1pt,y=1pt]
\definecolor{fillColor}{RGB}{255,255,255}
\path[use as bounding box,fill=fillColor,fill opacity=0.00] (0,0) rectangle (252.94,252.94);
\begin{scope}
\path[clip] ( 38.56, 30.69) rectangle (247.44,230.29);
\definecolor{drawColor}{gray}{0.92}

\path[draw=drawColor,line width= 0.3pt,line join=round] ( 38.56, 61.98) --
	(247.44, 61.98);

\path[draw=drawColor,line width= 0.3pt,line join=round] ( 38.56,106.41) --
	(247.44,106.41);

\path[draw=drawColor,line width= 0.3pt,line join=round] ( 38.56,150.85) --
	(247.44,150.85);

\path[draw=drawColor,line width= 0.3pt,line join=round] ( 38.56,195.29) --
	(247.44,195.29);

\path[draw=drawColor,line width= 0.3pt,line join=round] ( 74.88, 30.69) --
	( 74.88,230.29);

\path[draw=drawColor,line width= 0.3pt,line join=round] (128.73, 30.69) --
	(128.73,230.29);

\path[draw=drawColor,line width= 0.3pt,line join=round] (182.59, 30.69) --
	(182.59,230.29);

\path[draw=drawColor,line width= 0.3pt,line join=round] (236.44, 30.69) --
	(236.44,230.29);

\path[draw=drawColor,line width= 0.6pt,line join=round] ( 38.56, 39.76) --
	(247.44, 39.76);

\path[draw=drawColor,line width= 0.6pt,line join=round] ( 38.56, 84.20) --
	(247.44, 84.20);

\path[draw=drawColor,line width= 0.6pt,line join=round] ( 38.56,128.63) --
	(247.44,128.63);

\path[draw=drawColor,line width= 0.6pt,line join=round] ( 38.56,173.07) --
	(247.44,173.07);

\path[draw=drawColor,line width= 0.6pt,line join=round] ( 38.56,217.51) --
	(247.44,217.51);

\path[draw=drawColor,line width= 0.6pt,line join=round] ( 47.96, 30.69) --
	( 47.96,230.29);

\path[draw=drawColor,line width= 0.6pt,line join=round] (101.81, 30.69) --
	(101.81,230.29);

\path[draw=drawColor,line width= 0.6pt,line join=round] (155.66, 30.69) --
	(155.66,230.29);

\path[draw=drawColor,line width= 0.6pt,line join=round] (209.51, 30.69) --
	(209.51,230.29);
\definecolor{drawColor}{RGB}{0,0,0}
\definecolor{fillColor}{RGB}{173,216,230}

\path[draw=drawColor,line width= 0.6pt,fill=fillColor,fill opacity=0.70] ( 48.05, 39.76) rectangle ( 60.71, 40.70);

\path[draw=drawColor,line width= 0.6pt,fill=fillColor,fill opacity=0.70] ( 60.71, 39.76) rectangle ( 73.37, 41.65);

\path[draw=drawColor,line width= 0.6pt,fill=fillColor,fill opacity=0.70] ( 73.37, 39.76) rectangle ( 86.03, 41.65);

\path[draw=drawColor,line width= 0.6pt,fill=fillColor,fill opacity=0.70] ( 86.03, 39.76) rectangle ( 98.69, 50.15);

\path[draw=drawColor,line width= 0.6pt,fill=fillColor,fill opacity=0.70] ( 98.69, 39.76) rectangle (111.35, 73.78);

\path[draw=drawColor,line width= 0.6pt,fill=fillColor,fill opacity=0.70] (111.35, 39.76) rectangle (124.01, 98.35);

\path[draw=drawColor,line width= 0.6pt,fill=fillColor,fill opacity=0.70] (124.01, 39.76) rectangle (136.67,139.94);

\path[draw=drawColor,line width= 0.6pt,fill=fillColor,fill opacity=0.70] (136.67, 39.76) rectangle (149.33,192.86);

\path[draw=drawColor,line width= 0.6pt,fill=fillColor,fill opacity=0.70] (149.33, 39.76) rectangle (161.99,221.21);

\path[draw=drawColor,line width= 0.6pt,fill=fillColor,fill opacity=0.70] (161.99, 39.76) rectangle (174.65,191.92);

\path[draw=drawColor,line width= 0.6pt,fill=fillColor,fill opacity=0.70] (174.65, 39.76) rectangle (187.31,161.67);

\path[draw=drawColor,line width= 0.6pt,fill=fillColor,fill opacity=0.70] (187.31, 39.76) rectangle (199.97,117.26);

\path[draw=drawColor,line width= 0.6pt,fill=fillColor,fill opacity=0.70] (199.97, 39.76) rectangle (212.63, 73.78);

\path[draw=drawColor,line width= 0.6pt,fill=fillColor,fill opacity=0.70] (212.63, 39.76) rectangle (225.29, 50.15);

\path[draw=drawColor,line width= 0.6pt,fill=fillColor,fill opacity=0.70] (225.29, 39.76) rectangle (237.95, 46.37);
\definecolor{drawColor}{RGB}{255,0,0}

\path[draw=drawColor,line width= 1.4pt,line join=round] ( 57.41, 39.99) --
	( 59.18, 40.05) --
	( 60.95, 40.12) --
	( 62.73, 40.22) --
	( 64.50, 40.33) --
	( 66.27, 40.48) --
	( 68.04, 40.65) --
	( 69.82, 40.86) --
	( 71.59, 41.11) --
	( 73.36, 41.42) --
	( 75.13, 41.78) --
	( 76.91, 42.22) --
	( 78.68, 42.73) --
	( 80.45, 43.34) --
	( 82.22, 44.06) --
	( 84.00, 44.89) --
	( 85.77, 45.86) --
	( 87.54, 46.98) --
	( 89.31, 48.27) --
	( 91.09, 49.75) --
	( 92.86, 51.43) --
	( 94.63, 53.34) --
	( 96.40, 55.49) --
	( 98.17, 57.91) --
	( 99.95, 60.60) --
	(101.72, 63.59) --
	(103.49, 66.89) --
	(105.26, 70.52) --
	(107.04, 74.47) --
	(108.81, 78.77) --
	(110.58, 83.41) --
	(112.35, 88.39) --
	(114.13, 93.70) --
	(115.90, 99.34) --
	(117.67,105.28) --
	(119.44,111.50) --
	(121.22,117.98) --
	(122.99,124.66) --
	(124.76,131.52) --
	(126.53,138.51) --
	(128.31,145.57) --
	(130.08,152.64) --
	(131.85,159.67) --
	(133.62,166.58) --
	(135.40,173.31) --
	(137.17,179.79) --
	(138.94,185.95) --
	(140.71,191.72) --
	(142.48,197.03) --
	(144.26,201.83) --
	(146.03,206.05) --
	(147.80,209.65) --
	(149.57,212.57) --
	(151.35,214.78) --
	(153.12,216.25) --
	(154.89,216.96) --
	(156.66,216.91) --
	(158.44,216.10) --
	(160.21,214.52) --
	(161.98,212.22) --
	(163.75,209.21) --
	(165.53,205.53) --
	(167.30,201.23) --
	(169.07,196.36) --
	(170.84,190.98) --
	(172.62,185.15) --
	(174.39,178.95) --
	(176.16,172.43) --
	(177.93,165.67) --
	(179.71,158.74) --
	(181.48,151.70) --
	(183.25,144.63) --
	(185.02,137.58) --
	(186.79,130.60) --
	(188.57,123.76) --
	(190.34,117.10) --
	(192.11,110.66) --
	(193.88,104.48) --
	(195.66, 98.57) --
	(197.43, 92.98) --
	(199.20, 87.71) --
	(200.97, 82.77) --
	(202.75, 78.18) --
	(204.52, 73.93) --
	(206.29, 70.02) --
	(208.06, 66.43) --
	(209.84, 63.18) --
	(211.61, 60.23) --
	(213.38, 57.57) --
	(215.15, 55.19) --
	(216.93, 53.08) --
	(218.70, 51.20) --
	(220.47, 49.54) --
	(222.24, 48.09) --
	(224.02, 46.82) --
	(225.79, 45.72) --
	(227.56, 44.77) --
	(229.33, 43.96) --
	(231.10, 43.26) --
	(232.88, 42.66) --
	(234.65, 42.16);
\end{scope}
\begin{scope}
\path[clip] (  0.00,  0.00) rectangle (252.94,252.94);
\definecolor{drawColor}{gray}{0.30}

\node[text=drawColor,anchor=base east,inner sep=0pt, outer sep=0pt, scale=  0.88] at ( 33.61, 36.73) {0.00};

\node[text=drawColor,anchor=base east,inner sep=0pt, outer sep=0pt, scale=  0.88] at ( 33.61, 81.17) {0.05};

\node[text=drawColor,anchor=base east,inner sep=0pt, outer sep=0pt, scale=  0.88] at ( 33.61,125.60) {0.10};

\node[text=drawColor,anchor=base east,inner sep=0pt, outer sep=0pt, scale=  0.88] at ( 33.61,170.04) {0.15};

\node[text=drawColor,anchor=base east,inner sep=0pt, outer sep=0pt, scale=  0.88] at ( 33.61,214.48) {0.20};
\end{scope}
\begin{scope}
\path[clip] (  0.00,  0.00) rectangle (252.94,252.94);
\definecolor{drawColor}{gray}{0.30}

\node[text=drawColor,anchor=base,inner sep=0pt, outer sep=0pt, scale=  0.88] at ( 47.96, 19.68) {-8};

\node[text=drawColor,anchor=base,inner sep=0pt, outer sep=0pt, scale=  0.88] at (101.81, 19.68) {-4};

\node[text=drawColor,anchor=base,inner sep=0pt, outer sep=0pt, scale=  0.88] at (155.66, 19.68) {0};

\node[text=drawColor,anchor=base,inner sep=0pt, outer sep=0pt, scale=  0.88] at (209.51, 19.68) {4};
\end{scope}

\begin{scope}
\path[clip] (  0.00,  0.00) rectangle (252.94,252.94);
\definecolor{drawColor}{RGB}{0,0,0}

\node[text=drawColor,rotate= 90.00,anchor=base,inner sep=0pt, outer sep=0pt, scale=  1.10] at ( 13.08,130.49) {Density};
\end{scope}
\begin{scope}
\path[clip] (  0.00,  0.00) rectangle (252.94,252.94);
\definecolor{drawColor}{RGB}{0,0,0}

\node[text=drawColor,anchor=base,inner sep=0pt, outer sep=0pt, scale=  1.32] at (143.00,238.35) {\small{Random bipartite}};
\end{scope}
\end{tikzpicture}}
        \small{(c)}
    \end{subfigure}
        \hfill
    \begin{subfigure}[b]{0.48\textwidth}
        \centering
        \resizebox{\linewidth}{!}
        {
\begin{tikzpicture}[x=1pt,y=1pt]
\definecolor{fillColor}{RGB}{255,255,255}
\path[use as bounding box,fill=fillColor,fill opacity=0.00] (0,0) rectangle (252.94,252.94);
\begin{scope}
\path[clip] ( 38.56, 30.69) rectangle (247.44,230.29);
\definecolor{drawColor}{gray}{0.92}

\path[draw=drawColor,line width= 0.3pt,line join=round] ( 38.56, 71.92) --
	(247.44, 71.92);

\path[draw=drawColor,line width= 0.3pt,line join=round] ( 38.56,136.25) --
	(247.44,136.25);

\path[draw=drawColor,line width= 0.3pt,line join=round] ( 38.56,200.58) --
	(247.44,200.58);

\path[draw=drawColor,line width= 0.3pt,line join=round] ( 60.38, 30.69) --
	( 60.38,230.29);

\path[draw=drawColor,line width= 0.3pt,line join=round] (115.46, 30.69) --
	(115.46,230.29);

\path[draw=drawColor,line width= 0.3pt,line join=round] (170.54, 30.69) --
	(170.54,230.29);

\path[draw=drawColor,line width= 0.3pt,line join=round] (225.62, 30.69) --
	(225.62,230.29);

\path[draw=drawColor,line width= 0.6pt,line join=round] ( 38.56, 39.76) --
	(247.44, 39.76);

\path[draw=drawColor,line width= 0.6pt,line join=round] ( 38.56,104.09) --
	(247.44,104.09);

\path[draw=drawColor,line width= 0.6pt,line join=round] ( 38.56,168.42) --
	(247.44,168.42);

\path[draw=drawColor,line width= 0.6pt,line join=round] ( 87.92, 30.69) --
	( 87.92,230.29);

\path[draw=drawColor,line width= 0.6pt,line join=round] (143.00, 30.69) --
	(143.00,230.29);

\path[draw=drawColor,line width= 0.6pt,line join=round] (198.08, 30.69) --
	(198.08,230.29);
\definecolor{drawColor}{RGB}{0,0,0}
\definecolor{fillColor}{RGB}{173,216,230}

\path[draw=drawColor,line width= 0.6pt,fill=fillColor,fill opacity=0.70] ( 48.05, 39.76) rectangle ( 60.71, 42.00);

\path[draw=drawColor,line width= 0.6pt,fill=fillColor,fill opacity=0.70] ( 60.71, 39.76) rectangle ( 73.37, 47.59);

\path[draw=drawColor,line width= 0.6pt,fill=fillColor,fill opacity=0.70] ( 73.37, 39.76) rectangle ( 86.03, 61.03);

\path[draw=drawColor,line width= 0.6pt,fill=fillColor,fill opacity=0.70] ( 86.03, 39.76) rectangle ( 98.69,102.45);

\path[draw=drawColor,line width= 0.6pt,fill=fillColor,fill opacity=0.70] ( 98.69, 39.76) rectangle (111.35,131.56);

\path[draw=drawColor,line width= 0.6pt,fill=fillColor,fill opacity=0.70] (111.35, 39.76) rectangle (124.01,175.21);

\path[draw=drawColor,line width= 0.6pt,fill=fillColor,fill opacity=0.70] (124.01, 39.76) rectangle (136.67,207.68);

\path[draw=drawColor,line width= 0.6pt,fill=fillColor,fill opacity=0.70] (136.67, 39.76) rectangle (149.33,216.64);

\path[draw=drawColor,line width= 0.6pt,fill=fillColor,fill opacity=0.70] (149.33, 39.76) rectangle (161.99,200.96);

\path[draw=drawColor,line width= 0.6pt,fill=fillColor,fill opacity=0.70] (161.99, 39.76) rectangle (174.65,157.30);

\path[draw=drawColor,line width= 0.6pt,fill=fillColor,fill opacity=0.70] (174.65, 39.76) rectangle (187.31,117.00);

\path[draw=drawColor,line width= 0.6pt,fill=fillColor,fill opacity=0.70] (187.31, 39.76) rectangle (199.97, 94.61);

\path[draw=drawColor,line width= 0.6pt,fill=fillColor,fill opacity=0.70] (199.97, 39.76) rectangle (212.63, 71.10);

\path[draw=drawColor,line width= 0.6pt,fill=fillColor,fill opacity=0.70] (212.63, 39.76) rectangle (225.29, 49.83);

\path[draw=drawColor,line width= 0.6pt,fill=fillColor,fill opacity=0.70] (225.29, 39.76) rectangle (237.95, 40.88);
\definecolor{drawColor}{RGB}{255,0,0}

\path[draw=drawColor,line width= 1.4pt,line join=round] ( 54.74, 43.04) --
	( 56.51, 43.61) --
	( 58.29, 44.26) --
	( 60.06, 45.01) --
	( 61.83, 45.85) --
	( 63.60, 46.82) --
	( 65.37, 47.90) --
	( 67.15, 49.13) --
	( 68.92, 50.50) --
	( 70.69, 52.04) --
	( 72.46, 53.75) --
	( 74.24, 55.65) --
	( 76.01, 57.74) --
	( 77.78, 60.05) --
	( 79.55, 62.58) --
	( 81.33, 65.34) --
	( 83.10, 68.35) --
	( 84.87, 71.60) --
	( 86.64, 75.11) --
	( 88.42, 78.87) --
	( 90.19, 82.90) --
	( 91.96, 87.19) --
	( 93.73, 91.74) --
	( 95.51, 96.54) --
	( 97.28,101.59) --
	( 99.05,106.86) --
	(100.82,112.35) --
	(102.60,118.03) --
	(104.37,123.89) --
	(106.14,129.89) --
	(107.91,136.01) --
	(109.68,142.21) --
	(111.46,148.46) --
	(113.23,154.72) --
	(115.00,160.95) --
	(116.77,167.10) --
	(118.55,173.12) --
	(120.32,178.99) --
	(122.09,184.64) --
	(123.86,190.03) --
	(125.64,195.13) --
	(127.41,199.87) --
	(129.18,204.23) --
	(130.95,208.16) --
	(132.73,211.62) --
	(134.50,214.60) --
	(136.27,217.04) --
	(138.04,218.95) --
	(139.82,220.28) --
	(141.59,221.04) --
	(143.36,221.21) --
	(145.13,220.80) --
	(146.91,219.81) --
	(148.68,218.24) --
	(150.45,216.11) --
	(152.22,213.45) --
	(153.99,210.27) --
	(155.77,206.61) --
	(157.54,202.51) --
	(159.31,197.99) --
	(161.08,193.09) --
	(162.86,187.87) --
	(164.63,182.37) --
	(166.40,176.63) --
	(168.17,170.69) --
	(169.95,164.61) --
	(171.72,158.42) --
	(173.49,152.18) --
	(175.26,145.92) --
	(177.04,139.68) --
	(178.81,133.51) --
	(180.58,127.43) --
	(182.35,121.49) --
	(184.13,115.70) --
	(185.90,110.09) --
	(187.67,104.69) --
	(189.44, 99.51) --
	(191.22, 94.56) --
	(192.99, 89.86) --
	(194.76, 85.42) --
	(196.53, 81.23) --
	(198.30, 77.31) --
	(200.08, 73.65) --
	(201.85, 70.24) --
	(203.62, 67.09) --
	(205.39, 64.19) --
	(207.17, 61.53) --
	(208.94, 59.09) --
	(210.71, 56.87) --
	(212.48, 54.85) --
	(214.26, 53.03) --
	(216.03, 51.40) --
	(217.80, 49.93) --
	(219.57, 48.61) --
	(221.35, 47.45) --
	(223.12, 46.41) --
	(224.89, 45.50) --
	(226.66, 44.69) --
	(228.44, 43.99) --
	(230.21, 43.37) --
	(231.98, 42.83);
\end{scope}
\begin{scope}
\path[clip] (  0.00,  0.00) rectangle (252.94,252.94);
\definecolor{drawColor}{gray}{0.30}

\node[text=drawColor,anchor=base east,inner sep=0pt, outer sep=0pt, scale=  0.88] at ( 33.61, 36.73) {0.00};

\node[text=drawColor,anchor=base east,inner sep=0pt, outer sep=0pt, scale=  0.88] at ( 33.61,101.06) {0.05};

\node[text=drawColor,anchor=base east,inner sep=0pt, outer sep=0pt, scale=  0.88] at ( 33.61,165.39) {0.10};
\end{scope}
\begin{scope}
\path[clip] (  0.00,  0.00) rectangle (252.94,252.94);
\definecolor{drawColor}{gray}{0.30}

\node[text=drawColor,anchor=base,inner sep=0pt, outer sep=0pt, scale=  0.88] at ( 87.92, 19.68) {-5};

\node[text=drawColor,anchor=base,inner sep=0pt, outer sep=0pt, scale=  0.88] at (143.00, 19.68) {0};

\node[text=drawColor,anchor=base,inner sep=0pt, outer sep=0pt, scale=  0.88] at (198.08, 19.68) {5};
\end{scope}

\begin{scope}
\path[clip] (  0.00,  0.00) rectangle (252.94,252.94);
\definecolor{drawColor}{RGB}{0,0,0}

\node[text=drawColor,rotate= 90.00,anchor=base,inner sep=0pt, outer sep=0pt, scale=  1.10] at ( 13.08,130.49) {Density};
\end{scope}
\begin{scope}
\path[clip] (  0.00,  0.00) rectangle (252.94,252.94);
\definecolor{drawColor}{RGB}{0,0,0}

\node[text=drawColor,anchor=base,inner sep=0pt, outer sep=0pt, scale=  1.32] at (143.00,238.35) {\small{Rank one graphon}};
\end{scope}
\end{tikzpicture}}
        \small{(d)}
    \end{subfigure}
    \caption{\small{Histograms of $(\wbtu-\beta)/\sqrt{\theta_N}$ when $G_N$ is generated from (a) an Erd\H{o}s-R\'enyi model, (b) the graphon $W(x,y) = \sin(|x-y|)$, (c) $W$ as in \eqref{eq:Wpq} with $p=0$ and $q=1$ (which corresponds to a random bipartite graph), and (d) the rank one graphon $W(x,y) = x y$. The red curves represent the limiting normal densities from Theorem \ref{clttilde}. } }
    \label{fig:beta_tilde_comparisonI}
\end{figure}

\subsection{Asymptotic Confidence Intervals} 
\label{sec:CN}

Using Theorem \ref{clttilde} we can readily construct a confidence interval for $\beta$, by replacing the asymptotic variance in \eqref{eq:cltestimate} by its empirical estimate. To this end, observe that 
$$\frac{\sum_{1 \leq i,j \leq N} A_{G_N}(i, j)}{ N^2 \theta_N } \pto \int_{[0,1]^2} W(x,y)\mathrm{d}x\mathrm{d}y , $$ 
when $N^2 \theta_N \gg 1$. This means, under the assumptions of Theorem \ref{clttilde}, 
$$\frac{1}{\theta_N}(\wbtu-\beta) \sqrt{\frac{\sum_{1 \leq i,j \leq N} A_{G_N}(i,j)}{2N^2}} \xrightarrow{D} \cN(0,1) . $$
Hence,   
    \begin{align}\label{wbtuconf}
       \mathcal{C}_{N}:=\left[~ \wbtu - z_{\alpha/2}\theta_N\sqrt{\frac{2N^2}{\sum_{1 \leq i,j \leq N} A_{G_N}(i,j)}}, ~\wbtu + z_{\alpha/2} \theta_N \sqrt{\frac{2N^2}{\sum_{1 \leq i,j \leq N} A_{G_N}(i,j)} }~\right]
    \end{align}
is an asymptotic level $1-\alpha$ confidence interval for $\beta \in (0, \frac{1}{\|W\|_{\mathrm{op}}})$ and $\theta_N \ll 1$,  where $z_\alpha$ denotes the $(1-\alpha)$-th quantile of $\cN(0,1)$, for $\alpha \in (0, 1)$.

Figure \ref{fig:conInt_comparison} shows $100$ instances of $95\%$ confidence intervals (computed as in \eqref{wbtuconf}) for the four graphons as in the previous section. The nominal level is set to $\alpha= 0.05$. The other experimental parameters remain the same as before. 
The intervals not containing the true parameter are shown in red.  The empirical coverage in the four cases are $95\%$, $92\%$, $93\%$ and $98\%$, respectively, which closely align  with the asymptotic result.

\begin{figure}[htbp]
    \centering
    \begin{subfigure}[b]{0.48\textwidth}
        \centering
    \begin{adjustbox}{width=0.95\linewidth}
        \input{ci_beta0.5N500_ER.tex}
    \end{adjustbox}
        \caption*{(a)}
    \end{subfigure}
    \hfill
    \begin{subfigure}[b]{0.48\textwidth}
        \centering
        \begin{adjustbox}{width=0.95\linewidth}
        \input{ci_beta0.5N500_sinabsx-y.tex}
        \end{adjustbox}
        \caption*{(b)}
    \end{subfigure}
       \begin{subfigure}[b]{0.48\textwidth}
        \centering
    \begin{adjustbox}{width=0.95\linewidth}
\input{ci_beta0.25N500_Bipartite.tex}
    \end{adjustbox}
        \caption*{(c)}
    \end{subfigure}
       \begin{subfigure}[b]{0.48\textwidth}
        \centering
    \begin{adjustbox}{width=0.95\linewidth}
        \input{ci_beta0.5N500_XY.tex}
    \end{adjustbox}
        \caption*{(d)}
    \end{subfigure} 
    \caption{\small{ 100 instances of 95\% confidence intervals, where $G_N$ is generated from (a) an Erd\H{o}s-R\'enyi model, (b) the graphon $W(x,y) = \sin(|x-y|)$, (c) a random bipartite graph, and (d) the rank one graphon $W(x,y) = x y$. Here, $N = 500$ and  $\theta_N = N^{-0.6}$.  } }  
    \label{fig:conInt_comparison} 
\end{figure}

\subsection{Local Asymptotic Minimax Estimation}\label{sec:minmaxl2}

In the classical independent sampling paradigm it is well known that the asymptotic variance of the ML estimate is the best possible. Specifically, under appropriate regularity conditions, the ML estimate based on $N$ i.i.d. samples is {\it locally asymptotically minimax}, that is, it achieves the smallest possible asymptotic maximum risk among all estimates in shrinking neighborhoods (typically of order $1/\sqrt N$) of the true parameter \cite[Chapter 8]{Vaart_1998}. In this context, it is natural to ask whether similar optimality results can be established for Ising models. Classical techniques based on the central limit theorem break down in this case because of the dependence between the coordinates of $\bs$. Nonetheless, using our results on the asymptotic distributions of the sufficient statistic and the log-partition function, we show in the next theorem that both the ML estimate $\hat{\beta}_N$ and our proposed estimate $\wbtu$  are locally asymptotically minimax (over neighborhoods of size $\sqrt{\theta_N}$) throughout the entire subcritical regime.

\begin{thm}\label{locminest} 
Suppose Assumption \ref{assumption} holds and $\theta_N \ll 1$. Then, 
    \begin{align}\label{eq:locmin1}
         \sup_{I\in \mathscr{F}}~ \liminf_{N\rightarrow \infty}~ \sup_{h\in I}~\frac{1}{{\theta_N}}\E_{\beta+h\sqrt{\theta_N}} \left[\left( \bar{\beta}_N - \left(\beta + h\sqrt{\theta_N}\right)\right)^2 \right] \ge \frac{2}{\int_{[0,1]^2} W(x,y) \mathrm{d}x \mathrm{d}y} , 
    \end{align}  
    where $\bar{\beta}_N$ is any sequence of estimates in the model $\P_{\beta}$, and $\mathscr{F}$ denotes the collection of finite subsets of $\mathbb{R}$. Moreover, our proposed estimate $\wbtu$ satisfies:
    \begin{align}\label{eq:locmin2}
        \sup_{h\in I}~\frac{1}{{\theta_N}}\E_{\beta+h\sqrt{\theta_N}} \left[\left(\wbtu - \left(\beta + h\sqrt{\theta_N}\right)\right)^2 \Bigg| A_{G_N}\right]\xrightarrow{P} \frac{2}{\int_{[0,1]^2} W(x,y) \mathrm{d}x \mathrm{d}y} , 
    \end{align}
 for all $I \in \mathscr{F}$.
\end{thm}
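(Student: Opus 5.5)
The plan is to follow the Hájek--Le Cam route: first establish local asymptotic normality (LAN) of the model at scale $\sqrt{\theta_N}$, then invoke the local asymptotic minimax theorem for the lower bound \eqref{eq:locmin1}, and finally verify \eqref{eq:locmin2} by a direct conditional second-moment computation for $\wbtu$.

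\textbf{LAN expansion.} Write $\beta_h := \beta + h\sqrt{\theta_N}$ and $I_W := \int_{[0,1]^2} W(x,y)\,\mathrm dx\,\mathrm dy$. The log-likelihood ratio is
$$\log \frac{\mathrm d\P_{\beta_h}}{\mathrm d\P_\beta}(\bs) = -h\sqrt{\theta_N}\,H_N(\bs) - \bigl(\psi_N(\beta_h)-\psi_N(\beta)\bigr).$$
A second-order Taylor expansion of $\psi_N$ with \eqref{eq:HNderivative} gives
$$-h\sqrt{\theta_N}\,\bigl(H_N(\bs) - \E_\beta[H_N(\bs)\mid A_{G_N}]\bigr) - \tfrac{h^2\theta_N}{2}\,\psi_N''(\tilde\beta),$$
for some $\tilde\beta$ between $\beta$ and $\beta_h$. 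Two inputs are needed.

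1. From the Gaussian fluctuation result Theorem~\ref{thm:HNsigma} (together with the partition-function fluctuations of Section~\ref{sec:fluctpart78}), one should get $\sqrt{\theta_N}\,(H_N - \E_\beta[H_N\mid A_{G_N}]) \to \mathcal N(0, I_W/2)$. The variance is read off from $\psi_N''(0)\approx I_W/(2\theta_N)$, which matches Theorem~\ref{clttilde}. This step requires care in separating the graph randomness from the spin randomness: the expansion of $\theta_N H_N$ above centers at the deterministic Riemann sum, whereas here we need centering at the conditional mean.
2. Uniformly on a neighbourhood of $\beta$, $\theta_N\psi_N''(\tilde\beta) \to I_W/2$ in probability. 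To get this, I would bound $\psi_N''$ on a compact subinterval of the subcritical region and use convexity of $\psi_N$: convergence of $\theta_N(\psi_N(\beta_h)-\psi_N(\beta))$-type quantities pins down the second difference, since the $O(1)$ graphon-spectral corrections to $\psi_N''$ are negligible after multiplying by $\theta_N\to 0$.

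Together these give LAN with Fisher information $I_W/2$, i.e. $\Lambda_N(h) = h\Delta_N - \tfrac{h^2}{2}\cdot\tfrac{I_W}{2} + o_P(1)$ with $\Delta_N \to \mathcal N(0, I_W/2)$, jointly under the annealed law of $(A_{G_N},\bs)$.

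\textbf{Lower bound \eqref{eq:locmin1}.} Given LAN, apply the local asymptotic minimax theorem \cite[Theorem 8.11]{Vaart_1998} with squared-error loss to the rescaled estimator $(\bar\beta_N-\beta)/\sqrt{\theta_N}$. This yields the bound $(I_W/2)^{-1} = 2/I_W$. The graph is part of the observation, but because $I_W$ is deterministic, conditioning on the graph is harmless.

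\textbf{Upper bound \eqref{eq:locmin2}.} Here I would proceed directly.

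1. Under $\P_{\beta_h}$, the proof of Theorem~\ref{clttilde} gives $(\wbtu-\beta_h)/\sqrt{\theta_N}\to\mathcal N(0,2/I_W)$ conditionally on $A_{G_N}$, in probability. Contiguity (Le Cam's third lemma) transfers the statement at $\beta$ to $\beta_h$ with no shift in mean, because $\wbtu$ is regular.
2. Upgrade this to convergence of conditional second moments via uniform integrability. I would bound a conditional fourth moment of $\sqrt{\theta_N}(H_N - \E_{\beta_h}[H_N\mid A_{G_N}])$ using the moment generating function $\E_{\beta_h}[e^{tH_N}\mid A_{G_N}] = Z_N(\beta_h - t)/Z_N(\beta_h)$ with $t = O(\sqrt{\theta_N})$. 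This is controlled by the second-order expansion of $\psi_N$ already established, since $\beta_h - t$ stays subcritical.
3. Compute the conditional bias $\E_{\beta_h}[\wbtu\mid A_{G_N}] - \beta_h$. It equals $-\frac{2N^2\theta_N^2}{\sum A}\,\psi_N'(\beta_h)\cdot(-1) - \beta_h$, which is $o_P(\sqrt{\theta_N})$ by the one-step Taylor approximation of $\psi_N'$ at $0$: the remainder is $O(\theta_N)\cdot O(1) = o(\sqrt{\theta_N})$ after rescaling.

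Since $I$ is finite, the supremum over $h\in I$ is immediate.

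The main obstacle is the uniform control of $\theta_N\psi_N''$ and of $\theta_N\psi_N'$, beyond leading order, near $\beta$, which is needed both for LAN and for the bias and uniform integrability steps. I would try to extract it from the partition function expansion of Section~\ref{sec:fluctpart78} combined with convexity.
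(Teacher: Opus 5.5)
Your skeleton matches the paper's: LAN at scale $\sqrt{\theta_N}$ with information $\frac12 I_W$, where $I_W:=\int_{[0,1]^2}W(x,y)\,\mathrm dx\,\mathrm dy$, fed into the H\'ajek--Le Cam argument for \eqref{eq:locmin1}, and a conditional second-moment computation for \eqref{eq:locmin2}. The gap is the one estimate you defer as ``the main obstacle''. For fixed $h,s$ one needs
\[
\frac{Z_N\bigl(\beta+(h+s)\sqrt{\theta_N}\bigr)}{Z_N\bigl(\beta+h\sqrt{\theta_N}\bigr)}\,\exp\Bigl(-\frac{\beta s}{2N^2\sqrt{\theta_N}}\sum_{i,j=1}^N W\bigl(\tfrac iN,\tfrac jN\bigr)\Bigr)\xrightarrow{P}\exp\Bigl(\frac{hs}{2}I_W+\frac{s^2}{4}I_W\Bigr),
\]
which is the conditional MGF $\E_{\beta_h}[e^{sT_N}\mid A_{G_N}]$ of $T_N$ from \eqref{eq:TN} under a moving parameter. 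Every step you list (score centering, LAN quadratic term, bias, uniform integrability) follows from it, and the sources you name do not give it:
\begin{itemize}
\item Theorem~\ref{thm:HNsigma} is an annealed CLT at fixed $\beta$. It cannot exclude an order-one, graph-driven fluctuation of $\E_\beta[T_N\mid A_{G_N}]$; then the conditional variance $\theta_N\psi_N''(\beta)$, your Fisher information, would fall below $\frac12 I_W$. So your input~1 does not follow.
\item Theorem~\ref{thm:ZN} is a statement at one fixed $\beta$, not about increments over $\sqrt{\theta_N}$-windows. At $\theta_N\asymp N^{-2/3}$, which Assumption~\ref{assumption} allows, $\log Z_N-\log\E Z_N$ does not even vanish.
\item Convexity only converts convergence of values into convergence of first derivatives at the same scale. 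On the scale $\theta_N(\psi_N(\beta_h)-\psi_N(\beta))$ you propose, it gives $\theta_N\psi_N'\to\frac\beta2 I_W$, i.e.\ $\psi_N'$ to precision $o(1/\theta_N)$. The LAN quadratic term needs precision $o(1/\sqrt{\theta_N})$ across the window.
\item Your bias bound ``$O(\theta_N)\cdot O(1)$'' presumes $\psi_N'''=O_P(1)$ uniformly on $[0,\beta_h]$, which is not available. The $o_P(\sqrt{\theta_N})$ bias you actually need is the conditional-mean content of the display.
\end{itemize}

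The missing idea is the decomposition behind the proof of Theorem~\ref{thm:HNsigma}:
\[
\frac{Z_N(\beta')}{Z_N(\beta)}=\frac{J_N(\beta')}{J_N(\beta)}\cdot\frac{\E\hat Z_N(\beta')}{\E\hat Z_N(\beta)}\cdot\exp\Bigl(\frac{\beta'-\beta}{2N\theta_N}\,\vartheta_N+\frac{\beta'^2-\beta^2}{4N^2\theta_N^2}\,\eta_N\Bigr).
\]
Its three factors are handled as follows:
\begin{itemize}
\item $J_N\to1$ by the second-moment bound of Lemma~\ref{var_sum}. This is where $\theta_N\gtrsim N^{-2/3}$ enters and where spin and graph randomness are separated.
\item $\E\hat Z_N$ is insensitive to $O(\sqrt{\theta_N})$ shifts of $\beta$, by Lemma~\ref{thetanperturb6}.
\item The last factor is explicit and graph-measurable, and its centered part linearizes by Lemma~\ref{lem:linearization3}.
\end{itemize}
This yields \eqref{ref2707} and Proposition~\ref{mlelemd4y}. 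Lemma~\ref{l2ratio762} then turns conditional MGF convergence into convergence of conditional moments. With this in hand your separate steps collapse. LAN is Proposition~\ref{limexp1} via Cram\'er--Wold. For \eqref{eq:locmin2}, use the exact identity
\[
\frac{\wbtu-\beta_h}{\sqrt{\theta_N}}=\frac{2N^2\theta_N}{\sum_{i,j}A_{G_N}(i,j)}\Bigl(T_N-\frac{h}{2N^2}\sum_{i,j}W\bigl(\tfrac iN,\tfrac jN\bigr)-D_N-E_N\Bigr),
\]
with $D_N$ as in \eqref{eq:dn} and $E_N$ its analogue with $\beta$ replaced by $h\sqrt{\theta_N}$; both are graph-measurable and $o_P(1)$. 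Reading off the conditional first and second moments of $T_N$ under $\P_{\beta_h}$ then finishes the argument. No third lemma, separate uniform integrability argument, Taylor expansion at $0$, or uniform control of $\psi_N''$ is needed.
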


The proof of Theorem \ref{locminest} is given in Appendix \ref{sec:proof_locmintest}. To the best of our knowledge, Theorem \ref{locminest}  is the first instance of a sharp asymptotic optimality result in the context of Ising models (more generally,  for Markov random fields). The only related prior result is \cite[Theorem 3.1]{MukherjeeRay2022}, where a minimax lower bound is established for estimating $\beta$ in Ising models with general interaction matrices. In the setting of the sparse graphon model, their result implies a $1/\sqrt{\theta_N}$ lower bound on the rate of estimation (not the sharp constant) in a subregion of the subcritical regime. In contrast, Theorem \ref{locminest} shows that the ML estimate $\hat{\beta}_N$ and our proposed estimate $\wbtu$ are optimal both in terms of rate and asymptotic variance in the entire subcritical regime.

\section{Goodness-of-Fit Testing: Local Power and Minimax Detection Rates}\label{sec:gof}

In this section we consider the problem of goodness-of-fit testing for Ising models. In particular, given $\beta \in (0, \frac{1}{\|W\|_{\mathrm{op}}})$ and a sample $\bs \sim \P_{\beta}$ (as in \eqref{model_def}), our goal is to test the following hypothesis: 
\begin{align}\label{null_alt7}
    H_0: \beta =\beta_0\quad \text{versus}\quad H_1: \beta \ne \beta_0,
\end{align}
for some fixed $\beta_0 \in (0, \frac{1}{\|W\|_{\mathrm{op}}})$. This section is organized as follows: In Section \ref{sec:localpower} we derive the asymptotic local power of the likelihood ratio test and in Section \ref{sec:minimax} we establish its minimax optimality.

\subsection{Asymptotic Local Power}
\label{sec:localpower}

Note that for $0 < \beta_0 < \beta_1 < \frac{1}{\|W\|_{\mathrm{op}}}$, by the Neyman-Pearson lemma, the most powerful test for $H_0: \beta =\beta_0$ versus $H_1: \beta =\beta_1$, rejects $H_0$ for large values of the likelihood ratio:
\begin{align*}
    \frac{\P_{\beta_1}(\bs)}{\P_{\beta_0}(\bs)} &= \frac{Z_N(\beta_0)}{Z_N(\beta_1)} e^{ -(\beta_1-\beta_0)H_N(\bs)}\\ 
    &= (1+o_P(1)) C(\beta_0,\beta_1,W) e^{ -(\beta_1-\beta_0) H_N(\bs) + \frac{\beta_1^2-\beta_0^2}{4N^2\theta_N}\sum_{1 \leq i,j \leq N} W\left(\frac{i}{N},\frac{j}{N}\right) } , 
\end{align*}
for some constant $C(\beta_0,\beta_1,W)$, that depends on $\beta_0,\beta_1,W$.  
Hence, the test which rejects $H_0$ for small values of $H_N(\bs)$ is uniformly most powerful (UMP) for $H_0$ in \eqref{null_alt7} against the alternative $\beta > \beta_0$, and similarly, the test which rejects $H_0$ for large values of $H_N(\bs)$ is UMP for $H_0$ against the alternative $\beta < \beta_0$. Using our results on the sufficient statistic we can now derive the asymptotic properties of this test. We present the results in the sparse ($\theta_N \ll 1$) and dense  ($\theta_N \asymp 1$) cases separately in the following subsections:

\subsubsection{Sparse Regime}
\label{sec:sparseUMP}

Note that since $\wbtu$ is a function of $H_N(\bs)$, the likelihood ratio test with asymptotic level $\alpha$ in the sparse regime can be obtained by rejecting $H_0$ in \eqref{null_alt7} when $\beta_0 \notin \mathcal{C}_N$, where $\mathcal{C}_N$ is the $1-\alpha$ confidence interval in \eqref{wbtuconf}. This is equivalent to the test function: 
\begin{align}\label{eq:testcd} 
\phi_N^{\alpha} = \bm 1 \left\{ H_N(\bs) < \hat{c}_{\alpha} \text{ or } H_N(\bs) > \hat{d}_{\alpha}\right \},
\end{align}
 where  
$$\hat{c}_\alpha := -z_{\alpha/2}\sqrt{\frac{1}{2N^2\theta_N^2}\sum_{1 \leq i,j \leq N} A_{G_N}(i,j)} - \frac{\beta_0}{2N^2 \theta_N^2} \sum_{1 \leq i,j \leq N}A_{G_N}(i,j)$$
and 
$$\hat{d}_\alpha := z_{\alpha/2}\sqrt{\frac{1}{2N^2\theta_N^2}\sum_{1 \leq i,j \leq N} A_{G_N}(i,j)} - \frac{\beta_0}{2N^2 \theta_N^2} \sum_{1 \leq i,j \leq N}A_{G_N}(i,j).$$ 
Define the power function of the test by:  $$\rho_N(\beta) = \P_{\beta}(H_N(\bs) < \hat{c}_\alpha) + \P_{\beta}(H_N(\bs) > \hat{d}_\alpha).$$ The following result is an immediate consequence of the above discussion and Theorem \ref{thm:HNsigma}.

\begin{cor}\label{31cor266} Suppose Assumption \ref{assumption} holds and $\theta_N \ll 1$. Then the following hold: 
\begin{itemize} 

\item (Asymptotic level $\alpha$) $\lim_{N\rightarrow\infty} \rho_N(\beta_0)= \alpha$. 

\item (Consistent for fixed alternatives) For any $\beta \ne \beta_0$, $\lim_{N\rightarrow\infty} \rho_N(\beta)= 1$.  
\end{itemize}
\end{cor}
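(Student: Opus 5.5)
The plan is to reduce both claims to the Gaussian fluctuation result for the Hamiltonian in the sparse regime (Theorem \ref{thm:HNsigma}). Under $\P_\beta$ with $\theta_N \ll 1$, that result gives
\begin{align*}
\frac{1}{\sqrt{\theta_N}}\Big(\theta_N H_N(\bs) + \frac{\beta}{2N^2}\sum_{1\le i,j\le N} W\big(\tfrac iN,\tfrac jN\big)\Big) \xrightarrow{D} \cN\Big(0, \tfrac12 \textstyle\int W\Big),
\end{align*}
which is the form consistent with the variance $2/\int W$ in Theorem \ref{clttilde}. Write $S_N := \sum_{i,j} A_{G_N}(i,j)$. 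Multiplying the thresholds by $\theta_N$ gives
\begin{align*}
\theta_N \hat c_\alpha = -z_{\alpha/2}\sqrt{\theta_N}\sqrt{S_N/(2N^2\theta_N)} - \beta_0 S_N/(2N^2\theta_N),
\end{align*}
and $\theta_N\hat d_\alpha$ is the same expression with the sign of the first term flipped. Since $S_N/(N^2\theta_N) \pto \int W$, the scale factor $\sqrt{S_N/(2N^2\theta_N)}$ converges to $\sqrt{\tfrac12\int W}$, which is exactly the limiting standard deviation.

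For the level claim, the one delicate point is the centering. The centering uses $\beta_0 S_N/(2N^2\theta_N)$, whereas the CLT centers at $\beta_0 N^{-2}\sum W(i/N,j/N)/2$. Their difference, divided by $\sqrt{\theta_N}$, must be shown to be $o_P(1)$. I will handle this with a variance computation. Conditionally, $S_N$ is a sum of independent Bernoulli variables whose mean is $\theta_N\sum_{i\neq j}W(i/N,j/N)$ plus diagonal terms, and whose variance is $O(N^2\theta_N)$. Hence
\begin{align*}
\frac{S_N - \E S_N}{N^2\theta_N} = O_P\big((N^2\theta_N)^{-1/2}\big),
\end{align*}
and after dividing by $\sqrt{\theta_N}$ this is $O_P(1/(N\theta_N)) = o_P(1)$ under Assumption \ref{assumption}(3). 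The diagonal and $i=j$ corrections contribute $O(1/N)/\sqrt{\theta_N} = o(1)$, using $N^{2/3}\theta_N \gtrsim 1$.

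Actually, the cleaner route is to sidestep this entirely. The test rejects exactly when $\beta_0 \notin \mathcal{C}_N$. Theorem \ref{clttilde}, combined with Slutsky's lemma for the studentization (as in the derivation of \eqref{wbtuconf}), gives $\P_{\beta_0}(\beta_0\in\mathcal C_N)\to 1-\alpha$, and the centering issue above is already absorbed into that proof. So I will prove the level claim this way: $\rho_N(\beta_0) = 1 - \P_{\beta_0}(\beta_0 \in \mathcal{C}_N) \to \alpha$. This uses that the limit law is continuous, so there are no atoms at the boundary.

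For consistency, fix $\beta\neq\beta_0$ in the subcritical range. The same fluctuation theorem under $\P_\beta$ gives $\theta_N H_N(\bs) = -\tfrac{\beta}{2}\int W + o_P(1)$. Meanwhile both $\theta_N\hat c_\alpha$ and $\theta_N\hat d_\alpha$ converge in probability to $-\tfrac{\beta_0}{2}\int W$, because the $z_{\alpha/2}$ term is $O_P(\sqrt{\theta_N})$. Since $\int W>0$:
\begin{itemize}
\item if $\beta>\beta_0$, then $\theta_N H_N(\bs) < \theta_N \hat c_\alpha$ with probability tending to one;
\item if $\beta<\beta_0$, then $\theta_N H_N(\bs) > \theta_N \hat d_\alpha$ with probability tending to one.
\end{itemize}
Either way $\rho_N(\beta)\to1$. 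Equivalently, $\wbtu\pto\beta$ by Theorem \ref{clttilde}, while the width of $\mathcal C_N$ is $O_P(\sqrt{\theta_N})\to0$, so $\beta_0\notin\mathcal C_N$ eventually.

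The only genuine obstacle is making sure that Theorem \ref{thm:HNsigma} applies at an arbitrary $\beta$ (not only $\beta_0$) in the subcritical regime, and that the empirical normalization $S_N$ is interchangeable with its deterministic counterpart at the $\sqrt{\theta_N}$ scale. The former is part of the theorem's hypotheses. The latter is the Bernoulli variance bound above, which requires only $N\theta_N\to\infty$.
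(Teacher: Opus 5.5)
Your proposal is correct and follows essentially the paper's argument. The paper treats this corollary as an immediate consequence of Theorem \ref{thm:HNsigma}, using the observation that $\phi_N^\alpha$ rejects exactly when $\beta_0\notin\mathcal{C}_N$. The level then follows from the studentized form of Theorem \ref{clttilde}, whose proof already contains your $D_N=o_P(1)$ centering step. Consistency follows from $\theta_N H_N(\bs)\xrightarrow{P}-\frac{\beta}{2}\int W$, while both thresholds, scaled by $\theta_N$, concentrate at $-\frac{\beta_0}{2}\int W$.

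One cosmetic slip: $S_N$ is not literally a sum of independent Bernoullis, since each off-diagonal entry appears twice by symmetry. This does not affect your $O(N^2\theta_N)$ variance bound.
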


The above result shows that the likelihood ratio test can consistently detect any fixed alternative. In fact, since the fluctuations of the sufficient statistic are $\asymp \sqrt{\theta_N}$, the likelihood ratio test can consistently detect not only fixed alternatives, but also any sequence of local alternatives approaching $H_0$ at a rate slower than $\sqrt{\theta_N}$. Moreover, using the asymptotic distribution of the sufficient statistic established in Section \ref{sec:fluct_ham6} (see Theorem \ref{thm:HNsigma}), we derive in the following theorem the limiting local power for alternatives of order $\sqrt{\theta_N}$. The proof is given in Appendix \ref{sec:lppf}.

\begin{thm}\label{locpower}
Suppose Assumption \ref{assumption} holds and $\theta_N \ll 1$. Fix $t \in \R$ and consider testing the hypotheses:
    $$H_0: \beta = \beta_0\quad\text{versus} \quad H_{1,N}: \beta = \beta_0+t\sqrt{\theta_N}.$$ Then, 
\begin{align*} 
& \lim_{N\rightarrow \infty} \rho_{N}(\beta_0+t\sqrt{\theta_N}) \nonumber \\ 
& = \Phi\left(-z_{\alpha/2} - t\sqrt{\frac{1}{2} \int_{[0,1]^2} W(x,y) \mathrm{d}x \mathrm{d}y}\right) + \Phi\left(-z_{\alpha/2} + t\sqrt{\frac{1}{2} \int_{[0,1]^2} W(x,y) \mathrm{d}x \mathrm{d}y}\right).
\end{align*}
\end{thm}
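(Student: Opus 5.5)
The plan is to reduce the local power computation to the Gaussian fluctuation of the sufficient statistic under a \emph{perturbed} parameter. Write $\beta_N := \beta_0 + t\sqrt{\theta_N}$ and $w := \int_{[0,1]^2}W(x,y)\,\mathrm dx\,\mathrm dy$. The approach rests on the sparse-regime expansion from Theorem~\ref{thm:HNsigma}:
\begin{equation*}
\frac{1}{\sqrt{\theta_N}}\Big(\theta_N H_N(\bs) + \frac{\beta}{2N^2}\sum_{1\le i,j\le N} W\big(\tfrac iN,\tfrac jN\big)\Big) \xrightarrow{D} \mathcal N\big(0, \tfrac{w}{2}\big)
\end{equation*}
under $\P_\beta$. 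This is the CLT behind Theorem~\ref{clttilde}; its variance must be $w/2$ for the asymptotic variance of $\wbtu$ to equal $2/w$. The test $\phi_N^\alpha$ amounts to comparing $\theta_N H_N(\bs)$ with the thresholds $\theta_N\hat c_\alpha$ and $\theta_N\hat d_\alpha$. These equal $-\beta_0 S_N \mp z_{\alpha/2}\sqrt{\theta_N S_N}$, where
\begin{equation*}
S_N := \frac{1}{2N^2\theta_N}\sum_{1\le i,j\le N}A_{G_N}(i,j).
\end{equation*}

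First, I need the expansion above to hold under $\P_{\beta_N}$ rather than a fixed $\beta$. I would obtain this by contiguity and Le~Cam's third lemma. Conditionally on $A_{G_N}$, the log-likelihood ratio is
\begin{equation*}
\log\frac{\P_{\beta_N}}{\P_{\beta_0}}(\bs) = -t\sqrt{\theta_N}\,H_N(\bs) - \big(\psi_N(\beta_N)-\psi_N(\beta_0)\big).
\end{equation*}
Using the partition function fluctuations of Section~\ref{sec:fluctpart78}, or simply a second-order expansion of $\psi_N$ with $\psi_N'(\beta_0) = -\E_{\beta_0}[H_N\mid A_{G_N}]$ and $\theta_N\psi_N''\to w/2$, this equals $t\,U_N - \tfrac{t^2}{2}\cdot\tfrac{w}{2} + o_P(1)$. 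Here $U_N$ is the centered and scaled statistic, which converges to $\mathcal N(0,w/2)$, so the pair $(U_N,\log\text{LR})$ is jointly Gaussian in the limit. Le~Cam's third lemma then shows that under $\P_{\beta_N}$ the statistic $U_N$ is asymptotically $\mathcal N(t\,w/2,\, w/2)$.

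I would cross-check the mean shift directly. The centering at $\beta_N$ differs from that at $\beta_0$ by $t\sqrt{\theta_N}\cdot\frac{1}{2N^2}\sum_{i,j} W(\tfrac iN,\tfrac jN)$, which after dividing by $\sqrt{\theta_N}$ tends to $t\,w/2$. This is consistent with the third lemma.

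Second, I would replace the random thresholds by deterministic ones. By Chebyshev, $S_N - \frac{1}{2N^2}\sum_{i,j} W(\tfrac iN,\tfrac jN) = O_P(1/(N\sqrt{\theta_N}))$, which is $o(\sqrt{\theta_N})$ because $N^2\theta_N\to\infty$ under Assumption~\ref{assumption}. Also $S_N \to w/2$ in probability, using the Riemann integrability in Assumption~\ref{assumption}(2). The rejection event $\{H_N < \hat c_\alpha\}$ then becomes
\begin{equation*}
\{U_N^{(0)} < -z_{\alpha/2}\sqrt{w/2} + o_P(1)\}, \qquad U_N^{(0)} := \theta_N^{-1/2}\big(\theta_N H_N + \beta_0 S_N\big),
\end{equation*}
and similarly for $\hat d_\alpha$. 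Under $\P_{\beta_N}$, $U_N^{(0)}$ converges to $\mathcal N(-t\,w/2,\, w/2)$, with the sign dictated by $H_N$ being negative. Slutsky and standardization by $\sqrt{w/2}$ then give
\begin{equation*}
\Phi\big(-z_{\alpha/2} + t\sqrt{w/2}\big) + \Phi\big(-z_{\alpha/2} - t\sqrt{w/2}\big),
\end{equation*}
which is the claimed formula.

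The main obstacle is the first step: justifying the expansion of $\psi_N(\beta_N)-\psi_N(\beta_0)$ to $o_P(1)$ accuracy. The leading deterministic terms are of order $\theta_N^{-1}$ and $\theta_N^{-1/2}$, so the quadratic term must be identified exactly. I would first try to show $\theta_N\psi_N''(\beta)\to w/2$ uniformly on a neighborhood of $\beta_0$ in probability, via the conditional variance of $H_N$ and the fluctuation results of Section~\ref{sec:fluct_ham6}. If that fails, I would instead apply Theorem~\ref{thm:HNsigma} directly at the moving parameter $\beta_N$, provided its proof is uniform in $\beta$ on compact subsets of the subcritical interval. That route bypasses the third lemma altogether.
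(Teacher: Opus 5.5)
Your plan is correct, and the route you list as a fallback is essentially the paper's. The paper proves Proposition~\ref{mlelemd4y}, a CLT for $T_N$ directly under $\mathbb{P}_{\beta_0+t\sqrt{\theta_N}}$, by computing the conditional MGF at the moving parameter (via Lemma~\ref{thetanperturb6}). It then compares $\hat c_\alpha,\hat d_\alpha$ with deterministic cutoffs $c_\alpha,d_\alpha$ at scale $\sqrt{\theta_N}$. Finally it passes to the limit conditionally on $A_{G_N}$ along almost surely convergent subsequences and integrates out by dominated convergence.

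Your primary route (LAN expansion plus Le~Cam's third lemma, then unconditional Slutsky with your $S_N$) needs only the fixed-parameter result. It avoids both the moving-parameter MGF computation and the subsequence bookkeeping. This is legitimate because $\rho_N$ is an unconditional probability and the law of $G_N$ does not depend on $\beta$. Hence the joint likelihood ratio equals the conditional one, and $S_N$ behaves identically under both hypotheses. What the paper's route buys is convergence conditional on the graph under the alternative, which it needs anyway for Theorem~\ref{mle}(1) and for the conditional risk in Theorem~\ref{testminimax}.

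The step you flag as the main obstacle is easier than you fear. Set $U_N:=\theta_N^{-1/2}\big(-\theta_N H_N(\bs)-\frac{\beta_0}{2N^2}\sum_{i,j}W(\frac iN,\frac jN)\big)$. Then, exactly,
\[
\psi_N(\beta_0+t\sqrt{\theta_N})-\psi_N(\beta_0)=\log\mathbb{E}_{\beta_0}\big[e^{-t\sqrt{\theta_N}H_N(\bs)}\,\big|\,A_{G_N}\big]
=\frac{t\beta_0}{2N^2\sqrt{\theta_N}}\sum_{i,j}W\big(\tfrac iN,\tfrac jN\big)+\log\mathbb{E}_{\beta_0}\big[e^{tU_N}\,\big|\,A_{G_N}\big].
\]
The proof of Theorem~\ref{thm:HNsigma}(1) shows precisely that $\mathbb{E}_{\beta_0}[e^{tU_N}\mid A_{G_N}]\to e^{t^2w/4}$ in probability; this is \eqref{ref2707}. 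Hence $\log(\mathrm d\mathbb{P}_{\beta_N}/\mathrm d\mathbb{P}_{\beta_0})=tU_N-t^2w/4+o_P(1)$ at each fixed $t$. This needs no Taylor expansion and no uniform control of $\theta_N\psi_N''$. It also needs no separate argument that $\mathbb{E}_{\beta_0}[U_N\mid A_{G_N}]\to0$, which would not follow from the distributional statement of Theorem~\ref{thm:HNsigma} alone.

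Theorem~\ref{thm:ZN} as stated would not suffice for this step. It is for fixed $\beta$ and is centered at $\log\mathbb{E}Z_N(\beta)$. For $\theta_N\sim cN^{-2/3}$ it also has $O_P(1)$ fluctuations, which would have to cancel between $\beta_N$ and $\beta_0$.

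A small slip: $S_N-\frac{1}{2N^2}\sum_{i,j}W(\frac iN,\frac jN)=O_P(1/(N\sqrt{\theta_N}))$ is $o_P(\sqrt{\theta_N})$ because $N\theta_N\to\infty$, not because $N^2\theta_N\to\infty$. Both conditions hold under Assumption~\ref{assumption}.
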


\subsubsection{Dense Regime} 
\label{sec:densealpha}

Recall from Remark \ref{estimation} that the ML estimate is inconsistent in the dense regime (where $\theta_N \asymp 1$). In fact, \cite[Theorem~3.3]{bhattacharya2018inference} shows that no estimate of $\beta$ can be consistent in the subcritical phase for Ising models on dense graphs. This is a consequence of the fact that no test can consistently detect fixed alternatives in the subcritical phase for Ising models on dense graphs. Nonetheless, the test which rejects for small values of $H_N(\sigma)$ is still the UMP for testing the null hypothesis in \eqref{null_alt7} against one-sided alternatives. To choose the cutoff with the desired asymptotic level, recall from Theorem \ref{thm:HNsigma} (2), the asymptotic distribution of $H_N(\bs)$ in the dense regime. Specifically, if Assumption \ref{assumption} holds and $\theta_N \rightarrow \theta \in (0, 1]$, then 
 \begin{align}\label{eq:def_Ubeta}
    -H_N(\bs) \dto U_\beta:=\beta \sigma_W^2 + \frac{1}{2}\int_{0}^{1}W(x,x)\mathrm{d}x + \sigma_WZ_0+\frac{1}{2} \sum_{\lambda \in \mathrm{Spec}(W)} \lambda \left(\frac{Z_{\lambda}^2}{1-\beta \lambda}-1\right) , 
\end{align}
where $\sigma_W^2:=\frac{1}{2\theta} \int_{[0,1]^2} W(x,y)(1-\theta W(x,y))\mathrm{d}x\mathrm{d}y$. Hence, if the graphon $W$ is assumed to be known, the test that rejects $H_0$ when $-H_N(\bs)\leq q_{1-\alpha/2, \beta_0, W}$ or $-H_N(\bs) > q_{\alpha/2,\beta_0,W}$ is asymptotically level $\alpha$, where $q_{\zeta,\beta_0,W}$ is the $1-\zeta$-th quantile of $U_{\beta_0}$. Unlike in the sparse regime, the power of this test does not converge to $1$ at a fixed alternative, instead, it converges to a non-trivial limit (between $0$ and $1$), which can also be derived from \eqref{eq:def_Ubeta} (similar to \cite[Corollary~3.5]{bhattacharya2018inference}).

In practice, the underlying graphon is unknown, and we only observe the sampled graph $G_N$. Consequently, the procedure described above is no longer valid, since, in particular, the quantiles $q_{\alpha/2,\beta_0, W}$ and $q_{1-\alpha/2,\beta_0,W}$ depend on $W$. In this section,  we propose a multiplier bootstrap-based approach, a method for estimating
the quantiles of the limiting distribution $U_{\beta}$ (recall \eqref{eq:def_Ubeta}), based on the observed network $G_N$ itself and additional external randomness. To this end, for $\beta \in (0, \frac{1}{\|W\|_{\mathrm{op}}})$ define the following quantity: 
\begin{align}\label{eq:Uestimate}
    \hat U_\beta := \frac{\beta}{2N^2\theta_N^2}\sum_{1 \leq i \ne j \leq N} A_{G_N}(i,j) & +  \frac{1}{2N\theta_N}\sum_{i=1}^N A_{G_N}(i,i) \nonumber \\ 
    & + \frac{\beta^2}{2}\sum_{i=1}^{N}\frac{\hat\lambda_i(G_N)^3}{1-\beta\hat\lambda_i(G_N)} + \frac{1}{2}\sum_{i=1}^{N}\frac{\hat\lambda_i(G_N)}{1-\beta\hat\lambda_i(G_N)}\left(\eta_i^2-1\right) , 
\end{align}
where $\{\hat\lambda_i(G_N):1\leq i\leq N\}$ are the eigenvalues of $\frac{1}{N \theta_N} A_{G_N}$ arranged in non-increasing order and $\{\eta_i\}_{i\geq 1}$ are i.i.d. $\mathcal N(0, 1)$ random variables (independent of $G_N$). Note that \eqref{eq:Uestimate} depends only on the observed graph $G_N$ and the Gaussian multipliers $\eta_1, \eta_2, \ldots, \eta_N$, but not on the graphon $W$. In the next theorem we show that the distribution of $\hat U_\beta$ conditional on $G_N$ converges to $U_\beta$ (which is the limiting distribution of $H_N(\bs)$) in the dense regime.

\begin{thm}\label{thm:mb} Suppose Assumption \ref{assumption} holds and $\theta_N\rightarrow\theta\in(0,1]$. Then, as $N\rightarrow\infty$,
    \begin{align}\label{eq:Ubetaconvergence}
        \hat U_\beta\mid A_{G_N}\overset{d}{\rightarrow}U_\beta\text{ almost surely, }
    \end{align}
    where $\hat U_\beta$ and $U_\beta$ are defined in \eqref{eq:def_Ubeta} and \eqref{eq:Uestimate}, respectively. 
    \end{thm}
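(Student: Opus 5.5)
The plan is to work conditionally on a single almost-sure event for the graph sequence. On that event, $\hat U_\beta$ splits into a deterministic drift that converges to $\E U_\beta$, a finite spike part $\frac12\sum_{|\hat\lambda_i|>\delta}\frac{\hat\lambda_i}{1-\beta\hat\lambda_i}(\eta_i^2-1)$, and a bulk part over the remaining indices. The Gaussian $\sigma_W Z_0$ should come from the bulk via a Lindeberg CLT over the Gaussian multipliers. Write $\bar A_N := \E A_{G_N}$, $M_N := \frac{1}{N\theta_N}A_{G_N}$, and $\bar M_N := \frac{1}{N\theta_N}\bar A_N$.

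\textbf{Step 1: the almost-sure event.} I would establish the following four facts almost surely.
\begin{enumerate}
\item[(a)] $\frac{1}{N^2\theta_N}\sum_{i\ne j}A_{G_N}(i,j)\to\int W$ and $\frac{1}{N\theta_N}\sum_i A_{G_N}(i,i)\to\int_0^1 W(x,x)\,\mathrm dx$. This follows from fourth-moment bounds and Borel--Cantelli together with the Riemann integrability in Assumption~\ref{assumption}(2).
\item[(b)] $\|M_N-\bar M_N\|_{\mathrm{op}}\lesssim (N\theta_N)^{-1/2}\sqrt{\log N}\to0$. This uses a matrix Bernstein or Bandeira--van Handel bound plus Borel--Cantelli; since $\theta_N\asymp1$ here, the bound is $O(N^{-1/2+\epsilon})$.
\item[(c)] The step graphon of $\bar M_N$ converges to $\theta W/\theta_N\cdot\theta_N^{-1}\theta_N=W$ in $L^2$, by Riemann integrability. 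By Hoffman--Wielandt, the ordered nonzero eigenvalues of $\bar M_N$ then converge to $\mathrm{Spec}(W)$ in $\ell^2$.
\item[(d)] $\mathrm{tr}(M_N^2)=\frac{1}{N^2\theta_N^2}\sum_{i,j}A_{G_N}(i,j)\to\frac{1}{\theta}\int W$. This holds because $A^2=A$ entrywise.
\end{enumerate}

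\textbf{Step 2: drift and spike terms.} From (b), (c), and Weyl's inequality, for every fixed $\delta>0$ not in $\pm\mathrm{Spec}(W)$ the eigenvalues of $M_N$ with $|\hat\lambda_i|>\delta$ converge to those of $W$ with $|\lambda|>\delta$. All other eigenvalues lie in $[-\delta-o(1),\delta+o(1)]$. In particular $\hat\lambda_1\to\|W\|_{\mathrm{op}}<1/\beta$, so $1-\beta\hat\lambda_i$ is bounded away from $0$ uniformly in $i$.

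For the cubic drift term, write $\sum_i \hat\lambda_i^3/(1-\beta\hat\lambda_i)$ as the sum over spikes plus a remainder. The remainder is bounded by $C\delta\sum_i\hat\lambda_i^2$, which is $O(\delta)$ by (d). Letting $N\to\infty$ and then $\delta\to0$ gives convergence to $\sum_\lambda\lambda^3/(1-\beta\lambda)$. Combined with (a) and the identity
\[
\beta\sigma_W^2+\tfrac12\sum_\lambda\lambda\Big(\tfrac{1}{1-\beta\lambda}-1\Big)=\frac{\beta}{2\theta}\int W+\frac{\beta^2}{2}\sum_\lambda\frac{\lambda^3}{1-\beta\lambda},
\]
which uses $\sum\lambda^2=\|W\|_2^2$, the drift converges to $\E U_\beta$. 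The spike part converges in law to $\frac12\sum_{|\lambda|>\delta}\frac{\lambda}{1-\beta\lambda}(Z_\lambda^2-1)$.

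\textbf{Step 3: the bulk, which I expect to be the main obstacle.} The bulk part is $B_N:=\frac12\sum_{|\hat\lambda_i|\le\delta}c_i(\eta_i^2-1)$ with $c_i=\hat\lambda_i/(1-\beta\hat\lambda_i)$. Conditionally on $G_N$ it is a weighted sum of independent centred variables with $\max_i|c_i|\lesssim\delta$. Its variance is $\frac12\sum_{\mathrm{bulk}}c_i^2=\frac12\sum_{\mathrm{bulk}}\hat\lambda_i^2+O(\delta)$. By (d) and Step 2,
\[
\sum_{\mathrm{bulk}}\hat\lambda_i^2\to\frac1\theta\int W-\sum_{|\lambda|>\delta}\lambda^2,
\]
so $\frac12\sum_{\mathrm{bulk}}c_i^2 = \sigma_W^2+\frac12\sum_{|\lambda|\le\delta}\lambda^2+O(\delta)+o(1)$. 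The difficulty is that Lindeberg fails for fixed $\delta$, because the bulk still contains a few $O(\delta)$ eigenvalues coming from $W$.

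I would handle this by a further split. Fix $\delta'<\delta$ and let $\delta'$ depend on $N$ through a slowly decaying sequence $\delta_N\downarrow0$, chosen so that both Step 2 and the following estimate still hold. The eigenvalues with $|\hat\lambda_i|\le\delta_N$ have $\max|c_i|\to0$, so Lindeberg applies. Their variance sum tends to $\sigma_W^2$, because $\sum_{0<|\lambda|\le\delta_N}\lambda^2\to0$. The intermediate band $\delta_N<|\hat\lambda_i|\le\delta$ is matched to $\frac12\sum_{|\lambda|\le\delta}\lambda(Z_\lambda^2-1)/(1-\beta\lambda)$ up to an $L^2$ error of order $\sum_{|\lambda|\le\delta}\lambda^2+o(1)$.

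Finally, I would take conditional characteristic functions and let $N\to\infty$ and then $\delta\to0$. The $L^2$-small tails and the dominated-convergence bound $\sum\lambda^2<\infty$ then give $\hat U_\beta\mid A_{G_N}\dto U_\beta$ on the almost-sure event. This yields \eqref{eq:Ubetaconvergence}.
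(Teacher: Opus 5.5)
Your proof is correct, but it follows a genuinely different route from the paper's.

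\textbf{How the paper argues.} The paper never isolates the Gaussian component. It writes the conditional cumulant generating function of $\hat U_\beta$ as $\sum_{s\ge1}a_s(\beta,G_N)t^s$, with $a_s(\beta,G_N)=\frac{1}{2s}\sum_i\hat\lambda_i^s/(1-\beta\hat\lambda_i)^s$ for $s\ge2$. It then proves almost-sure convergence coefficient by coefficient:
\begin{itemize}
\item for $s\ge3$, it expands $(1-\beta\hat\lambda_i)^{-s}$ as a power series and uses $\sum_i\hat\lambda_i^{k}=\theta_N^{-k}t(C_k,W^{G_N})\to t(C_k,W)$, which follows from cut-distance convergence;
\item for $s=2$, it uses $\sum_i\hat\lambda_i^2=\frac{1}{N^2\theta_N^2}\sum_{i,j}A_{G_N}(i,j)$.
\end{itemize}
Finally it sums the series by dominated convergence and identifies the limit through Lemma~\ref{lm:HNmgf}. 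In this proof $\sigma_W Z_0$ is encoded only in the fact that $\sum_i\hat\lambda_i^2\to\frac1\theta\int W$ rather than $\|W\|_2^2$, while $\sum_i\hat\lambda_i^k\to\sum_\lambda\lambda^k$ for $k\ge3$.

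\textbf{What your route does differently.} You use operator-norm concentration, Weyl's inequality, and Hoffman--Wielandt for the step kernel to separate spikes from bulk. You then produce $\sigma_WZ_0$ directly by a Lindeberg CLT over the $O(N)$ noise eigenvalues, which carry squared mass $\frac1\theta\int W-\|W\|_2^2=2\sigma_W^2$. This explains where the Gaussian comes from. It works with characteristic functions, so you avoid the MGF radius, the cycle-density expansions and Fubini bookkeeping, and Lemma~\ref{lm:HNmgf}. The cost is a two-scale truncation. The paper's proof is shorter because it reuses the cumulant machinery already built for Theorem~\ref{thm:HNsigma}.

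\textbf{Points to tighten.}
\begin{itemize}
\item The variance of your Lindeberg part tending to $\sigma_W^2$ does not follow from $\sum_{0<|\lambda|\le\delta_N}\lambda^2\to0$ alone. You also need $\sum_{|\hat\lambda_i|>\delta_N}\hat\lambda_i^2\to\|W\|_2^2$, i.e.\ spike matching that is uniform down to level $\delta_N$. Two ways to get it:
\begin{itemize}
\item a diagonal argument over a sequence $\delta'_m\downarrow0$ avoiding $\pm\mathrm{Spec}(W)$, using the fixed-level convergence you already have;
\item quantitatively, taking $\delta_N\to0$ with $\delta_N\gg\|M_N-\bar M_N\|_{\mathrm{op}}$ and combining Weyl with the $\ell^2$ spectral convergence of $\bar M_N$.
\end{itemize}
\item The intermediate band does not need to be matched to anything. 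Its conditional variance tends to at most $C\sum_{0<|\lambda|\le\delta}\lambda^2$. Since the $\eta_i$ are independent across the three index groups, its conditional characteristic function is within $|t|$ times the square root of that variance of $1$. The same holds for $\frac12\sum_{|\lambda|\le\delta}\frac{\lambda}{1-\beta\lambda}(Z_\lambda^2-1)$ in $U_\beta$.
\item When $\sigma_W^2=0$ (e.g.\ $\theta=1$ and $W\in\{0,1\}$ a.e.), Lindeberg is vacuous and the bulk simply vanishes in $L^2$.
\item In (c), the step kernel of $\bar M_N$ is exactly $W(\lceil Nx\rceil/N,\lceil Ny\rceil/N)$, with no $\theta$ factors, so the claim should just read ``$\to W$ in $L^2$.''
\end{itemize}
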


The proof of Theorem \ref{thm:mb} is given in Appendix \ref{sec:mbpf}. It shows that the asymptotic distribution of
$ \hat U_\beta\mid A_{G_N}$ is the same as that of the sufficient statistic $H_N(\bs)$, in the dense regime. Hence, we can use the distribution $\hat U_\beta\mid A_{G_N}$, which depends only on the observed graph $G_N$, to approximate the quantiles of the limiting distribution $H_N(\bs)$. Specifically, for $\beta \in (0, \frac{1}{\| W \|_\mathrm{op}})$, denote by $\hat q_{\zeta,\beta, G_N}$, the $(1-\zeta)$-th quantile of $\hat U_{\beta}|A_{G_N}$ and consider the test which rejects $H_0$ when 
\begin{align}\label{eq:densec}
\phi_N^+= \bm 1 \{-H_N(\bs) \leq \hat q_{1-\alpha/2,\beta_0, G_N} \text{ or }-H_N(\bs) > \hat q_{\alpha/2,\beta_0, G_N}\}. 
\end{align} 
The following corollary readily follows from Theorem \ref{thm:mb}, but for the sake of completeness, we prove it in Appendix \ref{sec:prooftestgraphon}.
\begin{cor}\label{cor:testgraphon}
Suppose Assumption \ref{assumption} holds and $\theta_N\rightarrow\theta\in(0,1]$. 
Then, for $\phi_N^+$ as in \eqref{eq:densec}, the following hold: 
 \begin{itemize} 
\item $\lim_{N\rightarrow\infty} \E_{H_0}(\phi_N^+)= \alpha$. 
\item For any $\beta \ne \beta_0$, $\lim_{N\rightarrow\infty} \E_{\beta}(\phi_N^+)= 1 + F_\beta(q_{1-\alpha/2, \beta_0, W}) - F_\beta(q_{\alpha/2, \beta_0, W})$, where $F_{\beta}$ denotes the cumulative distribution function of $U_\beta$. 
\end{itemize}
\end{cor}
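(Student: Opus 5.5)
The plan is to combine the convergence of $-H_N(\bs)$ in \eqref{eq:def_Ubeta} (Theorem \ref{thm:HNsigma} (2)) with the bootstrap consistency of Theorem \ref{thm:mb}. The argument rests on three facts about the limit law $U_\beta$.

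\emph{Step 1: continuity and strict monotonicity of $F_\beta$.} The limit $U_\beta$ is a nondegenerate Gaussian $\sigma_W Z_0$ plus an independent term, provided $\sigma_W>0$. This is the case $\theta<1$, or $\theta=1$ with $W\not\in\{0,1\}$ a.e. In that case $F_\beta$ is continuous and strictly increasing on $\R$. In the degenerate case $\sigma_W=0$ (for instance Curie--Weiss), the chi-square part with some nonzero eigenvalue still has a continuous distribution. Its support is an interval, so the relevant quantiles are continuity points at which $F_\beta$ is strictly increasing. I would record this as a short lemma.

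\emph{Step 2: bootstrap quantiles converge.} Theorem \ref{thm:mb} gives, almost surely, weak convergence of $\hat U_{\beta_0}\mid A_{G_N}$ to $U_{\beta_0}$. By Step 1, the standard quantile-convergence lemma applies: weak convergence to a limit whose CDF is continuous and strictly increasing at the quantile implies convergence of the quantiles. This yields $\hat q_{\zeta,\beta_0,G_N}\to q_{\zeta,\beta_0,W}$ almost surely, for $\zeta\in\{\alpha/2,1-\alpha/2\}$. These thresholds are deterministic limits, so they decouple from the randomness of $\bs$ given $G_N$.

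\emph{Step 3: rejection probability.} Under $\P_\beta$, for any fixed $\beta$ in the subcritical regime, Theorem \ref{thm:HNsigma} (2) gives $-H_N(\bs)\dto U_\beta$, the law being annealed over $G_N$. Slutsky applied to $-H_N(\bs)-\hat q_{\zeta,\beta_0,G_N}$ then shows
\[
\P_\beta\bigl(-H_N(\bs)\le \hat q_{1-\alpha/2,\beta_0,G_N}\bigr)\to F_\beta(q_{1-\alpha/2,\beta_0,W}),
\]
\[
\P_\beta\bigl(-H_N(\bs)> \hat q_{\alpha/2,\beta_0,G_N}\bigr)\to 1-F_\beta(q_{\alpha/2,\beta_0,W}).
\]
The limit points are continuity points of $F_\beta$ by Step 1. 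Summing the two gives the second bullet. For $\beta=\beta_0$, by definition of the quantiles, $F_{\beta_0}(q_{1-\alpha/2})=\alpha/2$ and $1-F_{\beta_0}(q_{\alpha/2})=\alpha/2$, so the sum is $\alpha$, which is the first bullet.

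The only genuine obstacle is Step 1. One must verify that the relevant quantiles are continuity points and are uniquely defined even when $\sigma_W=0$. This requires checking that $\mathrm{Spec}(W)$ is nonempty, which holds since $\int W>0$. It also requires that the series $\sum_\lambda \lambda(Z_\lambda^2/(1-\beta\lambda)-1)$ converges in $L^2$, which follows from \eqref{eq:eigenvalue_l2_sum}. Finally, adding an independent absolutely continuous component $\lambda Z_\lambda^2/(2(1-\beta\lambda))$ preserves absolute continuity, and the support of the sum is an interval, so the CDF is strictly increasing on the interior of the support and the quantiles lie there.
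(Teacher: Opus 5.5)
Your proposal is correct and follows essentially the same route as the paper: almost-sure convergence of the bootstrap quantiles from Theorem~\ref{thm:mb}, then Slutsky combined with Theorem~\ref{thm:HNsigma}~(2), then continuity of $F_\beta$ at the limiting thresholds. Your Step~1 is slightly more careful than the paper, which only notes that $U_\beta$ has a Lebesgue density; you also check that $F_{\beta_0}$ is strictly increasing at the relevant quantiles, including the degenerate case $\sigma_W=0$, and this is what the quantile-convergence step actually requires.
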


Corollary \ref{cor:testgraphon} shows that the test $\phi_N^+$ (where the quantiles of the limiting distribution are estimated from the observed graph $G_N$) attains the same limiting power as the `oracle' test where $W$ is assumed to be known. In other words, $\phi_N$ adapts to the choice of $W$.

\subsection{Minimax Detection Rates} 
\label{sec:minimax}

We know from the results in Section \ref{sec:sparseUMP} that in the sparse regime, the likelihood ratio test can consistently detect deviations from $H_0$ which are of larger order than $\sqrt{\theta_N}$. In this section, we show this is optimal in a minimax sense, that is, no test can consistently detect deviations from $H_0$ which are of smaller order than $\sqrt{\theta_N}$. Specifically, we consider the testing problem: 
\begin{align}\label{perttest8}
  H_0 : \beta = \beta_0\quad\text{versus} \quad H_{1}: |\beta - \beta_{0}| \geq \delta_N , 
\end{align} 
for a sequence $\delta_N \ll 1$. For any test function $\phi_N :  \{-1,1\}^N \rightarrow \{0, 1\}$, its conditional worst-case risk (given the graph $G_N$) is defined as: 
\begin{align}\label{eq:RphiN}
\mathcal{R}(\phi_N) := \P_{H_0}(\phi_N(\bs) = 1|A_{G_N}) + \sup_{ \beta \in \Theta(\beta_0, \delta_N) } \p_{\beta}(\phi_N(\bs) = 0|A_{G_N}) , 
\end{align}   
where $\Theta(\beta_0, \delta_N):= \{ \beta \in (0, \frac{1}{\|W\|_{\mathrm{op}}}) : |\beta-\beta_0| \geq \delta_N \}$. (Note that in our setting $\mathcal{R}(\phi_N)$ is random, as it depends on the random graph $G_N$).

\begin{thm}\label{testminimax} Suppose Assumption \ref{assumption} holds and $\theta_N \ll 1$. Then, the following hold: 
    \begin{enumerate}
        \item[$(1)$] If $\delta_N \ll \sqrt{\theta_N}$, then $\inf_{\phi_N} \mathcal{R}(\phi_N) \pto 1.$ 
        \item[$(2)$] If $\delta_N \gg \sqrt{\theta_N}$, then for every fixed $\alpha \in (0,1)$, the likelihood ratio test $\phi_N^\alpha$  defined in \eqref{eq:testcd} satisfies: $\mathcal{R}(\phi_N^\alpha) \pto \alpha$. Consequently,
$\inf_{\phi_N} \mathcal{R}(\phi_N) \pto 0$, where the infimum is taken over all tests not necessarily of fixed asymptotic level.
    \end{enumerate}  
\end{thm}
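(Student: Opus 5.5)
For part (1), the plan is to lower bound the minimax risk by a two-point argument. Since $\delta_N \ll \sqrt{\theta_N}$ and $\beta_0$ lies in the open subcritical interval, the point $\beta_N := \beta_0 + \delta_N$ belongs to $\Theta(\beta_0,\delta_N)$ for large $N$. For any test $\phi_N$ we then have
\begin{align*}
\mathcal{R}(\phi_N) \ge \P_{\beta_0}(\phi_N=1\mid A_{G_N}) + \P_{\beta_N}(\phi_N=0\mid A_{G_N}) \ge 1 - d_{\mathrm{TV}}\left(\P_{\beta_0}(\cdot\mid A_{G_N}),\P_{\beta_N}(\cdot\mid A_{G_N})\right).
\end{align*}
It therefore suffices to show that this total variation distance tends to $0$ in probability. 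I will bound it through the KL divergence via Pinsker's inequality. Since the conditional model is a one-parameter exponential family with log-partition $\psi_N$, the KL divergence is
\[
\mathrm{KL}(\P_{\beta_0}\|\P_{\beta_N}) = \psi_N(\beta_N)-\psi_N(\beta_0)-(\beta_N-\beta_0)\psi_N'(\beta_0) = \tfrac12 \delta_N^2 \psi_N''(\tilde\beta_N)
\]
for some $\tilde\beta_N$ between $\beta_0$ and $\beta_N$, where $\psi_N'' = \mathrm{Var}_\beta[H_N(\bs)\mid A_{G_N}]$ by \eqref{eq:HNderivative}.

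The key step is to show that $\sup_{\beta\in[\beta_0,\beta_0+\epsilon]}\psi_N''(\beta) = O_P(1/\theta_N)$ for a small $\epsilon$ keeping us subcritical. The heuristic from Theorem \ref{clttilde} is $\psi_N''(0) = \frac{1}{2N^2\theta_N^2}\sum_{i\ne j}A_{G_N}(i,j) \asymp 1/\theta_N$, which is consistent with the Gaussian fluctuations of $H_N$ of order $1/\sqrt{\theta_N}$ in Theorem \ref{thm:HNsigma}. To make this rigorous I would avoid second-moment bounds on $H_N$ and work with $\psi_N$ directly. By convexity,
\[
\psi_N''(\beta)\le \frac{\psi_N'(\beta+\eta)-\psi_N'(\beta)}{\eta},
\]
and this can be integrated further. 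A cleaner route is to use the partition-function fluctuation results from Section \ref{sec:fluctpart78}. These give
\[
\psi_N(\beta) = \frac{\beta^2}{4N^2\theta_N}\sum_{i,j}W(i/N,j/N) + \frac{\beta}{2N\theta_N}\sum_i W(i/N,i/N) + O_P(1)
\]
uniformly on compacts of the subcritical region (the quadratic term is of order $1/\theta_N$). Then the exact identity
\[
\mathrm{KL}(\P_{\beta_0}\|\P_{\beta_N}) = \psi_N(\beta_N)-\psi_N(\beta_0)-\delta_N\psi_N'(\beta_0)
\]
can be handled by comparing with a three-point version. Convexity gives
\[
\psi_N'(\beta_0)\ge \frac{\psi_N(\beta_0)-\psi_N(\beta_0-\delta_N)}{\delta_N},
\]
which yields
\[
\mathrm{KL}\le \psi_N(\beta_0+\delta_N)+\psi_N(\beta_0-\delta_N)-2\psi_N(\beta_0).
\]
This second difference equals the quadratic part, of order $\delta_N^2/\theta_N = o(1)$, plus the second difference of the $O_P(1)$ remainder. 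The main obstacle is that the remainder must have a second difference that is $o_P(1)$, which requires the remainder to be continuous (stochastically equicontinuous) in $\beta$. I expect the partition-function limit to identify the remainder as a continuous functional of $\beta$ (in the sparse regime, a deterministic limit like $-\frac12\sum\log(1-\beta\lambda)$-type terms plus a Gaussian linear in $\beta$), so that it converges jointly at finitely many points to a continuous limit. Evaluating at $\beta_0,\beta_0\pm\delta_N$ then gives a vanishing second difference. Hence the KL divergence is $o_P(1)$, the total variation distance is $o_P(1)$, and $\inf_{\phi_N}\mathcal{R}(\phi_N)\pto 1$ (the upper bound $\le 1$ is trivial using the test $\phi\equiv 0$).

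For part (2), the type I error $\P_{\beta_0}(\phi_N^\alpha=1\mid A_{G_N})\pto\alpha$ follows from the conditional version of Theorem \ref{thm:HNsigma}, together with consistency of $\frac{1}{N^2\theta_N}\sum A_{G_N}(i,j)$. For the type II error uniformly over $|\beta-\beta_0|\ge\delta_N$, I will use monotonicity. Because $\P_\beta$ is an exponential family in $-H_N$, the family has monotone likelihood ratio, so $\P_\beta(H_N\ge \hat c_\alpha\mid A_{G_N})$ is nonincreasing in $\beta$ and $\P_\beta(H_N\le\hat d_\alpha\mid A_{G_N})$ is nondecreasing. Consequently the supremum of $\P_\beta(\phi_N^\alpha=0\mid A_{G_N})$ over $\beta\ge\beta_0+\delta_N$ is attained at $\beta=\beta_0+\delta_N$, and symmetrically on the other side. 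At $\beta_0\pm\delta_N$ with $\delta_N\gg\sqrt{\theta_N}$, the mean of $\theta_N H_N$ shifts by roughly $\mp\frac{\delta_N}{2}\int W$, which is much larger than the fluctuation scale $\sqrt{\theta_N}$. Therefore the acceptance probability tends to $0$ by Chebyshev's inequality applied with the variance bound $\psi_N''=O_P(1/\theta_N)$ and the mean $-\psi_N'$. This yields $\mathcal{R}(\phi_N^\alpha)\pto\alpha$. Letting $\alpha\downarrow 0$ along a sequence, with a diagonal argument, gives $\inf_{\phi_N}\mathcal{R}\pto0$.
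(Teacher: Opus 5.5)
Your architecture matches the paper's: a two-point lower bound in (1) and monotone likelihood ratio (MLR) in (2). In each part, however, the one analytic estimate the proof rests on is asserted rather than proved, and the justification you sketch does not go through.

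\textbf{Part (1).} Your KL/Pinsker reduction is sound and essentially equivalent to the paper's $\chi^2$ bound, since $\log\E_{\beta_0}[L_N^2\mid A_{G_N}]=\psi_N(\beta_0)+\psi_N(\beta_0+2\delta_N)-2\psi_N(\beta_0+\delta_N)$ is also a second difference of $\psi_N$ with step $\delta_N$. The gap is the claim that this second difference is $o_P(1)$. Theorem \ref{thm:ZN} and the lemmas behind it hold at a \emph{fixed} $\beta$; the paper proves no expansion of $\psi_N$ with an $O_P(1)$ remainder uniform on compacts. (The diagonal term in your expansion should also be $\frac{\beta}{2N\theta_N}\sum_i A_{G_N}(i,i)=O_P(1)$; this is harmless only because it is linear in $\beta$.) Joint convergence at finitely many fixed points to a continuous limit does not control a remainder evaluated at the $N$-dependent points $\beta_0\pm\delta_N$. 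That would need stochastic equicontinuity, which you only expect. The fix is to stay under $\P_{\beta_0}$. Set $B_N:=-\sqrt{\theta_N}H_N(\bs)-\frac{\beta_0}{2N^2\sqrt{\theta_N}}\sum_{i,j}W(\frac iN,\frac jN)$, $\Lambda_N(u):=\log\E_{\beta_0}[e^{uB_N}\mid A_{G_N}]$ and $s_N:=\delta_N/\sqrt{\theta_N}\to0$. The linear parts cancel exactly, and convexity of $\Lambda_N$ with $\Lambda_N(0)=0$ gives
\[
0\le \psi_N(\beta_0+\delta_N)+\psi_N(\beta_0-\delta_N)-2\psi_N(\beta_0)=\Lambda_N(s_N)+\Lambda_N(-s_N)\le s_N\bigl(\Lambda_N(1)+\Lambda_N(-1)\bigr).
\]
Here $\Lambda_N(\pm1)\pto\frac14\int_{[0,1]^2}W(x,y)\,\mathrm{d}x\,\mathrm{d}y$ by the proof of Theorem \ref{thm:HNsigma}(1). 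This is essentially the content of the paper's Lemma \ref{znanint6}, and with it your argument closes.

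\textbf{Part (2).} The MLR step is right, though strictly it bounds the two-sided acceptance probability by a one-sided one such as $\P_\beta(H_N(\bs)\ge\hat c_\alpha\mid A_{G_N})$, which is nonincreasing in $\beta$. You then apply Chebyshev at $\beta_0\pm\delta_N$, which needs $\psi_N''(\beta_0\pm\delta_N)=O_P(1/\theta_N)$ and a quantitative mean shift at that same point. Neither is available. You never prove the variance bound; it is the uniform bound you set aside in part (1). The paper's distributional results (Proposition \ref{mlelemd4y}, Lemma \ref{thetanperturb6}) only cover $\beta_0+t\sqrt{\theta_N}$ with $t$ fixed, whereas here $\delta_N/\sqrt{\theta_N}\to\infty$. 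The fix, which is what the paper does, is to use monotonicity once more. For any fixed $M$, eventually $\delta_N\ge M\sqrt{\theta_N}$, so $\sup_{\beta\ge\beta_0+\delta_N}\P_\beta(\phi_N^\alpha=0\mid A_{G_N})\le\P_{\beta_0+M\sqrt{\theta_N}}(H_N(\bs)\ge\hat c_\alpha\mid A_{G_N})$. By Proposition \ref{mlelemd4y}, this tends in probability to $\Phi\bigl(z_{\alpha/2}-M\sqrt{\tfrac12\int_{[0,1]^2}W(x,y)\,\mathrm{d}x\,\mathrm{d}y}\bigr)$. Then let $M\to\infty$, and argue symmetrically for $\beta\le\beta_0-\delta_N$. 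The rest of part (2), namely the type I error and $\inf_{\phi_N}\mathcal{R}(\phi_N)\pto0$ by taking $\alpha$ small, is fine.
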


Theorem \ref{testminimax} shows that the minimax detection threshold for the goodness-of-fit test in the subcritical regime is $\sqrt{\theta_N}$. The proof is given in Appendix \ref{proofminimax}. The proof of (2) uses the asymptotic distribution of $\tilde{\beta}_N$ from \eqref{eq:cltestimate}, combined with a monotone likelihood ratio property. The proof of (1) relies on a second-moment computation that uses the results on the sufficient statistic and the log-partition function from Section \ref{sec:ZH}.

\section{Limit of Experiments}\label{sec:limexp}

The notion of limit of experiments, introduced by Le Cam \cite{lecam1972limits}, is a fundamental concept in his asymptotic theory of statistical decision making. It describes how a sequence of statistical models, under suitable scaling, converges to a simpler canonical experiment that retains the essential information for asymptotic inference. It provides an asymptotic benchmark, in the sense that no sequence of procedures can asymptotically outperform the optimal procedure in the limit experiment. Below, we give the rigorous definition of limit of experiments.

\begin{defi}\label{definition:experiment}
    Suppose that for each $1\le N \le \infty$, $\{Q_{N,h}: h\in H\}$ is a family of probability measures on the measurable space $(\Omega_N,\mathscr{F}_N)$. We say that $\{Q_{N,h}: h\in H\}$ converges to $\{Q_{\infty,h}: h\in H\}$ as $N\rightarrow \infty$, and write $$Q_{N,h}\xrightarrow{\mathrm{Exp}} Q_{\infty,h},$$
    if for every $h_0 \in H$ and every finite $I\subseteq H$, we have:
    $$\left(\frac{\mathrm{d} Q_{N,h}}{\mathrm{d} Q_{N,h_0}}(X_N)\right)_{h\in I} \xrightarrow[X_N\sim Q_{N,h_0}]{d} \left(\frac{\mathrm{d} Q_{\infty,h}}{\mathrm{d} Q_{\infty,h_0}}(X)\right)_{h\in I}$$
    where $X \sim Q_{\infty,h_0}$. In particular, if $H \subseteq \mathbb{R}$ is an open set, and $Q_{\infty,h} = \cN(h,\sigma^2)$ for some $\sigma>0$, then we say that the family $\{Q_{N,h}: h\in H\}$ of experiments is \textit{locally asymptotically normal} (LAN). Note that if $X_{\infty,h}\sim Q_{\infty,h}$, then, as a matter of notational convenience, we shall sometimes write
$Q_{N,h}\xrightarrow{\mathrm{Exp}} X_{\infty,h}$
to mean that
$Q_{N,h}\xrightarrow{\mathrm{Exp}} Q_{\infty,h}.$
\end{defi}

Although the literature on limit of experiments for i.i.d. models is quite extensive (see, for example, \cite[Chapter 9]{Vaart_1998}), the corresponding theory for models with dependent outcomes remains scarce. The first attempt to establish limit experiments for Markov random fields was made in \cite{xu2023inference}, where the authors considered Ising models on dense degree-regular graphs in the supercritical and critical regimes. 
In the following theorem, we derive the limiting experiment for Ising models on sparse inhomogeneous random graphs, that allows for both sparse graphs and degree-heterogeneity, in the subcritical regime.

\begin{thm}\label{th:limexp} 
Suppose Assumption \ref{assumption} holds. Then the following hold: 
    \begin{enumerate}
        \item[$(1)$] (Sparse regime) If $\theta=0$, then $$\{\P_{\beta_0 + h\sqrt{\theta_N}}\}_{h\in \mathbb{R}} \xrightarrow{\mathrm{Exp}} \left\{\cN\left(h, \frac{2}{\int_{[0,1]^2} W(x,y) \mathrm{d}x \mathrm{d}y} \right)\right\}_{h\in \mathbb{R}}. $$  
        
        \item[$(2)$] (Dense regime) If $\theta>0$, then 
        $$\{\P_{\beta}\}_{\beta\in \left(0,\frac{1}{\|W\|_{\mathrm{op}}}\right)} \xrightarrow{\mathrm{Exp}} \left\{ V_{\beta} \right\}_{\beta\in \left(0,\frac{1}{\|W\|_{\mathrm{op}}}\right)} , $$
        where $V_{\beta}$ is as defined in \eqref{eq:Vbeta}. 
    \end{enumerate}
\end{thm}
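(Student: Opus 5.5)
The plan is to work directly with the likelihood ratio. Since the law of $G_N$ does not depend on $\beta$, for any $\beta,\beta_0$ in the subcritical regime we have
\[
\log \frac{\mathrm d\P_{\beta}}{\mathrm d\P_{\beta_0}}(\bs)= -(\beta-\beta_0)H_N(\bs) - \big(\psi_N(\beta)-\psi_N(\beta_0)\big).
\]
The right-hand side involves only the sufficient statistic $H_N(\bs)$ and the random log-partition function $\psi_N$. Thus the statement reduces to the joint asymptotics, under $\P_{\beta_0}$ and for finitely many $\beta$ at a time, of $H_N(\bs)$ (Theorem \ref{thm:HNsigma}) and of $\psi_N(\beta)-\psi_N(\beta_0)$ (the partition function fluctuations of Section \ref{sec:fluctpart78}). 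In both regimes I will use an expansion of the form
\[
\psi_N(\beta)=\frac{\beta}{2N\theta_N}\sum_{i}A_{G_N}(i,i)+\frac{\beta^2}{4N^2\theta_N^2}\sum_{i\neq j}A_{G_N}(i,j)+c_N(\beta),
\]
in which the remainder $c_N(\beta)$ converges in probability to a deterministic continuous function $c(\beta)$ of spectral type. This is what the analogue of the Kabluchko et al.\ result should give: roughly $c(\beta)=-\frac12\sum_\lambda(\log(1-\beta\lambda)+\beta\lambda+\beta^2\lambda^2/2)$ up to the normalizations used.

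\textbf{Sparse regime.} Put $\beta_h=\beta_0+h\sqrt{\theta_N}$ and write $S_N:=\frac{1}{2N^2\theta_N}\sum_{i\ne j}A_{G_N}(i,j)$, which tends in probability to $\frac12\int W$. The expansion above then gives
\[
\log\frac{\mathrm d\P_{\beta_h}}{\mathrm d\P_{\beta_0}} = h\sqrt{\theta_N}\Big(-H_N(\bs)-\frac{\beta_0 S_N}{\theta_N}-\frac{1}{2N\theta_N}\sum_iA_{G_N}(i,i)\Big)-\frac{h^2}{2}S_N+o_P(1).
\]
Here I use continuity of $c$ at $\beta_0$ to discard $c_N(\beta_h)-c_N(\beta_0)$. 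I also note that the diagonal term multiplied by $\sqrt{\theta_N}$ is $O_P(\sqrt{\theta_N})$, so it is negligible.

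By Theorem \ref{thm:HNsigma}(1), the bracket times $\sqrt{\theta_N}$ converges to $\mathcal N(0,\tfrac12\int W)$ under $\P_{\beta_0}$. The vector of log-likelihood ratios over $h\in I$ is an affine function of this single statistic, so the convergence is joint, and the limit is
\[
hZ-\tfrac{h^2}{2}\,\mathcal I,\qquad Z\sim \mathcal N(0,\mathcal I),\quad \mathcal I=\tfrac12\int W .
\]
This is exactly the likelihood-ratio process of the Gaussian shift $\{\mathcal N(h,\mathcal I^{-1})\}_h=\{\mathcal N(h,2/\int W)\}_h$, which proves (1).

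\textbf{Dense regime.} For fixed $\beta$, Theorem \ref{thm:HNsigma}(2) gives $-H_N(\bs)\dto U_{\beta_0}=V_{\beta_0}+\frac12\int_0^1W(x,x)\,\mathrm dx$. The partition function expansion gives
\[
\psi_N(\beta)-\psi_N(\beta_0)\pto (\beta-\beta_0)\Big(\tfrac12\int_0^1 W(x,x)\,\mathrm dx+\beta_0\sigma_W^2\Big)+\tfrac{\beta^2-\beta_0^2-2\beta_0(\beta-\beta_0)}{2}\sigma_W^2+c(\beta)-c(\beta_0),
\]
a deterministic constant. By Slutsky, the log-likelihood-ratio vector converges jointly to $\big((\beta-\beta_0)V_{\beta_0}-(K(\beta)-K(\beta_0))\big)_{\beta\in I}$ for explicit constants $K(\beta)$.

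It remains to identify these limits with $\log\frac{\mathrm dV_\beta}{\mathrm dV_{\beta_0}}(V_{\beta_0})$. The point is that $\{V_\beta\}$ is a one-parameter exponential family with sufficient statistic the identity; this is the key observation, since $V_\beta$ has no explicit density. Tilting $\sigma_WZ_0+\beta_0\sigma_W^2$ by $e^{tx}$ yields $\sigma_WZ_0+(\beta_0+t)\sigma_W^2$. Tilting $\frac\lambda2\frac{Z^2}{1-\beta_0\lambda}$ by $e^{tx}$ multiplies the Gaussian precision by $\frac{1-(\beta_0+t)\lambda}{1-\beta_0\lambda}$, yielding $\frac\lambda2\frac{Z^2}{1-(\beta_0+t)\lambda}$. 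By independence these tilts combine. The infinite sum is handled via the cumulant generating function, which converges because $\sum\lambda^2<\infty$ and $\beta<1/\|W\|_{\mathrm{op}}$. Hence
\[
\frac{\mathrm dV_\beta}{\mathrm dV_{\beta_0}}(x)=e^{(\beta-\beta_0)x-\kappa(\beta)+\kappa(\beta_0)},\qquad \kappa(t)=\log\E e^{tV_0}\ \text{(suitably centered)}.
\]
Then one checks $K=\kappa$ up to an additive constant. This can be done either by direct comparison of the spectral sums, or more cleanly by showing that both sides have expectation one. For the latter, uniform integrability of the likelihood ratios follows from Le Cam's third lemma and mutual contiguity, since $V_\beta$ has full support and the limit log-likelihood ratio has mean $-\frac12$ of its variance at Gaussian order.

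\textbf{Main obstacle.} I expect the hardest step to be justifying that $c_N(\beta)$ converges to a deterministic limit, jointly in $\beta\in I$ and on the same probability space as $H_N(\bs)$, with the correct normalization. In particular I must confirm that the random graph fluctuations (edge counts, spectral fluctuations) enter $\psi_N$ only through the explicit terms displayed above, and that these terms cancel exactly against the centering of $H_N$. If the partition function result instead leaves a random $O(1)$ limit, I would absorb it together with $H_N$ and re-derive the joint limit via a single conditional characteristic-function computation given $A_{G_N}$.
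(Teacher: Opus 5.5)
Your decomposition of the log-likelihood ratio into $-(\beta-\beta_0)H_N(\bs)$ plus a partition-function increment is exactly what the paper does in Proposition~\ref{limexp1}. So is your split of $\psi_N$ into $\gamma\vartheta_N+\gamma^2\eta_N+\log\mathbb{E}\hat Z_N+\log J_N$. In the dense regime your limit coincides with Proposition~\ref{limexp1}(2), and your guess $c(\beta)=-\frac12\sum_\lambda(\log(1-\beta\lambda)+\beta\lambda+\beta^2\lambda^2/2)$ is exactly the limit of $\log\mathbb{E}\hat Z_N(\beta)$ in Lemma~\ref{lem:expect_sum}(1).

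There is, however, a gap in the sparse regime. Pointwise convergence $c_N(\beta)\xrightarrow{P}c(\beta)$ at each fixed $\beta$, together with continuity of $c$, does not give $c_N(\beta_0+h\sqrt{\theta_N})-c_N(\beta_0)\xrightarrow{P}0$. The random functions $c_N$ are evaluated at a point that moves with $N$, so you need local uniformity at scale $\sqrt{\theta_N}$. Convexity of $\psi_N$ does not help: $c_N''(\beta)=\mathrm{Var}_\beta(H_N(\bs)\mid A_{G_N})-\frac{1}{2N^2\theta_N^2}\sum_{i\neq j}A_{G_N}(i,j)$ is a difference of two terms of order $1/\theta_N$, and their cancellation is exactly what needs proof. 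Moreover, if $N^{2/3}\theta_N$ is only bounded below, $c_N(\beta)$ need not converge at all, because of the term $\frac{\beta^4}{12N^4\theta_N^3}\sum_{i<j}W(\frac iN,\frac jN)$ in \eqref{eq:JN}; only its increment vanishes.

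This moving-parameter estimate is the real obstacle. Joint convergence with $H_N$ is not, since it is automatic by Slutsky once the limits are constants. The paper supplies the estimate in \eqref{ref2707}, via Lemma~\ref{thetanperturb6} and Lemma~\ref{var_sum} applied at $\beta+t\sqrt{\theta_N}$; cite that rather than continuity.

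Second, you compute the likelihood-ratio process only under $\mathbb{P}_{\beta_0}$, i.e.\ $h_0=0$, while Definition~\ref{definition:experiment} requires every base point $h_0$. This is easily repaired in either of two ways:
\begin{itemize}
\item Le Cam's first lemma gives contiguity from your LAN limit, and the third lemma then shifts $Z$ to $\mathcal N(h_0\mathcal I,\mathcal I)$ under $\mathbb{P}_{\beta_0+h_0\sqrt{\theta_N}}$, which is Proposition~\ref{limexp1}(1).
\item Alternatively, argue under $\mathbb{P}_{\beta_0+h_0\sqrt{\theta_N}}$ directly using Proposition~\ref{mlelemd4y}, as the paper does.
\end{itemize}
Also note that replacing $\beta_0 S_N/\sqrt{\theta_N}$ by the deterministic centering of Theorem~\ref{thm:HNsigma}(1) uses $D_N=o_P(1)$ from \eqref{dnequalop1}.

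Your identification of the dense limit is a genuinely different route. The paper proves Lemma~\ref{frinv} by Fourier inversion and a contour shift between the lines $\mathrm{Re}\,z=\beta_0$ and $\mathrm{Re}\,z=\beta$. You observe instead that $V_\beta$ is the $e^{(\beta-\beta_0)x}$-tilt of $V_{\beta_0}$. This follows cleanly from the moment generating function \eqref{eq:mgfU} via $M_{\beta_0}(s+t)/M_{\beta_0}(t)=M_{\beta_0+t}(s)$ for small $s$; you can chain small steps if you want to stay inside the range of Lemma~\ref{lm:HNmgf}. Your route gives the Radon--Nikodym derivative directly and needs neither Lebesgue densities nor integrable characteristic functions. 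The paper's inversion formula tacitly needs integrability, which fails when $\sigma_W=0$ and $W$ has at most two nonzero eigenvalues. In Curie--Weiss, for example, $V_\beta$ is an affine image of a $\chi^2_1$ and its characteristic function decays like $|t|^{-1/2}$. So your route is more elementary and more robust.

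To match constants, use the direct comparison of spectral sums, which does check out. Drop the ``expectation one'' justification as written. The right tool is Le Cam's first lemma with $\beta$ and $\beta_0$ swapped: the reverse ratio has a strictly positive limit, so $\mathbb{P}_\beta\triangleleft\mathbb{P}_{\beta_0}$ and the forward limit has mean one. The third lemma is not what gives this. Also, $V_\beta$ need not have full support (in Curie--Weiss $V_\beta\ge-\frac12$), and there is no Gaussian ``mean $-\frac12$ variance'' structure in this regime.
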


The proof of Theorem \ref{th:limexp} is given in Appendix \ref{proof_limexp8}. The result shows that the limiting experiment is LAN in the sparse setting ($\theta_N\ll 1$). In contrast, in the dense setting ($\theta_N \asymp 1$), the limiting experiment is non-Gaussian. To the best of our knowledge, the latter is the first example of an experiment limit in the context of Markov random fields, where the limiting likelihood ratio does not admit a closed-form density.

\begin{remark}
    An immediate consequence of Theorem \ref{th:limexp} is that in the sparse regime, the measures $\P_{\beta+h\sqrt{\theta_N}}$ and $\P_{\beta+h'\sqrt{\theta_N}}$ are mutually contiguous for any $0<\beta<\frac{1}{\|W\|_{\mathrm{op}}}$ and $h, h'\in \mathbb{R}$. This follows from Le Cam's first lemma, since the likelihood ratio of two non-degenerate Gaussians is always positive with probability $1$. Hence, no consistent test exists for any simple versus simple hypotheses, where the separation between the null and the alternative is proportional to $\sqrt{\theta_N}$. This aligns with Theorem \ref{locpower}, which shows that the limiting local power of the likelihood ratio test for such hypotheses is strictly less than $1$. A similar conclusion also holds for hypotheses of the form \eqref{perttest8} in the dense regime, which gives an alternative proof of Theorem 3.3 (a) in \cite{bhattacharya2018inference}. 
\end{remark}

For illustration, as before, we now compute the limiting experiment in our running examples:

\begin{example}[Block models] 
Let $W$ be the 2-block graphon as in \eqref{eq:Wpq}. Suppose $G_N \sim G(N, \theta_N, W)$ and $\bs$ be a sample from the Ising model \eqref{model_def} on $G_N$ with $0 < \beta < \frac{2}{(p+q)}$ and $N^{\frac{2}{3}}\theta_N \gg 1$. Then from Theorem \ref{th:limexp}, we obtain the following:

\begin{itemize}
\item If $\theta=0$, then
\begin{align}\label{eq:sparseexample-limexp}
\{\P_{\beta_0+h\sqrt{\theta_N}}\}_{h\in\mathbb R}
\xrightarrow{\mathrm{Exp}}
\left\{\cN\left(h,\frac{4}{p+q}\right)\right\}_{h\in\mathbb R}.
\end{align}

\item If $\theta\in(0,1]$, then
\begin{align}\label{eq:denseexample-limexp}
\{\P_{\beta}\}_{\beta\in\left(0,\frac{2}{p+q}\right)}
\xrightarrow{\mathrm{Exp}}
\{V_\beta\}_{\beta\in\left(0,\frac{2}{p+q}\right)},
\end{align}
where
\begin{align*}
V_{\beta}
&= \beta\sigma_W^2 + \sigma_W Z_0 
+ \frac{\lambda_{+}}{2} \left(\frac{Z_{\lambda_{+}}^2}{1-\beta \lambda_{+}}-1\right)
+ \frac{\lambda_{-}}{2} \left(\frac{Z_{\lambda_{-}}^2}{1-\beta \lambda_{-}}-1\right),
\end{align*}
with $Z_0,Z_{\lambda_{+}},Z_{\lambda_{-}}$ i.i.d.\ $\cN(0,1)$, and
\begin{align*}
\sigma_W^2
= \frac{1}{4\theta} \left(p(1-\theta p) + q(1-\theta q)\right).
\end{align*}
\end{itemize}
As in Example \ref{example1_26}, we consider the following important special cases.
\begin{itemize}

\item {\it Erd\H{o}s-R\'enyi model $G(N,\theta_N)$:}
This corresponds to $W\equiv 1$, that is, $p=q=1$ in \eqref{eq:Wpq}. In this case,
$\lambda_{+}=1$, $\lambda_{-}=0$, and $\|W\|_{\mathrm{op}}=1$.
Hence, in the sparse regime \eqref{eq:sparseexample-limexp} simplifies to
\begin{align*}
\{\P_{\beta_0+h\sqrt{\theta_N}}\}_{h\in\mathbb R}
\xrightarrow{\mathrm{Exp}}
\left\{\cN(h,2)\right\}_{h\in\mathbb R}.
\end{align*}
Moreover, in the dense regime \eqref{eq:denseexample-limexp} holds with
\begin{align}\label{eq:dexampleGN-limexp}
V_{\beta}
= \frac{\beta(1-\theta)}{2\theta}
+ \sqrt{\frac{1-\theta}{2\theta}}\,Z_0
+ \frac{1}{2}\left(\frac{Z_1^2}{1-\beta}-1\right),
\end{align}
where $Z_0,Z_1$ are i.i.d.\ $\cN(0,1)$.

\item {\it Curie-Weiss model:}
This corresponds to $W\equiv 1$ and $\theta=1$. In this case,
\eqref{eq:dexampleGN-limexp} simplifies to
\begin{align*}
V_{\beta}
= \frac{1}{2}\left(\frac{Z_1^2}{1-\beta}-1\right),
\end{align*}
where $Z_1\sim \cN(0,1)$. Hence,
\begin{align*}
\{\P_{\beta}\}_{\beta\in(0,1)}
\xrightarrow{\mathrm{Exp}}
\left\{
\frac{1}{2}\left(\frac{Z_1^2}{1-\beta}-1\right)
\right\}_{\beta\in(0,1)}.
\end{align*}

\item {\it Random bipartite graph:}
This corresponds to taking $p=0$ and $q=1$ in \eqref{eq:Wpq}. In this case,
$\lambda_{+}=\frac{1}{2}$, $\lambda_{-}=-\frac{1}{2}$, and $\|W\|_{\mathrm{op}}=\frac{1}{2}$.
Hence, in the sparse regime \eqref{eq:sparseexample-limexp} simplifies to
\begin{align*}
\{\P_{\beta_0+h\sqrt{\theta_N}}\}_{h\in\mathbb R}
\xrightarrow{\mathrm{Exp}}
\left\{\cN(h,4)\right\}_{h\in\mathbb R}.
\end{align*}
Moreover, in the dense regime \eqref{eq:denseexample-limexp} holds with
\begin{align*}
V_{\beta}
= \frac{\beta(1-\theta)}{4\theta}
+ \sqrt{\frac{1-\theta}{4\theta}}\,Z_0
+ \frac{Z_1^2}{4-2\beta}
- \frac{Z_2^2}{4+2\beta},
\end{align*}
where $Z_0,Z_1,Z_2$ are i.i.d.\ $\cN(0,1)$.
\end{itemize}
\end{example}

\begin{example}[Rank one graphon model]
Consider the rank one graphon $W(x, y) = f(x) f(y)$, where $f: [0, 1] \rightarrow [0, 1]$ is continuous almost everywhere, as in Example \ref{rankOne}. Suppose $G_N \sim G(N, \theta_N, W)$ and $\bs$ is a sample from the Ising model \eqref{model_def} on $G_N$ with $0 < \beta < \frac{1}{\mu_2}$, where $\mu_2 = \| f \|_2^2$,  and $N^{\frac{2}{3}}\theta_N \gg 1$. Then from Theorem \ref{th:limexp}, we obtain the following:
\begin{itemize}
\item If $\theta=0$, 
$$\{\P_{\beta_0 + h\sqrt{\theta_N}}\}_{h\in \mathbb{R}} \xrightarrow{\mathrm{Exp}} \left\{\cN\left(h,\frac{2}{\mu_1^2} \right)\right\}_{h\in \mathbb{R}}, $$
where $\mu_1 = \int_0^1 f(x) \mathrm d x$, 
\item If $\theta \in (0,1]$, 
\begin{align} 
& \{\P_{\beta}\}_{\beta\in \left(0,\frac{1}{\mu_2}\right)}  \nonumber \\ 
& \xrightarrow{\mathrm{Exp}} \left\{\frac\beta{2\theta}(\mu_1^2-\theta \mu_2^2)+Z_0\sqrt{\frac1{2\theta}(\mu_1^2-\theta \mu_2^2)} +\frac{\mu_2}{2}\left(\frac{Z_1^2}{1-\beta \mu_2}-1\right)\right\}_{\beta\in (0,\frac{1}{\mu_2})} , \nonumber 
\end{align} 
where $Z_0, Z_1$ are i.i.d. $\cN(0, 1)$.  
\end{itemize} 
\end{example}

\section{Fluctuations of the Hamiltonian and the Partition Function}
\label{sec:ZH}

At the core of our inferential results lie the fluctuations of the sufficient statistic (Hamiltonian) $H_N(\bs)$ (recall \eqref{eq:sufficientstatistics}), which is the sufficient statistic of the model \eqref{model_def}. These are, in turn, closely tied to the fluctuations of the logarithm of the partition function $Z_N$. To state the results, we recall the definition of $\alpha$- H\"{o}lder continuity (in the context of graphons): A graphon $W: [0, 1]^2 \rightarrow [0, 1] $ is said to be $\alpha$-{\it H\"{o}lder continuous}, for some $\alpha \in (0, 1]$, if there exists a constant $C>0$ such that $$|W(x, y)-W(x', y')| \leq C \left(\sqrt{ (x-x')^2 + (y-y')^2} \right)^{\alpha} , $$ for all $(x, y), (x', y') \in [0, 1]^2$.

\subsection{Fluctuations of the Hamiltonian}\label{sec:fluct_ham6}

In this section we present our results on the fluctuations of the Hamiltonian $H_N(\vec \sigma)$ (recall \eqref{eq:sufficientstatistics}). Throughout, the convergence holds jointly over the randomness of $\bs$ and the graph $G_N$.

\begin{thm}[Fluctuations of the Hamiltonian]
    \label{thm:HNsigma} 
    Suppose Assumption \ref{assumption} holds. Then the following hold: 
    \begin{enumerate} 
    
            \item[$(1)$] (Sparse regime) If $\theta = 0$, then
              \begin{equation}\label{eq:HNsparse}
                   \hspace{0.2in}  \frac{1}{\sqrt{\theta_N}}\left(\theta_N H_N(\bs)+\frac{\beta}{2N^2}\sum_{1 \leq i,j \leq N}W\left(\frac{i}{N},\frac{j}{N}\right)\right)\xrightarrow{D}\cN\left(0,\frac{1}{2}\int_{[0,1]^2} W(x,y)\mathrm{d}x\mathrm{d}y\right).
              \end{equation}
               Consequently, if $W$ is $\alpha$-H\"{o}lder continuous with $\alpha > \frac{1}{3}$, then:
\begin{align}\label{holder112}
                  \frac{1}{\sqrt{\theta_N}}\left(\theta_N H_N(\bs)+\frac{\beta}{2}\int_{[0,1]^2} W(x,y) \mathrm{d}x \mathrm{d}y\right)\xrightarrow{D}\cN\left(0,\frac{1}{2}\int_{[0,1]^2} W(x,y)\mathrm{d}x\mathrm{d}y\right).
              \end{align}

        \item[$(2)$] (Dense regime) If $\theta \in (0,1]$, then
              \begin{equation}\label{eq:HNdense}
                 \hspace{0.2in}  -H_N(\bs) \xrightarrow{D} \beta \sigma_W^2 + \frac{1}{2}\int_{0}^{1}W(x,x)\mathrm{d}x + \sigma_WZ_0 + \frac{1}{2} \sum_{\lambda \in \mathrm{Spec}(W)} \lambda \left(\frac{Z_{\lambda}^2}{1-\beta \lambda}-1\right),
              \end{equation}
              where $\sigma_W^2:=\frac{1}{2\theta} \int_{[0,1]^2} W(x,y)(1-\theta W(x,y))\mathrm{d}x\mathrm{d}y$
              and $Z_0, Z_1,\ldots$ are i.i.d. standard normal random variables.

    \end{enumerate}
\end{thm}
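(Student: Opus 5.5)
Throughout, the plan is to treat $H_N(\bs)$ through its moment generating function, computed jointly over $\bs$ and $G_N$. Fix $t$ in a small neighbourhood of $0$. Since $\E_\beta[e^{-sH_N(\bs)}\mid A_{G_N}] = Z_N(\beta+s)/Z_N(\beta)$, we get
\begin{align*}
\E\left[e^{-sH_N(\bs)}\right] = \E\left[\exp\big(\psi_N(\beta+s)-\psi_N(\beta)\big)\right],
\end{align*}
where the outer expectation is over $G_N$. We will take $s=t/\sqrt{\theta_N}$ in the sparse regime (after the recentring in \eqref{eq:HNsparse}) and $s=t$ in the dense regime. For small $|t|$, the point $\beta+s$ (sparse case) or $\beta+t$ (dense case) still lies in the subcritical regime. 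So the problem reduces to the joint fluctuations of the random log-partition function $\psi_N$ at two nearby subcritical temperatures, together with its expansion in the temperature variable. These are the partition-function fluctuation results of Section~\ref{sec:fluctpart78}, which extend \cite{kabluchko2021fluctuations}; I take them as the main input.

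Concretely, I expect an expansion of the following form, uniformly for $b$ in compact subsets of $(0,1/\|W\|_{\mathrm{op}})$:
\begin{align*}
\psi_N(b) = \frac{b^2}{4N^2\theta_N^2}\sum_{i\neq j}A_{G_N}(i,j) + \frac{b}{2N\theta_N}\sum_i A_{G_N}(i,i) - \frac12\sum_{\lambda}\left(\log(1-b\lambda)+b\lambda+\tfrac{b^2\lambda^2}{2}\right) + o_P(1).
\end{align*}
This is the Gaussian (Hubbard--Stratonovich) computation. The cubic and higher cycle counts of $G_N$ converge to the $t(C_r,W)=\sum\lambda^r$ terms, while in the dense regime the random fluctuation of the top-order spectrum of $A_{G_N}/(N\theta_N)$ produces the $\chi^2$ terms.

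\textbf{Sparse regime.} Take the difference $\psi_N(\beta+t\sqrt{\theta_N})-\psi_N(\beta)$ with the recentring. The edge-count term contributes
\begin{align*}
\frac{(2\beta t\sqrt{\theta_N}+t^2\theta_N)}{4N^2\theta_N^2}\sum_{i\ne j} A_{G_N}(i,j).
\end{align*}
The part linear in $t$ cancels against the centring $\frac{\beta}{2N^2}\sum W$, up to the edge-count fluctuation $\sum(A-\E A)$. That fluctuation is of order $N\sqrt{\theta_N}$, so after dividing by $N^2\theta_N^{3/2}$ it is $O_P(1/(N\theta_N))=o_P(1)$. The quadratic part gives $\frac{t^2}{4}\int W$, which is the MGF of $\cN(0,\frac12\int W)$. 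The spectral and diagonal terms change by $O(\sqrt{\theta_N})$ and vanish.

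The delicate point is that a mean-value bound on the $o_P(1)$ remainder of $\psi_N$ is not sufficient. I need the remainder's increment over a window of width $\sqrt{\theta_N}$ to be $o(1)$, i.e.\ control of $\psi_N'$ uniformly up to $o_P(\theta_N^{-1/2})$. I would get this by applying the expansion directly to the derivative $\psi_N'(b)=-\E_b[H_N\mid A_{G_N}]$, again via the Gaussian transform. This step is where $N^{2/3}\theta_N\gtrsim1$ is needed. Passing from MGF convergence to distributional convergence also requires uniform integrability, so I would prove convergence of $\E[e^{-sH}\mid A_{G_N}]$ in probability (the conditional MGF), which suffices by bounded convergence applied to characteristic functions obtained analytically.

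For \eqref{holder112}, the Riemann sum $\frac1{N^2}\sum W(i/N,j/N)$ differs from $\int W$ by $O(N^{-\alpha})$. Dividing by $\sqrt{\theta_N}\gtrsim N^{-1/3}$ leaves $o(1)$ exactly when $\alpha>1/3$.

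\textbf{Dense regime.} Here $s=t$ is fixed. I would use the joint limit of $(\psi_N(\beta+t)-\psi_N(\beta))$ in law, conditional on a jointly converging representation of the graph. Write $\sum_{i\neq j}A=\E+ N\sqrt{2\theta_N\cdot}\,\sigma_W Z_0$-type Gaussian fluctuation. The centered linear statistic, with variance $N^2\theta\int W(1-\theta W)$, gives the $\sigma_W Z_0$ term after scaling by $t(2\beta+t)/(4N^2\theta^2)$. Meanwhile, the $\log(1-b\lambda)$ terms come from the $\bs$-randomness via quadratic forms. Rather than MGFs of a mixture, it is cleaner to compute $\E[e^{-tH}\mid A_{G_N}]$, which converges in law to
\begin{align*}
\exp\Big(t\beta\sigma_W^2+\tfrac{t^2}{2}\sigma_W^2\ \text{(from the conditional Gaussian part)}\Big)\prod_\lambda\frac{e^{-t\lambda/2}}{\sqrt{1-t\lambda/(1-\beta\lambda)}}
\end{align*}
times a factor for $\sigma_W Z_0$. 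Matching this with the MGF of \eqref{eq:HNdense} (the $\chi^2$ MGF identity $\E e^{u(Z^2-1)}$) proves the result.

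I expect the main obstacle to be the derivative-level (uniform in a $\sqrt{\theta_N}$-window) control of the random log-partition function in the sparse case.
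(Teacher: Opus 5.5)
Your architecture matches the paper's. The conditional MGF is $Z_N(\beta+s)/Z_N(\beta)$. You expand $\psi_N(b)$ as a random edge-count term plus a diagonal term plus a deterministic spectral term plus $o_P(1)$. Then you pass from convergence in probability of the conditional MGF to joint convergence, as in Lemma~\ref{l2ratio762}. For $\theta_N\gg N^{-2/3}$ your formula for $\psi_N(b)$ is exactly what the paper's factorization $Z_N(b)=J_N(b)\,e^{\gamma_b\vartheta_N+\gamma_b^2\eta_N}\,\E\hat Z_N(b)$ yields. Your sparse bookkeeping is right, with $s=t\sqrt{\theta_N}$ (your first paragraph says $t/\sqrt{\theta_N}$). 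Two things, however, are wrong or missing.

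\textbf{The dense regime.} The mechanism you give for $\sigma_W Z_0$ fails.
\begin{itemize}
\item The centred edge count $\sum_{i\neq j}(A_{G_N}(i,j)-\E A_{G_N}(i,j))$ has standard deviation of order $N$. After multiplying by $t(2\beta+t)/(4N^2\theta_N^2)$ it is $O_P(1/N)$. So $\psi_N(\beta+t)-\psi_N(\beta)$ converges in probability to a constant, not in law to a mixture.
\item The Gaussian component comes from the spins given the graph, namely the part of $\frac{1}{2N\theta_N}\bs^\top A_{G_N}\bs$ carried by $A_{G_N}-\E A_{G_N}$. In your own expansion it appears deterministically: $\frac{2\beta t+t^2}{4\theta}\int W-\frac{2\beta t+t^2}{4}\sum_\lambda\lambda^2=(t\beta+\tfrac{t^2}{2})\sigma_W^2$, since $\sum_\lambda\lambda^2=\|W\|_2^2$.
\item Your displayed limit therefore already contains the whole $\sigma_W Z_0$ factor. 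Multiplying by a further ``factor for $\sigma_W Z_0$'' double counts.
\item You also dropped the factor $e^{\frac t2\int_0^1 W(x,x)\,\mathrm dx}$ coming from the diagonal term.
\item The $\chi^2$ terms likewise do not come from random fluctuations of the top spectrum of $A_{G_N}/(N\theta_N)$, which converges almost surely. They come from the spins, and the spectral term in $\psi_N$ is deterministic.
\end{itemize}

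\textbf{The expansion of $\psi_N$ carries the whole proof, and you assume it.} Theorem~\ref{thm:ZN} does not supply it. It only describes $\log Z_N(\beta)-\log\E Z_N(\beta)$ at a fixed $\beta$. It gives no asymptotics for the centring and nothing at the moving point $\beta+t\sqrt{\theta_N}$. Also, at $\theta_N\asymp N^{-2/3}$ your expansion omits the $O(1)$ term $-\frac{b^4}{12N^4\theta_N^3}\sum_{i<j}W(\frac iN,\frac jN)$; this is harmless for increments.

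The paper obtains the expansion in three steps:
\begin{itemize}
\item an $L^2$ self-averaging bound $\hat Z_N(b)/\E\hat Z_N(b)\to 1$ from a second-moment computation (Lemma~\ref{var_sum}); this and the expansion of $\E T(\bs)$ in Lemma~\ref{lem:expvariance} are where $N^{2/3}\theta_N\gtrsim 1$ enters;
\item computing $\E\hat Z_N(b)$ by averaging over the graph \emph{first}, which reduces it to the MGF of a Rademacher quadratic form in the deterministic matrix $(W(\frac iN,\frac jN))$ (Lemma~\ref{lem:expect_sum}, Proposition~\ref{quadJointAsym}, Lemma~\ref{expUniInt});
\item Lemmas~\ref{lem:linearization3} and~\ref{thetanperturb6} at $b=\beta+t\sqrt{\theta_N}$.
\end{itemize}

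A Gaussian/Hubbard--Stratonovich computation on the random matrix $A_{G_N}/(N\theta_N)$ is not a substitute in the sparse regime. The quantity $-\frac12\log\det(I-bA_{G_N}/(N\theta_N))$ counts closed walks, not cycles. Backtracking walks contribute, for example, a term of order $\sum_i d_i^2/(N\theta_N)^4\asymp 1/(N\theta_N^2)$ at fourth order. This diverges for $\theta_N\ll N^{-1/2}$ and has no counterpart for $\pm1$ spins.

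Finally, once the factorization is controlled along the sequence $\beta+t\sqrt{\theta_N}$, the increment of the remainder is trivially $o_P(1)$. No derivative-level control of $\psi_N'$ is needed, so the step you flag as the main obstacle is not where the difficulty lies.
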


The proof of Theorem \ref{thm:HNsigma} is given in Appendix \ref{corhamlproof}. Prior to the above result, the limiting distribution of the Hamiltonian was only known for the Curie-Weiss model (where $G_N$ is the complete graph and hence, the Hamiltonian is a function (polynomial) of just the sample mean) \cite{ellis1985entropy, ellis1980limit} (also see \cite{cometsgidas}) and for Ising models on dense graphs in the supercritical regime \cite{bhattacharya2018inference}. The above result gives the fluctuations of the Hamiltonian for Ising models on general inhomogeneous random graphs. In the following we compute the limit explicitly in our two running examples.

\begin{example} (Block models) Let $W$ be the 2-block graphon as in \eqref{eq:Wpq}. Suppose $G_N \sim G(N, \theta_N, W)$ and $\bs$ be a sample from the Ising model \eqref{model_def} on $G_N$ with $0 < \beta < \frac{2}{(p+q)}$ and $N^{\frac{2}{3}}\theta_N \gg 1$. Then, from Theorem \ref{thm:HNsigma}, we have the following: 
\begin{itemize}

\item If $\theta=0$, then
\begin{align}\label{eq:HNsparse-2block}
\frac{1}{\sqrt{\theta_N}}\left(\theta_N H_N(\bs)+\frac{\beta}{4}(p+q)\right)
\xrightarrow{D}
\cN\left(0,\frac{p+q}{4}\right).
\end{align}

\item If $\theta\in(0,1]$, then
\begin{align}\label{eq:HNdense-2block}
-H_N(\bs) \xrightarrow{D}
\beta\sigma_W^2 + \frac{p}{2}
+ \sigma_W Z_0
+ \frac{\lambda_{+}}{2}\left(\frac{Z_{\lambda_{+}}^2}{1-\beta \lambda_{+}}-1\right)
+ \frac{\lambda_{-}}{2}\left(\frac{Z_{\lambda_{-}}^2}{1-\beta \lambda_{-}}-1\right),
\end{align}
where $Z_0,Z_{\lambda_{+}},Z_{\lambda_{-}}$ are i.i.d.\ $\cN(0,1)$ and $\sigma_W^2
= \frac{1}{4\theta}\left(p(1-\theta p)+q(1-\theta q)\right)$. 

\end{itemize}

Once again, as in Example \ref{example1_26}, we consider the following  important special cases:

\begin{itemize}

\item {\it Erd\H{o}s-R\'enyi model $G(N,\theta_N)$:}
This corresponds to $W\equiv 1$ (that is, $p=q=1$ in \eqref{eq:Wpq}) and consequently, in the sparse regime \eqref{eq:HNsparse-2block} simplifies to
\begin{align*}
\frac{1}{\sqrt{\theta_N}}\left(\theta_N H_N(\bs)+\frac{\beta}{2}\right)
\xrightarrow{D}
\cN\left(0,\frac{1}{2}\right).
\end{align*}
Moreover, in the dense regime, we have the following from \eqref{eq:HNdense-2block}:
\begin{align}\label{eq:HNdense-GN}
-H_N(\bs) \xrightarrow{D}
\frac{\beta(1-\theta)}{2\theta}
+ \sqrt{\frac{1-\theta}{2\theta}}\,Z_0
+ \frac{Z_1^2}{2(1-\beta)},
\end{align}
where $Z_0,Z_1$ are i.i.d.\ $\cN(0,1)$.

\item {\it Curie-Weiss model:}
This corresponds to $W\equiv 1$ and $\theta=1$. In this case, $\sigma_W^2=0$, and \eqref{eq:HNdense-GN} simplifies to $-H_N(\bs) \xrightarrow{D} \frac{1}{2(1-\beta)} Z_1^2$.

\item {\it Random bipartite graph:}
This corresponds to taking $p=0$ and $q=1$. 
Hence, in the sparse regime \eqref{eq:HNsparse-2block} simplifies to
\begin{align*}
\frac{1}{\sqrt{\theta_N}}\left(\theta_N H_N(\bs)+\frac{\beta}{4}\right)
\xrightarrow{D}
\cN\left(0,\frac{1}{4}\right).
\end{align*}
Moreover, in the dense regime \eqref{eq:HNdense-2block} simplifies to 
\begin{align*}
-H_N(\bs) \xrightarrow{D}
\frac{\beta(1-\theta)}{4\theta}
+ \sqrt{\frac{1-\theta}{4\theta}}\,Z_0
+ \frac{Z_1^2}{4-2\beta}
- \frac{Z_2^2}{4+2\beta},
\end{align*}
where $Z_0,Z_1,Z_2$ are i.i.d.\ $\cN(0,1)$.

\end{itemize}
\end{example}

\begin{example}[Rank one graphon model]
  Consider the rank one graphon: $W(x, y) = f(x) f(y)$, where $f: [0, 1] \rightarrow [0, 1]$ is continuous almost everywhere, as in Example \ref{rankOne}. Suppose $G_N \sim G(N, \theta_N, W)$ and $\bs$ be a sample from the Ising model \eqref{model_def} on $G_N$ with $0 < \beta < \frac{1}{\mu_2}$, where $\mu_2 = \| f \|_2^2$,  and $N^{\frac{2}{3}}\theta_N \gg 1$. 
  Then, from Theorem \ref{thm:HNsigma} we have the following.
    \begin{enumerate}
        \item If $\theta \in (0,1]$, then
              \begin{align*}
                  -H_N(\bs) \xrightarrow{D} \frac\beta{2\theta}(\mu_1^2-\theta \mu_2^2)+Z_0\sqrt{\frac1{2\theta}(\mu_1^2-\theta \mu_2^2)} +\frac{\mu_2 Z_1^2}{2(1-\beta \mu_2)},
              \end{align*}
              where $\mu_1 = \int_0^1 f(x) \mathrm d x$ and $Z_0, Z_1$ are i.i.d. $\mathcal N(0, 1)$.    
        \item If $\theta = 0$, then
              \begin{align*}
                  \frac{1}{\sqrt{\theta_N}}\left(\theta_N H_N(\bs)+\frac{\beta}{2N^2}\left(\sum_{i=1}^N f\left(\frac{i}{N}\right)\right)^2\right)\xrightarrow{D}\cN\left(0,\frac{\mu_1^2}{2}\right).
              \end{align*}
    \end{enumerate} 
\end{example}

\subsection{Fluctuations of the Partition Function}\label{sec:fluctpart78}

In this section we present results on the asymptotic distribution of the log-partition function $\log Z_N$, where $Z_N$ is as defined in \eqref{eq:ZN}.

\begin{thm}[Fluctuations of the log-partition function]\label{thm:ZN} 
Suppose $W$ satisfies the conditions in Assumption \ref{assumption} (2). Fix $0<\beta < \frac{1}{\|W\|_{\mathrm{op}}}$. Then for $G_N \sim G(N, \theta_N, W)$, the following hold: 
    \begin{enumerate}
        \item[$(1)$] If $\theta_N \gg N^{-\frac{1}{2}}$ and $\theta_N \rightarrow \theta \in [0, 1]$, then 
              \begin{align}\label{eq:ZN12}      
               \sqrt{N\theta_N}\left(\log Z_N(\beta) - \log\E Z_N(\beta)\right) \xrightarrow{D} \cN\left(0,\frac{\beta^2}{4}\int_0^1 W(x,x)\left(1-\theta W(x,x)\right)\mathrm{d}x\right).
              \end{align}
              
        \item[$(2)$] If $\theta_N \sim c N^{-\frac{1}{2}}$ for some $c\in (0,\infty)$, then 
              \begin{align}\label{eq:ZN23}  
                  N \theta_N^{\frac{3}{2}} & \left(\log Z_N(\beta)  -\log\E Z_N(\beta)\right) \nonumber \\ 
                  & \to \mathcal{N}\left(0,\frac{\beta^4}{8} \int_{[0,1]^2} W(x,y) \mathrm{d}x \mathrm{d}y + \frac{\beta^2c^2}{4} \int_0^1 W(x,x)\mathrm{d}x\right).
              \end{align} 
              
                     \item[$(3)$] If $N^{-\frac{2}{3}} \ll \theta_N \ll N^{-\frac{1}{2}}$, then 
              \begin{align}\label{eq:ZNtheta12}  
                  N \theta_N^{\frac{3}{2}} & \left(\log Z_N(\beta)  -\log\E Z_N(\beta)\right) \to \mathcal{N}\left(0,\frac{\beta^4}{8} \int_{[0,1]^2} W(x,y) \mathrm{d}x \mathrm{d}y \right).
              \end{align}

        \item[$(4)$]  If $\theta_N \sim c N^{-\frac{2}{3}}$, for some $c \in (0,\infty)$, then 
              \begin{align}\label{eq:23threshold}  
                  \log Z_N(\beta) - \frac{\beta^2}{4N^2\theta_N}\sum_{1 \leq i,j \leq N}W\left(\frac{i}{N},\frac{j}{N}\right) \xrightarrow{D} \mathcal{N}\left(\mu_W,\frac{\beta^4}{8c^{3}} \int_{[0,1]^2} W(x,y) \mathrm{d}x \mathrm{d}y\right),
              \end{align}
              with 
              \begin{align*}  
                  \mu_W :=\frac{\beta}{2}\int_0^1 W(x,x)\mathrm{d}x & - \frac{\beta^2}{4}\int_{[0,1]^2} W(x,y)\left(\frac{\beta^2}{6c^3}+W(x,y)\right) \mathrm{d}x \mathrm{d}y \nonumber \\ 
                  & - \frac{1}{2}\sum_{\lambda \in \mathrm{Spec}(W)} \left(\log(1-\beta \lambda) + \beta\lambda \right) . 
              \end{align*}
 Consequently, if $W$ is $\alpha$-H\"{o}lder continuous with $\alpha > {\frac{2}{3}}$, then 
              \begin{align}\label{eq:23thresholdW}
               \log Z_N(\beta) - \frac{\beta^2}{4\theta_N}\int_{[0,1]^2} W(x,y) \mathrm{d}x \mathrm{d}y \xrightarrow{D} \mathcal{N}\left(\mu_W,\frac{\beta^4}{8c^{3}} \int_{[0,1]^2}  W(x,y) \mathrm{d}x \mathrm{d}y\right) . 
              \end{align}

        \item[$(5)$] If $N^{-1} \ll \theta_N \ll N^{-\frac{2}{3}}$, then 
              \begin{align}\label{eq:23bthreshold}
                  N\theta_N^{\frac{3}{2}} & \left(\log Z_N(\beta) - \theta_N \log\cosh\left(\frac{\beta}{N\theta_N}\right)\sum_{1 \leq i < j \leq N}W\left(\frac{i}{N},\frac{j}{N}\right)\right) \nonumber \\ 
                  & \xrightarrow{D} \mathcal{N}\left(0,\frac{\beta^4}{8} \int_{[0,1]^2} W(x,y) \mathrm{d}x \mathrm{d}y\right) . 
              \end{align}
              Consequently, if $W$ is $\alpha$-H\"{o}lder continuous with $\alpha \ge {\frac{2}{3}}$, then 
              \begin{align}\label{eq:23bthresholdW}
                  N\theta_N^{\frac{3}{2}} & \left(\log Z_N(\beta) - \frac{N^2\theta_N}{2}\log\cosh\left(\frac{\beta}{N\theta_N}\right)  \int_{[0,1]^2} W(x,y) \mathrm{d}x \mathrm{d}y\right) \nonumber \\ 
                  & \xrightarrow{D} \mathcal{N}\left(0,\frac{\beta^4}{8} \int_{[0,1]^2} W(x,y) \mathrm{d}x \mathrm{d}y\right).
              \end{align}
    \end{enumerate}
\end{thm}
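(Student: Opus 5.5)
We would follow the strategy of Kabluchko et al.~\cite{kabluchko2021fluctuations} for the Erd\H{o}s--R\'enyi case and adapt it to inhomogeneous edge probabilities $p_{ij} := \theta_N W(\frac{i}{N},\frac{j}{N})$.

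\textbf{Step 1: product expansion.} Write $\beta_N := \frac{\beta}{N\theta_N}$. Since $A_{G_N}(i,j)\in\{0,1\}$, we have $e^{\beta_N A_{ij}\sigma_i\sigma_j} = \cosh(\beta_N)^{A_{ij}}\bigl(1 + A_{ij}\sigma_i\sigma_j\tanh\beta_N\bigr)$. Hence
\begin{align*}
Z_N(\beta) = e^{\frac{\beta}{2N\theta_N}\sum_i A_{ii}}\,\cosh(\beta_N)^{\sum_{i<j}A_{ij}}\; \E_{\bs}\prod_{i<j}\bigl(1 + A_{ij}\sigma_i\sigma_j \tanh\beta_N\bigr),
\end{align*}
with $\bs$ uniform on $\{-1,1\}^N$. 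Expanding the product, the last factor equals $\sum_{G} \prod_{e\in G} A_e \tanh^{|E(G)|}(\beta_N)$, where the sum runs over all even (Eulerian) subgraphs $G$ of $K_N$. This splits $\log Z_N$ into three pieces: a diagonal term, an edge-count term, and the log of an even-subgraph sum $Y_N$. The diagonal term $\frac{\beta}{2N\theta_N}\sum_i A_{ii}$ is a sum of independent Bernoullis with variance $\frac{\beta^2}{4N\theta_N}\cdot\frac1N\sum_i W(\frac iN,\frac iN)(1-\theta_N W(\frac iN,\frac iN))$. After multiplying by $\sqrt{N\theta_N}$, the Lindeberg CLT together with Riemann integrability of $\mathrm{diag}_W$ gives the variance in \eqref{eq:ZN12}. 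The edge term $\log\cosh(\beta_N)\sum_{i<j}(A_{ij}-p_{ij})$ has variance $\approx \frac{\beta^4}{4N^4\theta_N^4}\sum_{i<j}p_{ij}(1-p_{ij}) \approx \frac{\beta^4}{8N^2\theta_N^3}\int W$. Comparing the scales $N^{-1}\theta_N^{-1}$ and $N^{-2}\theta_N^{-3}$ explains the threshold $\theta_N\asymp N^{-1/2}$ separating parts (1), (2) and (3): the diagonal term dominates above it, the edge term below it, and both contribute at it. The two terms are independent, so in part (2) the variances add.

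\textbf{Step 2: the cycle sum $Y_N$.} We would show that $\log Y_N - \log\E Y_N$ is negligible at the relevant scale whenever $\theta_N\gg N^{-2/3}$, and that at $\theta_N\sim cN^{-2/3}$ it contributes an $O(1)$ term. Following Janson's theory of small cycles, $Y_N$ is dominated by products of disjoint cycles. The centred cycle counts $\sum_{\text{$r$-cycles}}\prod_{e}(A_e-p_e)\tanh^r\beta_N$ have variance of order $\frac{\beta^{2r}}{2r}\,t(C_r,W^2)\,\theta_N^{-r}N^{-r}$ relative to the mean. This is small for $N\theta_N\to\infty$ except at $r=3$, where the triangle contribution matters only when $N^2\theta_N^3\asymp1$, i.e.\ $\theta_N\asymp N^{-2/3}$. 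Correspondingly, $\log\E Y_N \to -\frac12\sum_{\lambda}(\log(1-\beta\lambda)+\beta\lambda)$ plus explicit correction terms, obtained by computing the mean via $\sum_r \frac{\beta^r}{2r}\,t(C_r,W)$ and using \eqref{eq:cycle}. For part (4) we would additionally expand $\log\cosh\beta_N = \frac{\beta_N^2}{2}-\frac{\beta_N^4}{12}+\dots$ and centre by $\frac{\beta^2}{4N^2\theta_N}\sum W$. The quartic term $\frac{\beta^4}{12N^4\theta_N^4}\cdot\frac{N^2\theta_N}{2}\int W$ at $\theta_N=cN^{-2/3}$ gives the $\frac{\beta^4}{24c^3}$ piece of $\mu_W$, and $\E[\sum p_{ij}]$ versus $\sum p_{ij}$ together with the $-\frac{\beta^2}{4}\int W^2$ term arises from centring. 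Part (5) needs no such expansion, since there we centre directly by $\theta_N\log\cosh(\beta_N)\sum W$. The H\"older variants follow because the Riemann-sum error $O(N^{-\alpha})$, multiplied by $\theta_N^{-1}$ or by $N\theta_N^{3/2}\cdot N^2\theta_N\beta_N^2$, vanishes under the stated $\alpha$.

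\textbf{Main obstacle.} The hardest step is the concentration of $\log Y_N$, namely showing $Y_N/\E Y_N\to$ a positive limit in probability at the needed rate. The natural route is a second-moment computation: compute $\E Y_N^2/(\E Y_N)^2$ through the even-subgraph expansion and show it converges to $\exp\bigl(\sum_{r\ge3}\frac{\beta^{2r}}{2r}\text{(variance of cycle $r$)}\bigr)$. Then use the small-subgraph conditioning method (Janson) to conclude that $Y_N/\E Y_N - \prod_r(\cdot)\to0$, with the product tending to $1$ when $N\theta_N^{3/2}\to\infty$. In the inhomogeneous setting, spectral control of $\sum_r\beta^r\,t(C_r,W)$ requires $\beta\|W\|_{\mathrm{op}}<1$, and this is exactly where the subcritical condition enters. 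Bounding the contribution of long or overlapping even subgraphs uniformly in the graphon is the delicate part, and we would handle it by bounding the number of Eulerian subgraphs with given excess.
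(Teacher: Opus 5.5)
Your decomposition is correct and is essentially the paper's. With your $\beta_N=\beta/(N\theta_N)$ and the paper's $\gamma=\beta_N/2$, the modified partition function is $\hat Z_N(\beta)=Y_N\,e^{(\log\cosh\beta_N-\beta_N^2/2)\sum_{i<j}A_{G_N}(i,j)-\beta_N^2\vartheta_N/4}$, and that extra factor has negligible fluctuations. So parts (1)--(4) go along the lines you outline: a CLT for the centred diagonal and edge terms $\gamma\overline{\vartheta}_N+\gamma^2\overline{\eta}_N$, plus concentration of the remainder. Part (5) uses your cosh--tanh expansion verbatim.

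The gap is in Step~2, the step you call the main obstacle. First, you need a rate, not just convergence. $\log Y_N-\log\E Y_N$ must be $o_P(1/\sqrt{N\theta_N})$ in part (1) and $o_P(1/(N\theta_N^{3/2}))$ in parts (2)--(3). In practice you need the $L^2$ bound $\mathrm{Var}(Y_N/\E Y_N)=o(1/(N\theta_N))$, which is the paper's Lemma~\ref{var_sum} (stated for $\hat Z_N$). Small-subgraph conditioning cannot supply this: it identifies a distributional limit of $Y_N/\E Y_N$ with no rate, and it is moot here because $\E Y_N^2/(\E Y_N)^2\to1$.

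Second, your heuristic that triangles produce an $O(1)$ term at $\theta_N\asymp N^{-2/3}$ is wrong, as is the claim that the cycle product tends to $1$ only when $N\theta_N^{3/2}\to\infty$. The fully centred weighted $r$-cycle sum has standard deviation of order $(N\theta_N)^{-r/2}$. A direct count of edge-overlapping pairs gives $\E Y_N^2/(\E Y_N)^2-1=O\bigl((N\theta_N)^{-3}+N^{-2}\theta_N^{-1}\bigr)$. Identical $r$-cycles contribute $(N\theta_N)^{-r}$, and cycles sharing a path of $s$ edges contribute $N^{-1}(N\theta_N)^{-s}$. This is $o(1/(N\theta_N))$ as soon as $N\theta_N\to\infty$, and in the paper $\hat Z_N/\E\hat Z_N\to1$ also at $\theta_N\sim cN^{-2/3}$. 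Taken literally, your claim would add a spurious random term to the limit in part (4), whose variance $\frac{\beta^4}{8c^3}\int W$ comes from the edge term alone.

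Similarly, the $-\frac{\beta^2}{4}\int W^2$ in $\mu_W$ is not a centring effect. We have $\log\E Y_N\to\sum_{r\ge3}\frac{\beta^r}{2r}t(C_r,W)$, which differs from $-\frac12\sum_\lambda(\log(1-\beta\lambda)+\beta\lambda)$ by exactly the missing $r=2$ term.

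The $N^{-2/3}$ threshold comes from the edge term $E_N=\log\cosh(\beta_N)\sum_{i<j}A_{G_N}(i,j)$, not from cycles. At that threshold its fluctuation scale $1/(N\theta_N^{3/2})$ and its quartic correction $\asymp 1/(N^2\theta_N^3)$ both become of order one. For the centring by $\log\E Z_N$ in parts (1)--(3), it also produces a lognormal gap $\log\E e^{E_N}-\E E_N\approx\frac12\mathrm{Var}(E_N)\asymp N^{-2}\theta_N^{-3}$. This gap is $o(1/(N\theta_N^{3/2}))$ if and only if $\theta_N\gg N^{-2/3}$.

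Your proposal never passes from the CLT to this centring. The paper does it by linearising $e^{\gamma\overline\vartheta_N+\gamma^2\overline\eta_N}$ in every $L^p$ via Bernstein's inequality (Lemmas~\ref{lem:clt_for_delta_N1} and~\ref{lem:clt_for_delta_N2}), then using Cauchy--Schwarz as in \eqref{eq:L2_tech1}. This is also why an $L^2$ rate is needed: it absorbs the dependence between $E_N$ and $Y_N$ inside $\E Z_N$.

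For the variance bound itself, the paper avoids enumerating Eulerian subgraphs. It writes $\E\hat Z_N^2$ as an average over independent uniform $\bs,\bt$ of $\E T(\bs)T(\bt)$ and expands it (Lemma~\ref{lem:expvariance}). It then shows that $N\theta_N R_N(\bs,\bt)$, tilted by $e^{\frac{\beta}{N}\sum_{i<j}W(\frac iN,\frac jN)(\sigma_i\sigma_j+\tau_i\tau_j)}$, has vanishing mean. The tools are the joint limit of Rademacher quadratic forms (Proposition~\ref{quadJointAsym}), Hanson--Wright, and the uniform integrability of Lemma~\ref{expUniInt}, which is where $\beta\|W\|_{\mathrm{op}}<1$ enters. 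Your combinatorial route could reach the same estimate, but it has to target this $o(1/(N\theta_N))$ bound.

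Finally, for part (5) you only need tightness of $\log Y_N$, which follows from $Y_N\ge1$ and $\E Y_N=O(1)$. The paper gets this by decomposing Eulerian graphs into edge-disjoint cycles, which bounds $\log\tilde Z_N$ by a weighted cycle count with bounded mean. State this explicitly, since your concentration claim covers only $\theta_N\gg N^{-2/3}$.
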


The proof of Theorem \ref{thm:ZN} is given in Appendix \ref{proof_thm:ZN}. The proof builds on the techniques developed in \cite{kabluchko2021fluctuations}, where fluctuations of the partition function were established for the case in which $G_N$ is sampled from a homogeneous Erd\H{o}s--R\'{e}nyi model (that is, $W \equiv 1$). Theorem \ref{thm:ZN} extends this result to the setting of inhomogeneous random graphs. Depending on the asymptotic regime, several interesting features emerge from the results of Theorem \ref{thm:ZN}: 
\begin{itemize} 
\item  The weak-limit behavior of $Z_N(\beta)$ exhibits a transition from self-averaging to non-self-averaging across the threshold $N^{-\frac{2}{3}}$. Specifically, for  $\theta_N \gg N^{-\frac{2}{3}}$ (which includes cases (1), (2), and (3)) $Z_N(\beta)/\E Z_N(\beta)$ converges to $1$. On the other hand, in the regime $\theta_N \lesssim N^{-\frac{2}{3}}$ (which includes cases (4) and (5)), this property no longer holds. In each regime, $\log Z_N(\beta)$ has Gaussian fluctuations after appropriate centering and scaling.  
\item The different cases of Theorem \ref{thm:ZN} jointly cover the sparsity regime $N^{-1} \ll \theta_N \lesssim 1$, which exhausts the entire diverging expected degree regime. In contrast, our results for the Hamiltonian, and hence also those for inference, require the stronger sparsity condition $\theta_N \gtrsim N^{-\frac{2}{3}}$ (see Assumption \ref{assumption}). We expect the Hamiltonian fluctuations to hold throughout the entire diverging expected-degree regime as well, although current techniques are insufficient to establish this.
\item The behavior of the model \eqref{model_def} is fundamentally different in the constant expected-degree regime $\theta_N\sim N^{-1}$, where the graph is locally tree-like. For the homogeneous Erd\H{o}s--R\'enyi model, fluctuations of the partition function in this regime were obtained recently in~\cite{coja2026fluctuations}. Related critical-regime fluctuations of the magnetization for sparse Erd\H{o}s--R\'enyi and random regular graphs were obtained in~\cite{prodromidis2026distribution}. Extending these results to inhomogeneous random graphs is a natural direction for future work.
\item Finally, note that in the regime $\theta_N \gg N^{-\frac{1}{2}}$, the variance of the limiting normal distribution depends only on the diagonal part of $W$ (see \eqref{eq:ZN12}). Hence, if the model has no self-loops, the limiting distribution of $Z_N(\beta)$ obtained in \eqref{eq:ZN12} is degenerate. In this case, a finer analysis is required to obtain a non-degenerate fluctuation limit for $Z_N(\beta)$. In fact, to the best of our knowledge this issue remains unresolved even for the Erd\H{o}s--R\'{e}nyi model. This degeneracy, however, has no impact on our inferential results, since the limiting distribution of the Hamiltonian is always non-degenerate, irrespective of the behavior of the diagonal part of $W$ (recall Theorem \ref{thm:HNsigma}).
\end{itemize}

\small

\subsection*{Acknowledgement} S. Bhowal thanks Yogeshwaran D for helpful discussions during the early stages of this project. S. Mukherjee was supported by the AcRF Tier 1 grants A-8001449-00-00 and A-8002932-00-00. B. B. Bhattacharya was supported by NSF CAREER grant DMS 2046393 and a Sloan Research Fellowship.

\bibliographystyle{plain}  
\bibliography{refs,graphonbib}

\normalsize

\appendix

%
%
%
%
%


\section{Proof of Theorem \ref{thm:ZN}}\label{proof_thm:ZN}

Recall the definition of the partition function $Z_N$ of the model from \eqref{eq:ZN}. To begin with, define 
\begin{align}\label{eq:ANij}
    \vartheta_N := \sum_{i=1}^N A_{G_N}\left(i,i\right)
    \;\;
    \text{ and }
    \;\;
    \eta_N := \sum_{1 \leq i, j \leq N}  A_{G_N}\left(i,j\right).\end{align}
Let $\gamma:= \frac{\beta}{2 N\theta_N}$. For each $\bs\in \{-1,+1\}^N$, define the random variable
\begin{align}\label{Tstatdef}
    T\left(\bs\right):= e^{\textstyle  \gamma  \sum_{1 \leq i, j \leq N}  A_{G_N}\left(i,j\right)\sigma_i \sigma_j - \gamma \vartheta_N
    -\gamma^2\eta_N } .
\end{align}
Then the \textit{modified partition function} is defined as: 
\begin{align}\label{modpart27}
    \hat Z_N\left(\beta\right) := \frac{1}{2^N} \sum_{\bs\in \{-1,+1\}^N} T\left(\bs\right)
    = Z_N\left(\beta\right) e^{\textstyle  - \gamma \vartheta_N
    -\gamma^2\eta_N } .
\end{align}
In this notation, 
\begin{align}\label{eq:Z_N_hat_Z_N}
    Z_N\left(\beta\right)
    =
    \hat Z_N\left(\beta\right) e^{\textstyle  \gamma \vartheta_N
    +\gamma^2\eta_N	} .
\end{align}
The following lemma shows that the modified partition function has a self-averaging property, that is, it concentrates around its expected value. The proof is given in Appendix \ref{proof6p2}. 

\begin{lemma}\label{var_sum}
 Suppose $\theta_N \gtrsim N^{-\frac{2}{3}}$. Then 
    $$
        \var \left[\frac{\hat Z_N\left(\beta\right)}{\E \hat Z_N\left(\beta\right)}\right]  \ll \frac 1 {N\theta_N} .
    $$
\end{lemma}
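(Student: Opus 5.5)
We follow the second-moment (replica) strategy of \cite{kabluchko2021fluctuations}. First observe that $\gamma\vartheta_N+\gamma^2\eta_N$ does not depend on $\bs$, so $\hat Z_N(\beta)=2^{-N}\sum_{\bs}T(\bs)$. We write $T(\bs)=\prod_{i\le j}\big(e^{c_{ij}\gamma A_{ij}\sigma_i\sigma_j - \gamma A_{ii}\mathbf 1_{i=j}-c_{ij}\gamma^2 A_{ij}}\big)$ with $c_{ij}=2$ for $i<j$ and $c_{ii}=1$. The diagonal factors cancel identically because $\sigma_i^2=1$. Each off-diagonal factor is a function of the single independent Bernoulli $A_{ij}$ and of the product $\sigma_i\sigma_j$. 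Hence
\[
\E\hat Z_N^2=\frac{1}{4^N}\sum_{\bs,\bt}\prod_{i<j}\E\big[e^{2\gamma A_{ij}(\sigma_i\sigma_j+\tau_i\tau_j)-4\gamma^2A_{ij}}\big],
\]
and $\E\hat Z_N=2^{-N}\sum_{\bs}\prod_{i<j}\E[e^{2\gamma A_{ij}\sigma_i\sigma_j-2\gamma^2A_{ij}}]$. With $p_{ij}=\theta_N W(i/N,j/N)$, each expectation equals $1+p_{ij}(e^{x}-1)$ for the corresponding exponent $x$. Writing $\sigma_i\sigma_j+\tau_i\tau_j$ in terms of the overlap variables $\sigma_i\tau_i$, the ratio $\E\hat Z_N^2/(\E\hat Z_N)^2$ becomes an average over $\bs,\bt$ of
\[
\prod_{i<j}\frac{1+p_{ij}(e^{2\gamma(\sigma_i\sigma_j+\tau_i\tau_j)-4\gamma^2}-1)}{(1+p_{ij}(e^{2\gamma\sigma_i\sigma_j-2\gamma^2}-1))(1+p_{ij}(e^{2\gamma\tau_i\tau_j-2\gamma^2}-1))}.
\]

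Next we Taylor expand the logarithm of each factor in $\gamma\asymp 1/(N\theta_N)$. The centering $-\gamma^2$ in $T$ was chosen precisely to kill the second-order mean terms. Set $\rho_i=\sigma_i\tau_i$. The leading surviving term is $4\gamma^2 p_{ij}\sigma_i\sigma_j\tau_i\tau_j=4\gamma^2p_{ij}\rho_i\rho_j$, that is, $\frac{\beta^2}{N^2\theta_N}W(i/N,j/N)\rho_i\rho_j$. The remaining terms are deterministic constants of the form $p_{ij}^2\gamma^2$ and $p_{ij}\gamma^3$, plus cubic corrections. Summing the constants over $i<j$ gives terms of order $\theta_N^0$ and $1/(N\theta_N^2)$. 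They are bounded because $N\theta_N^{2}\gtrsim N^{-1/3}$... This bound is the delicate point: the $\gamma^3 p_{ij}$ terms multiply odd-in-$\rho$ expressions that do not vanish identically. We therefore have to track them exactly as $\gamma^3 p_{ij}\rho_i\rho_j\times O(1)$, and the quartic term $\gamma^4p_{ij}\asymp 1/(N^4\theta_N^3)$ summed over $N^2$ pairs gives $1/(N^2\theta_N^3)\lesssim 1$. This is exactly where $\theta_N\gtrsim N^{-2/3}$ enters.

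With this expansion the ratio equals $(1+o(1/(N\theta_N)))\,\E_{\rho}\exp\big(\frac{\beta^2}{2N^2\theta_N}\sum_{i\ne j}W\rho_i\rho_j+\text{(lower order)}\big)$ over i.i.d.\ uniform $\rho$. Note that this average carries an extra $1/\theta_N$ compared to a Curie--Weiss-type quadratic form in the dense regime. To handle it we do not exponentiate blindly. We decompose $e^{x}=1+x+\dots$ and compute $\E\hat Z_N^2/(\E\hat Z_N)^2-1$ as a sum over graphs of products $\prod(e^{x_{ij}}-1)$, where $x_{ij}=4\gamma^2p_{ij}\rho_i\rho_j+\dots$. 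Averaging over $\rho$ kills every edge set with an odd-degree vertex, so only edge sets that are unions of cycles survive. A cycle of length $r$ contributes $\sum (\beta^2/(N^2\theta_N))^r\prod W\approx \beta^{2r}t(C_r,W)/(N\theta_N)^r$ after normalizing. Here $\beta^{2r}t(C_r,W)\le(\beta\|W\|_{\mathrm{op}})^{2r}\|W\|_2^2$... We need the total to be $o(1/(N\theta_N))$. Cycles of length $r\ge 2$ contribute $O((N\theta_N)^{-2})=o((N\theta_N)^{-1})$, summable geometrically since $\beta\|W\|_{\mathrm{op}}<1$. Single edges vanish by symmetry.

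The main obstacle is therefore the rigorous control of the expansion uniformly over all configurations $(\bs,\bt)$, including the error terms coupling with $\rho$ and the off-diagonal cubic terms. We would handle this as in \cite{kabluchko2021fluctuations}: first restrict to typical overlaps $|\sum_i\rho_i|\lesssim\sqrt N\log N$ via a large-deviation bound, which is where the subcriticality $\beta\|W\|_{\mathrm{op}}<1$ ensures the atypical part is exponentially negligible. Then, on the typical set, we bound the cumulative Taylor remainder by $O(N^2\gamma^3\theta_N\cdot|\text{overlap}|/N)+O(N^2\gamma^4\theta_N)$, which is $o(1/(N\theta_N))$ under $N^{2/3}\theta_N\gtrsim1$.
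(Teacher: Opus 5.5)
Your reduction in the first paragraph is where the argument breaks. Since $\E\hat Z_N^2=4^{-N}\sum_{\bs,\bt}\E[T(\bs)T(\bt)]$ and $(\E\hat Z_N)^2=4^{-N}\sum_{\bs,\bt}\E T(\bs)\,\E T(\bt)$, the ratio $\E\hat Z_N^2/(\E\hat Z_N)^2$ is not the uniform average of $r(\bs,\bt):=\E[T(\bs)T(\bt)]/(\E T(\bs)\,\E T(\bt))$. It is $\E_{\nu_N}[r(\bs,\bt)]$ with $\nu_N(\bs,\bt)\propto\E T(\bs)\,\E T(\bt)$. By Lemma \ref{lem:expvariance}, up to an $\bs$-independent factor, $\E T(\bs)$ equals $e^{\frac{\beta}{N}\sum_{i<j}W(\frac iN,\frac jN)\sigma_i\sigma_j+O(Q_N(\bs))}$. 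This exponent has a non-degenerate limit under the uniform measure, so $\nu_N=\nu_N^{(1)}\otimes\nu_N^{(1)}$ is a product of two genuinely non-uniform (annealed) Ising measures. Under $\nu_N$ the overlaps $\rho_i=\sigma_i\tau_i$ are neither independent nor uniform; for instance $\E_{\nu_N}[\rho_i\rho_j]=(\E_{\nu_N^{(1)}}[\sigma_i\sigma_j])^2\neq0$. So ``single edges vanish by symmetry'' fails, and so does the parity argument behind your cycle expansion. The first-order term is $\frac{\beta^2}{N\theta_N}\,\E_{\nu_N}\bigl[\frac1N\sum_{i<j}W(\frac iN,\frac jN)(1-\theta_NW(\frac iN,\frac jN))\rho_i\rho_j\bigr]$. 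A priori this is only $O(1/(N\theta_N))$, which is exactly the borderline the lemma must beat. Showing that this tilted expectation is $o(1)$ is the heart of the lemma (claim \ref{itm:C1}), and your proposal contains no argument for it.

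The missing idea is an asymptotic decoupling of the overlap quadratic form from the forms that drive the weights. The paper gets this from Proposition \ref{quadJointAsym}: under the uniform measure, $\frac1{2N}\sum_{i\neq j}W(\sigma_i\sigma_j+\tau_i\tau_j)$ and $\frac1{2N}\sum_{i\ne j}W'\rho_i\rho_j$ converge jointly to independent limits, the latter centered. This is combined with uniform integrability of $N\theta_NR_N\,e^{\frac\beta N\sum_{i<j}W(\sigma_i\sigma_j+\tau_i\tau_j)}$ (Lemma \ref{sigtauUniInt}, resting on the high-temperature bound of Lemma \ref{expUniInt}), which forces the tilted mean to vanish. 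An alternative would be a uniform correlation bound $\E_{\nu_N^{(1)}}[\sigma_i\sigma_j]=O(1/N)$, which also needs a genuine high-temperature argument.

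This is where $\beta\|W\|_{\mathrm{op}}<1$ actually enters. In your uniform-$\rho$ computation it plays no role, since the cycle weights $(\beta^2/(N\theta_N))^r$ are summable for every $\beta$; that is a symptom of the dropped weights. Controlling $|\sum_i\rho_i|$ also does not control $\sum_{i,j}W_{ij}\rho_i\rho_j$ for non-constant $W$. The Taylor remainder must likewise be bounded in $L^p$ against the weights via H\"older, using a $(1+\delta)$-moment of $\E T(\bs)\E T(\bt)$.

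Your error bookkeeping also needs repair. $N^2\theta_N\gamma^4\asymp(N^2\theta_N^3)^{-1}$ is not $o(1/(N\theta_N))$ when $\theta_N\lesssim N^{-1/2}$. It is harmless only because the deterministic $p_{ij}\gamma^4$ and $p_{ij}^2\gamma^2$ terms cancel exactly between $\E[T(\bs)T(\bt)]$ and $\E T(\bs)\E T(\bt)$, since both carry $2\mathcal E_N(\beta,W)$. What remains is $O(p_{ij}^2\gamma^4+p_{ij}\gamma^6)$ per pair, that is, $O((N\theta_N)^{-2}+N^{-4}\theta_N^{-5})=o(1/(N\theta_N))$. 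Finally, no $p_{ij}\gamma^3$ constants arise at all. Odd powers of $\gamma$ multiply only $\sigma_i\sigma_j$ or $\tau_i\tau_j$, giving $O(Q_N)$-type terms.
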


From Lemma~\ref{var_sum} it follows that 
\begin{equation*}
  J_N(\beta) -1:= \frac{\hat Z_N(\beta)}{\E \hat Z_N(\beta)} - 1 = o_{L^2}\left(\frac 1 {\sqrt{N\theta_N}}\right) .    
\end{equation*}
Hence, from~\eqref{eq:Z_N_hat_Z_N}, 
\begin{align}    \label{prep_proof_th1.2}
    \frac{Z_N(\beta)}{\E \hat Z_N(\beta)}
    = J_N(\beta) e^{\textstyle  \gamma \vartheta_N + \gamma^2\eta_N }  =
    \left(1 + o_{L^2}\left(\frac 1 {\sqrt{N\theta_N}}\right)\right)e^{\textstyle  \gamma \vartheta_N + \gamma^2\eta_N } . 
\end{align}
Now, introduce the centered random variables
\begin{align*}
    \overline{\vartheta}_N := \sum_{i=1}^N \overline{A_{G_N}(i,i)},
    \;\;\;
    \text{ and }
    \;\;\;
    \overline{\eta}_N := \sum_{1 \leq i, j \leq N}  \overline{A_{G_N}(i,j)},
\end{align*} 
where
$\overline{A_{G_N}(i,j)}:= A_{G_N}(i,j) - \E A_{G_N}(i,j)  = A_{G_N}(i,j) - \theta_N W(\frac{i}{N},\frac{j}{N})$.
With this notation, we can rewrite~\eqref{prep_proof_th1.2} as follows:
\begin{align}\label{prep_proof_th1.2_rewr}
    & \frac{Z_N(\beta)}{e^{\textstyle \frac{\beta}{2N}\sum_{i=1}^{N}W\left(\frac{i}{N},\frac{i}{N}\right) + \frac{\beta^2}{4N^2\theta_N}\sum_{1 \leq i,j \leq N}W\left(\frac{i}{N},\frac{j}{N}\right)} \E \hat Z_N(\beta)} \nonumber \\ 
    & = J_N(\beta)
    e^{\textstyle \gamma \overline{\vartheta}_N + \gamma^2\overline{\eta}_N}  \nonumber \\ 
    & = \left(1 + o_{L^2}\left(\frac 1 {\sqrt{N\theta_N}}\right)\right)
    e^{\textstyle \gamma \overline{\vartheta}_N + \gamma^2\overline{\eta}_N} . 
\end{align}
Using this representation, we now present the proof of Theorem~\ref{thm:ZN} in the different regimes. The rest of the section is organized as follows: the proofs of Theorem~\ref{thm:ZN} (1), (2)--(3), (4), and (5) are given in Appendices~\ref{sec:thm:ZNpf1}, \ref{sec:thm:ZNpf2}, \ref{sec:thm:ZNpf3}, and \ref{sec:thm:ZNpf4}, respectively. These proofs rely on certain technical lemmas, which are proved in Appendix~\ref{sec:analysis_partition}.

\subsection{Proof of Theorem \ref{thm:ZN} (1)}
\label{sec:thm:ZNpf1}

The idea in this case is to replace the exponential term in the RHS of \eqref{prep_proof_th1.2_rewr} by its linearization and then analyze the fluctuations of the linear term. This is formalized in the following lemma. The proof of the lemma is given in Appendix \ref{proofdeltaN1}.

\begin{lemma}\label{lem:clt_for_delta_N1}
Suppose $N\theta_N^2 \gg 1$. Then for any $p\ge 1$,
    \begin{align}\label{eq:linear11}
        e^{\textstyle \gamma \overline{\vartheta}_N + \gamma^2 \overline{\eta}_N}
        =
        1 + \gamma \overline{\vartheta}_N + \gamma^2 \overline{\eta}_N + o_{L^p}\left(\frac{1}{\sqrt{N\theta_N}}\right).
    \end{align}
   Furthermore, if $N\theta_N^2 \gg 1$ such that $\theta_N \to \theta$, then 
     \begin{align}\label{eq:thetaN}
        \sqrt{N\theta_N} \left(\gamma \overline{\vartheta}_N + \gamma^2\overline{\eta}_N\right) \xrightarrow{D} \mathcal{N}\left(0,\frac{\beta^2}{4} \int_0^1 W(x,x)(1-\theta W(x,x)) \mathrm{d}x\right) . 
    \end{align}
\end{lemma}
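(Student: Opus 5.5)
Set $X_N := \gamma \overline{\vartheta}_N + \gamma^2 \overline{\eta}_N$, where $\gamma = \frac{\beta}{2N\theta_N}$. The argument has two parts: show that $X_N$ is small in every $L^p$, so that $e^{X_N}$ is close to its linearization, and then prove a Lindeberg CLT for $X_N$ itself.

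\textbf{Moments of $X_N$.} Both $\overline{\vartheta}_N$ and $\overline{\eta}_N$ are sums of independent centered Bernoulli variables, namely $A_{G_N}(i,j)$ for $i\le j$; in $\overline{\eta}_N$ each off-diagonal edge is counted twice. We have
\[
\operatorname{Var}(\overline{\vartheta}_N)\asymp N\theta_N, \qquad \operatorname{Var}(\overline{\eta}_N)\asymp N^2\theta_N .
\]
Hence
\[
\operatorname{Var}(\gamma\overline{\vartheta}_N)\asymp \frac{1}{N\theta_N}, \qquad \operatorname{Var}(\gamma^2\overline{\eta}_N)\asymp \frac{N^2\theta_N}{N^4\theta_N^4}=\frac{1}{N^2\theta_N^3}.
\]
The second variance is $\ll \frac{1}{N\theta_N}$ exactly when $N\theta_N^2\gg 1$, so under this assumption the $\overline{\eta}_N$ contribution is negligible at the scale $\sqrt{N\theta_N}$. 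For higher moments I would use Rosenthal's (or Bernstein's) inequality for sums of bounded independent variables. The summands $\gamma\,\overline{A_{G_N}(i,i)}$ and $\gamma^2\,\overline{A_{G_N}(i,j)}$ are bounded by $\gamma\lesssim \frac{1}{N\theta_N}$, which gives
\[
\|X_N\|_{L^p}\lesssim_p \frac{1}{\sqrt{N\theta_N}}+\frac{1}{N\theta_N}.
\]
Since $N\theta_N\to\infty$, this is $O\bigl(1/\sqrt{N\theta_N}\bigr)$.

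\textbf{Linearization \eqref{eq:linear11}.} From $|e^x-1-x|\le \frac{x^2}{2}e^{|x|}$ and Hölder,
\[
\|e^{X_N}-1-X_N\|_{L^p}\le \tfrac12\|X_N\|_{L^{2p\cdot 2}}^{2}\,\|e^{|X_N|}\|_{L^{2p}} .
\]
The first factor is $O\bigl(\frac{1}{N\theta_N}\bigr)=o\bigl(\frac{1}{\sqrt{N\theta_N}}\bigr)$. It remains to show that $\E e^{q|X_N|}$ is bounded for each fixed $q$, and this is the main point to check. I would bound $e^{q|X|}\le e^{qX}+e^{-qX}$ and use independence to factor the moment generating function over edges. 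For each edge, with $c=\gamma$ or $c=\gamma^2$,
\[
\E e^{\pm q c\overline{A}}\le \exp\bigl(C q^2 c^2 \,\theta_N W\bigr),
\]
valid because $qc\le 1$ for large $N$. Summing the exponents gives
\[
\log \E e^{\pm qX_N}\lesssim q^2\bigl(\gamma^2 N\theta_N+\gamma^4 N^2\theta_N\bigr)\to 0 .
\]
So the exponential moments are in fact $1+o(1)$, which establishes \eqref{eq:linear11}.

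\textbf{CLT \eqref{eq:thetaN}.} By the variance bound, $\sqrt{N\theta_N}\,\gamma^2\overline{\eta}_N\to 0$ in $L^2$, so it suffices to treat
\[
\sqrt{N\theta_N}\,\gamma\,\overline{\vartheta}_N=\frac{\beta}{2\sqrt{N\theta_N}}\sum_{i=1}^N \overline{A_{G_N}(i,i)} .
\]
Its variance is
\[
\frac{\beta^2}{4N\theta_N}\sum_{i=1}^N \theta_N W\!\left(\tfrac iN,\tfrac iN\right)\left(1-\theta_N W\!\left(\tfrac iN,\tfrac iN\right)\right)\to \frac{\beta^2}{4}\int_0^1 W(x,x)\bigl(1-\theta W(x,x)\bigr)\,\mathrm dx .
\]
The convergence uses the Riemann integrability of $\mathrm{diag}_W$ from Assumption \ref{assumption}(2) together with $\theta_N\to\theta$. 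Each summand is bounded by $\frac{\beta}{2\sqrt{N\theta_N}}\to 0$, so Lindeberg's condition holds trivially whenever the limiting variance is positive. If the limiting variance is zero, the limit is the degenerate normal and the conclusion follows directly from Chebyshev's inequality. Slutsky's lemma then combines the two pieces and completes the proof.
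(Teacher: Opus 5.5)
Your proof is correct and follows essentially the same route as the paper: a second-order Taylor remainder combined with H\"older's inequality, moment and exponential-moment bounds for sums of bounded independent centered Bernoulli variables, and Lindeberg--Feller for the dominant diagonal part. The differences are only technical. You control $\E e^{\pm q X_N}$ by factoring the moment generating function over edges, where the paper integrates a Bernstein tail bound. You also discard $\sqrt{N\theta_N}\,\gamma^2\overline{\eta}_N$ in $L^2$ before applying Lindeberg and treat the degenerate-variance case explicitly, whereas the paper applies Lindeberg--Feller to the full sum.
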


Using the above  lemma we can now complete the proof of Theorem~\ref{thm:ZN} (1) as follows: To begin with, using~\eqref{prep_proof_th1.2_rewr} and~\eqref{eq:linear11} gives, 
\begin{align}\label{eq:L2_tech1}
    &\frac{Z_N(\beta)}{e^{\textstyle \frac{\beta}{2N}\sum_{i=1}^{N}W\left(\frac{i}{N},\frac{i}{N}\right) + \frac{\beta^2}{4N^2\theta_N}\sum_{1 \leq i,j \leq N}W\left(\frac{i}{N},\frac{j}{N}\right)} \E \hat Z_N(\beta)}
    \notag\\ 
    & =
    \left(1 + o_{L^2}\left(\frac 1 {\sqrt{N\theta_N}}\right)\right)
    \left(1 + \gamma \overline{\vartheta}_N + \gamma^2\overline{\eta}_N + o_{L^2}\left(\frac{1}{\sqrt{N\theta_N}}\right)\right) \notag\\ 
    &=
    1 + \gamma \overline{\vartheta}_N + \gamma^2\overline{\eta}_N + o_{L^1}\left(\frac 1 {\sqrt{N\theta_N}}\right).
\end{align}
Multiplying \eqref{eq:L2_tech1} throughout by $\sqrt{N\theta_N}$ and applying Lemma \ref{lem:clt_for_delta_N1} gives, 
\begin{align}\label{eq:CLT_wrong_norming1}
    &\sqrt{N \theta_N} \left(\frac{Z_N(\beta)}{e^{\textstyle \frac{\beta}{2N}\sum_{i=1}^{N}W\left(\frac{i}{N},\frac{i}{N}\right) + \frac{\beta^2}{4N^2\theta_N} \sum_{1 \leq i,j \leq N} W\left(\frac{i}{N},\frac{j}{N}\right)}\E \hat{Z}_N(\beta)} - 1\right)
    \nonumber\\
    & =
    \sqrt{N\theta_N}(\gamma \overline{\vartheta}_N + \gamma^2\overline{\eta}_N) + o_{L^1}(1) \nonumber \\ 
    & \xrightarrow{D}
    \mathcal{N}\left(0,\frac{\beta^2}{4} \int_0^1 W(x,x)(1-\theta W(x,x)) \mathrm{d}x\right) . 
\end{align}
The $L^1$-convergence in the first equality of \eqref{eq:CLT_wrong_norming1} further implies that 
\begin{align*}
    \varepsilon_N := \sqrt{N\theta_N} \left( \frac{\E Z_N(\beta)}{e^{\textstyle \frac{\beta}{2N}\sum_{i=1}^{N}W\left(\frac{i}{N},\frac{i}{N}\right) + \frac{\beta^2}{4N^2\theta_N}\sum_{1 \leq i,j \leq N}W\left(\frac{i}{N},\frac{j}{N}\right)} \E \hat Z_N(\beta)} -1\right) \ll 1.
\end{align*}
Hence, 
\begin{align*}
    & \sqrt{N \theta_N} \left(\frac{Z_N(\beta)}{\E Z_N(\beta)} - 1\right) \nonumber \\ 
     & =
    \sqrt{N \theta_N} \left(Z_N(\beta)\cdot \frac{1+ \frac{\varepsilon_N}{\sqrt{N\theta_N}}}{\E Z_N(\beta)} - 1\right) - \varepsilon_N \frac{Z_N(\beta)}{\E Z_N(\beta)}                                                                                                                           \\
     & =
    \sqrt{N \theta_N} \left(\frac{Z_N(\beta)}{e^{\textstyle \frac{\beta}{2N}\sum_{i=1}^{N}W\left(\frac{i}{N},\frac{i}{N}\right) + \frac{\beta^2}{4N^2\theta_N}\sum_{1 \leq i,j \leq N}W\left(\frac{i}{N},\frac{j}{N}\right)} \E \hat Z_N(\beta)} - 1\right) - \varepsilon_N \frac{Z_N(\beta)}{\E Z_N(\beta)} \\
     & \xrightarrow{D}
    \mathcal{N}\left(0,\frac{\beta^2}{4} \int_0^1 W(x,x)(1-\theta W(x,x)) \mathrm{d}x\right),
\end{align*} 
where the last step uses~\eqref{eq:CLT_wrong_norming1}, together with the facts that $\varepsilon_N \ll 1$ and $\frac{Z_N(\beta)}{\E Z_N(\beta)} = O_P(1)$. The result in \eqref{eq:ZN12} then follows by applying the delta method.

\subsection{Proof of Theorem \ref{thm:ZN} (2) and (3)}
\label{sec:thm:ZNpf2}

Throughout this section we will assume $\theta_N \sim c N^{-\frac{1}{2}}$ for some $c\in [0,\infty)$. This includes both case (2) (where $c \in (0, \infty)$) and case (3) (where $c = 0$). As in the previous section, the proof in this case relies on the following linearization result, which we prove in Appendix \ref{proofdeltaN28}. 

\begin{lemma}\label{lem:clt_for_delta_N2}
    Suppose $N\theta_N^2 \lesssim 1$ and $N^2\theta_N^{3} \gg 1$. Then for any $p\ge 1$, 
    \begin{align}\label{eq:linear12}
        e^{\textstyle \gamma \overline{\vartheta}_N + \gamma^2 \overline{\eta}_N}
        =
        1 + \gamma \overline{\vartheta}_N + \gamma^2 \overline{\eta}_N + o_{L^p}\left(\frac{1}{N\theta_N^{\frac{3}{2}}}\right).
    \end{align} 
    Furthermore, if $N\theta_N^2 \to c^2$ and $N^2\theta_N^3 \to\infty$, then 
    \begin{align}\label{eq:thetaNZN} 
        N\theta_N^{\frac{3}{2}} \left(\gamma \overline{\vartheta}_N + \gamma^2\overline{\eta}_N\right) \xrightarrow{D} \mathcal{N}\left(0,\frac{\beta^4}{8} \int_{[0,1]^2} W (x, y) \mathrm{d}x \mathrm{d} y + \frac{\beta^2c^2}{4} \int_0^1 W(x,x)\mathrm{d}x \right).
    \end{align}
\end{lemma}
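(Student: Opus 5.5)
The plan is to treat $X_N := \gamma \overline{\vartheta}_N + \gamma^2 \overline{\eta}_N$ as a weighted sum of independent, centered, bounded random variables, the centered Bernoulli entries $\overline{A_{G_N}(i,j)}$ for $i \le j$. The linearization \eqref{eq:linear12} will follow from moment bounds showing $X_N$ is of order $1/(N\theta_N^{3/2})$, and \eqref{eq:thetaNZN} from a Lyapunov CLT.

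\textbf{Variance and cross term.} Using symmetry of $A_{G_N}$, write $\overline{\eta}_N = \overline{\vartheta}_N + 2\sum_{i<j}\overline{A_{G_N}(i,j)}$. Then
\begin{align*}
X_N = (\gamma+\gamma^2)\sum_{i}\overline{A_{G_N}(i,i)} + 2\gamma^2\sum_{i<j}\overline{A_{G_N}(i,j)} .
\end{align*}
Since $\mathrm{Var}(\overline{A_{G_N}(i,j)}) = \theta_N W(\frac iN,\frac jN)(1-\theta_N W(\frac iN,\frac jN))$ and $\gamma = \frac{\beta}{2N\theta_N}$, Riemann integrability (Assumption \ref{assumption} (2)) gives the two variance contributions.
\begin{itemize}
\item Diagonal part: $(\gamma+\gamma^2)^2\sum_i \mathrm{Var}(\overline{A_{G_N}(i,i)}) \sim \frac{\beta^2}{4N\theta_N}\int_0^1 W(x,x)\,\mathrm dx$, using $\theta_N\to 0$ (which follows from $N\theta_N^2\lesssim 1$).
\item Off-diagonal part: $4\gamma^4\sum_{i<j}\mathrm{Var}(\overline{A_{G_N}(i,j)}) \sim \frac{\beta^4}{8N^2\theta_N^3}\int_{[0,1]^2} W$.
\end{itemize}
The two parts involve disjoint families of edges, so there is no cross term. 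Multiplying by $N^2\theta_N^3$, and noting $N^2\theta_N^3\cdot\frac{1}{N\theta_N} = N\theta_N^2 \to c^2$, gives exactly the limiting variance in \eqref{eq:thetaNZN}.

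Under only $N\theta_N^2\lesssim 1$ the same computation gives $\mathrm{Var}(X_N) \lesssim \frac{1}{N^2\theta_N^3}$. This holds because $\frac{1}{N\theta_N} \le \frac{1}{N^2\theta_N^3}$ is equivalent to $N\theta_N^2 \le 1$, up to constants.

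\textbf{Linearization.} Write $e^{X_N} = 1 + X_N + R_N$ with $|R_N| \le X_N^2 e^{|X_N|}$. By H\"older,
\begin{align*}
\|R_N\|_p \le \|X_N\|_{4p}^2\,\|e^{|X_N|}\|_{2p}.
\end{align*}
Each summand of $X_N$ has coefficient at most $2\gamma \lesssim \frac{1}{N\theta_N} \to 0$ and is bounded. Bernstein's (or Bennett's) inequality for independent bounded variables therefore gives
\begin{align*}
\E e^{s|X_N|} \le 2\exp\{C s^2\,\mathrm{Var}(X_N)\} = O(1)
\end{align*}
for each fixed $s$, since $\mathrm{Var}(X_N)\to 0$. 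Rosenthal's inequality, or the same sub-exponential tail, gives $\|X_N\|_{4p} \lesssim_p \mathrm{Var}(X_N)^{1/2} + \max(\text{coefficient}) \lesssim \frac{1}{N\theta_N^{3/2}}$. Hence
\begin{align*}
\|R_N\|_p \lesssim \frac{1}{N^2\theta_N^3} = o\left(\frac{1}{N\theta_N^{3/2}}\right),
\end{align*}
because $N\theta_N^{3/2} = (N^2\theta_N^3)^{1/2} \to \infty$. This proves \eqref{eq:linear12}.

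\textbf{CLT.} Apply Lyapunov's theorem to the triangular array $N\theta_N^{3/2}X_N = \sum_k a_k\xi_k$, with $\xi_k$ the centered Bernoullis. The third-moment sum satisfies
\begin{align*}
\sum_k |a_k|^3\,\E|\xi_k|^3 \le \max_k|a_k| \sum_k a_k^2\,\E\xi_k^2 .
\end{align*}
The variance sum converges, by the first step. The maximal scaled coefficients are
\begin{itemize}
\item off-diagonal: $N\theta_N^{3/2}\gamma^2 \asymp \frac{1}{N\theta_N^{1/2}} \to 0$;
\item diagonal: $N\theta_N^{3/2}\gamma \asymp \theta_N^{1/2} \to 0$.
\end{itemize}
So the Lyapunov ratio vanishes and \eqref{eq:thetaNZN} follows.

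The main obstacle is the uniform exponential moment bound on $X_N$ in the linearization step, since every other estimate is a variance computation. I expect it to be routine here because all coefficients vanish uniformly.
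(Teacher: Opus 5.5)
Your proposal is correct and follows essentially the same route as the paper. Both arguments split $\gamma\overline{\vartheta}_N+\gamma^2\overline{\eta}_N$ into independent centered Bernoulli sums with the same variance computation, prove the CLT by a Lindeberg/Lyapunov argument, and prove the linearization by the second-order Taylor remainder combined with H\"older/Cauchy--Schwarz and Bernstein-type concentration. The only minor difference is in how the remainder is controlled. You get the explicit rate $O(1/(N^2\theta_N^3))$ directly from Rosenthal and Bennett moment bounds, whereas the paper obtains the $o(\cdot)$ from tightness plus uniform integrability.
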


Using the above lemma we can now complete the proof of Theorem~\ref{thm:ZN} (2) and (3) as follows: To begin with, using~\eqref{prep_proof_th1.2_rewr} and~\eqref{eq:linear12} gives: 
\begin{align}\label{eq:L2_tech2}
    &\frac{Z_N(\beta)}{e^{\textstyle \frac{\beta}{2N}\sum_{i=1}^{N}W\left(\frac{i}{N},\frac{i}{N}\right) + \frac{\beta^2}{4N^2\theta_N}\sum_{1 \leq i,j \leq N}W\left(\frac{i}{N},\frac{j}{N}\right)} \E \hat Z_N(\beta)}\notag\\
    &=
    \left(1 + o_{L^2}\left(\frac 1 {\sqrt{N\theta_N}}\right)\right)
    \left(1 + \gamma \overline{\vartheta}_N + \gamma^2\overline{\eta}_N + o_{L^2}\left(\frac{1}{N\theta_N^{\frac{3}{2}}}\right)\right)\notag \\
    &=
    1 + \gamma \overline{\vartheta}_N + \gamma^2\overline{\eta}_N + o_{L^1}\left(\frac 1 {N\theta_N^{\frac{3}{2}}}\right).
\end{align}
Multiplying \eqref{eq:L2_tech2} throughout by $N\theta_N^{\frac{3}{2}}$ and applying Lemma \ref{lem:clt_for_delta_N2} gives, 
\begin{align}
    \label{eq:L2_tech_multiplied2}
    & N \theta_N^{\frac{3}{2}} \left(\frac{Z_N(\beta)}{e^{\textstyle \frac{\beta}{2N}\sum_{i=1}^{N}W\left(\frac{i}{N},\frac{i}{N}\right) + \frac{\beta^2}{4N^2\theta_N} \sum_{1 \leq i,j \leq N} W\left(\frac{i}{N},\frac{j}{N}\right)}\E \hat{Z}_N(\beta)} - 1\right)\notag\\
    &=
    N\theta_N^{\frac{3}{2}}(\gamma \overline{\vartheta}_N + \gamma^2\overline{\eta}_N) + o_{L^1}(1) \nonumber \\ 
    &\xrightarrow{D} \mathcal{N}\left(0,\frac{\beta^4}{8} \int_{[0,1]^2} W + \frac{\beta^2c^2}{4} \int_0^1 W(x,x)\mathrm{d}x \right) .
\end{align}
The $L^1$-convergence in the first equality in \eqref{eq:L2_tech_multiplied2} further implies that 
\begin{align*}
    \varepsilon_N := N\theta_N^{\frac{3}{2}} \left( \frac{\E Z_N(\beta)}{e^{\textstyle \frac{\beta}{2N}\sum_{i=1}^{N}W\left(\frac{i}{N},\frac{i}{N}\right) + \frac{\beta^2}{4N^2\theta_N}\sum_{1 \leq i,j \leq N}W\left(\frac{i}{N},\frac{j}{N}\right)} \E \hat Z_N(\beta)} -1\right) \ll 1.
\end{align*}
Hence, 
\begin{align*}
    & N \theta_N^{\frac{3}{2}} \left(\frac{Z_N(\beta)}{\E Z_N(\beta)} - 1\right) \\ 
     & =
    N \theta_N^{\frac{3}{2}} \left(Z_N(\beta)\cdot \frac{1+ \frac{\varepsilon_N}{\sqrt{N^{2}\theta_N^{3}}}}{\E Z_N(\beta)} - 1\right) - \varepsilon_N \frac{Z_N(\beta)}{\E Z_N(\beta)}     \\
     & =
    N \theta_N^{\frac{3}{2}} \left(\frac{Z_N(\beta)}{e^{\textstyle \frac{\beta}{2N}\sum_{i=1}^{N}W\left(\frac{i}{N},\frac{i}{N}\right) + \frac{\beta^2}{4N^2\theta_N}\sum_{1 \leq i,j \leq N}W\left(\frac{i}{N},\frac{j}{N}\right)} \E \hat Z_N(\beta)} - 1\right) - \varepsilon_N \frac{Z_N(\beta)}{\E Z_N(\beta)} \\
     & \to
    \mathcal{N}\left(0,\frac{\beta^4}{8} \int_{[0,1]^2} W(x,y)\mathrm{d}x\mathrm{d}y + \frac{\beta^2c^2}{4}\int_0^1 W(x,x)\mathrm{d}x\right)
\end{align*}
where the last step uses \eqref{eq:L2_tech_multiplied2}, together with the facts that $\varepsilon_N \ll 1$ and $\frac{Z_N(\beta)}{\E Z_N(\beta)} = O_P (1)$. The results in \eqref{eq:ZN23} (where $c \in (0, \infty)$) and \eqref{eq:ZNtheta12} (where $c = 0$)  then follow from the above by applying the delta method.

\subsection{Proof of Theorem \ref{thm:ZN} (4)}
\label{sec:thm:ZNpf3}

First, note from \eqref{prep_proof_th1.2_rewr} that 
\begin{multline*}
    \log Z_N(\beta)-\frac{\beta}{2N}\sum_{i=1}^{N}W\left(\frac{i}{N},\frac{i}{N}\right) - \frac{\beta^2}{4N^2\theta_N}\sum_{1 \leq i,j \leq N}W\left(\frac{i}{N},\frac{j}{N}\right) - \log \E \hat Z_N(\beta)\\
    =
    \log \left(1 + o_{L^2}\left(\frac 1 {\sqrt{N\theta_N}}\right)\right)
    + \gamma \overline{\vartheta}_N + \gamma^2\overline{\eta}_N . 
\end{multline*}
Hence, 
\begin{align}\label{logzntoclt5}
    \log Z_N(\beta)-\frac{\beta}{2N}\sum_{i=1}^{N}W\left(\frac{i}{N},\frac{i}{N}\right) - \frac{\beta^2}{4N^2\theta_N}\sum_{1 \leq i,j \leq N}W\left(\frac{i}{N},\frac{j}{N}\right) & - \log \E \hat Z_N(\beta)
    - \gamma \overline{\vartheta}_N - \gamma^2\overline{\eta}_N \nonumber \\ 
    & \xrightarrow{P} 0 . 
\end{align}
Next, define 
\begin{align}\label{eq:gammatheta}
    \overline{\Delta}_N & :=\gamma \overline{\vartheta}_N + \gamma^2\overline{\eta}_N \nonumber \\ 
    & =
    \sum_{1 \leq i < j \leq N} \frac{\beta^2}{2(N\theta_N)^2} \overline{A_{G_N}(i,j)}
    +
    \sum_{i=1}^N
    \left(\frac{\beta^2}{4(N\theta_N)^2} + \frac {\beta}{2N \theta_N}\right) \overline{A_{G_N}(i,i)}.
\end{align} 
Since $N\theta_N^2 \ll 1$ in the current regime,
\begin{align}\label{pr81}
    \var\left[N\theta_N^{\frac{3}{2}} \sum_{i=1}^N
    \left(\frac{\beta^2}{4(N\theta_N)^2} + \frac {\beta}{2N \theta_N}\right) \overline{A_{G_N}(i,i)}\right] \ll 1.
\end{align} 
Furthermore, 
\begin{align}\label{pr82}
    \var\left[N\theta_N^{\frac{3}{2}} \sum_{1 \leq i < j \leq N} \frac{\beta^2}{2(N\theta_N)^2} \overline{A_{G_N}(i,j)}\right] \to \frac{\beta^4}{8} \int_{[0,1]^2} W (x, y) \mathrm{d}x \mathrm{d} y . 
\end{align} 
Combining \eqref{pr81} and \eqref{pr82} gives, 
 $$\var\left[{N\theta_N^{\frac{3}{2}}} \overline{\Delta}_N\right] \rightarrow \frac{\beta^4}{8}\int_{[0,1]^2} W (x, y) \mathrm{d}x \mathrm{d} y . $$
Moreover, 
$$N\theta_N^{\frac{3}{2}}\overline{\Delta}_N =  \sum_{1 \leq i < j \leq N} \frac{\beta^2}{2N \sqrt{\theta_N}} \overline{A_{G_N}(i,j)}
    +
    \sum_{i=1}^N
    \left(\frac{\beta^2}{4N \sqrt{\theta_N}} + \frac {\beta\sqrt{\theta_N}}{2}\right) \overline{A_{G_N}(i,i)}.$$
Note that each of the independent summands in the above expansion converges uniformly to $0$, as $N\rightarrow\infty$. Hence, by the Lindeberg-Feller CLT, \begin{align}\label{intmclt7}
    N\theta_N^{\frac{3}{2}}\overline{\Delta}_N \xrightarrow{D} \mathcal{N}\left(0,\frac{\beta^4}{8} \int_{[0,1]^2} W (x, y) \mathrm{d}x \mathrm{d} y \right).
\end{align}
Combining \eqref{logzntoclt5} (on noting that $N\theta_N^{\frac{3}{2}} = O(1)$) and \eqref{intmclt7}, we have:
\begin{align}\label{nthetazn2}
    &N\theta_N^{\frac{3}{2}} \left(\log Z_N(\beta)-\frac{\beta}{2N}\sum_{i=1}^{N}W\left(\frac{i}{N},\frac{i}{N}\right) - \frac{\beta^2}{4N^2\theta_N}\sum_{1 \leq i,j \leq N}W\left(\frac{i}{N},\frac{j}{N}\right) - \log \E \hat Z_N(\beta)\right)\notag\\ 
    &\xrightarrow{D} \mathcal{N}\left(0,\frac{\beta^4}{8} \int_{[0,1]^2} W (x, y) \mathrm{d}x \mathrm{d} y\right).
\end{align} 
Recalling that $N\theta_N^\frac{3}{2} \rightarrow c^\frac{3}{2}$, we thus have the following from \eqref{nthetazn2}: 
\begin{align}\label{nthetazn282}
    & \log Z_N(\beta) - \frac{\beta^2}{4N^2\theta_N}\sum_{1 \leq i,j \leq N}W\left(\frac{i}{N},\frac{j}{N}\right) - \log \E \hat Z_N(\beta)\notag\\ 
    &\xrightarrow{D} \mathcal{N}\left(\frac{\beta}{2}  \int_0^1 W(x,x) dx~,~\frac{\beta^4}{8c^3} \int_{[0,1]^2} W (x, y) \mathrm{d}x \mathrm{d} y\right).
\end{align}
The result in \eqref{eq:23threshold} now follows from \eqref{nthetazn282} and part (2) of the following lemma, which is proved in Appendix \ref{proof6p246}.  

Next, note that if $W$ is $\alpha$-H\"older continuous, 
\begin{align}\label{holdapprox62}
    \left|\frac{1}{N^2}\sum_{1\le i,j\le N} W\left(\frac{i}{N}, \frac{j}{N}\right) - \int_{[0,1]^2} W(x,y)~\mathrm{d}x \mathrm{d}y\right|  &\le \sum_{i,j=1}^N \int_{\frac{i-1}{N}}^\frac{i}{N} \int_{\frac{j-1}{N}}^\frac{j}{N} \left| W\left(\frac{i}{N}, \frac{j}{N}\right) - W(x,y)\right|~\mathrm{d}x \mathrm{d}y\notag\\ 
    &\lesssim N^{-\alpha} \sum_{i,j=1}^N \int_{\frac{i-1}{N}}^\frac{i}{N} \int_{\frac{j-1}{N}}^\frac{j}{N}1 ~\mathrm{d}x \mathrm{d}y = N^{-\alpha}.
\end{align}
It follows from \eqref{holdapprox62} that if $\alpha > \frac{2}{3}$, then
\begin{equation}\label{compholdap44}
    \left|\frac{\beta^2}{4 N^2 \theta_N}\sum_{1\le i,j\le N} W\left(\frac{i}{N}, \frac{j}{N}\right) - \frac{\beta^2}{4\theta_N}\int_{[0,1]^2} W(x,y)~\mathrm{d}x \mathrm{d}y\right| \lesssim \frac{\beta^2}{4\theta_N N^\alpha} \sim \frac{\beta^2}{4c N^{\alpha - \frac{2}{3}}} = o(1).
\end{equation}
The result in \eqref{eq:23thresholdW} now follows from \eqref{eq:23threshold} and \eqref{compholdap44}, thereby completing the proof of part (4). \hfill $\Box$

\begin{lemma} \label{lem:expect_sum}
    Suppose $0< \beta <\frac{1}{\|W\|_{\mathrm{op}}}$. Then the following hold: 
    \begin{enumerate}
        \item[$(1)$] If $\theta_N \gg N^{-\frac{2}{3}}$, then
              $$
                  \E \hat Z_N\left(\beta\right)
                  \to e^{\textstyle -\frac{\beta^2}{4}\int_{[0,1]^2} W(x,y)^2 \mathrm{d}x \mathrm{d}y } \prod_{\lambda \in \mathrm{Spec}(W)} \frac{ e^{\textstyle  -\frac{\beta \lambda}{2}}}{\sqrt{1-\beta \lambda}} . 
              $$
        \item[$(2)$] If $\theta_N \sim c N^{-\frac{2}{3}}$ for some $c \in (0,\infty)$, then
              \begin{align*}
                  & \log \E \hat{Z}_N(\beta) \nonumber \\ 
                  & \to -\frac{\beta^2}{4}\int_{[0,1]^2} W(x,y)\left(\frac{\beta^2}{6c^3}+W(x,y)\right) \mathrm{d}x \mathrm{d}y-\frac{1}{2}\sum_{\lambda \in \mathrm{Spec}(W)} \left(\log(1-\beta \lambda)+\beta\lambda \right). 
              \end{align*}
    \end{enumerate}
\end{lemma}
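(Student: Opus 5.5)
We need to compute $\E \hat Z_N(\beta) = 2^{-N}\sum_{\bs}\E T(\bs)$. Since the entries $A_{G_N}(i,j)$, $i\le j$, are independent Bernoulli variables with parameters $p_{ij}:=\theta_N W(\frac iN,\frac jN)$, the expectation factorizes over pairs. Write $\sum_{i,j}A_{ij}\sigma_i\sigma_j=\vartheta_N+2\sum_{i<j}A_{ij}\sigma_i\sigma_j$. Then $\gamma\vartheta_N-\gamma\vartheta_N$ cancels in the exponent of $T(\bs)$, and $\eta_N=\vartheta_N+2\sum_{i<j}A_{ij}$. This gives
\[
\E T(\bs)=\prod_{i}\E e^{-\gamma^2A_{ii}}\prod_{i<j}\E e^{2A_{ij}(\gamma\sigma_i\sigma_j-\gamma^2)} .
\]
The diagonal product equals $\exp(-\gamma^2\theta_N\sum_i W(\frac iN,\frac iN)(1+o(1)))=1+o(1)$, because $\gamma^2\theta_N N\asymp 1/(N\theta_N)\to0$. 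For $i<j$, let $s=\sigma_i\sigma_j$ and $x=2\gamma s-2\gamma^2$. Then $\E e^{xA_{ij}}=1+p_{ij}(e^x-1)$. Expanding, $\log(1+p(e^x-1)) = p(x+x^2/2+x^3/6)-p^2x^2/2+O(px^4+p^2|x|^3)$, with $x=O(1/(N\theta_N))$. Collecting powers of $s$ (using $s^2=1$) gives
\[
\log\E e^{xA_{ij}}= a_{ij} + b_{ij}\,s+\text{error},
\]
where:
\begin{itemize}
\item $a_{ij}=p(2\gamma^2-2\gamma^2)+O(p\gamma^4)-2p^2\gamma^2=-2p_{ij}^2\gamma^2+O(p\gamma^4)$;
\item $b_{ij}=p(2\gamma-4\gamma^3+\tfrac{8}{6}\gamma^3)+O(p^2\gamma^3)=2p_{ij}\gamma-\tfrac{8}{3}p_{ij}\gamma^3+\dots$.
\end{itemize}
Summing the remainders over pairs gives $O(N^2\theta_N\gamma^4+N^2\theta_N^2\gamma^3)=O(1/(N^2\theta_N^3)+1/(N\theta_N))$. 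This is $o(1)$ when $\theta_N\gg N^{-2/3}$. When $\theta_N\sim cN^{-2/3}$, the $\gamma^4$ term is of order one, so we keep the exact $s$-independent quartic term $p\,x^4/24$ together with the $\gamma^4$ contributions from $x^2/2$ and $x^3/6$.

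Collecting the constants, $\sum_{i<j}a_{ij}\to-\frac{\beta^2}{4}\int W^2$, since $2\gamma^2\theta_N^2\sum_{i<j}W^2\approx\frac{\beta^2}{4}\int W^2$. In case (2) there is an additional term $-\frac{\beta^4}{24c^3}\int W$: it comes from combining the $p\gamma^4$ coefficients, which should net to $-\frac{\beta^4}{24}\cdot\frac{1}{N^2\theta_N^3}\int W$ after Riemann sums (a coefficient check is needed). We therefore get
\[
\E\hat Z_N(\beta)=e^{C_N}\,2^{-N}\sum_{\bs}\exp\Big(\sum_{i<j}b_{ij}\sigma_i\sigma_j\Big)(1+o(1)).
\]
The main term is $b_{ij}\approx\frac{\beta}{N}W(\frac iN,\frac jN)$. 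The cubic correction $\frac83p\gamma^3$ has total weight $N^2\theta_N\gamma^3\asymp 1/(N\theta_N^2)$, which is $o(1)$ except that, at the $N^{-2/3}$ scale, its interaction with the quadratic form might contribute. I expect it to be negligible after the Gaussian computation below, since it changes the matrix by $O(\gamma^2)$ in operator norm.

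The remaining task, which is the main obstacle, is a Curie–Weiss-type Gaussian computation for the dense deterministic matrix $M_N=(\frac1N W(\frac iN,\frac jN))$ with zero diagonal. Write $\sum_{i<j}b_{ij}\sigma_i\sigma_j=\frac{\beta}{2}\bs^\top M_N\bs$. Because $W$ is not necessarily positive semidefinite, use the Hubbard–Stratonovich transform separately on the positive and negative parts of the spectrum; alternatively, expand $2^{-N}\sum_{\bs} e^{\frac\beta2\bs^\top M\bs}$ via the cluster (cycle) expansion. The target formula is
\[
2^{-N}\sum_{\bs}\exp\Big(\tfrac{\beta}{2}\bs^\top M_N\bs\Big)\to\prod_{\lambda}\frac{e^{-\beta\lambda/2}}{\sqrt{1-\beta\lambda}} .
\]
This matches $\exp(\frac12\sum_{r\ge2}\frac{\beta^r}{r}t(C_r,W)-\frac{\beta^2}{4}\|W\|_2^2)$ in the style of the standard subcritical result, and note $t(C_r,W)=\sum\lambda^r$. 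I would prove it by the following steps:
\begin{enumerate}
\item Truncate $W$ to finitely many eigenpairs, with error controlled in Hilbert–Schmidt norm. This uses the Riemann integrability from Assumption~\ref{assumption}(2) to get $\|M_N-\text{discretized }W\|_{HS}\to0$ and eigenvalue convergence $\mathrm{Spec}(M_N)\to\mathrm{Spec}(W)$.
\item For the finite-rank part, apply Gaussian linearization and a CLT for $\frac1{\sqrt N}\sum_i\phi_k(\frac iN)\sigma_i$ under uniform spins, giving $\E e^{\frac\beta2\sum\lambda_k(G_k^2-\text{diag})}$.
\item Use uniform integrability, which holds since $\beta\|W\|_{\mathrm{op}}<1$, together with a Hilbert–Schmidt bound on the remainder's contribution, via $\E_{\bs}e^{t\,\bs^\top R\bs}\le e^{Ct^2\|R\|_{HS}^2}$ for small $\|R\|_{\mathrm{op}}$.
\end{enumerate}
In the $\frac{\beta\lambda}{2}$ shift, the zero diagonal accounts for the $e^{-\beta\lambda/2}$ factors. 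The constant $-\frac{\beta^2}{4}\int W^2$ then combines with $-\frac12\sum(\log(1-\beta\lambda)+\beta\lambda)$ exactly as in both parts (1) and (2) of the claim.
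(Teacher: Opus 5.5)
Your proposal is correct. Its first half coincides with the paper's argument, and it reaches the key limit by a genuinely different route.

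\textbf{Reduction to the Rademacher MGF (same as the paper).} Factorizing $\E T(\bs)$ over independent edges and Taylor expanding each pair reproduces Lemma~\ref{lem:expvariance}. Your $a_{ij}$ and $b_{ij}$ are the paper's $a_0(i,j)$ and $a_1(i,j)$, and the Riemann-sum bookkeeping for the constants is identical. The coefficient check you left open works out. The $s$-independent $\gamma^4$ contributions from $px^2/2$, $px^3/6$ and $px^4/24$ are $2-4+\tfrac23=-\tfrac43$ (times $p\gamma^4$). This gives $-\frac{\beta^4}{12N^4\theta_N^3}\sum_{i<j}W(\frac iN,\frac jN)\to-\frac{\beta^4}{24c^3}\int W$, as required in part (2).

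\textbf{Limit of $2^{-N}\sum_{\bs}e^{\frac\beta2\bs^\top M_N\bs}$ (different route).}
\begin{itemize}
\item The paper gets distributional convergence of $\frac\beta N\sum_{i<j}W\sigma_i\sigma_j$ from a cumulant/cycle-density computation (Proposition~\ref{quadJointAsym}). It gets uniform integrability from Lemma~\ref{expUniInt}, which dominates the Rademacher MGF by the Gaussian one because $\bm W_N$ has nonnegative entries. This makes $\beta\|W\|_{\mathrm{op}}<1$ suffice in one line.
\item Your route has four ingredients:
  \begin{itemize}
  \item spectral truncation;
  \item a Lindeberg CLT in finitely many directions;
  \item the bound $\mathrm{Var}(\bs^\top R\bs)\le 2\|R\|_{\mathrm{HS}}^2$ for the tail;
  \item uniform integrability via H\"older, splitting into a finite-rank part and a remainder of small operator norm. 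Hubbard--Stratonovich bounds the finite-rank part by $\prod_k(1-a_k)^{-1/2}$ on the positive spectrum, with the negative part bounded by $1$. Hanson--Wright bounds the remainder, with $K$ chosen after the H\"older exponent.
  \end{itemize}
\item Your route is more self-contained and does not use $W\ge 0$. It costs more bookkeeping: eigenvalue and HS convergence of $M_N$, plus care with the directions.
\item For the directions, use the eigenvectors $v_k$ of $M_N$ rather than $\phi_k(\frac iN)$. Since $|M_N(i,j)|\le 1/N$, you get $|v_k(i)|\le(|\mu_k|\sqrt N)^{-1}$, so delocalization and the Lindeberg condition are automatic.
\end{itemize}

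\textbf{Two slips.}
\begin{itemize}
\item The cubic term $-\tfrac83p\gamma^3 s$ has total absolute weight $\asymp 1/(N\theta_N^2)$. This is not $o(1)$ anywhere in $N^{-2/3}\lesssim\theta_N\lesssim N^{-1/2}$, not only at the $N^{-2/3}$ scale. However, it is exactly proportional to $W$, so it merely replaces $\beta$ by $\beta(1-\tfrac43\gamma^2)\to\beta$. This is the paper's $O(Q_N(\bs))$ term, and your operator-norm remark covers it.
\item The expression $\exp\bigl(\frac12\sum_{r\ge2}\frac{\beta^r}{r}t(C_r,W)-\frac{\beta^2}{4}\|W\|_2^2\bigr)$ is the limit of the full $\E\hat Z_N(\beta)$ in part (1). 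It is not the limit of the Rademacher MGF alone, which is $\exp\bigl(\frac12\sum_{r\ge2}\frac{\beta^r}{r}\sum_\lambda\lambda^r\bigr)$. Your final combination of constants is nevertheless correct.
\end{itemize}
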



\subsection{Proof of Theorem \ref{thm:ZN} (5)}
\label{sec:thm:ZNpf4}

To begin with, note that
\begin{align*}
    Z_N(\beta) = & \frac{1}{2^N} \sum_{\bs\in \{-1,+1\}^N} e^{\textstyle   \frac{\beta}{2 N\theta_N } \sum_{1 \leq i, j \leq N}  A_{G_N}\left(i, j\right)\sigma_i \sigma_j } \\
    = & \E_{\mu_N}\left[\prod_{1 \leq i, j \leq N} e^{\textstyle   \frac{\beta}{2 N\theta_N } A_{G_N}\left(i, j\right)\sigma_i \sigma_j }  \right] \\     
    = & e^{\textstyle \frac{\beta}{2N\theta_N}\vartheta_N} \E_{\mu_N}\left[\prod_{1 \leq i \ne j \leq N} e^{\textstyle   \frac{\beta}{2 N\theta_N } A_{G_N}\left(i, j\right)\sigma_i \sigma_j }  \right] \\ 
     = & e^{\textstyle \frac{\beta}{2N\theta_N}\vartheta_N} \E_{\mu_N}\left[\prod_{1 \leq i < j \leq N} \left(\cosh\left(\frac{\beta}{N\theta_N} A_{G_N}(i,j)\right)+\sigma_i\sigma_j \sinh\left(\frac{\beta}{N\theta_N} A_{G_N}(i,j)\right)\right)\right]                            \\
    =            & e^{\textstyle \frac{\beta}{2N\theta_N}\vartheta_N} \prod_{1 \leq i < j \leq N} \left(\cosh\left(\frac{\beta}{N\theta_N} A_{G_N}(i,j)\right)\right) \E_{\mu_N}\left[\prod_{1 \leq i < j \leq N} \left(1+\sigma_i\sigma_j \tanh\left(\frac{\beta}{N\theta_N} A_{G_N}(i,j)\right)\right)\right] . 
\end{align*} 
For ease of notation, define 
\begin{align}
    \label{eq:defZ_Ntilde}
    \tilde{Z}_N(\beta) := \E_{\mu_N}\left[\prod_{1 \leq i < j \leq N} \left(1+\sigma_i\sigma_j \tanh\left(\frac{\beta}{N\theta_N} A_{G_N}(i,j)\right)\right)\right].
\end{align}
In this notation, 
\begin{align}\label{exprlogzn7}
    \log Z_N(\beta) = & \log \tilde{Z}_N(\beta) + \sum_{1 \leq i < j \leq N}\log\cosh\left(\frac{\beta}{N\theta_N} A_{G_N}(i,j)\right)+\frac{\beta}{2N\theta_N}\vartheta_N \nonumber \\
    =                 & \frac{\beta}{2N\theta_N}\vartheta_N + \log\cosh\left(\frac{\beta}{N\theta_N}\right)\sum_{1 \leq i < j \leq N}A_{G_N}(i,j)+ \log \tilde{Z}_N(\beta) . 
\end{align}
Now, we will show that the sequence $\log \tilde{Z}_N(\beta)$ is tight.
We begin with the following representation of $ \tilde{Z}_N(\beta)$. Here, for two graphs $G$ and $H$ we write $H \sqsubseteq G$, when $H$ is a subgraph of $G$. 

\begin{lemma}\label{lm:ZNbetaexpansion}
Let $\tilde{Z}_N(\beta)$ be as defined in \eqref{eq:defZ_Ntilde}. Then 
\begin{align*}
    \tilde{Z}_N(\beta) & =
    \E_{\mu_N}\left[\prod_{1 \leq i < j \leq N} \left(1+\sigma_i\sigma_j A_{G_N}(i,j)\tanh\left(\frac{\beta}{N\theta_N} \right)\right)\right] \nonumber \\ 
    & =                     \sum_{\Gamma \in \cE_N} \left(\tanh\left(\frac{\beta}{N\theta_N} \right)\right)^{|E(\Gamma)|}\one\left\{\Gamma \sqsubseteq G_{N}\right\},
\end{align*}
where $\cE_N$ denotes the collection of all spanning subgraphs of the complete graph on $N$ vertices in which every vertex has even degree and $G_N \sim G(N, \theta_N, W)$ denotes the random graph with adjacency $A_{G_N}$.  
\end{lemma}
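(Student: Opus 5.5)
The plan has two parts: first reduce the $\tanh$ factor to a deterministic scalar times an adjacency entry, then run the standard high-temperature expansion. Throughout, $\mu_N$ is the uniform measure on $\{-1,+1\}^N$, so $\E_{\mu_N}$ is the normalized sum $\frac{1}{2^N}\sum_{\bs}$ appearing in \eqref{eq:ZN}.

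\textbf{Step 1: the first equality.} Since $A_{G_N}(i,j)\in\{0,1\}$ and $\tanh(0)=0$, we have the pointwise identity
$$\tanh\Big(\frac{\beta}{N\theta_N}A_{G_N}(i,j)\Big)=A_{G_N}(i,j)\tanh\Big(\frac{\beta}{N\theta_N}\Big),$$
checked separately on $A_{G_N}(i,j)=0$ and $A_{G_N}(i,j)=1$. Substituting this factor by factor into \eqref{eq:defZ_Ntilde} gives the first equality.

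\textbf{Step 2: expand the product.} Write $t:=\tanh(\frac{\beta}{N\theta_N})$ and index the pairs $i<j$ by the edges of the complete graph $K_N$. Expanding the product over edges gives a sum over all edge subsets $E\subseteq E(K_N)$:
$$\sum_{E}t^{|E|}\Big(\prod_{(i,j)\in E}A_{G_N}(i,j)\Big)\prod_{(i,j)\in E}\sigma_i\sigma_j.$$
Identify each $E$ with the spanning subgraph $\Gamma=([N],E)$. Then $\prod_{(i,j)\in E}A_{G_N}(i,j)=\one\{\Gamma\sqsubseteq G_N\}$. Also $\prod_{(i,j)\in E}\sigma_i\sigma_j=\prod_{v}\sigma_v^{\deg_\Gamma(v)}$.

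\textbf{Step 3: take the expectation.} Apply $\E_{\mu_N}$ term by term; this is legitimate because the sum is finite. Under $\mu_N$ the $\sigma_v$ are i.i.d.\ uniform signs, so $\E\sigma_v^{k}$ equals $1$ if $k$ is even and $0$ if $k$ is odd. Hence the expectation of the spin monomial is $1$ exactly when every degree of $\Gamma$ is even, i.e.\ $\Gamma\in\cE_N$, and is $0$ otherwise. This yields the stated sum over $\cE_N$.

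There is no real obstacle. The only point needing care is the bookkeeping between edge subsets and spanning subgraphs. Self-loops play no role, since the product in \eqref{eq:defZ_Ntilde} runs only over $i<j$.
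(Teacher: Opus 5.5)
Your proposal is correct and follows the same route as the paper. You expand the product over edge subsets, identify each subset with a spanning subgraph of $K_N$, and use independence of the uniform spins to show that $\prod_v \sigma_v^{\deg_\Gamma(v)}$ has expectation $\mathbf 1\{\Gamma\in\mathcal E_N\}$. Your Step 1 checks the identity $\tanh\big(\frac{\beta}{N\theta_N}A_{G_N}(i,j)\big)=A_{G_N}(i,j)\tanh\big(\frac{\beta}{N\theta_N}\big)$ on $A_{G_N}(i,j)\in\{0,1\}$. The paper leaves this step implicit, so including it is a small but welcome addition.
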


\begin{proof} 
Let $t:=\tanh(\frac{\beta}{N\theta_N})$, and let $G_N$ be a fixed simple graph on vertex set $[N]:=\{1,\dots,N\}$. Then expanding the product gives, 
\begin{align*}
\prod_{1\le i<j\le N}\left(1+\sigma_i\sigma_j A_{G_N}(i,j)t\right) &= \sum_{ F\subseteq \binom{[N]}{2} } t^{|F|} \left( \prod_{(i, j)\in F} A_{G_N}(i,j) \right) \left( \prod_{(i, j)\in F} \sigma_i\sigma_j\right)\\
&=\sum_{\Gamma\subseteq K_N} t^{|E(\Gamma)|}\mathbf 1\{\Gamma\subseteq G_N\}
\prod_{(i, j)\in E(\Gamma)}\sigma_i\sigma_j ,  
\end{align*}
where the notation $\Gamma \subseteq K_N$ denotes that $\Gamma$ is a spanning subgraph of the complete graph $K_N$ on $N$ vertices, and $$\binom{[N]}{2} := \{(i, j): 1\le i < j \le N\}.$$ 
Now taking expectation under $\mu_N$ gives, 
\begin{equation}
\label{clusterexpand}    
\mathbb E_{\mu_N}\Bigg[\prod_{1\le i<j\le N}\Big(1+\sigma_i\sigma_j A_{G_N}(i,j)t\Big)\Bigg]
=
\sum_{\Gamma\subseteq K_N}
t^{|E(\Gamma)|}\mathbf 1\{\Gamma\subseteq G_N\}
\mathbb E_{\mu_N}\Bigg[\prod_{(i, j)\in E(\Gamma)}\sigma_i\sigma_j\Bigg].
\end{equation}
For a fixed graph $\Gamma$, let $d_\Gamma(v)$ be the degree of vertex $v$ in $\Gamma$. Then
\[
\prod_{(i, j)\in E(\Gamma)}\sigma_i\sigma_j
=
\prod_{v=1}^N \sigma_v^{\,d_\Gamma(v)}.
\]
This is because each edge incident to $v$ contributes one factor of $\sigma_v$, and this happens exactly $d_\Gamma(v)$ times. Hence,
\begin{align*}
\mathbb E_{\mu_N}\Bigg[\prod_{v=1}^N \sigma_v^{\,d_\Gamma(v)}\Bigg] =
\prod_{v=1}^N \mathbb E_{\mu_N}\big[\sigma_v^{\,d_\Gamma(v)}\big] =
\begin{cases}
1,& d_\Gamma(v)\text{ is even for every }v,\\
0,& \text{otherwise}.
\end{cases}
\end{align*}
Hence, $\mathbb E_{\mu_N} \left[ \prod_{(i, j)\in E(\Gamma)}\sigma_i\sigma_j \right]=
\mathbf 1\{\Gamma\in \mathcal E_N\}$. Substituting this into \eqref{clusterexpand} gives, 
\begin{equation}\label{prclaimdel}
    \mathbb E_{\mu_N}\Bigg[\prod_{1\le i<j\le N}\Big(1+\sigma_i\sigma_j A_{G_N}(i,j)t\Big)\Bigg]
=
\sum_{\Gamma\subseteq K_N}
t^{|E(\Gamma)|}\mathbf 1\{\Gamma\subseteq G_N\}\mathbf 1\{\Gamma\in\mathcal E_N\}.
\end{equation}
Lemma \ref{lm:ZNbetaexpansion} now follows from \eqref{prclaimdel}.
\end{proof}

Since the empty spanning subgraph belongs to $\mathcal E_N$,
Lemma~\ref{lm:ZNbetaexpansion} gives $\tilde Z_N(\beta)\ge1$.
Moreover, every nonempty graph in which all vertices have even degree
can be decomposed into an edge-disjoint union of cycles
\cite[Theorem 2.17]{bondy2008graph}. Hence, using
Lemma~\ref{lm:ZNbetaexpansion},
\[
\tilde Z_N(\beta)
\le
\prod_{\gamma\in\mathcal C_N}
\left(
1+
\left(\tanh\left(\frac{\beta}{N\theta_N}\right)\right)^{|E(\gamma)|}
\mathbf 1\{\gamma\sqsubseteq G_N\}
\right).
\]

Hence,
\begin{align}\label{eq:ZNcycle}
    \log \tilde{Z}_N(\beta)
    &\leq \sum_{\gamma \in\cC_N}\log \left(1+\left(\tanh\left(\frac{\beta}{N\theta_N} \right)\right)^{|E(\gamma)|} \one\left\{\gamma \sqsubseteq G_{N}\right\} \right) \nonumber \\
    &\leq \sum_{\gamma \in\cC_N}\left(\tanh\left(\frac{\beta}{N\theta_N} \right)\right)^{|E(\gamma)|}\one\left\{\gamma \sqsubseteq G_{N} \right\} \nonumber  \\
    &\leq \sum_{\gamma \in\cC_N}\left(\frac{\beta}{N\theta_N} \right)^{|E(\gamma)|}\one\left\{\gamma \sqsubseteq G_{N} \right\}.
\end{align}

Next, define
\begin{align*}
    U_N :=\sum_{\gamma \in\cC_N}\left(\frac{\beta}{N\theta_N} \right)^{|E(\gamma)|}\one\left\{\gamma \sqsubseteq G_{N} \right\} . 
\end{align*}
Then from \eqref{eq:ZNcycle} we get, 
\begin{align}\label{eq:ANBN}
0\le \log \tilde{Z}_N(\beta) \le U_N.
\end{align}
Hence, to show $\log \tilde{Z}_N(\beta)$ is tight, it suffices to show that $U_N$ is tight. Towards this, define:
    \begin{align*}
        \bm{W}_N(i,j)=\begin{cases}
            W\left(\frac{i}{N}, \frac{j}{N}\right) & \text{ if } i\neq j \\
            0                                      & \text{ otherwise}.
        \end{cases}
    \end{align*}

Note that if $\{i_1, i_2, \ldots, i_s\}$ is a cycle of length $s \geq 3$, then 
$$\P\left(\text{the cycle}~i_1i_2\ldots i_si_1~\text{ is contained in } G_N\right) = \theta_N^s \prod_{a=1}^{s} W\left(\frac{i_a}{N}, \frac{i_{a+1}}{N}\right),$$
where $i_{s+1} = i_1$.  Note that $\frac{1}{N} \lambda_1(\bm W_N) \to \lambda_1(W)$. Since $\beta \lambda_1(W)<1$, we can choose $\varepsilon >0$ such that $0 < \frac{\beta}{N} \lambda_1(\bm W_N) <1-\varepsilon$ for large enough $N$. Further, $\bm W_N$ being symmetric with nonnegative entries, Perron--Frobenius
gives $\max_i|\lambda_i(\bm W_N)|=\lambda_1(\bm W_N)$ thereby implying that $\frac{1}{N} \beta |\lambda_i(\bm W_N)| <1-\varepsilon$. Now, we have: 
\begin{align*} 
\E U_N & = \sum_{s=3}^\infty \left(\frac{\beta}{N\theta_N} \right)^{s} \frac{1}{2s}\sum_{1\leq i_1 \ne i_2 \ne \ldots \ne i_s \leq N}\P\left(\text{the cycle}~i_1i_2\ldots i_si_1~\text{ is contained in } G_N\right) \nonumber \\ 
& = \sum_{s=3}^\infty \left(\frac{\beta}{N} \right)^{s} \frac{1}{2s}\sum_{1\leq i_1 \ne i_2 \ne \ldots \ne i_s \leq N} \prod_{a=1}^{s} W\left(\frac{i_a}{N}, \frac{i_{a+1}}{N}\right)\nonumber \\ 
& \le \sum_{s=3}^\infty  \left(\frac{\beta}{N} \right)^{s} \frac{\tr(\bm W_N^s)}{2s}    \nonumber                              \\
& \leq \sum_{s=3}^\infty \sum_{i=1}^N \left(\frac{\beta |\lambda_i(\bm W_N)|}{N}\right)^s    \nonumber                              \\
& = \sum_{i=1}^N\left(\frac{\beta |\lambda_i(\bm W_N)|}{N}\right)^3\frac{1}{1-\frac{\beta |\lambda_i(\bm W_N)|}{N}}   \nonumber                              \\
& \leq \sum_{i=1}^N\left(\frac{\beta \lambda_i(\bm W_N)}{N}\right)^2  \frac{1}{\varepsilon} \nonumber \\
& = \frac{\beta^2}{\varepsilon N^2}\tr (\bm W_N^2)\\
& = \frac{\beta^2}{\varepsilon N^2}\sum_{1\leq i\neq j \leq N} W\left(\frac{i}{N},\frac{j}{N}\right)^2\\
&\leq \frac{\beta^2}{\varepsilon}.
\end{align*}
Therefore, $\E U_N \lesssim \frac{\beta^2}{\varepsilon}$, for large enough $N$. This shows that $U_N$ is tight. 

The above argument combined with \eqref{eq:ANBN} shows that $\log \tilde{Z}_N(\beta)$ is tight. This implies, in the regime $\theta_N \ll N^{-\frac{2}{3}}$,
\begin{align}\label{eq:expry65}
    N\theta_N^{\frac{3}{2}} \log \tilde{Z}_N(\beta) = o_P(1).
\end{align}
Now, recalling \eqref{eq:ANij} gives, 
$$N\theta_N^{\frac{3}{2}} \E \left[\frac{\beta}{2N\theta_N}\vartheta_N \right] = \frac{\beta\theta_N^{\frac{3}{2}}}{2}\sum_{i=1}^N W\left(\frac{i}{N}, \frac{i}{N}\right)$$
and 
\begin{align*}
    N^2\theta_N^{3} \mathrm{Var}\left[\frac{\beta}{2N\theta_N}\vartheta_N\right] \lesssim \frac{1}{N^{\frac{1}{3}}},
\end{align*}
 in the regime $\theta_N \ll N^{-\frac{2}{3}}$. Hence,
\begin{align}\label{ol2}
    N\theta_N^{\frac{3}{2}} \left( \frac{\beta}{2N\theta_N}\vartheta_N \right) = \frac{\beta\theta_N^{\frac{3}{2}}}{2}\sum_{i=1}^N W\left(\frac{i}{N}, \frac{i}{N}\right) + o_{L^2}(1).
\end{align}
Finally, note that
\begin{align}\label{twodecom89}
    N\theta_N^{\frac{3}{2}} \log\cosh\left(\frac{\beta}{N\theta_N}\right)\sum_{1 \leq i < j \leq N}A_{G_N}(i,j) &= N\theta_N^{\frac{3}{2}} \log\cosh\left(\frac{\beta}{N\theta_N}\right)\sum_{1 \leq i < j \leq N}\overline{A_{G_N}(i,j)} \\ 
    & \hspace{0.45in} + N\theta_N^{\frac{5}{2}} \log\cosh\left(\frac{\beta}{N\theta_N}\right)\sum_{1 \leq i < j \leq N}W\left(\frac{i}{N}, \frac{j}{N}\right) .  \nonumber  
\end{align}
By the Lindberg--Feller CLT, 
\begin{align*}
    \frac{\sum_{1 \leq i < j \leq N}\overline{A_{G_N}(i,j)}}{\sqrt{\frac{N^2 \theta_N}{2} \int_{[0, 1]^2} W(x,y)\mathrm{d}x\mathrm{d}y}} \xrightarrow{D} \cN(0,1) . 
\end{align*}
This, together with the observation $N\theta_N^{\frac{3}{2}}\log\cosh\left(\beta/(N\theta_N)\right) \sim \beta^2/(2\sqrt{N^2\theta_N})$ in the regime of interest, implies that:
\begin{align}\label{logcosh67}
    N\theta_N^{\frac{3}{2}} \log\cosh\left(\frac{\beta}{N\theta_N}\right)\sum_{1 \leq i < j \leq N}\overline{A_{G_N}(i,j)} \xrightarrow{D} \mathcal{N}\left(0,\frac{\beta^4}{8} \int_{[0, 1]^2} W(x,y)\mathrm{d}x\mathrm{d}y \right).
\end{align} 
The proof of \eqref{eq:23bthreshold} then follows from \eqref{exprlogzn7}, \eqref{eq:expry65}, \eqref{ol2}, \eqref{twodecom89} and \eqref{logcosh67}. 
Now, recall from \eqref{holdapprox62} that if $W$ is $\alpha$-H\"older continuous, then
$$\left|\frac{1}{N^2}\sum_{1\le i,j\le N} W\left(\frac{i}{N}, \frac{j}{N}\right) - \int_{[0,1]^2} W(x,y)~\mathrm{d}x \mathrm{d}y\right| \le N^{-\alpha}.$$
Therefore, for $\theta_N \ll N^{-\frac{2}{3}}$ and $\alpha \ge \frac{2}{3}$, 
\begin{align}\label{wholdtrn32}
   & N\theta_N^\frac{3}{2}
   \left|\theta_N \log\cosh\left(\frac{\beta}{N\theta_N}\right)
   \sum_{1 \leq i < j \leq N}W\left(\frac{i}{N},\frac{j}{N}\right)
   - \frac{N^2\theta_N}{2}\log\cosh\left(\frac{\beta}{N\theta_N}\right)
   \int_{[0,1]^2} W(x,y) \mathrm{d}x \mathrm{d}y\right| \nonumber \\
   &\leq
   \frac{N^3\theta_N^\frac{5}{2}}{2}
   \log \cosh \left(\frac{\beta}{N\theta_N}\right)
   \left|\frac{1}{N^2}\sum_{1\le i,j\le N}
   W\left(\frac{i}{N}, \frac{j}{N}\right)
   - \int_{[0,1]^2} W(x,y)~\mathrm{d}x \mathrm{d}y\right| \nonumber \\
   &\quad
   +\frac{N\theta_N^\frac{5}{2}}{2}
   \log \cosh \left(\frac{\beta}{N\theta_N}\right)
   \sum_{i=1}^N W\left(\frac{i}{N},\frac{i}{N}\right) \nonumber \\
   &=
   O(N^{1-\alpha}\sqrt{\theta_N}) + O(\sqrt{\theta_N}) \nonumber \\
   &= o(N^{\frac{2}{3}-\alpha})+o(1)
   =o(1),
\end{align} 

The result in \eqref{eq:23bthresholdW} now follows from \eqref{eq:23bthreshold} and 
\eqref{wholdtrn32}. 
\hfill $\Box$

\subsection{Proofs of Technical Lemmas for Theorem \ref{thm:ZN} }\label{sec:analysis_partition}

This section is organized as follows: We begin by proving  Lemma \ref{var_sum} in Appendix \ref{proof6p2}. Lemma \ref{lem:clt_for_delta_N1}, \ref{lem:clt_for_delta_N2}, and \ref{lem:expect_sum} are proved in Appendix \ref{proofdeltaN1}, \ref{proofdeltaN28}, \ref{proof6p246}, respectively.

\subsubsection{Proof of Lemma \ref{var_sum}}\label{proof6p2}

For $\bs \in \{-1, 1\}^N$, define 
    \begin{align}\label{eq:QN}
        Q_N(\bs):=\frac{1}{N^3 \theta_N^2}\left|\sum_{1 \leq i < j \leq N}W\left(\frac{i}{N},\frac{j}{N}\right)\sigma_i \sigma_j\right|.
    \end{align}
    Also, for $\bs, \bt \in \{-1, 1\}^N$ define 
    \begin{align}\label{eq:RN}
        R_N(\bs,\bt):=\frac{\beta^2}{\theta_N N^2} \sum_{1 \leq i < j \leq N}W\left(\frac{i}{N},\frac{j}{N}\right)\left(1-\theta_N W\left(\frac{i}{N},\frac{j}{N}\right)\right)\sigma_i\sigma_j\tau_i\tau_j.
    \end{align}
Now, recall the definition of $\hat Z_N(\beta)$ from \eqref{modpart27}. Then from Lemma \ref{lem:expvariance} it follows that 
\begin{align}\label{eq:ZHbetasigmatau}
        & \var \hat Z_N(\beta)  \nonumber \\ 
        & = \frac{1}{4^N} \sum_{\bs, \bt \in \{-1, 1\}^N} \cov \left[T(\bs),T(\bt)\right]              \\  
        & =   \frac{1}{4^N} \sum_{\bs, \bt \in \{-1, 1\}^N } \E T(\bs) \E T(\bt) \left(e^{\textstyle R_N(\bs,\bt)+O(Q_N(\bs))+O(Q_N(\bt))+O\left(\frac{1}{N\theta_N}Q_N(\bs\bt)\right)}-1\right) , \nonumber 
\end{align} 
where $\bs \bt = (\sigma_1 \tau_1, \sigma_2 \tau_2, \ldots \sigma_N \tau_N)$. The next lemma shows that in the asymptotic regime of Lemma \ref{var_sum}, the exponential term in \eqref{eq:ZHbetasigmatau} can be linearized. As before, $\mu_N$ will denote the uniform measure $\{-1,+1\}^{N}$.

\begin{lemma}
    \label{lem:taylor_expRn} 
    For $\bs, \bt \in \{-1, 1\}^N$ and $L_0, L_1$ define 
      $$\Lambda_N :=R_N(\bs,\bt)+L_0\left(Q_N(\bs)+Q_N(\bt)\right)+ \frac{L_1 }{N \theta_N} Q_N(\bs \bt). $$
    Then 
    \begin{align*}
       \E_{\mu_N}\left[e^{ \Lambda_N} -( 1 + \Lambda_N) \right] \ll \frac{1}{N^2\theta_N^2}  ,  
    \end{align*}
    where the expectation is taken over independent choices of $\bs,\bt$ from $\mu_N
$. 
\end{lemma}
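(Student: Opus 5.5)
We need to prove Lemma \ref{lem:taylor_expRn}: $\E_{\mu_N}[e^{\Lambda_N}-(1+\Lambda_N)] \ll \frac{1}{N^2\theta_N^2}$, with $\bs,\bt$ independent and uniform.

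The starting point is the elementary inequality $0\le e^x-1-x\le \frac{x^2}{2}e^{|x|}$. Cauchy--Schwarz then gives
\[
\E_{\mu_N}\bigl[e^{\Lambda_N}-1-\Lambda_N\bigr]\le \tfrac12\bigl(\E_{\mu_N}\Lambda_N^4\bigr)^{1/2}\bigl(\E_{\mu_N}e^{2|\Lambda_N|}\bigr)^{1/2}.
\]
So two things are needed. First, $\E\Lambda_N^4\ll (N\theta_N)^{-4}$, or at least that the fourth moment is of order $(N\theta_N)^{-4}$ while the leading quadratic term is handled separately. Second, $\E e^{2|\Lambda_N|}=O(1)$. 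A sharper route is to also use $\E\Lambda_N^2$ directly: bound $e^{|x|}$ via Hölder and aim for $\E\Lambda_N^2\cdot O(1)$, with $\E\Lambda_N^2\ll (N\theta_N)^{-2}$.

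Next I analyse the three pieces of $\Lambda_N$. Because $\bs,\bt$ are independent and uniform, the vector $\bs\bt$ is also uniform, and $\sigma_i\sigma_j\tau_i\tau_j=(\sigma\tau)_i(\sigma\tau)_j$. Hence $R_N$ is a Rademacher chaos of order two, $\frac{\beta^2}{\theta_N N^2}\sum_{i<j}a_{ij}\xi_i\xi_j$ with $a_{ij}\in[0,1]$. Its variance is $\frac{\beta^4}{\theta_N^2N^4}\sum_{i<j}a_{ij}^2\asymp \frac{1}{N^2\theta_N^2}$. That is of the same order as the target, so a crude bound on the quadratic term fails. The way out is that the quantity $\frac{x^2}{2}$ is what matters only at order $x^3$: write $e^x-1-x=\frac{x^2}{2}+O(|x|^3e^{|x|})$. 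This seems to give $\E R_N^2/2\asymp (N\theta_N)^{-2}$, which is not $\ll$. However, $\sum_{i<j}a_{ij}^2\le N^2/2$ gives $\E R_N^2\lesssim \theta_N^{-2}N^{-2}$ only, and the claimed bound is $o(N^{-2}\theta_N^{-2})$. I expect the intended reading is that the normalization already absorbs the constant, so the real task is to show the remainder beyond second order is negligible and the second-order term matches what is used in the variance formula. I will prove $\E|\Lambda_N|^3e^{|\Lambda_N|}\ll (N\theta_N)^{-2}$ and track $\E\Lambda_N^2/2$ explicitly.

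For the other pieces, $Q_N(\bs)=\frac{1}{N^3\theta_N^2}|\sum W\sigma_i\sigma_j|$ has $L^p$ norm $\lesssim_p \frac{N}{N^3\theta_N^2}=\frac{1}{N^2\theta_N^2}$ by hypercontractivity, and $\frac{1}{N\theta_N}Q_N(\bs\bt)$ is smaller still. Since $N\theta_N\to\infty$, all these terms are $o((N\theta_N)^{-1})$ in every $L^p$. Hypercontractivity for degree-two Rademacher chaos gives $\|R_N\|_p\lesssim p\,(N\theta_N)^{-1}$, so $\E|\Lambda_N|^3\lesssim (N\theta_N)^{-3}\ll (N\theta_N)^{-2}$.

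The main obstacle is the exponential moment $\E e^{c|\Lambda_N|}=O(1)$. For $Q_N$ this is trivial, since $Q_N\le \frac{N^2}{N^3\theta_N^2}=\frac{1}{N\theta_N^2}\to0$ when $N\theta_N^2\gg1$. In general I will instead use the chaos tail $\P(|R_N|>t)\le 2e^{-ctN\theta_N}$ (Hanson--Wright, using $\|a\|_{\mathrm{op}}\lesssim N$ and $\|a\|_F\lesssim N$). This yields $\E e^{c|R_N|}\le 1+O(1/(N\theta_N))$, and similarly for $Q_N$ via $\E e^{cQ_N}$, whose scaled quadratic form has coefficient $(N^2\theta_N^2)^{-1}\ll (N\theta_N)^{-1}$. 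Combining everything by Hölder gives the remainder bound, which completes the plan.
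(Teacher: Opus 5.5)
You have the right tools, but you aim them at the wrong target. Your own computation is correct. With $\xi=\bs\bt$ uniform,
\[
\E_{\mu_N}R_N^2=\frac{\beta^4}{\theta_N^2N^4}\sum_{i<j}W\Big(\frac iN,\frac jN\Big)^2\Big(1-\theta_NW\Big(\frac iN,\frac jN\Big)\Big)^2\asymp\frac{1}{N^2\theta_N^2}
\]
as soon as $\int_{[0,1]^2}W^2(1-\theta W)^2>0$, for example whenever $\theta<1$. The $Q_N$ terms are $O((N\theta_N)^{-2})$ in every $L^p$, and $\E_{\mu_N}|\Lambda_N|^3\lesssim (N\theta_N)^{-3}$. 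Combining these with $e^x-1-x\ge \frac{x^2}{2}-\frac{|x|^3}{6}$ gives $\E_{\mu_N}[e^{\Lambda_N}-1-\Lambda_N]\asymp (N\theta_N)^{-2}$. So the display, read literally as a first-moment bound, is false.

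The wrong conclusion to draw from this is that some normalization ``absorbs the constant'' and that you should instead prove a third-order remainder bound while tracking $\frac12\E\Lambda_N^2$. That modified claim is neither the lemma nor what the paper uses. The statement is missing a square. The paper's proof bounds $N^2\theta_N^2\,\E_{\mu_N}[(e^{\Lambda_N}-1-\Lambda_N)^2]$, i.e.\ it shows $e^{\Lambda_N}-1-\Lambda_N=o_{L^2}(1/(N\theta_N))$. This is exactly the error term inserted in \eqref{vznshep}, where the second-order term is not tracked at all. There the error is integrated against the unbounded weight $\E T(\bs)\E T(\bt)$, so what is needed is an $L^p$ bound with $p>1$, to be paired with H\"older. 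A first-moment expansion like yours would not serve downstream even with the sharp constant.

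Once the target is corrected, your argument goes through essentially verbatim and coincides with the paper's. From $|e^x-1-x|\le\frac{x^2}{2}e^{|x|}$ and Cauchy--Schwarz,
\[
\E_{\mu_N}\big[(e^{\Lambda_N}-1-\Lambda_N)^2\big]\le\tfrac14\big(\E_{\mu_N}\Lambda_N^8\big)^{1/2}\big(\E_{\mu_N}e^{4|\Lambda_N|}\big)^{1/2}.
\]
Your Hanson--Wright and hypercontractivity estimates give $\|\Lambda_N\|_{L^8}\lesssim (N\theta_N)^{-1}$ and $\E_{\mu_N}e^{4|\Lambda_N|}=O(1)$. Hence the left side is $O((N\theta_N)^{-4})\ll (N\theta_N)^{-2}$. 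The paper does the same via the union-bound tail $\P_{\mu_N}(N\theta_N|\Lambda_N|\ge t)\le 8e^{-C\min\{t^2,t\}}$.

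Two minor points. The shortcut $Q_N\le 1/(N\theta_N^2)\to 0$ is unavailable when $\theta_N\asymp N^{-2/3}$, so the tail bound is genuinely needed there, as you note. Also, the Hanson--Wright tail for $R_N$ is $2\exp(-c\min\{t^2N^2\theta_N^2,\,tN\theta_N\})$; only its linear part matters for the exponential moment.
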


\begin{proof}
    By a second order Taylor expansion, 
    $$
        |e^{ \Lambda_N} - 1 - \Lambda_N| \leq \tfrac12 e^{\textstyle |\Lambda_N|} \Lambda_N^2. 
    $$
    Hence, 
    $$
        N^2 \theta_N^2 \E_{\mu_N} \left[\left(e^{ \Lambda_N} - 1 - \Lambda_N\right)^2\right] \leq \tfrac14 N^2 \theta_N^2 \E_{\mu_N}\left[e^{  2|\Lambda_N|} \Lambda_N^4\right].
    $$
    By the Cauchy-Schwarz inequality, to prove Lemma \ref{lem:taylor_expRn} it suffices to show that
    \begin{align}\label{eq:expansionpf}
        \E_{\mu_N}\left[e^{  4|\Lambda_N|}\right] \lesssim 1 
        \quad \text{ and } \quad
        N^4 \theta_N^4 \E_{\mu_N}[\Lambda_N^8] \ll 1 .
    \end{align}
From Proposition \ref{quadJointAsym}, we know that $Z_N:=N\theta_N \Lambda_N$ has a non-degenerate limiting distribution. This means, $N^4 \theta_N^4 \Lambda_N^8 = \frac{1}{N^4\theta_N^4} Z_N^8$ converges in probability to zero, since $N\theta_N \gg 1$.  Consequently, if we can show that $\E (N\theta_N\Lambda_N)^{16} \lesssim 1$, then uniform integrability will give us the second condition in \eqref{eq:expansionpf}. Hence, to prove Lemma \ref{lem:taylor_expRn} it suffices to check the following:    
\begin{align}\label{eq:uniform_integr_lambda}
        \E_{\mu_N}\left[e^{  4|\Lambda_N|}\right] \lesssim 1 
        \quad \text{ and } \quad
        \E_{\mu_N}\left[(N\theta_N\Lambda_N)^{16}\right] \lesssim 1.
    \end{align}
    
    First, we will verify the second condition in \eqref{eq:uniform_integr_lambda}. To this end, define
    $$\bm{W}_N :=\left(W\left(\frac{i}{N},\frac{j}{N}\right) \bm{1}\{i\ne j\}\right)_{1\leq i,j\leq N}.$$ 
    Also, define $$\tilde{\bm{W}}_N :=\left(W\left(\frac{i}{N},\frac{j}{N}\right)\left(1-\theta_N W\left(\frac{i}{N},\frac{j}{N}\right) \right)\bm{1}\{i\ne j\}\right)_{1\leq i,j\leq N}.$$
    Then using a standard union bound gives, 
    \begin{align}
        \label{eq:union_bound}
            \P_{\mu_N}\left(N \theta_{N}\left|\Lambda_{N}\right| \geq t\right) & \leq \sum_{i=1}^4 \P_{\mu_N} \left( S_i \geq \frac{t}{4} \right) , 
            \end{align}
            where $S_1 := N \theta_{N}\left|R_{N}(\bs, \bt)\right|$, $S_2 := L_{0} N \theta_{N} Q_{N}(\bs)$, $S_3 :=L_{0} N \theta_{N} Q_{N}(\bt)$, and $S_4  := L_{1}  Q_{N}(\bs\bt)$. Recalling \eqref{eq:RN} note that 
$$S_1 = \frac{\beta^2}{2 N}\left|(\bs \bt)^{\top} \tilde{\bm{W}}_N (\bs \bt)\right|. $$
Since the entries of $\tilde{\bm{W}_N}$ are bounded by $1$, we have $\max\{\|\tilde{\bm{W}}_N\|_{2},\|\tilde{\bm{W}}_N\|_{F}\}\leq N$. Hence, by the Hanson-Wright inequality \cite{Rudelson2013HansonWrightIA}, 
$$\P_{\mu_N}\left( S_1 \geq \frac{t}{4} \right) \leq 2e^{  -C\min\{t^2,t\}},$$ for some constant $C>0$ and for all $t\ge 0$ and sufficiently large $N$. Next, recalling \eqref{eq:QN} gives, 
$$S_2 = \frac{L_{0}}{2 N^{2} \theta_{N}}\left|\bs^{\top} \bm{W}_N \bs\right| $$
Again, since the entries of $\bm{W}$ are bounded by $1$, we have $\max\{\|\bm{W}_N\|_{2}, \|\bm{W}_N\|_F\}\leq N$. Hence, as before, by the Hanson-Wright inequality \cite{Rudelson2013HansonWrightIA}, we have for large $N$: 
$$\P_{\mu_N}\left( S_2 \geq \frac{t}{4} \right) \leq \P_{\mu_N} \left(\frac{L_{0}}{2 N}\left|\bs^{\top} \bm{W}_N \bs\right| \ge \frac{t}{4}\right)\leq 2e^{  -C\min\{t^2,t\}}, $$ 
since $N\theta_N \to\infty$. Similarly, it can be 
shown that 
$$\P_{\mu_N}\left( S_3 \geq \frac{t}{4} \right) \leq 2e^{  -C\min\{t^2,t\}} \quad \text{ and } \quad \P_{\mu_N}\left( S_4 \geq \frac{t}{4} \right) \leq 2e^{  -C\min\{t^2,t\}}.$$ 
Combining the above tail bounds with \eqref{eq:union_bound} gives 
\begin{align}\label{eq:lambda}
\P_{\mu_N}\left(N \theta_{N}\left|\Lambda_{N}\right| \geq t\right) \leq 8 e^{  -C\min\{t^2,t\}}.
\end{align}  
Hence, 
    \begin{align*}
        \E_{\mu_N}\left[(N\theta_N\Lambda_N)^{16} \right]  = \int_0^\infty \P_{\mu_N}(N\theta_N|\Lambda_N|>t^{\frac{1}{16}}) \mathrm{d}t \le 8\int_0^\infty e^{  -C\min\{t^{\frac{1}{8}},t^{\frac{1}{16}}\}} \mathrm{d}t \lesssim 1 . 
    \end{align*}
    This proves the second claim in \eqref{eq:uniform_integr_lambda}. 

Next, we will prove the first condition in \eqref{eq:uniform_integr_lambda}. Note that, since $N\theta_N\to\infty$, for any fixed $\alpha > 0$ we have $N\theta_N > \alpha$, for sufficiently large $N$. Hence, from \eqref{eq:lambda}, 
    \begin{equation}\label{jjjke1}
\P_{\mu_N}\left(\left|\Lambda_N\right| \geq t \right) \leq \P_{\mu_N}\left(N\theta_N \left|\Lambda_N\right| \geq N\theta_N t \right) \leq \P_{\mu_N}\left(N\theta_N \left|\Lambda_N\right| \geq \alpha t \right) \leq 8 e^{  - C\min\{\alpha^2 t^2, \alpha t\}} , 
    \end{equation} for any $\alpha>0$ and sufficiently large $N$. This implies, 
    \begin{align}\label{jjjke2}
        \E_{\mu_N}\left[e^{  4|\Lambda_N|}\right]= \int_0^\infty \P_{\mu_N}\left(|\Lambda_N| \ge \frac{1}{4}\log t\right) \mathrm{d}t \le e^{  4/\alpha} + 8\int_{e^{  4/\alpha}}^\infty t^{- \frac{\alpha C}{4} } ~\mathrm{d}t \lesssim 1 , 
    \end{align} 
    if $\alpha > \frac{4}{C}$. This completes the proof of Lemma \ref{lem:taylor_expRn}. 
\end{proof}

The technique used in the proof above can also be applied to derive general moment bounds for $Q_N(\bs)$. We state this result in the following lemma, which will be used later.

\begin{lemma}\label{lem:Qnlp} Let $Q_N(\bs)$ be as defined in \eqref{eq:QN}. Then for any $1\leq r <\infty$, 
    \begin{align*}
        \E_{\mu_N} |Q_N(\bs)|^r \lesssim \frac{1}{N^{2r}\theta_N^{2r} } .
    \end{align*}
\end{lemma}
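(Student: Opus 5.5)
The approach mirrors the Hanson--Wright argument used for $S_2$ in the proof of Lemma \ref{lem:taylor_expRn}. Recalling \eqref{eq:QN} and the matrix $\bm{W}_N = (W(\frac iN,\frac jN)\bm 1\{i\ne j\})_{1\le i,j\le N}$, we can write
\begin{align*}
N^2\theta_N^2\, Q_N(\bs) = \frac{1}{2N}\left|\bs^\top \bm{W}_N \bs\right| .
\end{align*}
The claim is therefore equivalent to $\E_{\mu_N}\bigl|\frac{1}{N}\bs^\top \bm W_N \bs\bigr|^r \lesssim_r 1$, uniformly in $N$. So the plan is to prove a subexponential tail bound for $\frac1N \bs^\top\bm W_N\bs$ and then integrate it.

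Under $\mu_N$ the coordinates $\sigma_i$ are i.i.d.\ Rademacher, so they are centered and have sub-Gaussian norm bounded by an absolute constant. The diagonal of $\bm W_N$ vanishes, hence $\E_{\mu_N}[\bs^\top \bm W_N\bs] = \tr(\bm W_N)=0$ and no centering is needed. Since every entry lies in $[0,1]$, we have $\|\bm W_N\|_F\le N$ and $\|\bm W_N\|_2\le \|\bm W_N\|_F\le N$. Applying the Hanson--Wright inequality \cite{Rudelson2013HansonWrightIA} at level $Nt$ then gives
\begin{align*}
\P_{\mu_N}\left(\frac1N\left|\bs^\top\bm W_N\bs\right|\ge t\right)\le 2\exp\left(-C\min\left\{\frac{N^2t^2}{\|\bm W_N\|_F^2},\frac{Nt}{\|\bm W_N\|_2}\right\}\right)\le 2e^{-C\min\{t^2,t\}},
\end{align*}
for all $t\ge 0$, where $C>0$ is an absolute constant.

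For the moments, use the layer-cake formula:
\begin{align*}
\E_{\mu_N}\Bigl|\tfrac1N\bs^\top\bm W_N\bs\Bigr|^r=\int_0^\infty rt^{r-1}\,\P_{\mu_N}\Bigl(\tfrac1N|\bs^\top\bm W_N\bs|\ge t\Bigr)\mathrm dt\le 2r\int_0^\infty t^{r-1}e^{-C\min\{t^2,t\}}\mathrm dt<\infty .
\end{align*}
The final integral is a finite constant depending only on $r$ and $C$, and not on $N$. Multiplying back by $(2N^2\theta_N^2)^{-r}$ gives $\E_{\mu_N}|Q_N(\bs)|^r\lesssim_r N^{-2r}\theta_N^{-2r}$.

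There is no genuine obstacle here. The only points needing care are that the diagonal of $\bm W_N$ is zero, so the quadratic form is already centered, and that the implied constant depends on $r$ but is uniform in $N$. Both follow from the bounded entries of $\bm W_N$ and the dimension-free form of Hanson--Wright.
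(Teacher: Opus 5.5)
Your proposal is correct and matches the paper's proof: the paper also rewrites $(N\theta_N)^2 Q_N(\bs) = \frac{1}{2N}|\bs^\top \bm W_N \bs|$, applies Hanson--Wright (using $\|\bm W_N\|_2, \|\bm W_N\|_F \le N$) to obtain a $\min\{t^2,t\}$-type tail that is uniform in $N$, and integrates this tail to bound the $r$-th moment. Your explicit remarks that the zero diagonal makes the form centered and that the constant depends only on $r$ are exactly the points the paper leaves implicit.
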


\begin{proof} Note that 
$$(N\theta_N)^2 Q_N(\bs) = \frac{1}{2 N}\left|\bs^{\top} \bm{W}_N \bs\right|.$$
Hence, by applying the Hanson-Wright inequality as in the proof of Lemma \ref{lem:taylor_expRn}, we get 
    \begin{align*}
        \P_{\mu_N}\left((N\theta_N)^2\left|Q_N(\bs)\right| \geq t\right) \leq 2e^{  -C\min\{4t^2, 2t\}}, 
    \end{align*} 
    for some constant $C > 0$ and $N$ large enough. 
    Therefore, 
        \begin{align*}
        \E_{\mu_N}\left[(N\theta_N)^{2r} |Q_N(\bs)|^r \right] \le \int_0^\infty \P_{\mu_N}\left(N^2\theta_N^2 |Q_N(\bs)| \ge t^{\frac{1}{r}}\right) ~\mathrm{d}t \le 2 \int_0^\infty e^{  -C\min\{4t^{\frac{2}{r}}, 2t^{\frac{1}{r}}\}}~\mathrm{d}t \lesssim 1,
    \end{align*}
    which completes the proof of Lemma \ref{lem:Qnlp}. 
\end{proof}


We now return to the proof of Lemma \ref{var_sum}. Applying Lemma \ref{lem:taylor_expRn} to \eqref{eq:ZHbetasigmatau} gives,  
\begin{align}\label{vznshep}
    \var \hat Z_N(\beta) & = \E_{\mu_N}\Bigg[ \E T(\bs) \E T(\bt)\Bigg(R_N(\bs,\bt)+O(Q_N(\bs))+O(Q_N(\bt))+O\left(\frac{1}{N\theta_N}Q_N(\bs\bt)\right)\nonumber\\ 
    & \hspace{0.85in} + o_{L^2}\left(\frac{1}{N\theta_N}\right)\Bigg) \Bigg].
\end{align}
Also, from Lemma \ref{lem:expect_sum} we know that $\E \hat{Z}_N(\beta) \gtrsim 1$. Hence, in order to prove Lemma \ref{var_sum}, it suffices to establish the following claims: 
\begin{enumerate}[label=$(\mathbf{C\arabic*})$]

\item\label{itm:C1} $\E_{\mu_N}\left[ R_N(\bs,\bt) \E T(\bs) \E T(\bt)  \right] \ll \frac{1}{N\theta_N} $. 

\item\label{itm:C2} $\E_{\mu_N} \left[  \left( Q_N(\bs) + Q_N(\bt) \right) \E T(\bs) \E T(\bt) \right] \ll \frac{1}{N\theta_N} $.  
 
\item\label{itm:C3} $\E_{\mu_N} \left[ Q_N(\bs \bt) \E T(\bs) \E T(\bt) \right] \ll 1$. 

\end{enumerate}

\subsubsection*{Proof of \ref{itm:C1}} By Lemma \ref{lem:expvariance} we know that 
 \begin{align*}
        \E T\left(\bs\right) = e^{\textstyle  \mathcal{E}_N(\beta, W) + \frac{\beta}{N}\sum_{1 \leq i < j \leq N}W\left(\frac{i}{N},\frac{j}{N}\right)\sigma_i \sigma_j +o\left(\frac 1{N\theta_N}\right) + O\left( Q_N(\bs) \right) } .
    \end{align*}
   where $\mathcal{E}_N(\beta, W)$ is defined in \eqref{eq:JN}. Note that, since $N^2 \theta_N^3 \gtrsim 1$ and $|W| \leq 1$, we have $\mathcal{E}_N(\beta, W) \lesssim_{\beta} 1$. Hence, there exists a constant $L_0 > 0$ such that, 
\begin{align}\label{eq:C1pf1}
& N\theta_N  \E_{\mu_N} \big[ R_N(\bs,\bt)\E T(\bs) \E T(\bt) \big]  \nonumber \\ 
& \lesssim_{\beta} N\theta_N \E_{\mu_N} \left[R_N(\bs,\bt) e^{\textstyle  \frac{\beta}{N}\sum_{1 \leq i < j \leq N}W\left(\frac{i}{N},\frac{j}{N}\right)(\sigma_i \sigma_j + \tau_i \tau_j) + L_0 \left( Q_N(\bs) +  Q_N(\bt) \right) } \right]\nonumber \\ 
& = \E_{\mu_N} [U_N] + \E_{\mu_N} [V_N], 
\end{align} 
where 
\begin{align}\label{eq:U}
\begin{aligned} 
        U_N & :=  N\theta_N R_N(\bs,\bt) e^{\textstyle \frac{\beta}{N}\sum_{1 \leq i < j \leq N}W\left(\frac{i}{N},\frac{j}{N}\right)(\sigma_i\sigma_j+\tau_i\tau_j)} , \\
        V_N & :=  N\theta_N R_N(\bs,\bt) \left( e^{\textstyle L_0( Q_N(\bs) + Q_N(\bt)) }-1\right) e^{\textstyle  \frac{\beta}{N}\sum_{1 \leq i < j \leq N}W\left(\frac{i}{N},\frac{j}{N}\right)(\sigma_i\sigma_j+\tau_i\tau_j) }  . 
        \end{aligned} 
\end{align} 
Now, we can write
\begin{align*}
    N \theta_N R_N(\bs,\bt)=\tilde{R}_N(\bs,\bt)-\bar{R}_N(\bs,\bt),
\end{align*}
where
\begin{align}\label{eq:RN12}
\begin{aligned} 
        \tilde{R}_N(\bs,\bt) & =\frac{\beta^2}{N}\sum_{1 \leq i< j \leq N}W\left(\frac{i}{N},\frac{j}{N}\right)\sigma_i\sigma_j\tau_i\tau_j  , \\
        \bar{R}_N(\bs,\bt) &= \frac{\theta_N\beta^2}{N}\sum_{1 \leq i< j \leq N}W\left(\frac{i}{N},\frac{j}{N}\right)^2\sigma_i\sigma_j\tau_i\tau_j . 
        \end{aligned} 
\end{align}

\begin{lemma}
    \label{sigtauUniInt}
    Let $\tilde{R}_N(\bs,\bt)$ and $\bar{R}_N(\bs,\bt)$ be as defined above. 
For $\bs,\bt \in \{-1, 1\}^N$, define 
    $$\tilde X_N:= \tilde{R}_N(\bs,\bt) e^{\textstyle  \frac{\beta}{N}\sum_{1 \leq i < j \leq N}W\left(\frac{i}{N},\frac{j}{N}\right)(\sigma_i\sigma_j+\tau_i\tau_j)}$$ and $$\bar{X}_N:= \bar{R}_N(\bs,\bt) e^{\textstyle  \frac{\beta}{N}\sum_{1 \leq i < j \leq N}W\left(\frac{i}{N},\frac{j}{N}\right)(\sigma_i\sigma_j+\tau_i\tau_j)}.$$ 
    Then there exists $\delta> 0$ such that 
    $$\limsup_{N \to \infty}\E |\tilde X_N |^{1+\delta} < \infty \quad \text{ and } \quad \limsup_{N \to \infty}\E |\bar{X}_N|^{1+\delta}  < \infty,$$
where the expectation is taken over independent choices of $\bs,\bt$ from $\mu_N$.      \end{lemma}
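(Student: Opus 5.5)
The plan is to separate the polynomial prefactor from the exponential weight using Hölder's inequality, and to control each factor uniformly in $N$. Fix $p,q>1$ with $\frac1p+\frac1q=1$. Then
\begin{align*}
\E|\tilde X_N|^{1+\delta} \le \left(\E|\tilde R_N(\bs,\bt)|^{(1+\delta)p}\right)^{\frac1p}\left(\E_{\mu_N} e^{\textstyle \frac{(1+\delta)q\beta}{N}\sum_{1\le i<j\le N}W\left(\frac iN,\frac jN\right)\sigma_i\sigma_j}\right)^{\frac2q}.
\end{align*}
In the second factor we used that $\bs$ and $\bt$ are independent under $\mu_N\otimes\mu_N$, so the exponential of the sum factorizes into a product of two identical $\bs$- and $\bt$-expectations. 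Since $\beta\|W\|_{\mathrm{op}}<1$, we can choose $\delta>0$ small and $q>1$ close to $1$ so that $\beta':=(1+\delta)q\beta$ still satisfies $\beta'<\frac1{\|W\|_{\mathrm{op}}}$. The same bound, with $W^2$ in place of $W$ inside $\bar R_N$ and the same exponential factor, handles $\bar X_N$.

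\emph{Polynomial factor.} Write $\tilde R_N=\frac{\beta^2}{2N}(\bs\bt)^\top\bm W_N(\bs\bt)$, and note that $\bs\bt$ is again uniform on $\{-1,1\}^N$. The matrix $\bm W_N$ has zero diagonal, entries in $[0,1]$, and $\|\bm W_N\|_2,\|\bm W_N\|_F\le N$. Hence the Hanson--Wright inequality gives $\P(|\tilde R_N|\ge t)\le 2e^{-C\min\{t,t^2\}}$, exactly as for $S_1$ in the proof of Lemma \ref{lem:taylor_expRn}. It follows that $\E|\tilde R_N|^{r}\lesssim_r 1$ for every $r$. The same argument applies to $\bar R_N$, whose matrix has entries $\theta_NW^2\le1$.

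\emph{Exponential factor (main step).} I need $\limsup_N \E_{\mu_N}\exp\left(\frac{\beta'}{N}\sum_{i<j}W(\frac iN,\frac jN)\sigma_i\sigma_j\right)<\infty$ for every $\beta'<1/\|W\|_{\mathrm{op}}$. This is a subcritical Curie--Weiss-type bound, and it is the main obstacle. A naive Hubbard--Stratonovich argument that drops the negative part of $\bm W_N$ loses a factor $e^{c\,\mathrm{tr}(\bm W_N)_+/N}$, which need not be bounded. So the cancellation coming from the zero diagonal must be kept.

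My first attempt is to reuse the computation behind Lemma \ref{lem:expect_sum}(1) at parameter $\beta'$. By Lemma \ref{lem:expvariance}, $\E \hat Z_N(\beta')=\E_{\mu_N}\E T(\bs)$, and $\E T(\bs)$ equals this exponential times $e^{\mathcal E_N(\beta',W)+o(1)+O(Q_N(\bs))}$, with $\mathcal E_N$ bounded. Because $Q_N\ge0$, I will use the lower bound $e^{O(Q_N)}\ge e^{-L_0Q_N}$. Then, by Hölder's inequality again together with the small-moment bound $\E Q_N^r\lesssim (N\theta_N)^{-2r}$ from Lemma \ref{lem:Qnlp}, the desired expectation is bounded by a constant multiple of $\E\hat Z_N(\beta'')$ for some $\beta''\in(\beta',1/\|W\|_{\mathrm{op}})$. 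This last quantity is bounded by Lemma \ref{lem:expect_sum}(1).

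If that route is awkward, for instance because Lemma \ref{lem:expect_sum} is itself proved only via this lemma, I would instead prove the bound directly via the high-temperature expansion of Lemma \ref{lm:ZNbetaexpansion} with $A_{G_N}$ replaced by the weights $W(\frac iN,\frac jN)$. Expanding $e^{x\sigma_i\sigma_j}=\cosh x\,(1+\sigma_i\sigma_j\tanh x)$ and averaging over $\bs$ leaves a sum over even subgraphs. These are dominated by products over edge-disjoint cycles, giving
\begin{align*}
\log\E_{\mu_N}(\cdot)\le \sum_{i<j}\log\cosh\left(\tfrac{\beta'}N W\left(\tfrac iN,\tfrac jN\right)\right)+\sum_{s\ge3}\frac{1}{2s}\mathrm{tr}\left(\left(\tfrac{\beta'}{N}\bm W_N\right)^s\right).
\end{align*}
The first sum is $O(1)$. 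The second is bounded by $\frac{\beta'^2}{\varepsilon N^2}\mathrm{tr}(\bm W_N^2)\le \beta'^2/\varepsilon$, exactly as in the bound on $\E U_N$ in the proof of Theorem \ref{thm:ZN}(5). This uses the Perron--Frobenius fact that $\frac{\beta'}{N}\max_i|\lambda_i(\bm W_N)|<1-\varepsilon$ for large $N$. Combining the two factors yields both claims of the lemma.
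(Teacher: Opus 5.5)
Your Hölder split and the Hanson--Wright bound on all moments of $\tilde R_N$ and $\bar R_N$ are exactly the paper's argument; your $q$ plays the role of the paper's $1+\tilde\delta$. The only divergence is the exponential factor, which the paper does not treat as an obstacle.

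The bound $\limsup_N \E_{\mu_N} e^{\frac{\beta'}{N}\sum_{i<j}W(\frac iN,\frac jN)\sigma_i\sigma_j}<\infty$ for $\beta'<1/\|W\|_{\mathrm{op}}$ is precisely Lemma \ref{expUniInt}. That lemma dominates the Rademacher MGF by the Gaussian one, $\E e^{\frac{\beta'}{2N}\bm Z^\top\bm W_N\bm Z}$, which is valid because $\bm W_N$ has non-negative entries. It then uses $\mathrm{tr}(\bm W_N)=0$ and $-\log(1-x)-x\le Mx^2$, so the zero-diagonal cancellation you were worried about is retained.

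Your second route is a correct alternative proof of that lemma. It applies the even-subgraph/cycle expansion from the proof of Theorem \ref{thm:ZN}(5) to the deterministic weights $\frac{\beta'}{N}W(\frac iN,\frac jN)$:
\begin{itemize}
\item Non-negativity of the weights makes the $\tanh$ factors non-negative, so the injection of even subgraphs into sets of edge-disjoint cycles gives an upper bound.
\item Closed walks dominate cycles.
\item Perron--Frobenius together with $\frac1N\lambda_1(\bm W_N)\to\|W\|_{\mathrm{op}}$ gives the explicit bound $(\beta')^2/4+(\beta')^2/\varepsilon$.
\end{itemize}
This route is self-contained within the paper and avoids the external Gaussian-comparison lemma of \cite{bhattacharya2018inference}. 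The paper's route is shorter and reduces everything to an exact Gaussian computation.

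Discard your first route, however. Lemma \ref{lem:expect_sum} obtains the uniform integrability of $Y_N$ from Lemma \ref{expUniInt}, i.e.\ from the very exponential bound you are trying to establish (at the even larger parameter $\beta''$). Bounding $\E\hat Z_N(\beta'')$ is therefore circular in substance.
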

     
\begin{proof} Fix $\delta > 0$. Then applying H\"older's inequality gives, 
    \begin{align*}
      &\E |\tilde X_N|^{1+\delta} \\ & = \E\left[|\tilde{R}_N(\bs,\bt)|^{1+\delta}  e^{\textstyle \frac{\beta(1+\delta)}{2 N} \sum_{1 \leq i,j \leq N} W\left(\frac{i}{N}, \frac{j}{N}\right) \left(\sigma_{i} \sigma_{j}+\tau_i \tau_j\right) } \right]  \nonumber \\
            & =  e^{\textstyle \frac{\beta(1+\delta)}{N} \sum_{i} W\left(\frac{i}{N}, \frac{i}{N}\right)} \E\left[|\tilde{R}_N(\bs,\bt)|^{1+\delta} e^{\textstyle \frac{\beta(1+\delta)}{2 N} \sum_{1 \leq i \neq j \leq N} W\left(\frac{i}{N}, \frac{j}{N}\right) \left(\sigma_{i} \sigma_{j}+\tau_{i} \tau_j\right)  } \right] \nonumber \\ 
            &  \lesssim_{\beta, \delta}  \E\left[|\tilde{R}_N(\bs,\bt)|^{1+\delta} e^{\textstyle \frac{\beta(1+\delta)}{2 N} \sum_{1 \leq i \neq j \leq N} W\left(\frac{i}{N}, \frac{j}{N}\right) \left(\sigma_{i} \sigma_{j}+\tau_{i} \tau_j\right)  } \right] \nonumber                                                                                                    \\
        & \leq \left( \E \left[ e^{\textstyle  \frac{\beta(1+\delta)(1+ \tilde{\delta})}{2 N} \sum_{1 \leq i \neq j \leq N} W\left(\frac{i}{N}, \frac{j}{N}\right) \left(\sigma_{i} \sigma_{j}+\tau_{i} \tau_{j}\right) } \right] \right)^{\frac{1}{1+\tilde{\delta}}} \left( \E\left[|\tilde{R}_N(\bs,\bt)|^{(1+\delta)\frac{1+\tilde{\delta}}{\tilde{\delta}}}\right] \right)^{\frac{\tilde{\delta}}{1+\tilde{\delta}}} . 
    \end{align*}
    From Lemma \ref{expUniInt}, we know that for $\delta, \tilde\delta$ small enough, 
    \begin{align*}
        \limsup_{N \to \infty} \E e^{\textstyle  \frac{\beta(1+\delta)(1+\tilde\delta)}{2 N} \sum_{1 \leq i \neq j \leq N}\left(\sigma_{i} \sigma_{j}+\tau_{i} \tau_{j}\right) W\left(\frac{i}{N}, \frac{j}{N}\right)} <\infty.
    \end{align*}
    Also, $|\tilde{R}_N(\bs,\bt)| = (\beta N\theta_N)^2 Q_N(\bs \bt)$, for $Q_N$ as defined in \eqref{eq:QN}. Hence, by Lemma \ref{lem:Qnlp}, all its moments are bounded. This shows, $\limsup_{N \to \infty} \E|\tilde X_N|^{1+\delta}  < \infty$, for $\delta>0$ small enough. By a similar argument, we also have:
\begin{align}\label{xnbar6684}
    & \E |\bar X_N|^{1+\delta}  \\ 
    &\lesssim_{\beta,\delta} \left( \E \left[ e^{\textstyle  \frac{\beta(1+\delta)(1+ \tilde{\delta})}{2 N} \sum_{1 \leq i \neq j \leq N} W\left(\frac{i}{N}, \frac{j}{N}\right) \left(\sigma_{i} \sigma_{j}+\tau_{i} \tau_{j}\right) } \right] \right)^{\frac{1}{1+\tilde{\delta}}} \left( \E\left[|\bar{R}_N(\bs,\bt)|^{(1+\delta)\frac{1+\tilde{\delta}}{\tilde{\delta}}}\right] \right)^{\frac{\tilde{\delta}}{1+\tilde{\delta}}} .  \nonumber  
\end{align} 
Once again, by Lemma \ref{expUniInt}, the first term in \eqref{xnbar6684} remains bounded as $N\rightarrow \infty$. On the other hand,
$$|\bar{R}_N(\bs,\bt)| = \frac{\beta^2 \theta_N}{N} \left|(\bs \bt)^\top \bm W_N^{(2)} \bs\bt\right|$$ 
where  $$\bm{W}_N^{(2)} :=\left(W^2\left(\frac{i}{N},\frac{j}{N}\right) \bm{1}\{i\ne j\}\right)_{1\leq i,j\leq N}$$ 
Since $\max\{\|\bm W_N^{(2)}\|_2, \|\bm W_N^{(2)}\|_F\} \le \max\{\|\bm W_N\|_2, \|\bm W_N\|_F\} \le N$, one can apply the Hanson-Wright inequality as in the proof of Lemma \ref{lem:Qnlp} to conclude that all moments of $N^{-1}(\bs \bt )^\top W_N^{(2)} \bs \bt$ are bounded. Since $\theta_N \le 1$, this implies that all moments of $|\bar{R}_N(\bs,\bt)| $ are bounded too, thereby showing that $\limsup_{N \to \infty} \E|\bar X_N|^{1+\delta}  < \infty$ for $\delta >0$ small enough, completing the proof. 
\end{proof}

Now, recalling \eqref{eq:RN12}, $U_N$ (recall \eqref{eq:U}) can be written in the above notation as $U_N = \tilde{X}_N - \bar{X}_N$. By Proposition \ref{quadJointAsym} and the continuous mapping theorem $U_N\xrightarrow{D} U$ for some random variable $U$ with mean $0$, under $\mu_N$. Also, by Lemma \ref{sigtauUniInt} we know that $U_N$ is uniformly integrable. Hence, 
\begin{align} \label{eq:C1pf2} 
\E_{\mu_N} U_N \ll 1. 
\end{align}
Hence, by the uniform integrability of $U_N$ from Lemma \ref{sigtauUniInt}, $\E_{\mu_N} U_N \ll 1$. 
Next, recalling the definition of $V_N$ from \eqref{eq:U} gives, 
$$V_N = U_N  \left( e^{\textstyle L_0( Q_N(\bs) + Q_N(\bt)) }-1\right) . $$
Since $U_N = \tilde{X}_N + \bar{X}_N$, Lemma \ref{sigtauUniInt} implies that $\E_{\mu_N} |U_N|^{1+\delta} \lesssim 1$, for some $\delta > 0$. Therefore, by H\"older's inequality,
\begin{align}\label{LN_Holder}
    \E_{\mu_N} |V_N| &\le \left(\E_{\mu_N} |U_N|^{1+\delta}\right)^\frac{1}{1+\delta} \left(\E_{\mu_N}\left[e^{\textstyle L_0(Q_N(\bs)+Q_N(\bt))}(Q_N(\bs)+Q_N(\bt))\right]^{\frac{(1+\delta)}{\delta}}\right)^{\frac{\delta}{1+\delta}}\nonumber\\&\lesssim  \left(\E_{\mu_N}\left[e^{\textstyle L_0(Q_N(\bs)+Q_N(\bt))}(Q_N(\bs)+Q_N(\bt))\right]^{\frac{(1+\delta)}{\delta}}\right)^{\frac{\delta}{1+\delta}} .
\end{align} 

Next, for any constant $L>0$ with $S := L N\theta_N (Q_N(\bs) + Q_N(\bt))$, we have from the proof of Lemma \ref{lem:taylor_expRn}:
$$\p_{\mu_N}\left(S\ge \frac{t}{2}\right) \le 4 e^{-C\min\{t^2,t\}}$$ from which it follows by an argument similar to \eqref{jjjke1} and \eqref{jjjke2}, that 
\begin{align}\label{qneqn7}
    \E_{\mu_N} e^{\textstyle L (Q_N(\bs) + Q_N(\bt))} \lesssim 1. 
\end{align} 
Hence, applying the Cauchy-Schwarz inequality on \eqref{LN_Holder} and applying Lemma \ref{lem:Qnlp} gives, 
\begin{align}\label{eq:C1pf3}
\E_{\mu_N} |V_N| \lesssim \left(\E_{\mu_N}\left[Q_N(\bs)+Q_N(\bt)\right]^{\frac{2(1+\delta)}{\delta}}\right)^{\frac{\delta}{2(1+\delta)}}  \lesssim \frac{1}{N^2\theta_N^2} \ll 1.
\end{align} 
Combining \eqref{eq:C1pf1}, \eqref{eq:C1pf2}, and \eqref{eq:C1pf3} the result in \ref{itm:C1} follows. \hfill $\Box$

\subsubsection*{Proof of \ref{itm:C2}} For any $\delta > 0$, by H\"older's inequality, 
\begin{align*} 
& N \theta_N \E_{\mu_N} \left[  \left( Q_N(\bs) + Q_N(\bt) \right) \E T(\bs) \E T(\bt) \right] \nonumber \\ 
& \leq \E_{\mu_N}\left[\left(\E T(\bs) \E T(\bt)\right)^{1+\delta}\right]^{\frac{1}{1+\delta}} \E_{\mu_N}\left[\left(N \theta_N \left(Q_N(\bs)+Q_N(\bt)\right)\right)^{\frac{(1+\delta)}{\delta}}\right]^{\frac{\delta}{1+\delta}} . 
\end{align*} 
By Lemma \ref{expUniInt} $(\E T(\bs) \E T(\bt))^{1+\delta}= O_{\mu_N}(1)$ for some $\delta>0$.\footnote{For two sequences of random variables $X_N$ and $Y_N$, we say that $X_N = O_{\mu_N}(Y_N)$, if $\frac{X_N}{Y_N}$ is tight under the probability measure $\mu_N$, that is, given every $\varepsilon > 0$, there exists a constant $M >0$ such that $\sup_{N\ge 1} \mu_N(|X_N| \ge M|Y_N|) < \varepsilon$.} Then applying Lemma \ref{lem:Qnlp} gives, 
$$N \theta_N \E_{\mu_N} \left[  \left( Q_N(\bs) + Q_N(\bt) \right) \E T(\bs) \E T(\bt) \right]  \lesssim \frac{1}{N\theta_N}  \ll 1.
$$
This completes the proof of \ref{itm:C2}. \hfill $\Box$

\subsubsection*{Proof of \ref{itm:C3}} As before, there exists $\delta>0$ such that 
\begin{align*}
    \E_{\mu_N} \left[ Q_N(\bs \bt) \E T(\bs) \E T(\bt) \right] 
   & \lesssim \E_{\mu_N}\left[\left(\E T(\bs) \E T(\bt)\right)^{1+\delta}\right]^{\frac{1}{1+\delta}} \E_{\mu_N}\left[Q_N(\bs\bt)^{\frac{(1+\delta)}{\delta}}\right]^{\frac{\delta}{1+\delta}} \nonumber \\ 
   & \lesssim \frac{1}{N^2\theta_N^2} \ll 1.
\end{align*} 
This completes the proof of \ref{itm:C3}.  \hfill $\Box$ \\

Combining \ref{itm:C1},  \ref{itm:C2}, and  \ref{itm:C3} with \eqref{vznshep}, shows that 
$\var \hat Z_N(\beta) \ll \frac{1}{N \theta_N}$. Since $\E \hat{Z}_N(\beta) \gtrsim 1$ by  from Lemma \ref{lem:expect_sum}, the proof of Lemma \ref{var_sum} is now complete. \hfill $\Box$

\subsubsection{Proof of Lemma \ref{lem:clt_for_delta_N1}}\label{proofdeltaN1}     
   We will first prove \eqref{eq:linear11}. Recalling \eqref{eq:gammatheta} gives, 
    \begin{align}\label{eq:tech2}
        \sqrt{N\theta_N} \overline{\Delta}_N
        =
        \sum_{1 \leq i < j \leq N} \frac{\beta^2}{2(N\theta_N)^{\frac{3}{2}}} \overline{A_{G_N}(i,j)}
        +
        \sum_{i=1}^N
        \left(\frac{\beta^2}{4(N\theta_N)^{\frac{3}{2}}} + \frac {\beta}{2\sqrt{N \theta_N}}\right) \overline{A_{G_N}(i,i)} . 
    \end{align}
    The RHS of \eqref{eq:tech2} is a sum of independent random variables. Hence, for $N\theta_N^2 \gg 1$,
    \begin{align}
        \var [\sqrt{N\theta_N} \overline{\Delta}_N]
        & =    \sum_{1 \leq i\neq j \leq N} \frac{\beta^4}{8(N\theta_N)^3} \theta_N W\left(\frac{i}{N},\frac{j}{N}\right)\left(1-\theta_N W\left(\frac{i}{N},\frac{j}{N}\right)\right)\notag                                                             \\
        \label{eq:var_asympt1}
         & \hspace{0.75in} +    \sum_{i=1}^N \left(\frac{\beta^2}{4(N\theta_N)^{\frac{3}{2}}} + \frac {\beta}{2\sqrt{N\theta_N}}\right)^2 \theta_N W\left(\frac{i}{N},\frac{i}{N}\right)\left(1-\theta_N W\left(\frac{i}{N},\frac{i}{N}\right)\right)\notag \\
        & \to  \frac{\beta^2}{4} \int_0^1 W(x,x)(1-\theta W(x,x))\mathrm{d}x . 
    \end{align}
    The result in \eqref{eq:thetaN} now follows from the Lindeberg-Feller CLT, since each summand in the RHS of \eqref{eq:tech2} converges uniformly to $0$, as $N\rightarrow\infty$.

We now prove \eqref{eq:linear11}. By a second-order Taylor expansion, we have:
\[
\bigl|e^{\overline{\Delta}_N}-1-\overline{\Delta}_N\bigr|
\le \frac12 e^{|\overline{\Delta}_N|}\,\overline{\Delta}_N^2.
\]
Therefore, for any $r \ge 1$, we have:
\[
(N\theta_N)^{\frac{r}{2}} 
\mathbb E \Bigl[\bigl(e^{\overline{\Delta}_N}-1-\overline{\Delta}_N\bigr)^r\Bigr]
\le \frac{1}{2^r} (N\theta_N)^{\frac{r}{2}} 
\mathbb E \Bigl[e^{r|\overline{\Delta}_N|}\,\overline{\Delta}_N^{2r}\Bigr].
\]
To show \eqref{eq:linear11}, it is enough to prove that the RHS of the above inequality goes to $0$ as $N \rightarrow \infty$. By the Cauchy--Schwarz inequality, it suffices to show that
\[
\mathbb E \bigl[e^{2r|\overline{\Delta}_N|}\bigr] \lesssim 1
\quad \text{ and } \quad
(N\theta_N)^r \mathbb E \bigl[\overline{\Delta}_N^{4r}\bigr] \ll 1.
\]
To this end, note that
\[
(N\theta_N)^r \overline{\Delta}_N^{4r}
= \frac{(\sqrt{N\theta_N}\,\overline{\Delta}_N)^{4r}}{(N\theta_N)^r}
\ll 1 \quad \text{in probability},
\]
since $N\theta_N \to \infty$ and $\sqrt{N\theta_N}\,\overline{\Delta}_N$ is tight. Hence, to establish the second condition above, it suffices to show that the corresponding expectation converges to $0$. Therefore, it suffices to check that
\begin{align}\label{eq:uniform_integr_linear11}
\mathbb E \bigl[e^{2r|\overline{\Delta}_N|}\bigr] \lesssim 1
\quad \text{ and } \quad
\mathbb E \Bigl[(\sqrt{N\theta_N}\,\overline{\Delta}_N)^{8r}\Bigr] \lesssim 1.
\end{align}

To prove both assertions, we use Bernstein's inequality (see~\cite[Theorem 2.8.4]{vershynin2018high}) which states that, given a collection of independent, zero-mean random variables $X_1,\ldots,X_m$, with $|X_i |<K$ for all $1 \leq i \leq m$, we have
    \begin{equation}\label{bernmod6}
        \P\left(\left|\sum_{i=1}^m X_i \right| \geq t \right) \leq 2 e^{  -\frac {t^2}{2 \mathrm{Var}[\sum_{i=1}^m X_i] + \frac{2}{3} Kt } }    
    \end{equation}
    for all $t\geq 0$.  Writing
\[
\sqrt{N\theta_N}\,\overline{\Delta}_N = \sum_{k} X_k,
\]
where $X_k$ are independent, centered random variables corresponding to the summands in \eqref{eq:tech2}, we note that $|X_k| \le K_N := C_1/\sqrt{N\theta_N}$ for some constant $C_1>0$. Moreover, by \eqref{eq:var_asympt1},
\[
\var\!\left(\sqrt{N\theta_N}\,\overline{\Delta}_N\right) \le C_2,
\]
for some constant $C_2>0$. Therefore, Bernstein's inequality yields
\begin{align*}
\mathbb P\Bigl(\bigl|\sqrt{N\theta_N}\,\overline{\Delta}_N\bigr| \ge t\Bigr)
\le 2 \exp\!\left(
-\frac{t^2}{2C_2 + \frac{2}{3}K_N t}
\right)
\le C_3 e^{-C_4 t},
\end{align*}
for all $t>0$, where $C_3, C_4 > 0$ are constants independent of $N$.
Hence, we have
\begin{align*}
\mathbb E \Bigl[(\sqrt{N\theta_N}\,\overline{\Delta}_N)^{8r}\Bigr]
&= \int_0^\infty 
\mathbb P\Bigl(\bigl|\sqrt{N\theta_N}\,\overline{\Delta}_N\bigr| > t^{\frac{1}{8r}}\Bigr)\,dt \\
&\le C_3 \int_0^\infty e^{-C_4 t^{\frac{1}{8r}}}\,dt < \infty,
\end{align*}
thereby proving the second claim in~\eqref{eq:uniform_integr_linear11}.

To prove the first claim in~\eqref{eq:uniform_integr_linear11}, note that $N\theta_N \to \infty$, and hence for sufficiently large $N$,
\begin{align*}
\mathbb E \Bigl[ e^{2r|\overline{\Delta}_N|}\mathbf 1\{|\overline{\Delta}_N|\ge 1\} \Bigr]
&= \int_{e^{2r}}^\infty 
\mathbb P\Bigl(|\overline{\Delta}_N| \ge \tfrac{1}{2r}\log t\Bigr)\,dt \nonumber \\
&= \int_{e^{2r}}^\infty 
\mathbb P\Bigl(\bigl|\sqrt{N\theta_N}\,\overline{\Delta}_N\bigr| 
\ge \tfrac{\sqrt{N\theta_N}}{2r}\log t\Bigr)\,dt \nonumber \\
&\le C_3 \int_{e^{2r}}^\infty 
\exp\!\left(-C_4 \tfrac{\sqrt{N\theta_N}}{2r}\log t\right)\,dt \nonumber \\
&= C_3 \int_{e^{2r}}^\infty 
t^{-\frac{C_4 \sqrt{N\theta_N}}{2r}}\,dt \lesssim 1.
\end{align*}
Together with the trivial bound $e^{2r}$ on $\{|\overline{\Delta}_N|<1\}$, this proves the first claim in~\eqref{eq:uniform_integr_linear11} and completes the proof of \eqref{eq:linear11}. \hfill $\Box$

\subsubsection{Proof of Lemma \ref{lem:clt_for_delta_N2}}\label{proofdeltaN28}
        We will first prove \eqref{eq:thetaNZN}. As in the previous case, recalling \eqref{eq:gammatheta} gives,  
    \begin{align}\label{eq:tech1}
        \overline{\Delta}_N
        =
        \sum_{1 \leq i < j \leq N} \frac{\beta^2}{2N^2 \theta_N^2 } \overline{A_{G_N}(i,j)}
        +
        \sum_{i=1}^N
        \left(\frac{\beta^2}{4N^2 \theta_N^2 } + \frac {\beta }{2 N \theta_N }\right) \overline{A_{G_N}(i,i)}.
    \end{align} 
The RHS of \eqref{eq:tech1} is a sum of independent random variables. Hence, for $N \theta_N^2 \to c^2 \in [0,\infty)$,     
\begin{align}
        \var (N\theta_N^{\frac{3}{2}} \overline{\Delta}_N)
        & =  \sum_{1 \leq i\neq j \leq N} \frac{\beta^4}{8N^2} W\left(\frac{i}{N},\frac{j}{N}\right)\left(1-\theta_N W\left(\frac{i}{N},\frac{j}{N}\right)\right)\notag                                                                      \\
        \label{eq:var_asympt2}
       & \hspace{0.75in} +  \sum_{i=1}^N \left(\frac{\beta^2}{4N \sqrt{\theta_N}} + \frac {\beta\sqrt \theta_N}{2}\right)^2 \theta_N W\left(\frac{i}{N},\frac{i}{N}\right)\left(1-\theta_N W\left(\frac{i}{N},\frac{i}{N}\right)\right)\notag \\
        & \to \frac{\beta^4}{8} \int_{[0,1]^2} W (x, y) \mathrm{d}x \mathrm{d}y + \frac{\beta^2c^2}{4} \int_0^1 W(x,x)\mathrm{d}x . 
    \end{align} 
    The result in \eqref{eq:thetaNZN} now follows from the Lindeberg-Feller CLT, since each summand in the right-hand side of \eqref{eq:tech1} converges uniformly to $0$, as $N\rightarrow\infty$.

Next, we prove \eqref{eq:linear12}. By a second-order Taylor expansion, 
    $$
        |e^{  \overline{\Delta}_N} - 1 - \overline{\Delta}_N| \le \frac 12 e^{  |\overline{\Delta}_N|} \overline{\Delta}_N^2 .
    $$
    Hence, for any $r \geq 1$, 
    $$
        N^r \theta_N^{\frac{3r}{2}} \E \left[(e^{  \overline{\Delta}_N} - 1 - \overline{\Delta}_N)^r\right] \leq \frac{1}{2^r} N^r \theta_N^{\frac{3r}{2}} \E \left[e^{  r |\overline{\Delta}_N|} \overline{\Delta}_N^{2r}\right].
    $$
    To show \eqref{eq:linear12} it suffices to show that the RHS of the above inequality goes to $0$. In view of the Cauchy-Schwarz inequality, it suffices to show that:
    $$
        \E \left[e^{  2r |\overline{\Delta}_N|} \right] \lesssim 1 
        \quad \text{ and } \quad
        N^{2r} \theta_N^{3r} \E \left[ \overline{\Delta}_N^{4 r} \right] \ll 1 .
    $$
   To this end, note that $N^{2r} \theta_N^{3r} \overline{\Delta}_N^{4r}= (N\theta_N^{\frac{3}{2}}\overline{\Delta}_N)^{4r} / (N\theta_N^{\frac{3}{2}})^{2r} \ll 1$ in probability, because the numerator converges in distribution to the $(4r)$-th power of a normal distribution by Lemma~\ref{lem:clt_for_delta_N2}. Hence, to establish the second condition above, we need to show that the corresponding expectation converges as well. To show  \eqref{eq:linear12} it thus suffices to check that
    \begin{align}\label{eq:uniform_integr}
        \E \left[e^{  2r |\overline{\Delta}_N|} \right] \lesssim 1 
        \quad \text{ and } \quad
        \E \left[(N\theta_N^{\frac{3}{2}}\overline{\Delta}_N)^{8r} \right] \lesssim 1.
    \end{align}
    To prove both assertions, we once again use the Bernstein inequality \eqref{bernmod6}. 
    Applying \eqref{bernmod6} to the independent and centered summands on the RHS of~\eqref{eq:tech1} with $K := 3C_1\sqrt \theta_N$, for some large constant $C_1$,     
     gives,    
    \begin{align*}
        \P\left(\left|N\theta_N^{\frac{3}{2}} \overline{\Delta}_N\right| \geq t \right) \leq 2 e^{  -\frac {t^2/2}{C_2 + C_1 t \sqrt \theta_N}}        \leq C_3 e^{  -C_4 t } , 
    \end{align*} 
    for all $t>0$, where $C_1, C_2, C_3, C_4$ are positive constants. (Note that we are using $\mathrm{Var}[N\theta_N^{\frac{3}{2}} \overline{\Delta}_N] \leq C_2$, for some constant $C_2 > 0$, by \eqref{eq:var_asympt2}.) Hence, we have:
    \begin{align*}
        \E \left[(N\theta_N^{\frac{3}{2}}\overline{\Delta}_N)^{8r} \right] = \int_0^\infty \p \left(|N\theta_N^{\frac{3}{2}}\overline{\Delta}_N| > t^{\frac{1}{8r}}\right)~\mathrm{d}t \le C_3 \int_0^\infty e^{-C_4 t^{\frac{1}{8r}}}~\mathrm{d}t <\infty.
    \end{align*}
    thereby proving the second claim in~\eqref{eq:uniform_integr}. 
    
    To prove the first claim in~\eqref{eq:uniform_integr}, recall that $N\theta_N^{\frac{3}{2}}\to\infty$, and hence for sufficiently large $N$, 
    \begin{align}\label{eq:DeltaNmoment}
        \E \left[ e^{  2r |\overline{\Delta}_N|}\bm{1} \{ |\overline{\Delta}_N|\ge 1 \} \right]  
        &= \int_{e^{  2r}}^\infty \P\left(|\overline{\Delta}_N| \ge \frac{1}{2r}\log t\right)~\mathrm{d}t \nonumber \\ 
        &\le C_3\int_{e^{  2r}}^\infty e^{  -C_4\frac{N\theta_N^{\frac{3}{2}}}{2r}\log t } ~\mathrm{d}t \nonumber \\ 
        &=  C_3 \int_{e^4}^\infty t^{-\frac{C_4 N\theta_N^{\frac{3}{2}}}{2r}}~\mathrm{d}t \lesssim 1 ,
    \end{align}
    which yields the first claim in~\eqref{eq:uniform_integr}. This completes the proof of  \eqref{eq:linear12}. \hfill $\Box$

\subsubsection{Proof of Lemma \ref{lem:expect_sum}}\label{proof6p246}

Recalling the definition of $\hat{Z}_N(\beta)$ from \eqref{modpart27} we have $\E \hat{Z}_N(\beta) = \E_{\mu_N} \E T(\bs)$, where $\mu_N$ denotes the uniform measure on $\{-1,+1\}^N$.  By Lemma \ref{lem:expvariance} we know that 
 \begin{align}\label{eq:ETsigma}
        \E T\left(\bs\right) = e^{\textstyle  \mathcal{E}_N(\beta, W) + \frac{\beta}{N}\sum_{1 \leq i < j \leq N}W\left(\frac{i}{N},\frac{j}{N}\right)\sigma_i \sigma_j +o\left(\frac 1{N\theta_N}\right) + O\left( Q_N(\bs) \right) } , 
    \end{align} 
    where 
    $$\mathcal{E}_N(\beta, W) = -\frac{\beta^2}{4N^2\theta_N}\sum_{i=1}^{N}W\left(\frac{i}{N},\frac{i}{N}\right) - \frac{\beta^2}{2N^2}\sum_{1 \leq i < j \leq N}W^2\left(\frac{i}{N},\frac{j}{N}\right)-\frac{\beta^4}{12 N^4\theta_N^3}\sum_{1 \leq i < j \leq N} W\left(\frac{i}{N},\frac{j}{N}\right)$$ is as defined in \eqref{eq:JN}. 
    From Lemma \ref{lem:Qnlp}, $Q_N(\bs) \xrightarrow{P} 0$, under $\mu_N$. Also, from Proposition \ref{quadJointAsym}, under $\mu_N$, 
$$\frac{\beta}{N}\sum_{1 \leq i < j \leq N}W\left(\frac{i}{N},\frac{j}{N}\right)\sigma_i \sigma_j \xrightarrow{D} \frac{\beta}{2}\sum_{\lambda \in \mathrm{Spec}(W)} \lambda (X_\lambda^2-1),$$
where $\{X_i\}_{i=1}^\infty$ is a collection of i.i.d. $\mathcal N(0, 1)$ random variables. Therefore,
$$Y_N := e^{\textstyle \frac{\beta}{N}\sum_{1 \leq i < j \leq N}W\left(\frac{i}{N},\frac{j}{N}\right)\sigma_i \sigma_j + O(Q_N(\bs))}\xrightarrow{D} e^{\textstyle  \frac{\beta}{2}\sum_{\lambda \in \mathrm{Spec}(W)} \lambda (X_\lambda^2-1)} . $$
Furthermore, Lemma \ref{expUniInt}, \eqref{qneqn7}, and a simple application of H\"older's inequality imply that $\E_{\mu_N} Y_N^{1+\gamma} \lesssim 1$, for some $\gamma > 0$, which shows the uniform integrability of $Y_N$ under $\mu_N$. Hence,
\begin{align}\label{sigmapart7}
    \E_{\mu_N} Y_N \rightarrow \E e^{\textstyle \frac{\beta}{2}\sum_{\lambda \in \mathrm{Spec}(W)} \lambda (X_\lambda^2-1)} = \prod_{\lambda \in \mathrm{Spec}(W)} \frac{ e^{\textstyle -\frac{\beta \lambda}{2} } }{\sqrt{1-\beta \lambda}} , 
\end{align} 
where the last step follows from \cite[Proposition 7.1]{bhattacharya2017universal}. Next, note that when $\theta_N \gg \frac{1}{N}$,  
\begin{align}\label{constant76}
    & -\frac{\beta^2}{4N^2\theta_N}\sum_{i=1}^{N}W\left(\frac{i}{N},\frac{i}{N}\right)
    -\frac{\beta^2}{2N^2}\sum_{1 \leq i < j \leq N}W^2\left(\frac{i}{N},\frac{j}{N}\right)+o\left(\frac 1{N\theta_N}\right) \nonumber \\ 
    & = -\frac{\beta^2}{4}\int_{[0,1]^2} W(x,y)^2 \mathrm{d}x\mathrm{d}y + o(1).
\end{align}
Finally, 
\begin{align}\label{variable98part}
    -\frac{\beta^4}{12 N^4\theta_N^3}\sum_{1 \leq i < j \leq N} W\left(\frac{i}{N},\frac{j}{N}\right)=
    \begin{cases}
        o(1)                                               & \text{if } \theta_N \gg N^{-\frac{2}{3}} ,  \\
        -\frac{\beta^4}{24c^3}\int_{[0,1]^2} W(x,y) \mathrm{d}x \mathrm{d}y & \text{if } \theta_N \sim c N^{-\frac{2}{3}} . 
    \end{cases}
\end{align}
Combining \eqref{sigmapart7}, \eqref{constant76}, and \eqref{variable98part}, Lemma \ref{lem:expect_sum} follows. \hfill  $\Box$

\section{Proof of Theorem \ref{thm:HNsigma}}\label{corhamlproof} 
 We will prove Theorem \ref{thm:HNsigma} by computing the limit of the conditional moment-generating function (MGF) of $-H_N(\bs)$. To this end, choose $\varepsilon>0$ such that $[\beta-\varepsilon,\beta+\varepsilon]\subset (0, \frac{1}{\|W\|_{\mathrm{op}}})$. Then, for any $t\in(-\varepsilon,\varepsilon)$, 
\begin{align}\label{eq:mgfHN}
    \phi_{A_{G_N}}(t) & :=  \E\left[ e^{ \textstyle \frac{t}{2N\theta_N}\bs^\top A_{G_N}\bs } \bigg | A_{G_N}\right]             \nonumber \\                                                  
     & =        \frac{1}{2^N Z_N(\beta)} \sum_{ \sigma \in \{-1, 1\}^N } e^{\textstyle \frac{t}{2N\theta_N}\bs^\top A_{G_N} \bs+\frac{\beta}{2N\theta_N}\bs^\top A_{G_N} \bs} \nonumber \\ 
    & =          \frac{Z_N(\beta+t)}{Z_N(\beta)} , 
\end{align}
where the first expectation is with respect to the Ising model \eqref{model_def}, conditional on $A_{G_N}$. From \eqref{prep_proof_th1.2_rewr}, we have
\begin{align*}
    \frac{Z_N(\beta)}{e^{\textstyle \frac{\beta}{2N}\sum_{i=1}^{N}W\left(\frac{i}{N},\frac{i}{N}\right) + \frac{\beta^2}{4N^2\theta_N}\sum_{1 \leq i,j \leq N}W\left(\frac{i}{N},\frac{j}{N}\right)} \E \hat Z_N(\beta)}
    =
    J_N(\beta)
    e^{\textstyle \gamma \overline{\vartheta}_N + \gamma^2\overline{\eta}_N}.
\end{align*}
Hence,
\begin{align}\label{eq:ZNbetat}
    \frac{Z_N(\beta+t)}{Z_N(\beta)} = \frac{J_N(\beta+t)}{J_N(\beta)}\cdot \frac{\E\hat{Z}_N(\beta+t)}{\E\hat{Z}_N(\beta)} e^{ \textstyle \frac{t}{2N\theta_N}\vartheta_N+\frac{t^2+2\beta t}{4N^2\theta_N^2}\eta_N } . 
\end{align}
Now, by \eqref{eq:linear11} and \eqref{eq:linear12}, 
\begin{align*}
    e^{ \textstyle \frac{t}{2N\theta_N}\overline{\vartheta}_N+\frac{t^2+2\beta t}{4N^2\theta_N^2}\overline{\eta}_N } = 1+o_{L^p}\left(1\right) , 
\end{align*}
for every $p\ge 1$. Also, by Lemma \ref{var_sum}, $\frac{J_N(\beta+t)}{J_N(\beta)} = 1+o_P(1)$. Hence, for every $p\ge 1$, 
\begin{align}
    \label{eq:Z_Nratio}
    \frac{Z_N(\beta+t)}{Z_N(\beta)} = \left(1 + o_{P}\left(1\right)\right)\frac{\E\hat{Z}_N(\beta+t)}{\E\hat{Z}_N(\beta)} e^{ \textstyle \frac{t}{2N}\sum_{i=1}^{N}W\left(\frac{i}{N},\frac{i}{N}\right) + \frac{t^2+2\beta t}{4N^2\theta_N}\sum_{1 \leq i,j \leq N}W\left(\frac{i}{N},\frac{j}{N}\right)} . 
\end{align} 
We now consider the two cases of Theorem \ref{thm:HNsigma} separately in the next subsections.

\subsection{Proof of Theorem \ref{thm:HNsigma} (1)}
Let 
$$S_N=-H_N(\bs)-\frac{\beta}{2N^2\theta_N}\sum_{1 \leq i,j \leq N}W\left(\frac{i}{N},\frac{j}{N}\right).$$ 
Then,
\begin{align}\label{eq:SN}
\E \left[e^{\textstyle t\sqrt{\theta_N} S_N}\Big| A_{G_N}\right] = \frac{{Z}_N(\beta+t\sqrt{\theta_N})}{{Z}_N(\beta)} e^{ \textstyle -\frac{\beta t}{2N^2\sqrt{\theta_N}}\sum_{1 \leq i,j \leq N}W\left(\frac{i}{N},\frac{j}{N}\right) } .  
\end{align} 
As in \eqref{eq:ZNbetat}, 
\begin{align*}
 \frac{Z_N(\beta+ t \sqrt \theta_N )}{Z_N(\beta)} = \frac{J_N(\beta+t \sqrt \theta_N )}{J_N(\beta)}\cdot \frac{\E\hat{Z}_N(\beta+t \sqrt \theta_N )}{\E\hat{Z}_N(\beta)} e^{ \textstyle \frac{t \sqrt \theta_N }{2N\theta_N}\vartheta_N+\frac{t^2+2\beta t \sqrt \theta_N }{4N^2\theta_N^2}\eta_N } . 
\end{align*}
By Lemma \ref{var_sum}, $\frac{J_N(\beta+t\sqrt{\theta_N})}{J_N(\beta)} = 1+o_P(1)$. Hence, for every $p\ge 1$, 
\begin{align}\label{eq:ZNbetatN}
    \frac{Z_N(\beta+ t \sqrt \theta_N )}{Z_N(\beta)} = \frac{\E\hat{Z}_N(\beta+t \sqrt \theta_N )}{\E\hat{Z}_N(\beta)} e^{ \textstyle \frac{t \sqrt \theta_N }{2N\theta_N}\vartheta_N+\frac{t^2+2\beta t \sqrt \theta_N }{4N^2\theta_N^2}\eta_N } . 
\end{align}
The next lemma shows that the term in the exponent above can be asymptotically  replaced in its expected value.

\begin{lemma}\label{lem:linearization3}
    Suppose $ \theta_N \gtrsim N^{-\frac{2}{3}}$. Then for all $t \in \mathbb{R}$ and $p\ge 1$, 
    \begin{align*}
        e^{\textstyle \frac{t \sqrt \theta_N}{2N\theta_N}\overline{\vartheta}_N+\frac{\theta_N t^2+2\beta t \sqrt \theta_N}{4N^2\theta_N^2}\overline{\eta}_N } = 1+o_{L^p}(1).
    \end{align*}
\end{lemma}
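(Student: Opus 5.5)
Let $\Delta_N := \frac{t \sqrt{\theta_N}}{2N\theta_N}\overline{\vartheta}_N+\frac{\theta_N t^2+2\beta t \sqrt{\theta_N}}{4N^2\theta_N^2}\overline{\eta}_N$. The plan is to copy the structure of the proofs of Lemmas \ref{lem:clt_for_delta_N1} and \ref{lem:clt_for_delta_N2}. The only difference is the target: since the claimed error is just $o_{L^p}(1)$, it suffices to show that $\Delta_N$ is a sum of independent, centered, uniformly small summands with vanishing variance, and then control $e^{\Delta_N}-1$ in $L^p$.

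\textbf{Step 1: write $\Delta_N$ as a sum and bound its variance.} Expand $\Delta_N$ as in \eqref{eq:gammatheta}:
\[
\Delta_N = \sum_{i<j} \frac{\theta_N t^2+2\beta t\sqrt{\theta_N}}{2N^2\theta_N^2}\overline{A_{G_N}(i,j)} + \sum_i\Big(\frac{\theta_N t^2+2\beta t\sqrt{\theta_N}}{4N^2\theta_N^2}+\frac{t}{2N\sqrt{\theta_N}}\Big)\overline{A_{G_N}(i,i)}.
\]
Using $\mathrm{Var}\,\overline{A_{G_N}(i,j)}\le \theta_N$ and $\theta_N\le 1$:
\begin{itemize}
\item the off-diagonal part has variance $\lesssim N^2\theta_N\cdot \theta_N/(N^4\theta_N^4) = 1/(N^2\theta_N^2)$;
\item the diagonal part has variance $\lesssim N\theta_N\big(1/(N^4\theta_N^3) + 1/(N^2\theta_N)\big) = 1/(N^3\theta_N^2) + 1/N$.
\end{itemize}
Since $N\theta_N\to\infty$ (indeed $\theta_N\gtrsim N^{-2/3}$), all of these tend to $0$. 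Each summand is also bounded in absolute value by $K_N\lesssim 1/(N^2\theta_N^{3/2}) + 1/(N\sqrt{\theta_N}) \to 0$.

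\textbf{Step 2: Bernstein tail bound.} Apply Bernstein's inequality \eqref{bernmod6} with $\mathrm{Var}(\Delta_N)=:v_N\to0$ and $K_N\to0$. This gives $\P(|\Delta_N|\ge s)\le 2\exp\!\big(-s^2/(2v_N+\tfrac23K_Ns)\big)$. For $s\ge s_0>0$ fixed this is at most $2e^{-c_N s}$ with $c_N\to\infty$.

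\textbf{Step 3: $L^p$ control of $e^{\Delta_N}-1$.} Use $|e^{x}-1|\le |x|e^{|x|}$ and Cauchy--Schwarz:
\[
\E|e^{\Delta_N}-1|^p\le (\E \Delta_N^{2p})^{1/2}(\E e^{2p|\Delta_N|})^{1/2}.
\]
The first factor tends to $0$ by integrating the Bernstein tail: it is bounded by $\int_0^\infty \P(|\Delta_N|>u^{1/(2p)})\,du$, and this vanishes by dominated convergence, the tail being dominated uniformly in $N$ by $Ce^{-cu^{1/(2p)}}$ and converging pointwise to $0$. For the second factor, split on the event $\{|\Delta_N|<1\}$, where the integrand is at most $e^{2p}$. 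On the complement, argue exactly as in \eqref{eq:DeltaNmoment}:
\[
\E\big[e^{2p|\Delta_N|}\mathbf 1\{|\Delta_N|\ge 1\}\big] \le \int_{e^{2p}}^\infty \P\big(|\Delta_N|\ge \tfrac{1}{2p}\log u\big)\,du \le C\int_{e^{2p}}^\infty u^{-c_N/(2p)}\,du,
\]
which is bounded once $c_N>2p$.

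The main obstacle is simply making sure the diagonal term $t\overline{\vartheta}_N/(2N\sqrt{\theta_N})$ and the $\overline{\eta}_N$ term both have vanishing variance and vanishing uniform summand bounds under only $\theta_N\gtrsim N^{-2/3}$. The check in Step 1 indicates that $N\theta_N\to\infty$ alone suffices, so no delicate regime splitting is needed.
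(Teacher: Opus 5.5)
Your proof is correct and follows the same route as the paper: the bound $|e^x-1|\le |x|e^{|x|}$, Cauchy--Schwarz, Bernstein's inequality \eqref{bernmod6} for the independent centered summands, and tail integration as in \eqref{eq:DeltaNmoment}. Like the paper, your tail-integral formula drops the boundary term $e^{2p}\,\p(|\Delta_N|\ge 1)$, which is harmless. Working directly with the unscaled exponent and getting a rate $c_N\to\infty$ from $\mathrm{Var}(\Delta_N)\to 0$ and $K_N\to 0$ is cleaner than the paper's rescaling by $N\theta_N$: the paper's claimed bound $\mathrm{Var}[N\theta_N\overline{\zeta}_N]\lesssim N^2\theta_N^3\lesssim 1$ does not hold in general under $\theta_N\gtrsim N^{-2/3}$ (the diagonal part alone is of order $N\theta_N^2$), so your version genuinely covers the whole stated range.
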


\begin{proof}
    To begin with, define 
    $$\overline{\zeta}_N := \frac{t \sqrt \theta_N}{2N\theta_N}\overline{\vartheta}_N+\frac{\theta_N t^2+2\beta t \sqrt \theta_N}{4N^2\theta_N^2}\overline{\eta}_N. $$
    Then, by a first-order Taylor expansion, $|e^{ \overline{\zeta}_N} - 1| \le e^{|\overline{\zeta}_N|} |\overline{\zeta}_N| .$ Hence, to prove Lemma \ref{lem:linearization3}, by Cauchy-Schwarz inequality, it suffices to show that 
    \begin{align}\label{eq:zetaNmoment}
        \E e^{ 2p|\overline{\zeta}_N|} \lesssim 1 
        \quad \text{ and } \quad
        \E \overline{\zeta}_N^{2p} \ll 1 .
    \end{align}
    Now, it is easy to check that
    \begin{align*}
        \E \overline{\zeta}_N^2 =  \var[ \overline{\zeta}_N ] = O\left(\frac{1}{N}\right) + O\left(\frac{1}{N^2\theta_N^2}\right) \lesssim \theta_N.
    \end{align*}
    Hence, $\mathrm{Var}[N\theta_N \overline{\zeta}_N] \lesssim N^2\theta_N^3 \lesssim 1$ and  applying the Bernstein inequality gives, 
    $$
        \P\left(\left|N\theta_N \overline{\zeta}_N\right| \geq t \right) \leq 2 e^{-\frac {t^2/2}{C_2 + C_1 t \sqrt \theta_N} } \leq C_3 e^{-C_4 t } , 
    $$
    for all $t>0$, where $C_1, C_2, C_3, C_4$ are some positive constants. The bounds in \eqref{eq:zetaNmoment} can now be proved by arguments similar to \eqref{eq:DeltaNmoment}.  
\end{proof}

Applying Lemma \ref{lem:linearization3} to \eqref{eq:ZNbetatN} gives, 
\begin{align}\label{eq:ZNthetaN}
    & \frac{Z_N(\beta+t\sqrt{\theta_N})}{Z_N(\beta)}  \\ 
    & = \left(1 + o_{P}\left(1\right)\right)\frac{\E\hat{Z}_N(\beta+t\sqrt{\theta_N})}{\E\hat{Z}_N(\beta)} e^{ \textstyle \frac{t\sqrt{\theta_N}}{2N}\sum_{i=1}^{N}W\left(\frac{i}{N},\frac{i}{N}\right) + \left(\frac{t^2}{4N^2} + \frac{\beta t}{2N^2\sqrt{\theta_N}} \right) \sum_{1 \leq i,j \leq N}W\left(\frac{i}{N},\frac{j}{N}\right)  } . \nonumber
\end{align}
The next lemma gives the asymptotics for $\frac{\E\hat{Z}_N(\beta+t\sqrt{\theta_N})}{\E\hat{Z}_N(\beta)}$. 

\begin{lemma}\label{thetanperturb6}
    If $\theta_N \ll 1$ and $\theta_N \gtrsim N^{-\frac{2}{3}} $, then for all $t\in \R$, 
    $$\lim_{N\rightarrow\infty}\frac{\E\hat{Z}_N(\beta+t\sqrt{\theta_N})}{\E\hat{Z}_N(\beta)}=1.$$
\end{lemma}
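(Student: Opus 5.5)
We will compute $\E\hat Z_N(\beta')$ with $\beta'=\beta_N:=\beta+t\sqrt{\theta_N}$ using the same route as in the proof of Lemma \ref{lem:expect_sum}, but now tracking the dependence on a parameter that varies with $N$. Since $\theta_N\ll 1$, $\beta_N\to\beta$, so for large $N$ we have $\beta_N\in[\beta-\varepsilon,\beta+\varepsilon]\subset(0,\frac{1}{\|W\|_{\mathrm{op}}})$. Write $\E\hat Z_N(\beta_N)=\E_{\mu_N}\E T_{\beta_N}(\bs)$ and invoke the expansion \eqref{eq:ETsigma} from Lemma \ref{lem:expvariance} at parameter $\beta_N$:
\[
\E T_{\beta_N}(\bs)=e^{\mathcal E_N(\beta_N,W)+\frac{\beta_N}{N}\sum_{i<j}W(\frac iN,\frac jN)\sigma_i\sigma_j+o(\frac1{N\theta_N})+O(Q_N(\bs))}.
\]
We must check that the error terms in Lemma \ref{lem:expvariance} are uniform for $\beta$ in a compact subinterval of the subcritical range. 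They come from Taylor expansions of $\log(1+\theta_N W(e^{x}-1))$ with $x=O(\beta/(N\theta_N))$, whose constants depend on $\beta$ only through an upper bound, so this should be routine.

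The argument then splits into a deterministic part and a random part. For the deterministic part, the three terms of $\mathcal E_N(\beta_N,W)$ are polynomial in $\beta_N$ with coefficients of order $1/(N\theta_N)$, $1$, and $1/(N^2\theta_N^3)\lesssim1$ respectively (the last uses $\theta_N\gtrsim N^{-2/3}$). Hence $\mathcal E_N(\beta_N,W)-\mathcal E_N(\beta,W)=O(|\beta_N-\beta|)=O(\sqrt{\theta_N})\to0$. For the random part, define
\[
Y_N(b):=e^{\frac bN\sum_{i<j}W(\frac iN,\frac jN)\sigma_i\sigma_j+O(Q_N(\bs))}.
\]
As in the proof of Lemma \ref{lem:expect_sum}, Proposition \ref{quadJointAsym} gives $\frac1N\sum_{i<j}W\sigma_i\sigma_j\dto\frac12\sum_\lambda\lambda(X_\lambda^2-1)$, which is tight. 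Since $\beta_N\to\beta$, Slutsky's lemma then gives $Y_N(\beta_N)\dto e^{\frac\beta2\sum\lambda(X_\lambda^2-1)}$.

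The main obstacle is uniform integrability of $Y_N(\beta_N)$. We handle it by the monotone domination $e^{b x}\le e^{(\beta+\varepsilon)x}+e^{(\beta-\varepsilon)x}$, valid for $b\in[\beta-\varepsilon,\beta+\varepsilon]$ and all real $x$. Lemma \ref{expUniInt} applied at $(\beta+\varepsilon)(1+\gamma)$ and $(\beta-\varepsilon)(1+\gamma)$, with $\varepsilon,\gamma$ small enough that these remain subcritical, gives bounded $(1+\gamma)$-th moments. Together with \eqref{qneqn7} and Hölder's inequality, this yields $\sup_N\E_{\mu_N}Y_N(\beta_N)^{1+\gamma'}<\infty$. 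Therefore $\E_{\mu_N}Y_N(\beta_N)$ and $\E_{\mu_N}Y_N(\beta)$ converge to the same limit $\prod_\lambda e^{-\beta\lambda/2}(1-\beta\lambda)^{-1/2}$, which is positive.

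Combining the two parts, $\log\E\hat Z_N(\beta_N)-\log\E\hat Z_N(\beta)\to0$. This argument covers both $\theta_N\gg N^{-2/3}$ and $\theta_N\sim cN^{-2/3}$, because the $\theta_N^{-3}$ term in $\mathcal E_N$ is only ever compared at $\beta_N$ versus $\beta$, and its difference is $O(\sqrt{\theta_N})$ whether or not that term converges to $0$. This gives the claimed limit of $1$.
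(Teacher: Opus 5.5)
Your proposal is correct and follows essentially the same route as the paper. Like the paper, you apply \eqref{eq:ETsigma} at $\beta_N=\beta+t\sqrt{\theta_N}$, handle the deterministic exponent separately, and pass to the limit in the $\mu_N$-average via Proposition~\ref{quadJointAsym} together with uniform integrability from Lemma~\ref{expUniInt} and \eqref{qneqn7}. Your direct bound $\mathcal E_N(\beta_N,W)-\mathcal E_N(\beta,W)=O(\sqrt{\theta_N})$ and your two-sided domination $e^{bx}\le e^{(\beta+\varepsilon)x}+e^{(\beta-\varepsilon)x}$ are in fact slightly more careful than the paper, which computes the limit of $\mathcal E_N$ as if $N^{2/3}\theta_N$ converged and asserts uniform integrability for the moving parameter without further comment.
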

\begin{proof}
    We closely follow the proof of Lemma \ref{lem:expect_sum}. Define $\beta_N := \beta + t\sqrt{\theta_N}$. Then, recalling \eqref{eq:ETsigma}, gives 
        $$\E \hat{Z}_N(\beta_N) = \E_{\mu_N} \left[ e^{\textstyle  \mathcal{E}_N(\beta_N, W) + \frac{\beta_N}{N}\sum_{1 \leq i < j \leq N}W\left(\frac{i}{N},\frac{j}{N}\right)\sigma_i \sigma_j +o\left(\frac 1{N\theta_N}\right) + O\left( Q_N(\bs) \right) } \right] , $$ 
    where $$\mathcal{E}_N(\beta_N, W) = -\frac{\beta_N^2}{4N^2\theta_N}\sum_{i=1}^{N}W\left(\frac{i}{N},\frac{i}{N}\right)  - \frac{\beta_N^2}{2N^2}\sum_{1 \leq i < j \leq N}W^2\left(\frac{i}{N},\frac{j}{N}\right)-\frac{\beta_N^4}{12 N^4\theta_N^3}\sum_{1 \leq i < j \leq N} W\left(\frac{i}{N},\frac{j}{N}\right),$$ is as defined in \eqref{eq:JN}. Note that in the regime $\theta_N \gtrsim N^{-\frac{2}{3}} $, 
    \begin{align*}  
        \mathcal{E}_N(\beta_N, W) &= -\frac{\beta^2}{4N^2\theta_N}\sum_{i=1}^{N}W\left(\frac{i}{N},\frac{i}{N}\right)
        -\frac{\beta^2}{2N^2}\sum_{1 \leq i < j \leq N}W^2\left(\frac{i}{N},\frac{j}{N}\right) -\frac{\beta^4}{12 N^4\theta_N^3}\sum_{1 \leq i < j \leq N} W\left(\frac{i}{N},\frac{j}{N}\right)\\ 
        & = -\frac{\beta^2}{4}\int_{[0,1]^2} W(x,y)\left(\frac{\beta^2}{6c^3} + W(x,y)\right) \mathrm{d}x\mathrm{d}y +  o(1),
    \end{align*}
    where $\theta_N \sim cN^{-\frac{2}{3}}$ for some constant $c \in (0, \infty]$.  
    Now, define 
    $$Y_N^{(t)} := e^{\textstyle \frac{\beta_N}{N}\sum_{1 \leq i < j \leq N}W\left(\frac{i}{N},\frac{j}{N}\right)\sigma_i \sigma_j + O(Q_N(\bs)) } . $$
      By Lemma \ref{lem:Qnlp}, $Q_N(\bs) \xrightarrow{P} 0$, under $\mu_N$. Also, 
   by Proposition \ref{quadJointAsym},  $\frac{t\sqrt{\theta}_N}{N} \sum_{1 \leq i < j \leq N}W\left(
        \frac{i}{N}, \frac{j}{N}\right)\sigma_i\sigma_j \xrightarrow{P} 0$ 
        and hence,  
    $$Y_N^{(t)} \xrightarrow{D} e^{\textstyle \frac{\beta}{2}\sum_{\lambda \in \mathrm{Spec}(W)} \lambda (X_i^2-1) } , $$
 as $N \rightarrow \infty$. Further, since $\theta_N \ll 1$, it follows from Lemma \ref{expUniInt} and \eqref{qneqn7} that $Y_{N}$ is uniformly integrable, and hence, from \eqref{sigmapart7}
       $$\E_{\mu_N} Y_N^{(t)} \rightarrow \E e^{\textstyle \frac{\beta}{2}\sum_{\lambda \in \mathrm{Spec}(W)} \lambda (X_\lambda^2-1)} = \prod_{i=1}^{\infty} \frac{ e^{\textstyle -\frac{\beta \lambda }{2} } }{\sqrt{1-\beta \lambda }}, $$ 
       as $N \rightarrow \infty$. 
Combining the above shows that $\E \hat{Z}_N(\beta_N)$ has the same limit as $\E \hat{Z}_N(\beta)$ as in Lemma \ref{lem:expect_sum}. Hence, their ratio converges to 1,  completing the proof of Lemma \ref{thetanperturb6}. 
\end{proof}

Applying Lemma \ref{thetanperturb6} and the observation 
\begin{align*}
    e^{\textstyle \frac{t\sqrt{\theta_N}}{2N}\sum_{i=1}^{N}W\left(\frac{i}{N},\frac{i}{N}\right) + \frac{t^2}{4N^2}\sum_{1 \leq i,j \leq N}W\left(\frac{i}{N},\frac{j}{N}\right) }  \to e^{\textstyle \frac{t^2}{4} \int_{[0, 1]^2} W(x,y)\mathrm{d}x\mathrm{d}y} , 
\end{align*} 
in \eqref{eq:ZNthetaN} gives, 
\begin{align}\label{ref2707}
    \frac{Z_N(\beta+t\sqrt{\theta_N})}{Z_N(\beta)} =\left(1 + o_{P}\left(1\right)\right)e^{\textstyle \frac{t^2}{4} \int_{[0, 1]^2} W(x,y)\mathrm{d}x\mathrm{d}y }  e^{\textstyle  \frac{\beta t}{2N^2\sqrt{\theta_N}}\sum_{1 \leq i,j \leq N}W\left(\frac{i}{N},\frac{j}{N}\right)} . 
 \end{align}
Hence, 
$$\frac{Z_N(\beta+t\sqrt{\theta_N})}{Z_N(\beta)} e^{\textstyle - \frac{\beta t}{2N^2\sqrt{\theta_N}}\sum_{1 \leq i,j \leq N}W\left(\frac{i}{N},\frac{j}{N}\right) } \xrightarrow{P} e^{\textstyle \frac{t^2}{4} \int_{[0, 1]^2} W(x,y)\mathrm{d}x\mathrm{d}y } .$$
This implies, recalling \eqref{eq:SN}, 
\begin{align*} 
\E \left[e^{\textstyle t\sqrt{\theta_N} S_N}\Big| A_{G_N}\right]  \xrightarrow{P} e^{\textstyle \frac{t^2}{4} \int_{[0, 1]^2} W(x,y)\mathrm{d}x\mathrm{d}y } ,
\end{align*} 
for all $t  \in \R$. Hence, by Lemma \ref{l2ratio762}, 
\begin{align*}
    \sqrt{\theta_N} S_N \xrightarrow{D} \cN\left(0,\frac{1}{2}\int_{[0, 1]^2} W(x,y)\mathrm{d}x\mathrm{d}y \right) .  
\end{align*}
This completes the proof of \eqref{eq:HNsparse}.

Finally, recall from \eqref{holdapprox62} that if $W$ is $\alpha$-H\"older continuous, then
$$\left|\frac{1}{N^2}\sum_{1\le i,j\le N} W\left(\frac{i}{N}, \frac{j}{N}\right) - \int_{[0,1]^2} W(x,y)~\mathrm{d}x \mathrm{d}y\right| \le N^{-\alpha}.$$
Therefore, $$\frac{1}{\sqrt{\theta_N}} \left(\frac{\beta}{2N^2}\sum_{1\le i,j\le N} W\left(\frac{i}{N}, \frac{j}{N}\right) - \frac{\beta}{2}\int_{[0,1]^2} W(x,y)~\mathrm{d}x \mathrm{d}y\right) = O\left((\theta_N N^{2\alpha})^{-1/2}\right)$$
and the RHS above is $o(1)$ by Assumption \ref{assumption} (3), since $\alpha > \frac{1}{3}$. This proves \eqref{holder112}.

\subsection{Proof of Theorem \ref{thm:HNsigma} (2)} In this case, 
\begin{align}\label{newlab:align6}
    & e^{ \textstyle \frac{t}{2N}\sum_{i=1}^{N}W\left(\frac{i}{N},\frac{i}{N}\right) + \frac{t^2+2\beta t}{4N^2\theta_N}\sum_{1 \leq i,j \leq N}W\left(\frac{i}{N},\frac{j}{N}\right) }  \nonumber \\ 
    & \to e^{ \textstyle \frac{t^2+2t \beta}{4\theta} \int_{[0, 1]^2} W(x,y)\mathrm{d}x\mathrm{d}y + \frac{t}{2}\int_0^1 W(x,x)\mathrm{d}x } . 
\end{align} 
We also have the following from Lemma \ref{lem:expect_sum}: 
\begin{align}
    \label{eq:EZ_Nratio}
    \frac{\E \hat{Z}_N(\beta+t)}{\E \hat{Z}_N(\beta)} \to e^{ \textstyle -\frac{t^2+2t \beta}{4} \int_{[0,1]^2} W(x,y)^2 \mathrm{d}x\mathrm{d}y } \prod_{\lambda \in \mathrm{Spec}(W)} e^{ \textstyle -\frac{t \lambda }{2} } \sqrt{\frac{1-\beta \lambda }{1-(\beta+t) \lambda }} . 
\end{align} 
Combining \eqref{newlab:align6} with \eqref{eq:mgfHN}, \eqref{eq:Z_Nratio} and \eqref{eq:EZ_Nratio}, we get $\phi_{A_{G_N}}(t)\xrightarrow{P}\phi(t)$, where 
\begin{align}\label{eq:mgfU}
    \phi(t)& :=  e^{ \textstyle \frac{(t^2+2t \beta) \sigma_W^2}{2} +\frac{t}{2}\int_0^1 W(x,x)\mathrm{d}x} \prod_{\lambda \in \mathrm{Spec}(W)} e^{\textstyle -\frac{t \lambda}{2} } \sqrt{\frac{1-\beta \lambda}{1-(\beta+t) \lambda }} , 
\end{align} 
where $\sigma_W^2=\frac{1}{2\theta} \int_{[0, 1]^2} W(x,y)(1-\theta W(x,y))\mathrm{d}x\mathrm{d}y$ is as defined in Proposition \ref{thm:HNsigma}. Now, recalling the definition of $V_{\beta}$ from \eqref{eq:Vbeta}, let 
\begin{align}\label{eq:UV}
    U_\beta & :=\frac{1}{2}\int_{0}^{1}W(x,x)\mathrm{d}x + V_{\beta} \nonumber \\
    & = \beta \sigma_W^2 + \frac{1}{2}\int_{0}^{1}W(x,x)\mathrm{d}x + \sigma_WZ_0+\frac{1}{2} \sum_{\lambda \in \mathrm{Spec}(W)} \lambda \left(\frac{Z_{\lambda}^2}{1-\beta \lambda}-1\right),
\end{align}
where $Z_0, Z_1,\ldots$ are i.i.d. $\mathcal N(0, 1)$ random variables. Lemma \ref{lm:HNmgf} below shows that the MGF of $U_\beta$ is $\phi(\cdot)$. This means that, recalling \eqref{eq:mgfHN}, 
$$\E\left[ e^{ \textstyle \frac{t}{2N\theta_N}\bs^\top A_{G_N}\bs } \bigg | A_{G_N}\right] \pto \E[e^{t U_\beta}],$$
for $t$ in a neighborhood of zero. Hence, by Lemma \ref{l2ratio762}, we conclude that $-H_N(\bs) \xrightarrow{D} U_\beta$, completing the proof of \eqref{eq:HNdense}. \hfill $\Box$

\begin{lemma}\label{lm:HNmgf}
There exists $\tau>0$ such that for all $|t|<\tau$, $\E[e^{t U_\beta}] = \phi(t)$, where $\phi(t)$ is defined in \eqref{eq:mgfU}. 
\end{lemma}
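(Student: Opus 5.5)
We compute $\E[e^{tU_\beta}]$ directly from the representation \eqref{eq:UV}, using independence of $Z_0$ and the family $\{Z_\lambda\}_{\lambda\in\mathrm{Spec}(W)}$. Choose $\tau>0$ small enough that $(\beta+t)\|W\|_{\mathrm{op}}<1$ for $|t|<\tau$; possible since $\beta\|W\|_{\mathrm{op}}<1$. Also require $|t|\lambda<1-\beta\lambda$ for every negative eigenvalue; all $|\lambda|\le\|W\|_{\mathrm{op}}$, so $\tau<1/\|W\|_{\mathrm{op}}-\beta$ suffices for both. Hence, for all $\lambda\in\mathrm{Spec}(W)$ and $|t|<\tau$,
\[
1-(\beta+t)\lambda>0 .
\]

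\textbf{Finite truncation.} Let $\lambda_1,\dots,\lambda_K$ be the first $K$ eigenvalues, ordered by decreasing modulus, and define
\[
U^{(K)} := \beta\sigma_W^2+\tfrac12\int_0^1 W(x,x)\,dx+\sigma_W Z_0+\tfrac12\sum_{k\le K}\lambda_k\left(\frac{Z_{\lambda_k}^2}{1-\beta\lambda_k}-1\right).
\]
By independence,
\[
\E e^{tU^{(K)}} = e^{t\beta\sigma_W^2+\frac t2\int W(x,x)dx}\; e^{t^2\sigma_W^2/2}\prod_{k\le K}e^{-t\lambda_k/2}\,\E\exp\left(\frac{t\lambda_k}{2(1-\beta\lambda_k)}Z^2\right).
\]
The chi-square MGF gives $\E e^{sZ^2}=(1-2s)^{-1/2}$ for $s<1/2$. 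Here $s=\frac{t\lambda_k}{2(1-\beta\lambda_k)}$, so
\[
1-2s=\frac{1-(\beta+t)\lambda_k}{1-\beta\lambda_k}>0
\]
by the choice of $\tau$. Each factor therefore equals $e^{-t\lambda_k/2}\sqrt{(1-\beta\lambda_k)/(1-(\beta+t)\lambda_k)}$. The Gaussian part contributes $e^{(t^2+2t\beta)\sigma_W^2/2}$, which matches the prefactor of $\phi$ in \eqref{eq:mgfU}.

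\textbf{Passing to $K\to\infty$.} This is the main obstacle. First, the series in $U_\beta$ converges in $L^2$. The summands $\tfrac12\lambda\big(Z_\lambda^2/(1-\beta\lambda)-1\big)$ have mean $\frac{\beta\lambda^2}{2(1-\beta\lambda)}$, which is summable because $\sum\lambda^2<\infty$ by \eqref{eq:eigenvalue_l2_sum} and $1-\beta\lambda$ is bounded below. Their variances are $\lesssim\lambda^2$, also summable. So $U^{(K)}\to U_\beta$ in $L^2$ and almost surely.

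Next, the infinite product in $\phi$ converges. Write $\log$ of each factor as
\[
-\tfrac{t\lambda}{2}-\tfrac12\log\left(1-\tfrac{t\lambda}{1-\beta\lambda}\right).
\]
With $x=\frac{t\lambda}{1-\beta\lambda}$, the term $-\tfrac12\log(1-x)$ equals $\tfrac{x}{2}+O(x^2)$. The linear pieces combine to $\frac{t\beta\lambda^2}{2(1-\beta\lambda)}=O(\lambda^2)$, so the sum is absolutely convergent. Thus $\E e^{tU^{(K)}}\to\phi(t)$.

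It remains to show $\E e^{tU^{(K)}}\to\E e^{tU_\beta}$. Pick $t'$ with $|t|<|t'|<\tau$ and the same sign as $t$. The computation above gives $\sup_K\E e^{t'U^{(K)}}<\infty$, since it converges to $\phi(t')$. Hence $e^{tU^{(K)}}$ is uniformly integrable, being bounded in $L^{t'/t}$. Combined with almost sure convergence $U^{(K)}\to U_\beta$, Vitali's theorem yields $\E e^{tU^{(K)}}\to\E e^{tU_\beta}$. (Fatou alone gives $\E e^{tU_\beta}\le\phi(t)<\infty$ but not equality.) Therefore $\E e^{tU_\beta}=\phi(t)$ for all $|t|<\tau$.
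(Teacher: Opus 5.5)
Your proof is correct, but it takes a different route from the paper's. The paper works with cumulant power series. It quotes \cite[Proposition 7.1]{bhattacharya2017universal} to write the log-MGF of the weighted $\chi^2$ series as $\frac12\sum_{\lambda}\sum_{s\ge 2}\frac{\lambda^s}{(1-\beta\lambda)^s}\frac{t^s}{s}$ for small $|t|$. It then uses Fubini, justified by Lemma \ref{lemma:eigenvalue_summability}, to rewrite $\log \E[e^{tU_\beta}]$ as $\sum_{s\ge 1}a_s(\beta,W)t^s$ with the explicit coefficients in \eqref{eq:coefficientW}. Finally it expands $\log\phi(t)$ term by term and matches coefficients.

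You instead compute the MGF exactly for the finite truncations $U^{(K)}$, using the one-dimensional $\chi^2_1$ MGF, and then pass to the limit. The limit step has three pieces: almost sure convergence of the independent series (summable means and variances), convergence of the infinite product in $\phi$ (the logarithm of each factor is $O(\lambda^2)$), and uniform integrability of $e^{tU^{(K)}}$ from $L^{t'/t}$-boundedness, closed by Vitali. All of these steps check out.

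Your route is more elementary and self-contained. It makes explicit the limiting step that the paper delegates to the cited proposition. It also gives the identity on the whole range $|t|<1/\|W\|_{\mathrm{op}}-\beta$, rather than on a small neighborhood fixed by radii of convergence.

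The paper's route has a different payoff: it produces the cumulant coefficients $a_s(\beta,W)$. These are reused in the proof of Theorem \ref{thm:mb}, where the conditional log-MGF $\sum_s a_s(\beta,G_N)t^s$ of the bootstrap statistic is shown to converge coefficientwise to $\sum_s a_s(\beta,W)t^s = \log \E[e^{tU_\beta}]$. With your argument, that power-series expansion of $\log\phi$ would still have to be carried out separately there.

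A small wording point: your condition on negative eigenvalues is oddly phrased. The clean justification of $1-(\beta+t)\lambda>0$ is simply $|(\beta+t)\lambda|\le(\beta+|t|)\|W\|_{\mathrm{op}}<1$, which your choice $\tau<1/\|W\|_{\mathrm{op}}-\beta$ already guarantees.
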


\begin{proof} 
Denote $M_{\beta}(t) := \E\left[ e^{tU_\beta} \right]$. Then,
\begin{align}\label{eq:logMGFUbeta}
    \log M_\beta(t) 
    & = t\left[\beta\sigma_W^2 + \frac{1}{2}\int_0^1 W(x,x)\mathrm{d} x + \frac{\beta}{2}\sum_{\lambda \in \mathrm{Spec}(W)}\frac{\lambda^2}{1-\beta\lambda}\right] + \frac{\sigma_W^2t^2}{2}\nonumber\\
    & \hspace{150pt}+ \log\E\left[ e^{ \textstyle \frac{t}{2}\sum_{\lambda \in \mathrm{Spec}(W)} \frac{\lambda}{1-\beta\lambda}(Z_\lambda^2-1) } \right].
\end{align}
The above expression is well defined since $\sum_{\lambda \in \mathrm{Spec}(W) }\lambda ^2 = \|W\|_2^2<\infty$ and $(1-\beta\lambda )^{-1}\leq (1-\beta\|W\|_{\mathrm{op}})^{-1}<\infty$, for $\lambda \in \mathrm{Spec}(W)$. 
Then by \cite[Proposition 7.1]{bhattacharya2017universal} we know,
\begin{align*} 
\log\E\left[ e^{ \textstyle \frac{t}{2}\sum_{\lambda \in \mathrm{Spec}(W)}\frac{\lambda }{1-\beta\lambda }(Z_\lambda^2-1) } \right]  = \frac{1}{2}\sum_{\lambda \in \mathrm{Spec}(W)}\sum_{s=2}^{\infty} \frac{\lambda ^s}{(1-\beta\lambda )^s} \frac{t^s}{s}, \text{ for all }|t|< \frac{1}{8\|W\|_2}.
\end{align*}
Now, using the absolute convergence from Lemma \ref{lemma:eigenvalue_summability} (a) and Fubini's theorem gives,
\begin{align}
& \log\E\left[ e^{ \textstyle \frac{t}{2}\sum_{\lambda \in \mathrm{Spec}(W)}\frac{\lambda }{1-\beta\lambda }(Z_\lambda^2-1) } \right]  \nonumber \\ 
& = \frac{1}{2}\sum_{s=2}^{\infty}\sum_{\lambda \in \mathrm{Spec}(W)} \frac{\lambda ^s}{(1-\beta\lambda )^s} \frac{t^s}{s} \nonumber\\
& = \frac{t^2}{4}\sum_{\lambda \in \mathrm{Spec}(W)} \frac{\lambda ^2}{(1-\beta\lambda )^2} + \frac{1}{2}\sum_{s=3}^{\infty}\sum_{\lambda \in \mathrm{Spec}(W)} \frac{\lambda ^s}{(1-\beta\lambda )^s} \frac{t^s}{s} . 
    \label{eq:logMGFinfchisq}
\end{align}
Moreover, once again using the relation $\sum_{\lambda \in \mathrm{Spec}(W)}\lambda ^2 = \|W\|_2^2$ gives,
\begin{align}\label{eq:sigmaWalternate}
    \sigma_W^2 = \frac{1}{2\theta}\int_{[0, 1]^2} W(x,y)(1-\theta W(x,y))\mathrm{d} x\mathrm{d} y = \frac{1}{2\theta}\int_{[0, 1]^2} W(x,y)\mathrm{d} x\mathrm{d} y - \frac{1}{2}\sum_{\lambda \in \mathrm{Spec}(W)}\lambda ^2.
\end{align}
Now, substituting expressions from \eqref{eq:logMGFinfchisq} and \eqref{eq:sigmaWalternate} in \eqref{eq:logMGFUbeta} gives,
\begin{align*}
    \log M_\beta(t)  & = \sum_{s=1}^\infty a_s(\beta, W) t^s , 
\end{align*}
where 
\begin{align}\label{eq:coefficientW} 
\begin{aligned}
a_1(\beta, W) & := \frac{\beta}{2\theta}\int_{[0, 1]^2} W(x,y)\mathrm{d} x\mathrm{d} y + \frac{1}{2}\int_0^1 W(x,x)\mathrm{d} x + \frac{\beta^2}{2}\sum_{\lambda \in \mathrm{Spec}(W)}\frac{\lambda ^3}{1-\beta\lambda } , \\ 
a_2(\beta, W) & := \frac{1}{4\theta}\int_{[0, 1]^2} W(x,y)\mathrm{d} x\mathrm{d} y + \frac{\beta}{2}\sum_{\lambda \in \mathrm{Spec}(W)}\frac{\lambda ^3}{(1-\beta\lambda )^2} - \frac{\beta^2}{4}\sum_{\lambda \in \mathrm{Spec}(W)}\frac{\lambda ^4}{(1-\beta\lambda )^2} , 
\end{aligned}
\end{align} 
and $a_s(\beta, W) = \frac{1}{2 s} \sum_{\lambda \in \mathrm{Spec}(W)} \frac{\lambda ^s}{(1-\beta\lambda )^s}$, for $s \geq 3$. To complete the proof, recall the definition of $\phi(t)$ from \eqref{eq:mgfU} and note that it is well defined by Lemma \ref{lemma:eigenvalue_summability} (b). Moreover, applying Lemma \ref{lemma:eigenvalue_summability} (b) there exists $\tau>0$ such that we can take logarithm on both sides of \eqref{eq:mgfU} to get
\begin{align*}
    \log\phi(t) = \frac{t^2}{2}\sigma_W^2 + t\beta\sigma_W^2 + \frac{t}{2}\int W(x,x)\mathrm{d}x + \sum_{\lambda\in \mathrm{Spec}(W)} \left[-\frac{t\lambda}{2} - \frac{1}{2}\log \left(1-\frac{t\lambda}{1-\beta\lambda}\right)\right] , 
\end{align*}
for all $|t|<\tau$. From Assumption \ref{assumption} recall that $\beta\|W\|_{\rm op}<1$ and,  hence, we can expand the logarithm to get,
\begin{align*}
    & \sum_{\lambda\in \mathrm{Spec}(W)}\frac{t\lambda}{2} + \frac{1}{2}\log \left(1-\frac{t\lambda}{1-\beta\lambda}\right)  \nonumber \\
    & = \sum_{\lambda\in \mathrm{Spec}(W)} \frac{t\lambda}{2} - \frac{1}{2}\sum_{s=1}^{\infty}\frac{1}{s}\left(\frac{t\lambda}{1-\beta\lambda}\right)^s\\
    & = \sum_{\lambda\in \mathrm{Spec}(W)}\frac{t\lambda}{2} - \frac{1}{2}\frac{t\lambda}{1-\beta\lambda} - \frac{1}{4}\left(\frac{t\lambda}{1-\beta\lambda}\right)^2 - \sum_{s=3}^{\infty}\frac{1}{2s}\left(\frac{t\lambda}{1-\beta\lambda}\right)^s\\
    & = \sum_{\lambda\in \mathrm{Spec}(W)}-\frac{\beta t\lambda^2}{2(1-\beta\lambda)} - \frac{1}{4}\left(\frac{t\lambda}{1-\beta\lambda}\right)^2 - \sum_{s=3}^{\infty}\frac{1}{2s}\left(\frac{t\lambda}{1-\beta\lambda}\right)^s.
\end{align*}
Now, once again applying Fubini's theorem (similar to arguments in the proof of Lemma \ref{lemma:eigenvalue_summability} (a)), using the expansion from \eqref{eq:sigmaWalternate} and collecting the coefficients of $t^s$, for $s\geq 1$, shows that $\log M_\beta(t) = \log \phi(t)$ for all $|t|<\tau$. 
\end{proof}

\section{Proofs from Section \ref{sec:estimation}}

We begin with the proof of Theorem \ref{mle} in Appendix \ref{sec:mlepr8}. Theorem \ref{clttilde} is proved in Appendix \ref{sec:clttildepf}.

\subsection{Proof of Theorem \ref{mle}}\label{sec:mlepr8}

Recall from \eqref{eq:mleeq7}, that the ML estimate $\hat{\beta}_N$ satisfies the equation 
\begin{align}\label{eq:mle_eq7}
    H_N(\bs)= \E_{\hat{\beta}_N} \left[H_N(\bs)\mid A_{G_N}\right] = - \psi_N'(\hat{\beta}_N) ,  
\end{align}
if it exists, where $\psi_N(\beta) = \log Z_N (\beta)$. Otherwise, $\hat{\beta}_N$ is defined to be $+\infty$. We begin by determining when the ML estimate is finite.

\begin{lemma} Suppose $\theta_N\to \theta\in [0,1]$. Then the following hold: 
\begin{enumerate}
            \item[$(1)$] (Dense regime) If $\theta \in (0,1]$, 
            $$\lim_{N \rightarrow \infty} \P(\hat{\beta}_N < \infty ) = \P\left( V_{\beta} > 0 \right) \in (0, 1) , $$
            where $V_{\beta}$ is defined in \eqref{eq:Vbeta}. 
            
             \item[$(2)$] (Sparse regime) If $\theta = 0$, 
            $$\lim_{N \rightarrow \infty} \P(\hat{\beta}_N < \infty ) = 1.$$
\end{enumerate}
\label{lem:mlesolution}
\end{lemma}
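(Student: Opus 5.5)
The plan is to reduce finiteness of $\hat\beta_N$ to a sign condition on the derivative of $\ell_N(b)=bH_N(\bs)+\psi_N(b)$ at the left endpoint, and then read off the limit from Theorem \ref{thm:HNsigma}. By \eqref{eq:HNderivative}, $\ell_N$ is strictly convex on $[0,\infty)$. Hence $\ell_N$ has a minimizer in $(0,\infty)$ if and only if both of the following hold:
\begin{itemize}
\item[(a)] $\ell_N'(0^+)<0$;
\item[(b)] $\lim_{b\to\infty}\ell_N'(b)>0$.
\end{itemize}

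\textbf{Step 1: condition (b) holds with high probability.} As $b\to\infty$, $\psi_N'(b)=-\E_b[H_N|A_{G_N}]$ increases to $-\min_{\bs'}H_N(\bs')$. Therefore $\lim_{b\to\infty}\ell_N'(b)=H_N(\bs)-\min H_N\ge 0$, with equality only when $\bs$ is a ground state of $H_N$. I will argue that, in the subcritical regime, $\P_\beta(\bs \text{ is a ground state})\to 0$. This follows because $-H_N(\bs)$ is tight by Theorem \ref{thm:HNsigma}, while $-\min H_N\ge -H_N(\mathbf 1)=\eta_N/(2N\theta_N)\asymp N\to\infty$ with high probability. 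So condition (b) holds with probability tending to one.

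\textbf{Step 2: rewrite condition (a).} Since $\ell_N'(0)=H_N(\bs)-\E_0[H_N|A_{G_N}]$ and $\E_0[H_N|A_{G_N}]=-\vartheta_N/(2N\theta_N)$ (only the diagonal survives under $\mu_N$), we get
$$\{\hat\beta_N<\infty\}=\Big\{-H_N(\bs)>\tfrac{\vartheta_N}{2N\theta_N}\Big\}\ \text{ up to an event of vanishing probability.}$$
For the diagonal term, $\E\,\vartheta_N=\theta_N\sum_i W(\tfrac iN,\tfrac iN)$ and $\mathrm{Var}\,\vartheta_N\le \theta_N N$. Using the Riemann integrability of $\mathrm{diag}_W$, this gives $\vartheta_N/(2N\theta_N)\pto \tfrac12\int_0^1W(x,x)\,\mathrm dx$, since $N\theta_N\to\infty$.

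\textbf{Step 3: dense case $\theta>0$.} Theorem \ref{thm:HNsigma}(2) gives $-H_N(\bs)\dto U_\beta=\tfrac12\int_0^1W(x,x)\,\mathrm dx+V_\beta$. By Slutsky, $-H_N(\bs)-\vartheta_N/(2N\theta_N)\dto V_\beta$. It then remains to check that $\P(V_\beta=0)=0$, so that the Portmanteau theorem applies.
\begin{itemize}
\item If $\sigma_W>0$, $V_\beta$ has a Gaussian component independent of the rest.
\item Otherwise, $\mathrm{Spec}(W)$ is nonempty because $\int W>0$, and a nondegenerate $\chi^2_1$ component makes the law continuous.
\end{itemize}
To show $\P(V_\beta>0)\in(0,1)$, I plan to use this same component. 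If $\sigma_W>0$, the independent Gaussian $\sigma_WZ_0$ puts positive mass on both sides of any level. If $\sigma_W=0$, then $\theta=1$ and $W\in\{0,1\}$ a.e., so $\sum\lambda^2=\int W>0$. Taking $\lambda_1=\|W\|_{\mathrm{op}}>0$, the $\lambda_1 Z_{\lambda_1}^2$ term can be made arbitrarily large, giving positive mass to $\{V_\beta>0\}$. Sending all $Z_\lambda$ near $0$ gives $V_\beta\approx -\tfrac12\sum\lambda$. If this limit is not negative, I expect to need a short extra argument combining negative eigenvalues with the centering.

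\textbf{Step 4: sparse case $\theta=0$.} Multiply the event by $\theta_N$. By Theorem \ref{thm:HNsigma}(1), $-\theta_NH_N(\bs)\pto\tfrac\beta2\int W>0$. Meanwhile $\theta_N\vartheta_N/(2N\theta_N)=\vartheta_N/(2N)\to 0$ in probability. Hence the probability of the event tends to $1$.

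The main obstacle is the dense-case claim that $\P(V_\beta>0)<1$ when $\sigma_W=0$, which relies on a careful look at the eigenvalue series. The remaining steps are routine.
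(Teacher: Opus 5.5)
Your argument follows the paper's proof. Both characterize $\{\hat\beta_N<\infty\}$ as $\psi_N'(0)<-H_N(\bs)<\lim_{b\to\infty}\psi_N'(b)=\eta_N/(2N\theta_N)$ and discard the event that $\bs$ is a ground state. Both then combine Theorem~\ref{thm:HNsigma} with $\vartheta_N/(2N\theta_N)\to\frac12\int_0^1W(x,x)\,\mathrm dx$ via Slutsky.

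There is one slip in Step~1. In the sparse regime $-H_N(\bs)$ is \emph{not} tight: it is of order $\theta_N^{-1}\to\infty$. What you need, and what Theorem~\ref{thm:HNsigma}(1) gives, is $-H_N(\bs)=O_P(\theta_N^{-1})=o_P(N)$, using $N\theta_N\to\infty$. This is how the paper argues, via $\frac1N H_N(\bs)\to0$.

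On the claim $\mathbb{P}(V_\beta>0)\in(0,1)$: the paper's proof never justifies it, nor $\mathbb{P}(V_\beta=0)=0$, which the Portmanteau step tacitly needs. Your Step~3 therefore goes beyond the paper. Your continuity argument and the $\mathbb{P}(V_\beta>0)>0$ argument are fine.

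Your sketch for $\mathbb{P}(V_\beta<0)>0$ when $\sigma_W=0$ does not work as written. The series $\sum_\lambda\lambda$ need not converge (only $\sum_\lambda\lambda^2<\infty$), and you cannot force infinitely many $Z_\lambda$ to be near $0$ with positive probability. Truncate instead:
\begin{itemize}
\item Order the eigenvalues by modulus and write $V_\beta=H_n+T_n$, where $H_n$ collects the first $n$ terms. Then $T_n$ is independent of $H_n$, with $\mathbb{E}T_n=\frac\beta2\sum_{k>n}\frac{\lambda_k^2}{1-\beta\lambda_k}$ and $\mathrm{Var}(T_n)=\frac12\sum_{k>n}\frac{\lambda_k^2}{(1-\beta\lambda_k)^2}$. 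Both tend to $0$, so $\mathbb{P}(T_n<\frac14\|W\|_{\mathrm{op}})\to1$.
\item Consider the positive-probability event where $Z_{\lambda_k}^2<\varepsilon$ for each $\lambda_k>0$ and $Z_{\lambda_k}^2>1-\beta\lambda_k$ for each $\lambda_k<0$, with $k\le n$. On it, every negative-eigenvalue term of $H_n$ is $\le0$.
\item On the same event, the positive-eigenvalue terms sum to at most $-\frac12(1-\varepsilon C_0)\|W\|_{\mathrm{op}}$, where $C_0=(1-\beta\|W\|_{\mathrm{op}})^{-1}$. This uses that $\|W\|_{\mathrm{op}}$ is among the first $n$ eigenvalues for $n$ large.
\item Taking $\varepsilon C_0\le\frac12$ and $n$ large, independence gives $\mathbb{P}(V_\beta<0)>0$.
\end{itemize}
No separate case analysis on negative eigenvalues or on the sign of $\sum_\lambda\lambda$ is needed.
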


\begin{proof} 
To begin with, define  $ \sG_N:= \{\bt \in \{-1, 1\}^N: -H_N(\bt) = \frac{1}{2N\theta_N} \eta_N \}$. Note that $|H_N(\bs)| \le \frac{1}{2N\theta_N} \eta_N$. Now,
$\psi_N'(0) =  \frac{1}{2N\theta_N}\sum_{i=1}^N A_{G_N} (i, i) =  \frac{1}{2N\theta_N} \vartheta_N$ (recall \eqref{eq:ANij}) and 
we also have:
\begin{align*}
    \lim_{\beta\to \infty} \psi_N'(\beta) &= -\lim_{\beta \rightarrow\infty} \frac{\sum_{\bs \in \{-1,1\}^N} H_N(\bs) e^{-\beta H_N(\bs)}}{\sum_{\bs \in \{-1,1\}^N} e^{-\beta H_N(\bs)}}\\
&= -\lim_{\beta \rightarrow\infty} \frac{\sum_{\bs \in \{-1,1\}^N} H_N(\bs) e^{-\beta H_N(\bs) -\frac{\beta}{2N\theta_N} \eta_N}}{\sum_{\bs \in \{-1,1\}^N} e^{-\beta H_N(\bs) -\frac{\beta}{2N\theta_N} \eta_N}}\\
&= -\lim_{\beta \rightarrow\infty} \frac{-\frac{|\sG_N|\eta_N}{2N\theta_N} + \sum_{\bs \notin \sG_N} H_N(\bs) e^{-\beta H_N(\bs) -\frac{\beta}{2N\theta_N} \eta_N}}{|\sG_N| + \sum_{\bs \notin \sG_N} e^{-\beta H_N(\bs) -\frac{\beta}{2N\theta_N} \eta_N}} \\
    &= \frac{1}{2N\theta_N}\eta_N.
\end{align*} 
The monotonicity and continuity of $\psi_N'$ now imply that the MLE equation \eqref{eq:mle_eq7} has a solution if and only if: $$-H_N(\bs) \in \left[ \frac{1}{2N\theta_N}\vartheta_N , \frac{1}{2N\theta_N}\eta_N\right).$$
We now consider the two cases separately: 

\begin{enumerate}

\item[$(1)$] $\theta \in (0,1]$:  Note that
   \begin{align}\label{eq:HNprobability}
        \P(\mathscr{G}_N) &= \P\left(-\frac{1}{N\theta_N} H_N(\bs)=\frac{1}{2N^2\theta_N^2}\eta_N\right) \nonumber \\
        &= \P\left(\frac{1}{N\theta_N} H_N(\bs)+\frac{1}{2N^2\theta_N^2}\eta_N=0\right) \ll 1 , 
    \end{align}
    where the last step uses the following facts: $\frac{1}{N\theta_N} H_N(\bs) \xrightarrow{P} 0$ (recall \eqref{eq:HNdense}) and $$\frac{1}{N^2\theta_N^2}\eta_N \xrightarrow{P} \frac{1}{\theta}\int_{[0, 1]^2} W(x,y) \mathrm{d}x\mathrm{d}y > 0.$$ 
Also, note that $\frac{1}{2N\theta_N} \vartheta_N \xrightarrow{L^2} \frac{1}{2}\int_0^1 W(x,x) \mathrm{d}x$,
and from \eqref{eq:HNdense}, 
$$-H_N(\bs) \dto \frac{1}{2}\int_0^1 W(x,x) \mathrm{d}x + V_{\beta}.$$ Hence, 
\begin{align*}
\P(\hat{\beta}_N < \infty ) & = \P\left( -H_N(\bs) \geq \frac{1}{2N\theta_N}\vartheta_N  \text{ and } \bs \notin \mathscr{G}_N \right) 
\rightarrow \P\left( V_{\beta} > 0 \right) , 
\end{align*}
 by \eqref{eq:HNprobability} and \eqref{eq:HNdense}.

 \item[$(2)$] $\theta = 0$: In this case,  $-\theta_N H_N(\bs) \xrightarrow{P} \frac{\beta}{2}\int_{[0,1]^2} W(x,y) \mathrm{d}x  \mathrm{d}y$  (recall \eqref{eq:HNsparse}) and hence, $-H_N(\bs) \xrightarrow{P} \infty$ and $\frac{1}{N} H_N(\bs) \xrightarrow{P} 0$. Also, $\frac{1}{N^2\theta_N}\eta_N \xrightarrow{P} \int_{[0,1]^2} W(x,y) \mathrm{d}x  \mathrm{d}y > 0$. Hence, by \eqref{eq:HNprobability}, $\P_{\beta}(\mathscr{G}_N) \ll 1$. Also, as in the previous case, note that $\frac{1}{2N\theta_N} \vartheta_N \xrightarrow{L^2} \frac{1}{2}\int_0^1 W(x,x) \mathrm{d}x. $
Hence, 
\begin{align*}
\P(\hat{\beta}_N < \infty ) & = \P\left( -H_N(\bs) \geq \frac{1}{2N\theta_N}\vartheta_N  \text{ and } \bs \notin \mathscr{G}_N \right) \rightarrow 1 . 
\end{align*} 
 \end{enumerate}
This completes the proof of \eqref{lem:mlesolution}. 
\end{proof}

The above lemma shows that in the sparse regime the ML estimate is finite with probability tending to 1. On the other hand, in the dense regime there is a positive probability that the ML estimate is infinite. With this result, we now proceed with the proof of Theorem \ref{mle}. 

\subsubsection{Proof of Theorem \ref{mle} $(1)$}
Note that for any $t \in \R$,
\begin{align}\label{eq:mlesparse}
\P_\beta\left(\frac{1}{\sqrt{\theta_N}}\left(\hat{\beta}_N-\beta\right)\leq t \right) 
&=  \P_\beta\left(\hat{\beta}_N\leq t \sqrt{\theta_N}+\beta\right) \nonumber \\ 
&= \P_\beta\left( \psi_N'(\hat{\beta}_N) \leq \psi_N'\left(t \sqrt{\theta_N}+\beta\right)\right)                                                                                                                                      \nonumber \\ 
    &= \P_\beta\left(H_N(\bs)\geq \E_{\beta+t\sqrt{\theta_N}}\left[H_N(\bs) | A_{G_N} \right]\right)                                                                                                                                                                                                                                                                 \nonumber \\ 
        &= \P_\beta\left( T_N \geq \E_{\beta+t\sqrt{\theta_N}}\left[ T_N | A_{G_N} \right]\right)   , 
    \end{align}
    where 
    \begin{align}\label{eq:TN}
    T_N := \sqrt{\theta_N}\left(\frac{1}{2N\theta_N}\bs^\top A_{G_N} \bs-\frac{\beta}{2N^2\theta_N}\sum_{1 \leq i,j \leq N}W\left(\frac{i}{N},\frac{j}{N}\right) \right) . 
    \end{align}
   The next lemma establishes the distributional convergence of $T_N$ and its conditional mean and variance.

\begin{ppn}\label{mlelemd4y} 
Fix $0<\beta<\frac{1}{\|W\|_{\mathrm{op}}}$ and $t \in \R$, and suppose $\bs \sim \P_{\beta+t\sqrt{\theta_N}}$. Then, in the regime $N^{-\frac{2}{3}} \lesssim \theta_N \ll 1$ and for $T_N$ as defined in \eqref{eq:TN}, 
\begin{align}\label{eq:TNdistribution}
T_N \xrightarrow{D} \cN\left(\frac{t}{2}\int_{[0,1]^2} W(x,y)\mathrm{d}x\mathrm{d}y, \frac{1}{2} \int_{[0,1]^2} W(x,y)\mathrm{d}x\mathrm{d}y\right). 
\end{align} 
Further, 
\begin{align*}
\E\left[ T_N |A_{G_N}\right]\xrightarrow{P} \frac{t}{2}\int_{[0,1]^2} W(x,y)\mathrm{d}x\mathrm{d}y \quad \text{ and } \quad 
\mathrm{Var}\left[ T_N |A_{G_N}\right] \xrightarrow{P}  \frac{1}{2}\int_{[0,1]^2} W(x,y)\mathrm{d}x\mathrm{d}y,
\end{align*} 
and for every subsequence $\{N_s\}_{s \ge 1}$ of the natural numbers, there is a further subsequence $\{N_{s_a}\}_{a \ge 1}$ and a probability $1$ set $\mathcal{A} \in \sigma(\{A_{G_N}\}_{N\ge 1})$, on which the conditional distribution of $T_{N_{s_a}}$ given $A_{G_{N_{s_a}}}$ converges weakly to the RHS of \eqref{eq:TNdistribution}. 
\end{ppn}

Proposition \ref{mlelemd4y} is proved in Appendix \ref{sec:mlelemd4ypf}. Applying this result in \eqref{eq:mlesparse} gives 
    \begin{align*}
\P_\beta\left(\frac{1}{\sqrt{\theta_N}}\left(\hat{\beta}_N-\beta\right)\leq t \right) &= \P_\beta\left( T_N \geq \E_{\beta+t\sqrt{\theta_N}}\left[ T_N | A_{G_N} \right]\right)   \\  
    &\to\P_\beta\left(\cN\left(0 , ~ \frac{1}{2}\int_{[0, 1]^2} W(x,y)\mathrm{d}x\mathrm{d}y\right) \geq  \frac{t}{2}\int_{[0,1]^2} W(x,y)\mathrm{d}x\mathrm{d}y \right)\\
    &=\P\left(\cN\left(0,\frac{2}{\int_{[0, 1]^2} W(x,y)\mathrm{d}x\mathrm{d}y}\right)\leq t \right) . 
\end{align*}
This completes the proof of Theorem \ref{mle} (1).  \hfill $\Box$

\subsubsection{Proof of Theorem \ref{mle} $(2)$}

Recall from the proof of Theorem \ref{thm:HNsigma}, that for every $t\in \mathbb{R}$,
\begin{align*}
    \E_\beta\left[e^{ -tH_N(\bs)}|A_{G_N}\right] \xrightarrow{P} \E\left[e^{ tU_\beta} \right],
\end{align*}
where 
\begin{align*}
    U_\beta & := \frac{1}{2}\int_0^1 W(x,x) \mathrm{d}x + V_{\beta} \nonumber \\ 
    & = \beta \sigma_W^2 + \frac{1}{2}\int_{0}^{1}W(x,x)\mathrm{d}x + \sigma_WZ_0+\frac{1}{2} \sum_{\lambda \in \mathrm{Spec}(W)} \lambda \left(\frac{Z_{\lambda}^2}{1-\beta \lambda}-1\right) . 
\end{align*}
This implies, by Lemma \ref{l2ratio762}, 
\begin{align}\label{mgfsubseq34}
    \psi_N'(\beta) = -\E_\beta\left[\left.H_N(\bs)\right|A_{G_N}\right]\xrightarrow{P} \E[U_\beta] = \frac{1}{2}\int_0^1 W(x,x) \mathrm{d}x + \E[ V_{\beta} ] , 
\end{align} 
for $0<\beta<\frac{1}{\|W\|_{\mathrm{op}}}$. 

We now proceed to compute the limit of $\P_\beta(\hat{\beta}_N>t)$, for $t \in \R$. To begin with, note that $\hat{\beta}_N$ is supported on $(0,\infty]$, hence, 
\begin{align}\label{eq:betamle1} 
\P_\beta(\hat{\beta}_N>t)=1 , 
\end{align}
for all $t\le 0$. Next, suppose $0<t<\frac{1}{\|W\|_{\mathrm{op}}}$. Since the function $\psi_N'(\beta)$ is strictly increasing,  
   \begin{align}
        \P_\beta(\hat{\beta}_N>t) &= \P(\hat{\beta}_N=+\infty) +  \P_\beta ( \psi_N'(\hat{\beta}_N)> \psi_N'(t)) + o(1) \nonumber \\
        & = \P(\hat{\beta}_N=+\infty) +  \P_\beta ( - H_N(\bs) > \psi_N'(t)) + o(1)\nonumber \\ 
        &\to \P(V_\beta<0)+ \P_\beta(V_\beta>\E V_t) \tag*{(by \eqref{eq:HNdense} and \eqref{mgfsubseq34})} \nonumber \\
        &= \P(V_\beta<0) + \P_\beta(V_{\beta} > F(t)) \nonumber \\ 
        & = \P(V_\beta<0) + \P_\beta(F^{-1}(V_{\beta} \bm{1}\{V_\beta >0\})> t)  , 
        \label{eq:betamle2} 
    \end{align}
 where the last line follows from the fact that $F(t) > 0$ and is increasing for $0<t<\frac{1}{\|W\|_{\mathrm{op}}}$. 
    
    Now, suppose $t\geq \frac{1}{\|W\|_{\mathrm{op}}}$. Then for any $\delta >0$ small enough,     \begin{align*}
        \P_\beta(\hat{\beta}_N>t) &=   \P_\beta(-H_N(\bs)> \psi_N'(t) ) + \P(\hat{\beta}_N=\infty) \\ 
         & \leq \P_\beta\left (-H_N(\bs)> \psi_N'\left(\frac{1}{\|W\|_{\mathrm{op}}}-\delta\right) \right) + \P(\hat{\beta}_N=\infty) \\ 
        & \to \P_\beta\left(V_\beta^+> F\left(\frac{1}{\|W\|_{\mathrm{op}}}-\delta\right)\right)+\P(V_\beta<0)  , 
    \end{align*} 
    by \eqref{eq:HNdense} and \eqref{mgfsubseq34}. Hence,
    \begin{align*}
        \limsup_{N\to \infty} \P_\beta(\hat{\beta}_N>t) \le \P_\beta\left(V_\beta^+ > F\left(\frac{1}{\|W\|_{\mathrm{op}}}-\delta\right)\right)+\P(V_\beta<0).
    \end{align*}
As $\delta \ll 1$, $F(\frac{1}{\|W\|_{\mathrm{op}}}-\delta) \to \infty$ and, hence, $ \limsup_{N\to \infty} \P_\beta(\hat{\beta}_N>t)\leq\P(V_\beta<0)$.  Also, $\P_\beta(\hat{\beta}_N>t) \geq \P(\hat{\beta}_N=\infty) \rightarrow  \P(V_\beta<0)$.  Hence, for $t\geq \frac{1}{\|W\|_{\mathrm{op}}}$, 
    \begin{align}
        \P_\beta(\hat{\beta}_N>t) \rightarrow  \P(V_\beta<0). 
        \label{eq:betamle3} 
    \end{align}
    Combining \eqref{eq:betamle1}, \eqref{eq:betamle2}, and \eqref{eq:betamle3} gives, 
    \begin{align*}
        \lim_{N\to \infty} \P_\beta(\hat{\beta}_N>t) = \begin{cases}
            1                                                       & \text{ if } t\leq 0 , \\
            \P_\beta(F^{-1}(V_\beta^+)>t)+\P(V_\beta<0) & \text{ if } 0<t<
            \frac{1}{\|W\|_{\mathrm{op}}}       ,                                                         \\
            \P(V_\beta<0)                                           & \text{ if } t\geq
            \frac{1}{\|W\|_{\mathrm{op}}} . 
        \end{cases}
    \end{align*}    
This completes the proof of Theorem \ref{mle} (2).   \hfill $\Box$

\subsubsection{Proof of Proposition \ref{mlelemd4y}}
\label{sec:mlelemd4ypf}

    Let $\beta_N=\beta + t \sqrt{\theta_N}$. Then, by \eqref{eq:Z_Nratio}, for $s$ with small enough absolute value, we have:
    \begin{align*}
        \frac{Z_N(\beta_N+ s)}{Z_N(\beta_N)} = \left(1 + o_{P}\left(1\right)\right)\frac{\E\hat{Z}_N(\beta_N+ s)}{\E\hat{Z}_N(\beta_N)} e^{ \textstyle \frac{s}{2N}\sum_{i=1}^{N}W\left(\frac{i}{N},\frac{i}{N}\right) + \frac{s^2+2\beta_N s}{4N^2\theta_N}\sum_{1 \leq i,j \leq N}W\left(\frac{i}{N},\frac{j}{N}\right) } . 
    \end{align*} 
    Next, define $$T_N' := \sqrt{\theta_N} \left(-H_N(\bs)-\frac{\beta_N}{2N^2\theta_N}\sum_{1 \leq i,j \leq N}W\left(\frac{i}{N},\frac{j}{N}\right) \right).$$ Then,  
    \begin{align*}
        &\E_{\beta_N} \left[e^{ s T_N'}\Big| A_{G_N}\right]\\ 
        &=\left(1 + o_{P}\left(1\right)\right)\frac{\E\hat{Z}_N(\beta_N+ s\sqrt{\theta_N})}{\E\hat{Z}_N(\beta_N)} e^{ \textstyle \frac{ s \sqrt{\theta_N}}{2N}\sum_{i=1}^{N}W\left(\frac{i}{N},\frac{i}{N}\right) + \frac{s^2}{4N^2}\sum_{1 \leq i,j \leq N}W\left(\frac{i}{N},\frac{j}{N}\right) } . 
    \end{align*}
    By Lemma \ref{thetanperturb6},  
    \begin{align*}
        \frac{\E\hat{Z}_N(\beta_N+ s\sqrt{\theta_N})}{\E\hat{Z}_N(\beta_N)} \to 1.
    \end{align*}
    Also, observe  that 
    \begin{align*}
        e^{ \textstyle \frac{ s \sqrt{\theta_N}}{2N}\sum_{i=1}^{N}W\left(\frac{i}{N},\frac{i}{N}\right) + \frac{s^2}{4N^2}\sum_{1 \leq i,j \leq N}W\left(\frac{i}{N},\frac{j}{N}\right) } \to  e^{ \textstyle  \frac{ s^2}{4} \int_{[0, 1]^2} W(x,y)\mathrm{d}x\mathrm{d}y }  . 
    \end{align*}
   Combining the above gives, 
    \begin{align}\label{eq:TNmgf}
        \E_{\beta_N}\left[e^{ sT_N'}\Big| A_{G_N}\right]\xrightarrow{P} e^{ c s^2},
    \end{align}
    where $c:=\frac{1}{4} \int_{[0, 1]^2} W(x,y)\mathrm{d}x\mathrm{d}y$. Also, recalling \eqref{eq:TN}, observe that
        \begin{align}\label{eq:differenceTN}
        T_N- T_N' & = \frac{t}{2N^2}\sum_{1 \leq i,j \leq N} W\left(\frac{i}{N},\frac{j}{N}\right) \rightarrow \frac{t}{2}\int_{[0, 1]^2} W(x,y) \mathrm{d}x \mathrm{d}y.
    \end{align}
    Combining \eqref{eq:TNmgf} and \eqref{eq:differenceTN} gives, 
    $$\E_{\beta_N}\left[e^{s T_N} | A_{G_N}\right]\xrightarrow{P} e^{2c s t + cs ^2} , $$ 
    for $s$ in a neighborhood of zero. Proposition \ref{mlelemd4y} now follows from Lemma \ref{l2ratio762} and its proof, on noting that the RHS of the above convergence is the moment-generating function of $\cN(2c t ,2c)$. \hfill $\Box$

\subsection{Proof of Theorem \ref{clttilde}} 
\label{sec:clttildepf}

To begin with, define:
\begin{align}\label{eq:dn}
D_N = D_N(\beta) := \frac{1}{\sqrt{\theta_N}}\left(\frac{\beta}{2N^2\theta_N}\sum_{1 \leq i,j \leq N} A_{G_N}(i,j) -  \frac{\beta}{2N^2}\sum_{1 \leq i,j \leq N} W\left(\frac{i}{N}, \frac{j}{N}\right)\right) . 
\end{align}
 By a straightforward algebra, we can re-write
\begin{align}\label{dnexp98}
 D_N = \frac{\beta}{2N\theta_N} & \frac{\sum_{1 \leq i,j \leq N} (A_{G_N}(i,j) - \theta_N W(\frac{i}{N},\frac{j}{N}))}{\sqrt{\theta_N \sum_{1 \leq i,j \leq N}W(\frac{i}{N},\frac{j}{N})(1-\theta_N W(\frac{i}{N},\frac{j}{N}))}} \nonumber \\ 
 & \hspace{0.95in} \sqrt{\frac{\sum_{1 \leq i,j \leq N}W(\frac{i}{N},\frac{j}{N})(1-\theta_N W(\frac{i}{N},\frac{j}{N}))}{N^2}}.
\end{align}
Note that $\frac{\beta}{2N\theta_N}  \ll 1$, since $N\theta_N \gg 1$. By the Lindeberg-Feller CLT, 
$$\frac{\sum_{1 \leq i,j \leq N} (A_{G_N}(i,j) - \theta_N W(\frac{i}{N},\frac{j}{N}))}{\sqrt{\theta_N \sum_{1 \leq i,j \leq N}W(\frac{i}{N},\frac{j}{N})(1-\theta_N W(\frac{i}{N},\frac{j}{N}))}}  \dto \cN(0,2).$$ 
Furthermore, 
$$\sqrt{\frac{\sum_{1 \leq i,j \leq N}W(\frac{i}{N},\frac{j}{N})(1-\theta_N W(\frac{i}{N},\frac{j}{N}))}{N^2}}  \pto \sqrt{\int_{[0,1]^2} W(x,y) \mathrm{d}x \mathrm{d}y}.$$ Combining all these, we get:
\begin{equation}\label{dnequalop1}
  D_N = o_P(1).  
\end{equation}
Therefore, Theorem \ref{thm:HNsigma} (1) gives,  
\begin{align}\label{modthmham8}
     \frac{1}{\sqrt{\theta_N}}\left(\theta_N H_N(\bs)+\frac{\beta}{2N^2\theta_N}\sum_{1 \leq i,j \leq N} A_{G_N}(i,j) \right)\xrightarrow{D}\cN\left(0,\frac{1}{2}\int_{[0,1]^2} W(x,y)\mathrm{d}x\mathrm{d}y\right).
\end{align}
    Hence, plugging the identity (recall \eqref{newestimate}) 
    $$\theta_N H_N(\bs) = -\frac{\wbtu}{2N^2\theta_N}\sum_{1 \leq i,j \leq N} A_{G_N}(i,j)$$ in the LHS of \eqref{modthmham8} gives, 
\begin{align}\label{retiosl78}
    \frac{\sum_{1 \leq i,j \leq N}A_{G_N}(i,j)}{2N^2\theta_N^{\frac{3}{2}}} (\wbtu -\beta) \xrightarrow{D} \cN\left(0,\frac{1}{2}\int_{[0,1]^2} W(x,y)\mathrm{d}x\mathrm{d}y\right).
\end{align}  
Next, note that 
\begin{align*}
    \var\left[\frac{\sum_{1 \leq i,j \leq N} A_{G_N}(i,j)}{\theta_N \sum_{1 \leq i,j \leq N} W(\frac{i}{N}, \frac{j}{N})}\right] = \frac{\sum_{1 \leq i<j \leq N} W(\frac{i}{N}, \frac{j}{N}) (1- \theta_N W(\frac{i}{N}, \frac{j}{N}))}{\theta_N (\sum_{1 \leq i<j \leq N} W(\frac{i}{N}, \frac{j}{N}))^2} = O\left(\frac{1}{N^2\theta_N}\right) \ll 1,
\end{align*}
which implies that 
\begin{align*}
 \frac{\sum_{1 \leq i,j \leq N} A_{G_N}(i,j)}{\theta_N \sum_{1 \leq i,j \leq N} W(\frac{i}{N}, \frac{j}{N})} \xrightarrow{P} 1.  
\end{align*}
This, together with \eqref{retiosl78} and an application of Slutsky's theorem gives, 
$$\frac{\sum_{1 \leq i,j \leq N} W(\frac{i}{N}, \frac{j}{N})}{2N^2\sqrt{\theta_N}}(\wbtu-\beta) \xrightarrow{D} \cN\left(0,\frac{1}{2}\int_{[0,1]^2} W(x,y)\mathrm{d}x\mathrm{d}y\right),$$ and hence,
$$\frac{\int_{[0,1]^2} W(x,y) \mathrm{d}x \mathrm{d}y}{2\sqrt{\theta_N}}(\wbtu-\beta) \xrightarrow{D} \cN\left(0,\frac{1}{2}\int_{[0,1]^2} W(x,y)\mathrm{d}x\mathrm{d}y\right).$$  
   The proof of Theorem \ref{clttilde} is now complete.

\subsection{Proof of Theorem \ref{locminest}}\label{sec:proof_locmintest}

We first prove \eqref{eq:locmin2} in Appendix \ref{sec:locmin2pf}. Then we prove   \eqref{eq:locmin1} in Appendix \ref{sec:locmin1pf}.

\subsubsection{Proof of \eqref{eq:locmin2} } 
\label{sec:locmin2pf}

 To begin with, define:
$$B_N:=\frac{1}{\sqrt{\theta_N}}\left(\theta_N H_N(\bs)+\frac{\beta}{2N^2}\sum_{1 \leq i,j \leq N}W\left(\frac{i}{N},\frac{j}{N}\right)\right) + \frac{h}{2N^2} \sum_{1 \leq i,j \leq N}W\left(\frac{i}{N},\frac{j}{N}\right).$$
Also, recall the definition of $D_N$ from \eqref{eq:dn}:
$$D_N := \frac{1}{\sqrt{\theta_N}}\left(\frac{\beta}{2N^2\theta_N}\sum_{1 \leq i,j \leq N} A_{G_N}(i,j) -  \frac{\beta}{2N^2}\sum_{1 \leq i,j \leq N} W\left(\frac{i}{N}, \frac{j}{N}\right)\right).$$
Finally, for $h \in \R$, define:
$$E_N := \frac{1}{\sqrt{\theta_N}}\left(\frac{h\sqrt{\theta_N}}{2N^2\theta_N}\sum_{1 \leq i,j \leq N} A_{G_N}(i,j) -  \frac{h\sqrt{\theta_N}}{2N^2}\sum_{1 \leq i,j \leq N} W\left(\frac{i}{N}, \frac{j}{N}\right)\right).$$ 
With these notations,  
\begin{align}\label{l2riskalt6}
    &\E_{\beta+h\sqrt{\theta_N}} \left[\left(\frac{1}{\sqrt{\theta_N}}(\wbtu - \beta) -h\right)^2 \Bigg| A_{G_N}\right]\nonumber\\ 
    &= \frac{1}{\theta_N}\E_{\beta+h\sqrt{\theta_N}} \left[\left(\frac{2N^2\theta_N^2}{\sum_{1 \leq i,j \leq N} A_{G_N}(i,j)} H_N(\bs) + \beta +h\sqrt{\theta_N}\right)^2 \Bigg| A_{G_N}\right]\nonumber\\ 
    &= \left(\frac{2N^2\theta_N}{\sum_{1 \leq i,j \leq N}A_{G_N}(i,j)}\right)^2 \E_{\beta+h\sqrt{\theta_N}}\left[( B_N+D_N + E_N)^2 \Bigg| A_{G_N}\right].
\end{align}
By \eqref{dnexp98}, both $D_N$ and $E_N$ are $o_P(1)$. Also, by Proposition \ref{mlelemd4y}, $$\E_{\beta+h\sqrt{\theta_N}} [B_N^2|A_{G_N}] \xrightarrow{P} \frac{1}{2}\int_{[0, 1]^2} W(x,y) \mathrm{d}x \mathrm{d}y,$$ under $\P_{\beta+h\sqrt{\theta_N}}$. Hence,
\begin{align}\label{gden4}
    \E_{\beta+h\sqrt{\theta_N}}\left[( B_N+D_N + E_N)^2 \Bigg| A_{G_N}\right] = \frac{1}{2}\int_{[0, 1]^2} W(x,y) \mathrm{d}x \mathrm{d}y +o_P(1).
\end{align}
Now, define $$F_N := \frac{\sum_{1 \leq i,j \leq N}A_{G_N}(i,j)}{\theta_N\sum_{1 \leq i,j \leq N} W(\frac{i}{N}, \frac{j}{N})}. $$ By Hoeffding's inequality, we have for any $\varepsilon > 0$,
\begin{align*}
\P\left(|F_N-1| >\varepsilon\right) & = \P\left(\left|\sum_{1 \leq i,j \leq N} \left(A_{G_N}(i,j) -\theta_N W\left(\frac{i}{N},\frac{j}{N}\right)\right)\right| > \varepsilon\theta_N \sum_{1 \leq i,j \leq N} W\left(\frac{i}{N},\frac{j}{N}\right)\right) \nonumber \\ 
& \le 2e^{-C \varepsilon^2 (N\theta_N)^2} , 
\end{align*}
for some universal constant $C>0$. This shows that $F_N = 1+ o_P(1)$. Hence,
\begin{align}\label{F5first6}
    \frac{2N^2\theta_N}{\sum_{1 \leq i,j \leq N}A_{G_N}(i,j)} \xrightarrow{P} \frac{2}{\int_{[0,1]^2} W(x,y) \mathrm{d}x \mathrm{d}y} .  
\end{align}
Combining \eqref{l2riskalt6}, \eqref{gden4} and \eqref{F5first6} gives, 
$$\E_{\beta+h\sqrt{\theta_N}} \left[\left(\frac{1}{\sqrt{\theta_N}}(\wbtu - \beta) -h\right)^2 \Bigg| A_{G_N}\right] \xrightarrow{P} \frac{2}{\int_{[0,1]^2} W(x,y) \mathrm{d}x \mathrm{d}y} , 
$$
for any $h \in \R$. Hence, for any finite set $I \subset \R$, 
$$\sup_{h \in I} \E_{\beta+h\sqrt{\theta_N}} \left[\left(\frac{1}{\sqrt{\theta_N}}(\wbtu - \beta) -h\right)^2 \Bigg| A_{G_N}\right] \xrightarrow{P} \frac{2}{\int_{[0,1]^2} W(x,y) \mathrm{d}x \mathrm{d}y} , 
$$
which proves the result in \eqref{eq:locmin2}. \hfill $\Box$

\subsubsection{Proof of \eqref{eq:locmin1}} 
\label{sec:locmin1pf}

To prove this result, we will follow the recipe outlined in the proof of \cite[Theorem 8.11]{Vaart_1998}. Throughout we will assume that $(\bar{\beta}_N - \beta)/\sqrt{\theta_N}$ is uniformly tight under $\p_\beta$ (this assumption suffices, since the  general case can be proved similarly with the help of a compactification tool to induce tightness, as noted in \cite[Page 118]{Vaart_1998} and \cite[Chapter 3.11]{van1996weak}). 
A key ingredient of the proof of \eqref{eq:locmin1} is the convergence of the likelihood ratio $\mathrm{d}\p_{\beta+h\sqrt{\theta_N}}/\mathrm{d}\p_{\beta}$, under $\p_\beta$. This is established in the following proposition, which is proved later in Appendix \ref{sec:limexp1pf}.

\begin{ppn}\label{limexp1} Suppose Assumption \ref{assumption} holds and $\theta_N \ll 1$. Then the following hold: 
    \begin{enumerate}
        \item[$(1)$] (Sparse regime) If $\theta=0$, then for any finite set $I\subset \mathbb{R}$, 
        $$\left(\frac{\mathrm{d} \P_{\beta_0+h\sqrt{\theta_N}}}{\mathrm{d} \P_{\beta_0+ h_0\sqrt{\theta_N}}}(\bs)\right)_{h\in I} \xrightarrow[\bs \sim \P_{\beta_0+ h_0\sqrt{\theta_N}}]{D} \left( e^{ \textstyle (h-h_0) Z + \frac{1}{4}(h_0^2-h^2)\int_{[0,1]^2} W(x,y) \mathrm{d}x \mathrm{d}y } \right)_{h\in I}, $$
    where 
    \begin{align}\label{eq:Zh}
    Z \sim \cN\left(\frac{h_0}{2}\int_{[0,1]^2} W(x,y) \mathrm{d}x \mathrm{d}y, \frac{1}{2}\int_{[0,1]^2} W(x,y) \mathrm{d}x \mathrm{d}y\right). 
    \end{align}
    
    \item[$(2)$] (Dense regime) If $\theta > 0$, then for any finite set $I \subset (0,\frac{1}{\|W\|_{\mathrm{op}}} )$, 
    \begin{align*}
        \left(\frac{\mathrm{d} \P_{\beta}}{\mathrm{d} \P_{\beta_0}} (\bs)\right)_{\beta\in I} \xrightarrow[\bs \sim \P_{\beta_0}]{D} \left( e^{ \textstyle \frac{\beta_0^2-\beta^2}{2}\sigma_W^2 -(\beta_0-\beta) V_{\beta_0} } \prod_{i=1}^\infty e^{ \textstyle \frac{(\beta-\beta_0)\lambda }{2} } \sqrt{\frac{1-\beta\lambda }{1-\beta_0\lambda }}\right)_{\beta\in I} , 
    \end{align*}
   where $V_{\beta_0}$ is as in \eqref{eq:Vbeta} and $\sigma_W^2:=\frac{1}{2\theta} \int_{[0,1]^2} W(x,y)(1-\theta W(x,y))\mathrm{d}x\mathrm{d}y$. 
    \end{enumerate}    
\end{ppn}

\begin{remark} 
Note that Proposition \ref{limexp1} computes the asymptotic distribution of the likelihood ratio in both the sparse and the dense cases. For the proof of Proposition \ref{limexp1}, we only require the result in the sparse case. The dense case will be used later in the proof of Theorem \ref{th:limexp} in Appendix \ref{proof_limexp8}.  
\end{remark}

We now complete the proof of \eqref{eq:locmin1} using Proposition \ref{limexp1}. Towards this, note by Proposition \ref{limexp1} that the likelihood ratio $\mathrm{d}\p_{\beta+h\sqrt{\theta_N}}/\mathrm{d}\p_{\beta}$ is weakly convergent, hence tight under $\p_\beta$. Hence, the random vector
$$\bm V_N := \left(\frac{\bar{\beta}_N-\beta}{\sqrt{\theta_N}}, ~\frac{\mathrm{d}\p_{\beta+h\sqrt{\theta_N}}}{\mathrm{d}\p_{\beta}}\right)$$ is tight under $\p_\beta$. Hence, by Prohorov's theorem, every subsequence of $\N$ has a further subsequence along which 
$\bm V_N$ is weakly convergent under $\P_\beta$. Moreover, by Proposition \ref{limexp1} and Le Cam's first lemma \cite[Lemma 6.4]{Vaart_1998}, $\p_{\beta+h\sqrt{\theta_N}}$ and $\p_{\beta}$ are mutually contiguous. This, coupled with the observation above, shows that along the further subsequence, $(\bar{\beta}_N-\beta-h\sqrt{\theta_N})/\sqrt{\theta_N}$ is weakly convergent under $\P_{\beta+h\sqrt{\theta_N}}$, by Le Cam's third lemma \cite[Theorem 6.6]{Vaart_1998}. Then \cite[Theorem 9.3]{Vaart_1998} and Theorem \ref{th:limexp} imply that this weak limit is of the form $T-h$ for a (possibly randomized) statistic $T$ based on the limiting experiment: $$\cF_h :=\cN\left(h, \frac{2}{\int_{[0,1]^2} W(x,y) \mathrm{d}x \mathrm{d}y} \right).$$ Finally, by \cite[Proposition 8.6]{Vaart_1998}, we have:
\begin{align*}
    \sup_{h\in \mathbb{R}} \e_{\cF_h} \left(T-h\right)^2 \ge \e_{X\sim \cF_0} X^2 = \frac{2}{\int_{[0,1]^2} W(x,y) \mathrm{d}x \mathrm{d}y}.
\end{align*}
Now, following the arguments in the proof of  \cite[Theorem 8.11]{Vaart_1998}, the result in \eqref{eq:locmin1} follows. \hfill $\Box$

\subsubsection{Proof of Proposition \ref{limexp1}} 
\label{sec:limexp1pf}

We begin with the proof of (1). For this, suppose $I = \{h_1,\ldots,h_K\} \subset \R$, for some $K \geq 1$. Fix $\bm \alpha = (\alpha_1,\ldots,\alpha_K) \in \R^K$. Then, by \eqref{ref2707}, 
    \begin{align*}
        \sum_{i=1}^{K} \alpha_i\log \frac{\mathrm{d} \P_{\beta_0+h_i\sqrt{\theta_N}}}{\mathrm{d} \P_{\beta_0+ h_0\sqrt{\theta_N}}}(\bs) & = \sum_{i=1}^{K} \alpha_i(h_0-h_i) \sqrt{\theta_N} H_N(\bs) + \sum_{i=1}^{K} \alpha_i \log \frac{Z_N(\beta_0 + h_0\sqrt{\theta_N})}{Z_N(\beta_0 + h_i\sqrt{\theta_N})}\\
        & = T_{n, \bm \alpha} +  \frac{1}{4}\int_{[0,1]^2} W(x,y) \mathrm{d}x \mathrm{d}y \sum_{i=1}^{K} \alpha_i (h_0^2-h_i^2) + o_P(1) ,   
        \end{align*}
        where 
        $$T_{n, \bm \alpha} := \sum_{i=1}^{K} \alpha_i (h_0-h_i) \left[\sqrt{\theta_N} H_N(\bs) + \frac{\beta_0}{2N^2\sqrt{\theta_N}} \sum_{1 \leq u,v \leq N}W\left(\frac{u}{N},\frac{v}{N}\right)\right]. $$ 
    By Proposition \ref{mlelemd4y}, as $N \rightarrow \infty$, 
    $$T_{n, \bm \alpha} \xrightarrow[\bs \sim \P_{\beta_0+ h_0\sqrt{\theta_N}}]{D} \sum_{i=1}^K \alpha_i (h_0-h_i) Z,$$
where $ Z $ as in \eqref{eq:Zh}. Hence,
$$\sum_{i=1}^K \alpha_i\log \frac{\mathrm{d} \P_{\beta_0+h_i\sqrt{\theta_N}}}{\mathrm{d} \P_{\beta_0+ h_0\sqrt{\theta_N}}} (\bs) \xrightarrow[\bs \sim \P_{\beta_0+ h_0\sqrt{\theta_N}}]{D}  \sum_{i=1}^K \alpha_i \left[(h_0-h_i) Z + \frac{1}{4}(h_0^2-h_i^2)\int_{[0,1]^2} W(x,y) \mathrm{d}x \mathrm{d}y \right].$$ 
By the Cram\'er-Wold device and the continuous mapping theorem, the result in Proposition \ref{limexp1} (1) now follows.

Next, we prove Proposition \ref{limexp1} (2). For this suppose $I = \{\beta_1,\ldots,\beta_K\} \subset (0, \frac{1}{\|W\|_{\mathrm{op}}})$, for some $K \geq 1$. Fix $\bm \alpha = (\alpha_1,\ldots,\alpha_K) \in \R^K$. Then 
$$\sum_{i=1}^K \alpha_i\log \frac{\mathrm{d} \P_{\beta_i}}{\mathrm{d} \P_{\beta_0}}(\bs) = \sum_{i=1}^K \alpha_i(\beta_0-\beta_i)  H_N(\bs) + \sum_{i=1}^K \alpha_i \log \frac{Z_N(\beta_0)}{Z_N(\beta_i)}.$$
It follows from \eqref{eq:Z_Nratio} and \eqref{eq:EZ_Nratio} that, 
\begin{align*}
    & \log \frac{Z_N(\beta_0)}{Z_N(\beta_i)} \\ 
    &= \frac{\beta_0-\beta_i}{2}\int_0^1 W(x,x) \mathrm{d}x + \frac{(\beta_0^2-\beta_i^2) \sigma_W^2}{2} + \frac{1}{2}\sum_{\lambda \in \mathrm{Spec}(W)} \left[\log \left(\frac{1-\beta_i\lambda}{1-\beta_0\lambda}\right) + (\beta_i-\beta_0) \lambda\right] + o_P(1) , 
\end{align*} 
where $\sigma_W^2$ is as defined in the statement of Proposition \ref{limexp1}. Also, from \eqref{eq:HNdense} we know that $H_N(\bs) \dto -U_{\beta_0}$, for $\bs \sim \P_{\beta_0}$, where 
\begin{align*}
    U_{\beta_0} :=\beta_0 \sigma_W^2 + \frac{1}{2}\int_{0}^{1}W(x,x)\mathrm{d}x + \sigma_WZ_0+\sum_{\lambda \in \mathrm{Spec}(W)} \frac{\lambda}{2}\left(\frac{Z_\lambda^2}{1-\beta_0 \lambda}-1\right).
\end{align*}
Hence, 
\begin{align*}
    &\sum_{i=1}^K \alpha_i\log \frac{\mathrm{d} \P_{\beta_i}}{\mathrm{d} \P_{\beta_0}}(\bs)\\ &\xrightarrow[\bs \sim \P_{\beta_0}]{D} \sum_{i=1}^K \alpha_i\left(\frac{ (\beta_0^2-\beta_i^2) \sigma_W^2 }{2}\right) + \frac{1}{2}\sum_{i=1}^K \alpha_i \sum_{\lambda \in \mathrm{Spec}(W)} \left[\log \left(\frac{1-\beta_i\lambda}{1-\beta_0\lambda}\right) + (\beta_i-\beta_0) \lambda\right]\\
    & \hspace{0.95in} - \sum_{i=1}^K \alpha_i (\beta_0-\beta_i)\left[\beta_0 \sigma_W^2 + \sigma_WZ_0+\sum_{\lambda \in \mathrm{Spec}(W)} \frac{\lambda}{2}\left(\frac{Z_\lambda^2}{1-\beta_0 \lambda}-1\right)\right] . 
\end{align*}
By the Cram\'er-Wold device and the continuous mapping theorem, the result in Proposition \ref{limexp1} (2) now follows. \hfill $\Box$

\section{Proofs from Section \ref{sec:gof} } 

This section is organized as follows: We begin with the proof of 
Theorem \ref{locpower} in Appendix \ref{sec:lppf}. Theorem \ref{thm:mb} and Corollary \ref{cor:testgraphon} are proved in Appendix \ref{sec:mbpf} and Appendix \ref{sec:prooftestgraphon}, respectively. The proof of  Theorem \ref{testminimax} is given in Appendix \ref{proofminimax}.

\subsection{Proof of Theorem \ref{locpower}}
\label{sec:lppf}

To begin with, define $\gamma := \frac{1}{2}\int_{[0,1]^2} W(x,y) \mathrm{d}x \mathrm{d}y$. By Proposition \ref{mlelemd4y}, it follows that under the model $\P_{\beta_0+t\sqrt{\theta_N}}$, 
\begin{equation*}
   \sqrt{\theta_N} H_N(\bs) + \frac{\beta_0}{2N^2\sqrt{\theta_N}} \sum_{1 \leq i,j \leq N} W\left(\frac{i}{N}, \frac{j}{N}\right) \xrightarrow{D} \cN(-t\gamma, \gamma), 
\end{equation*} 
and in fact, the above weak convergence also happens conditionally on $A_{G_{N}}$ for every further subsequence $N_2$ of a chosen subsequence $N_1$ of the natural numbers, almost surely. 
 Next, define:
$$c_\alpha := -z_{\alpha/2}\sqrt{\frac{1}{2\theta_N}\iint_{[0,1]^2}W(x,y) dx dy} - \frac{\beta_0}{2N^2 \theta_N} \sum_{1 \leq i,j \leq N}W\left(\frac{i}{N},\frac{j}{N}\right) $$
and 
$$d_\alpha := z_{\alpha/2}\sqrt{\frac{1}{2\theta_N}\iint_{[0,1]^2}W(x,y) dx dy} - \frac{\beta_0}{2N^2 \theta_N} \sum_{1 \leq i,j \leq N}W\left(\frac{i}{N},\frac{j}{N}\right).$$
The following argument is meant to be on this further subsequence $N_2$, but for notational convenience, we simply denote $N_2$ by $N$. Specifically, we have almost surely,
\begin{align}\label{hnnaivepower4}
    &\P_{\beta_0+t\sqrt{\theta_N}}(H_N(\bs) < c_\alpha| A_{G_N}) + \P_{\beta_0+t\sqrt{\theta_N}}(H_N(\bs) > d_\alpha| A_{G_N})\notag\\
    &= \Phi\left(- z_{\alpha/2} + t\sqrt{\gamma}\right) + \Phi\left(- z_{\alpha/2} - t\sqrt{\gamma}\right) + o(1).
\end{align}

We now claim that $\sqrt{\theta_N}(\hat{c}_\alpha - c_\alpha) = o_P(1)$ and $\sqrt{\theta_N}(\hat{d}_\alpha - d_\alpha) = o_P(1)$. We will only prove the result for $\sqrt{\theta_N}(\hat{c}_\alpha - c_\alpha)$, as the other result will follow similarly. Towards this, note that:
$$\sqrt{\theta_N}(\hat{c}_\alpha - c_\alpha) = S_1+ S_2,$$ where
$$S_1 := -z_{\alpha/2}\left(\sqrt{\frac{1}{2N^2\theta_N}\sum_{1\le i,j\le N} A_{i,j}(G_N)} - \sqrt{\gamma}\right) \quad \text{and}\quad S_2 := -D_N(\beta_0),$$
where $D_N(\beta_0)$ is as defined in \eqref{eq:dn} (with $\beta$ replaced by $\beta_0$). It follows from \eqref{dnequalop1} that $S_2 = o_P(1)$. Further, we have $S_1 := S_1' + S_1''$, where 
\begin{align*}
    &S_1' := -z_{\alpha/2} \left(\sqrt{\frac{1}{2N^2\theta_N}\sum_{1\le i,j\le N} A_{i,j}(G_N)} - \sqrt{\frac{1}{2N^2} \sum_{1\le i,j\le N} W\left(\frac{i}{N}, \frac{j}{N}\right)}\right) 
    \end{align*} 
       and 
       \begin{align*} 
       S_1'' := -z_{\alpha/2} \left(\sqrt{\frac{1}{2N^2} \sum_{1\le i,j\le N} W\left(\frac{i}{N}, \frac{j}{N}\right)} - \sqrt{\gamma}\right).
\end{align*}
First, note that 
\begin{align}\label{t1panalysis108}
    S_1' &= -z_{\alpha/2}  \frac{{\frac{1}{2N^2\theta_N}\sum_{1\le i,j\le N} A_{i,j}(G_N)} - {\frac{1}{2N^2} \sum_{1\le i,j\le N} W\left(\frac{i}{N}, \frac{j}{N}\right)}}{\sqrt{\frac{1}{2N^2\theta_N}\sum_{1\le i,j\le N} A_{i,j}(G_N)} + \sqrt{\frac{1}{2N^2} \sum_{1\le i,j\le N} W\left(\frac{i}{N}, \frac{j}{N}\right)}}\notag\\ &= -z_{\alpha/2} \frac{\sqrt{\theta_N} D_N}{\beta \left(\sqrt{\frac{1}{2N^2\theta_N}\sum_{1\le i,j\le N} A_{i,j}(G_N)} + \sqrt{\frac{1}{2N^2} \sum_{1\le i,j\le N} W\left(\frac{i}{N}, \frac{j}{N}\right)}\right)}\\&= o_P(1),\notag
\end{align}
since the numerator of \eqref{t1panalysis108} is $o_P(1)$ by \eqref{dnequalop1}, and the denominator is larger than $\sqrt{\frac{\gamma}{2}}$ for all large $N$. Also, since $W(x,y)$ is Riemann integrable, one has $S_1'' = o(1)$. Hence, $S_1 = o_P(1)$, thereby proving the claim.

Returning to the main proof, there exists a further subsequence $N \equiv N_3$ of $N_2$, along which both $\sqrt{\theta_N}(\hat{c}_\alpha - c_\alpha)$ and $\sqrt{\theta_N}(\hat{d}_\alpha - d_\alpha)$ converge to $0$ almost surely. Since weak convergence to a continuous distribution implies uniform convergence of the underlying distribution functions, it follows from \eqref{hnnaivepower4} and our claim, that along the subsequence $N\equiv N_3$, we have almost surely,
\begin{align*}
    & \P_{\beta_0+t\sqrt{\theta_N}}(H_N(\bs) < \hat{c}_\alpha| A_{G_N}) + \P_{\beta_0+t\sqrt{\theta_N}}(H_N(\bs) > \hat{d}_\alpha| A_{G_N}) \nonumber \\  
    & \rightarrow \Phi\left(- z_{\alpha/2} + t\sqrt{\gamma}\right) + \Phi\left(- z_{\alpha/2} - t\sqrt{\gamma}\right).
\end{align*}
Taking expectations on both sides above, we have by the dominated convergence theorem,
\begin{equation}\label{hu778}
    \rho_{N_3}(\beta_0+t\sqrt{\theta_{N_3}}) \rightarrow \Phi\left(- z_{\alpha/2} + t\sqrt{\gamma}\right) + \Phi\left(- z_{\alpha/2} - t\sqrt{\gamma}\right).
\end{equation}
Theorem \ref{locpower} now follows on noticing that \eqref{hu778} holds for some subsequence $N_3$ of any arbitrary sequence $N_1$ of natural numbers.

\subsection{Proof of Theorem \ref{thm:mb}}
\label{sec:mbpf}

Now recall that $\{\hat\lambda_i(G_N), 1\leq i\leq N\}$ are the eigenvalues of $\frac{1}{N \theta_N} A_{G_N}$ arranged in non-increasing order. Then,
\begin{align}\label{eq:hateigensum}
    \frac{1}{N^2\theta_N^2}\sum_{1 \leq i \ne j \leq N} A_{G_N}(i,j) = \frac{1}{N^2\theta_N^2}\sum_{1 \leq i \ne j \leq N} A_{G_N}(i,j)^2 = \sum_{i=1}^{N}\hat\lambda_i(G_N)^2,
\end{align} 
where the first equality follows, since $A_{G_N}(i,j)\in \{0,1\}$ for all $1\leq i,j\leq N$. By Lemma \ref{lemma:A_{G_N}_convg_W} and \cite[Theorem 11.54]{lovasz2012large} we know that there exists an almost sure set $\cA_1$, such that on $\cA_1$, $\|\frac{1}{N \theta_N} A_{G_N}\|\ra \|W\|_{\mathrm{op}}$. The assumption $\beta\left\|W\right\|<1$ implies there exists a $\gamma>0$ small enough such that $|\beta|\left(\|W\|_{\mathrm{op}}+\gamma\right)<1$. Hence on the set $\cA_1$, for large enough $N$, 
\begin{align}\label{eq:lambda_hat_bdd_1}
    | \beta\hat\lambda_i(G_N) |\leq |\beta| \left\|\frac{1}{N \theta_N} A_{G_N}\right\|\leq |\beta|\left(\|W\|_{\mathrm{op}} + \gamma\right)<1 ,  
\end{align} 
for all $i\geq 1$. Then on $\cA_1$, we can find constant $C(W,\beta)$ such that 
\begin{align}\label{eq:1_beta_hat_lambda_bdd}
    \frac{1}{|1-\beta\hat\lambda_i(G_N)|} \leq C(W,\beta),
\end{align}
for all $N$ large enough. Now, recalling the definition of $ \hat U_\beta$ from \begin{align*}
    \hat U_\beta := \frac{\beta}{2N^2\theta_N^2}\sum_{1 \leq i \ne j \leq N} A_{G_N}(i,j) & +  \frac{1}{2N\theta_N}\sum_{i=1}^N A_{G_N}(i,i) \nonumber \\ 
    & + \frac{\beta^2}{2}\sum_{i=1}^{N}\frac{\hat\lambda_i(G_N)^3}{1-\beta\hat\lambda_i(G_N)} + \frac{1}{2}\sum_{i=1}^{N}\frac{\hat\lambda_i(G_N)}{1-\beta\hat\lambda_i(G_N)}\left(\eta_i^2-1\right) . 
\end{align*}
Then conditional on $A_{G_N}$, by \cite[Proposition 7.1]{bhattacharya2017universal} the moment generating function of $\hat U_\beta$ is given by 
\begin{align}\label{eq:logMGFUbetahat}
    \log \E\left[ e^{ t\hat U_\beta } \mid A_{G_N}\right] = \sum_{s=1}^\infty a_s(\beta, G_N) t^s , 
\end{align}
where 
\begin{align*} 
a_1(\beta, G_N) & := \frac{\beta}{2N^2\theta_N^2}\sum_{1 \leq i \ne j \leq N} A_{G_N}(i,j) + \frac{1}{2N\theta_N}\sum_{i=1}^N A_{G_N}(i,i) + \frac{\beta^2}{2}\sum_{i=1}^{N}\frac{\hat\lambda_i(G_N)^3}{1-\beta\hat\lambda_i(G_N)} , \nonumber \\ 
a_s(\beta, G_N) & := \frac{1}{2 s }\sum_{i=1}^{N}\frac{\hat\lambda_i(G_N)^s}{(1-\beta\hat\lambda_i(G_N))^s}  , 
\end{align*} 
for $s \geq 2$, and 
%
%
for all $|t|<\frac{1}{8}\left( \sum_{i=1}^{N}\frac{\hat\lambda_i(G_N)^2}{(1-\beta\hat\lambda_i(G_N))^2} \right)^{-1}$. Notice that on the set $\cA_1$,
\begin{align*}
    \sum_{i=1}^{N}\frac{\hat\lambda_i(G_N)^2}{1-\beta\hat\lambda_i(G_N)^2}\leq C(W,\beta)^2 \sum_{i=1}^{N}\hat\lambda_i(G_N)^2 \leq \frac{4C(W,\beta)^2}{\theta^2} , 
\end{align*}
for all $N$ large enough. Hence, on $\cA_1$, for all $N$ large enough, the conditional moment generating function of $\hat U_\beta$ exists for all $|t|<\delta$, for some $\delta<\min\{\frac{1}{2}, \frac{1}{8}\|W\|_2, \frac{\theta^2}{32C(W,\beta)^2}\}$. Also, by Hoeffding's concentration inequality there exists an almost sure set $\cA_2$ such that,
\begin{align}\label{eq:Hoeffdin_convgsedges}
    \frac{1}{N^2\theta_N^2}\sum_{1 \leq i \ne j \leq N} A_{G_N}(i,j)\ra \frac{1}{\theta}\int_{[0, 1]^2} W(x,y)\mathrm{d} x\mathrm{d} y 
    \end{align} 
    and 
    \begin{align}\label{eq:Hoeffdin_convgsdiagonal}
    \frac{1}{N\theta_N}\sum_{i=1}^N A_{G_N}(i,i)\ra \int_0^1 W(x,x)\mathrm{d} x . 
\end{align}
Now, to complete the proof of Theorem \ref{thm:mb} we need the following lemma, which computes the limit of the terms appearing in $a_s(\beta, G_N)$.

\begin{lemma}\label{lemma:eigen_convg}
    Under the assumptions of  Theorem \ref{thm:mb} there exists an almost sure set $\cA_0$ such that,
    \begin{align}\label{eq:eigen_convg}
        \sum_{i=1}^{N}\frac{\hat\lambda_i(G_N)^{s}}{\left(1-\beta\hat\lambda_i(G_N)\right)^{t}}\ra\sum_{ \lambda \in \mathrm{Spec}(W) } \frac{\lambda ^{s}}{\left(1-\beta\lambda \right)^{t}} ,     \end{align}
     as $N\ra\infty$, for all $s\geq 3$ and $t\geq 1$ on $\cA_0$.
\end{lemma}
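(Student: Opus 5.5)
Throughout, I work on the almost sure set $\cA_1$ from the proof of Theorem \ref{thm:mb}, intersected with an almost sure set $\cA_3$ on which all cycle homomorphism densities converge. The plan has three steps.

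\textbf{Step 1: moment convergence.} Fix $s\ge 3$. The power sum $\sum_i \hat\lambda_i(G_N)^s$ equals $\frac{1}{(N\theta_N)^s}\tr(A_{G_N}^s)$. I expand this trace as a sum over closed walks of length $s$ and split it into two parts: walks visiting $s$ distinct vertices (the cycle part), and degenerate walks. After normalization, the cycle part is the injective homomorphism density of $C_s$ in $G_N$ rescaled by $\theta_N^{-s}$, and its expectation tends to $t(C_s,W)=\sum_{\lambda}\lambda^s$ by Riemann integrability of $W$. For the degenerate walks, using $\theta_N\asymp 1$, their number is at most $O(N^{s-1})$ and each contributes $O(N^{-s})$, so their total is $O(1/N)$ deterministically. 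For almost sure convergence of the cycle part, I use Lemma \ref{lemma:A_{G_N}_convg_W} together with the fact that homomorphism densities are continuous in cut metric \cite[Theorem 11.54]{lovasz2012large} (the graphs converge in cut distance almost surely). Alternatively, a bounded-differences inequality gives tails $e^{-cN^{2}\epsilon^2}$, and Borel--Cantelli then applies. This yields an almost sure set on which $\sum_i\hat\lambda_i^s\to\sum_\lambda\lambda^s$ for every integer $s\ge 3$, since there are countably many $s$.

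\textbf{Step 2: extension to the weighted sums.} On $\cA_1$, \eqref{eq:lambda_hat_bdd_1} gives, for large $N$, $|\beta\hat\lambda_i|\le\rho:=\beta(\|W\|_{\mathrm{op}}+\gamma)<1$ uniformly in $i$; the same bound holds for $\lambda\in\mathrm{Spec}(W)$. I then expand
\[
\frac{x^s}{(1-\beta x)^t}=\sum_{k\ge 0}\binom{k+t-1}{k}\beta^k x^{s+k}.
\]
This expresses the left side of \eqref{eq:eigen_convg} as $\sum_k c_k\beta^k p_{s+k}(N)$, where $p_m(N)=\sum_i\hat\lambda_i^m$, and the right side as the same series with $p_m=\sum_\lambda\lambda^m$. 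Dominated convergence over $k$ then finishes the argument. The key bound is
\[
|p_{s+k}(N)|\le \Big(\max_i|\hat\lambda_i|\Big)^{k+s-2}\sum_i\hat\lambda_i^2 .
\]
By \eqref{eq:hateigensum} and \eqref{eq:Hoeffdin_convgsedges}, $\sum_i\hat\lambda_i^2$ is bounded on $\cA_2$. Hence the $k$-th term is bounded by $C\binom{k+t-1}{k}\rho^k$, which is summable. Termwise convergence holds by Step 1, so the series converge to each other.

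\textbf{Main obstacle.} The delicate point is Step 1: establishing almost sure, not merely in-probability, convergence of the normalized cycle counts, uniformly over all $s$. This matters because the conditional statement in Theorem \ref{thm:mb} is almost sure. I would rely on cut-metric convergence of the graphs, which handles all $s$ simultaneously on a single almost sure set. Failing that, I would use a concentration inequality combined with Borel--Cantelli for each $s$ separately and then take the countable intersection. Finally, set $\cA_0=\cA_1\cap\cA_2\cap\cA_3$.
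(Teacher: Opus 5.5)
Your proposal is correct and follows essentially the paper's route: expand $x^s(1-\beta x)^{-t}$ in powers of $\beta x$, and get almost sure convergence of the normalized power sums $\sum_i \hat\lambda_i(G_N)^m = \theta_N^{-m}\, t(C_m, W^{G_N})$ for all $m \geq 3$ on a single almost sure set, using the cut-distance convergence $\delta_\square(W^{G_N}, \theta W) \to 0$ and the counting lemma; then pass the limit through the series by dominated convergence in the expansion index. The differences are cosmetic. Your domination bound $\bigl(\max_i|\hat\lambda_i(G_N)|\bigr)^{m-2}\sum_i \hat\lambda_i(G_N)^2$ replaces the paper's $|\hat\lambda_i(G_N)| \leq 2/(\theta\sqrt{i})$, and the degenerate-walk split and the McDiarmid/Borel--Cantelli alternative are unnecessary once you use cut-distance convergence.
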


The proof of Lemma \ref{lemma:eigen_convg} is deferred to Appendix  \ref{sec:proofof_eigen_convg}. To complete the proof of Theorem \ref{thm:mb}, define $\cA = \cA_0 \cap  \cA_1 \cap \cA_2$, where $\cA_0, \cA_1, \cA_2$ are as defined above. By \eqref{eq:Hoeffdin_convgsedges}, \eqref{eq:Hoeffdin_convgsdiagonal}, and Lemma \ref{lemma:eigen_convg}, 
\begin{align}\label{eq:aGN}
   a_1(\beta, G_N) \ra  a_1(\beta, W) ,
\end{align}
on $\mathcal A$, where $a_1(\beta, W)$ is as defined in \eqref{eq:coefficientW}. Again, by Lemma \ref{lemma:eigen_convg}, 
\begin{align}\label{eq:aGNt}
   a_2(\beta, G_N) & :=  \frac{1}{4}\sum_{i=1}^{N}\frac{\hat\lambda_i(G_N)^2}{1-\beta\hat\lambda_i(G_N)^2}
    \nonumber \\ 
    & = \frac{1}{4}\left[\sum_{i=1}^{\infty}\hat\lambda_i(G_N)^2 + \sum_{i=1}^{\infty} \left( \frac{\hat\lambda_i(G_N)^2}{(1-\beta\hat\lambda_i(G_N))^2} - \hat\lambda_i(G_N)^2 \right) \right] \nonumber \\
    & = \frac{1}{4}\left[\sum_{i=1}^{\infty}\hat\lambda_i(G_N)^2 + 2\beta\sum_{i=1}^{\infty}\frac{\hat\lambda_i(G_N)^3}{(1-\beta\hat\lambda_i(G_N))^2} - \beta^2\sum_{i=1}^{\infty}\frac{\hat\lambda_i(G_N)^4}{(1-\beta\hat\lambda_i(G_N))^2}\right] \nonumber \\
    & = \frac{1}{4}\left[\frac{1}{N^2\theta_N^2}\sum_{1 \leq i \ne j \leq N} A_{G_N}(i,j) + 2\beta\sum_{i=1}^{\infty}\frac{\hat\lambda_i(G_N)^3}{(1-\beta\hat\lambda_i(G_N))^2} - \beta^2\sum_{i=1}^{\infty}\frac{\hat\lambda_i(G_N)^4}{(1-\beta\hat\lambda_i(G_N))^2}\right]  \nonumber \\ 
   & \rightarrow a_2(\beta, W) ,
    \end{align} 
    on $\mathcal A$, where $a_2(\beta, W)$ is as defined in \eqref{eq:coefficientW}. Also, by Lemma \ref{lemma:eigen_convg}, 
    \begin{align}\label{eq:aGNz}
    a_s(\beta, G_N) \rightarrow \frac{1}{2 s} \sum_{i=1}^{\infty} \frac{\lambda ^{s}}{\left(1-\beta\lambda \right)^{s}} := a_s(\beta, W)  , 
    \end{align} 
    on $\mathcal A$, for $s \geq 3$. Now to complete the proof, from \eqref{eq:hateigensum} notice that for large enough $N$, $|\hat\lambda_i(G_N)|\leq \frac{2}{\theta\sqrt{i}}$ for all $i\geq 1$. Hence on the set $\cA$, recalling \eqref{eq:logMGFUbetahat} and combining the convergences from \eqref{eq:aGN}, \eqref{eq:aGNt}, and \eqref{eq:aGNz} with the bound from \eqref{eq:1_beta_hat_lambda_bdd}, we can apply the Dominated Convergence Theorem to show, 
\begin{align}\label{eq:logMGFUbetahat_convg}
    \log \E\left[ e^{ t\hat U_\beta } \mid A_{G_N}\right] = \sum_{s=1}^\infty a_s(\beta, G_N) t^s \rightarrow \sum_{s=1}^\infty a_s(\beta, W) t^s = \log \E\left[ e^{ t U_\beta } \right] , 
\end{align}
for $|t|< \tau$, for some small $\tau<\delta< \min\{\frac{1}{2},\frac{1}{8}\|W\|_2, \frac{\theta^2}{32C(W,\beta)^2}\}$. Note that the last equality in 
 \eqref{eq:logMGFUbetahat_convg} is by Lemma \ref{lm:HNmgf}. This completes the proof of \eqref{eq:Ubetaconvergence}.  


\subsubsection{Proof of Lemma \ref{lemma:eigen_convg}}\label{sec:proofof_eigen_convg}

 Since $\beta\left\|W\right\|<1$ by assumption, we can expand the RHS of \eqref{eq:eigen_convg} as,
\begin{align}\label{eq:RHS_expand}
    \sum_{\lambda \in \mathrm{Spec}(W)} \frac{\lambda ^s}{\left(1-\beta\lambda \right)^t} 
    & = \sum_{\lambda \in \mathrm{Spec}(W)} \sum_{a=0}^{\infty}{\binom{t+a-1}{a}}\beta^a\lambda ^{a+s}\nonumber\\
    & = \sum_{a=0}^{\infty}{\binom{t+a-1}{a}}\beta^a\sum_{\lambda \in \mathrm{Spec}(W)} \lambda ^{a+s} , 
\end{align}
where the final equality follows by Fubini's theorem in conjunction with Lemma \ref{lemma:eigenvalue_summability} (d). Recalling \eqref{eq:cycle}, we can rewrite \eqref{eq:RHS_expand} as,
\begin{align}\label{eq:lambda_sum_tC}
    \sum_{\lambda \in \mathrm{Spec}(W)}\frac{\lambda ^s}{\left(1-\beta\lambda \right)^t} = \sum_{a=0}^{\infty}{\binom{t+a-1}{a}}\beta^at\left(C_{a+s},W\right) . 
\end{align}

Next, consider the LHS of \eqref{eq:eigen_convg}. From \eqref{eq:lambda_hat_bdd_1} recall that on $\mathcal A_1$, $| \beta\hat\lambda_i(G_N) |\leq |\beta|\left(\|W\|_{\mathrm{op}} + \gamma\right)<1$, for all $i\geq 1$. Now, defining $\hat\lambda_i(G_N) = 0$, for all $i> N$, we can expand the LHS of \eqref{eq:eigen_convg} on $\cA_1$ as follows: 
\begin{align}\label{eq:eigenvalueconvergence}
    \sum_{i=1}^{N}\frac{\hat\lambda_i(G_N)^s}{\left(1-\beta\hat\lambda_i(G_N)\right)^t} = \sum_{i=1}^{\infty}\frac{\hat\lambda_i(G_N)^s}{\left(1-\beta\hat\lambda_i(G_N)\right)^t} = \sum_{i=1}^{\infty}\sum_{a=0}^{\infty}{\binom{t+a-1}{a}}\beta^a\hat\lambda_i(G_N)^{a+s}.
\end{align}
Note that on the set $\cA_1$ using the bound in \eqref{eq:lambda_hat_bdd_1} gives, 
\begin{align*}
    \sum_{i=1}^{\infty}\sum_{a=0}^{\infty}{\binom{t+a-1}{a}}|\beta|^a|\hat\lambda_i(G_N) |^{a+s}
    &\leq \sum_{i=1}^{\infty}\sum_{a=0}^{\infty}{\binom{t+a-1}{a}}|\beta|^a\left(\|W\|_{\mathrm{op}} + \gamma\right)^a|\hat\lambda_i(G_N)|^s\\
    &\leq \frac{1}{\left(1-\beta(\|W\|_{\mathrm{op}} + \gamma)\right)^t}\sum_{i=1}^{\infty}|\hat\lambda_i(G_N)|^s.
\end{align*}
Also, from \eqref{eq:hateigensum}, for large enough $N$, $|\hat\lambda_i(G_N)|\leq \frac{2}{\theta\sqrt{i}}$, for all $i\geq 1$. 
Hence,
\begin{align*}
    \sum_{i=1}^{\infty}\sum_{a=0}^{\infty}{\binom{t+a-1}{a}}\left|\beta\right|^a |\hat\lambda_i(G_N) |^{a+s}\leq \frac{(2/\theta)^s}{\left(1-\beta(\|W\|_{\mathrm{op}} + \gamma)\right)^t}\sum_{i=1}^{\infty}\frac{1}{i^{\frac{s}{2}}}<\infty.
\end{align*} 
Then, by Fubini's Theorem, on the set $\cA_1$ and for all large enough $N$, the order of the summation in \eqref{eq:eigenvalueconvergence} can be interchanged. This gives, 
\begin{align*}
    \sum_{i=1}^{N}\frac{\hat\lambda_i(G_N)^s}{\left(1-\beta\hat\lambda_i(G_N)\right)^t} = \sum_{a=0}^{\infty}{\binom{t+a-1}{a}}\beta^a\sum_{i=1}^{\infty}\hat\lambda_i(G_N)^{a+s} = \sum_{a=0}^{\infty}{\binom{t+a-1}{a}}\frac{\beta^a}{\theta_N^{a+s}} t\left(C_{a+s},W^{G_N}\right), 
\end{align*} 
where $W^{G_N}$ is the empirical graphon as defined in \eqref{eq:WGN}. By Lemma \ref{lemma:A_{G_N}_convg_W} and \cite[Theorem 11.5]{lovasz2012large} there exists an almost sure set $\mathcal A_3$ such that on $\mathcal A_3$, $t\left(C_{a+s},W^{G_N}\right) \rightarrow t(C_{a+s}, W)$, for all $a\geq 0$ and $s\geq 3$, as $N \rightarrow \infty$. Now, to complete the proof we need to apply dominated convergence theorem to exchange the limit and the sum. To that end notice that on $\mathcal A_0 = \mathcal A_1\cap \mathcal A_3$, for $t\geq 1$, $a\geq 0$, and $s\geq 3$,
\begin{align*}
    {\binom{t+a-1}{a}} \frac{\left|\beta^at\left(C_{a+s},W^{G_N}\right)\right|}{\theta_N^{a+s}}
    & \leq {\binom{t+a-1}{a}}\beta^a\sum_{i=1}^{\infty}\left|\hat \lambda_i(G_N)\right|^{a+s}\\
    & \leq {\binom{t+a-1}{a}}\beta^a\left(\|W\|_{\rm op} + \gamma\right)^{a}\sum_{i=1}^{\infty}\frac{(2/\theta)^{s}}{i^{\frac{s}{2}}}.
\end{align*}
Moreover,
\begin{align*}
    \sum_{a=0}^{\infty}{\binom{t+a-1}{a}}\beta^a\left(\|W\|_{\rm op} + \gamma\right)^{a}\sum_{i=1}^{\infty}\frac{(2/\theta)^{s}}{i^{\frac{s}{2}}} = \frac{1}{\left(1-\beta\left(\|W\|_{\rm op}+\gamma\right)\right)^t}\sum_{i=1}^{\infty}\frac{(2/\theta)^{s}}{i^{\frac{s}{2}}}<\infty,
\end{align*}
where the first equality follows by recalling that $\gamma>0$ is chosen such that $\beta\left(\|W\|_{\rm op} + \gamma\right)<1$. 
Hence, by the dominated convergence theorem we conclude that on $\cA_0$,
\begin{align*}
    \sum_{i=1}^{N}\frac{\hat\lambda_i(G_N)^s}{\left(1-\beta\hat\lambda_i(G_N)\right)^t}\ra \sum_{a=0}^{\infty}{\binom{t+a-1}{a}}\beta^at\left(C_{a+s},W\right) = \sum_{ \lambda \in \mathrm{Spec}(W) }\frac{\lambda ^s}{\left(1-\beta\lambda \right)^t} , 
\end{align*} 
as $N \rightarrow \infty$, where the final equality is from \eqref{eq:lambda_sum_tC}. \hfill $\Box$

\subsection{Proof of Corollary \ref{cor:testgraphon}}\label{sec:prooftestgraphon}
To prove Corollary \ref{cor:testgraphon}, we begin by noting that for all $\beta\in (0, \frac{1}{\|W\|_{\rm op}})$, $U_{\beta}$ admits a density with respect to the Lebesgue measure on $\R$ (recall \eqref{eq:def_Ubeta}). Fix $\zeta\in (0,1)$. Then, by the distributional convergence established in Theorem \ref{thm:mb}, 
\begin{align*}
    \hat q_{\zeta, \beta_0, G_N}\ra q_{\zeta, \beta_0, W}, \text{ almost surely},
\end{align*}
where $q_{\zeta, \beta_0, W}$ denotes the $\zeta^{th}$ quantile of $U_{\beta_0}$. Furthermore, for any $\beta\in (0, \frac{1}{\|W\|_{\rm op}})$, Theorem \ref{thm:HNsigma} (2), together with Slutsky's lemma, implies that
\begin{align*}
    -H_N(\bs) - \hat q_{\zeta, \beta_0, G_N}\dto U_{\beta} - q_{\zeta,\beta_0, W},
\end{align*}
where $\bs$ is sampled from the model $\P_\beta$ in \eqref{model_def}. Since $U_\beta$ is absolutely continuous, it follows immediately that
\begin{align}\label{eq:convg_HN_qhat_beta}
    \P_\beta\left(-H_N(\bs)\leq \hat q_{\zeta, \beta_0, G_N}\right)\ra \P_\beta(U_\beta\leq q_{\zeta, \beta_0, W}).
\end{align}
Observe that the convergence in \eqref{eq:convg_HN_qhat_beta} holds for every $\zeta\in (0,1)$. Therefore, under any $\beta$,
\begin{align*}
    \lim_{N\ra\infty}\E_\beta(\phi_N^+) 
    & = \lim_{N\ra\infty}\P_\beta\left(-H_N(\bs)\leq \hat q_{1-\alpha/2, \beta_0, G_N}\text{ or }-H_N(\bs)>\hat q_{\alpha/2, \beta_0, G_N}\right)\\
    & = \lim_{N\ra\infty}\P_{\beta}\left(-H_N(\bs)\leq \hat q_{1-\alpha/2, \beta_0, G_N}\right)
    + \P_{\beta}\left(-H_N(\bs)>\hat q_{\alpha/2, \beta_0, G_N}\right)\\
    & = \P_{\beta}(U_{\beta}\leq q_{1-\alpha/2, \beta_0, W})
    + \P_{\beta}(U_{\beta}> q_{\alpha/2, \beta_0, W}).
\end{align*}
The proof is completed by observing that under $H_0:\beta=\beta_0$,
\begin{align*}
    \P_{\beta_0}(U_{\beta_0}\leq q_{1-\alpha/2, \beta_0, W})
    + \P_{\beta_0}(U_{\beta_0}> q_{\alpha/2, \beta_0, W})
    = \alpha,
\end{align*}
while for $\beta\neq \beta_0$,
\begin{align*}
    \P_{\beta}(U_{\beta}\leq q_{1-\alpha/2, \beta_0, W})
    + \P_{\beta}(U_{\beta}> q_{\alpha/2, \beta_0, W})
    = 1 + F_\beta(q_{1-\alpha/2, \beta_0, W}) - F_\beta(q_{\alpha/2, \beta_0, W}).
\end{align*}

\subsection{Proof of Theorem \ref{testminimax}}\label{proofminimax}

\subsubsection{Proof of Theorem \ref{testminimax} $(1)$} 

Fix a sequence $\delta_N \ll \sqrt{\theta_N}$ and let $\beta_1 := \beta_{0} + \delta_N$. Denote by $L_N$ the likelihood ratio of the joint measures of $(\bs, A_{G_N})$ at $\beta_{1}$ and $\beta_0$. Then 
$$L_N(\bs) := \frac{Z_N(\beta_0)}{Z_N(\beta_{1})} e^{-(\beta_{1}-\beta_0) H_N(\bs)}.$$
Hence, 
\begin{align}\label{l2exp998}
    \E_{\beta_0} [L_N^2(\bs)| A_{G_N}] =\frac{Z_N^2(\beta_0)}{Z_N^2(\beta_0 + \delta_N)} \cdot \frac{Z_N(\beta_0 +2 \delta_N)}{Z_N(\beta_0)}.
\end{align} 
The following lemma gives the asymptotics of the ratio of the partition functions in the regime 
$\delta_N \ll \sqrt{\theta_N}$.

\begin{lemma}\label{znanint6} Suppose $\beta \in (0, \frac{1}{\|W\|_{\mathrm{op}}})$ and $N^{-\frac{2}{3}}  \lesssim \theta_N \ll 1$. Then for any sequence $a_N \ll \sqrt{\theta_N}$, 
\begin{align*}
    \frac{Z_N(\beta+ a_N)}{Z_N(\beta)} e^{ \textstyle -\frac{\beta a_N}{2N^2 \theta_N} \sum_{1 \leq i,j \leq N} W\left(\frac{i}{N}, \frac{j}{N}\right) } \xrightarrow{P} 1 . 
\end{align*}
\end{lemma}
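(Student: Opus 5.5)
The plan mirrors the proof of \eqref{ref2707}, with $t\sqrt{\theta_N}$ replaced by $a_N$. By the decomposition used in \eqref{eq:ZNbetat}, with $\beta+a_N$ in place of $\beta+t$,
\begin{align*}
\frac{Z_N(\beta+a_N)}{Z_N(\beta)} = \frac{J_N(\beta+a_N)}{J_N(\beta)}\cdot\frac{\E\hat Z_N(\beta+a_N)}{\E\hat Z_N(\beta)}\, e^{\textstyle \frac{a_N}{2N\theta_N}\vartheta_N + \frac{a_N^2+2\beta a_N}{4N^2\theta_N^2}\eta_N}.
\end{align*}
I will show that each factor, after removing the deterministic centering, tends to $1$ in probability.

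\emph{First factor.} By Lemma \ref{var_sum}, $J_N(\beta')=1+o_P(1)$ for each fixed subcritical $\beta'$. Here we need it along the sequence $\beta'=\beta+a_N$. The variance bound in Lemma \ref{var_sum} comes from Lemmas \ref{lem:taylor_expRn}, \ref{sigtauUniInt} and \ref{expUniInt}, and all of these are uniform over $\beta'$ in a compact subinterval $[\beta-\varepsilon,\beta+\varepsilon]$ of the subcritical regime. This uniformity is exactly what was used implicitly for $\beta+t\sqrt{\theta_N}$ before \eqref{eq:ZNbetatN}. So $J_N(\beta+a_N)/J_N(\beta)\pto 1$.

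\emph{Second factor.} I will follow the proof of Lemma \ref{thetanperturb6} verbatim with $\beta_N=\beta+a_N$. The term $\mathcal E_N(\beta_N,W)$ differs from $\mathcal E_N(\beta,W)$ by $O(a_N)\to 0$. The extra term $\frac{a_N}{N}\sum_{i<j}W(\frac iN,\frac jN)\sigma_i\sigma_j\pto 0$ by Proposition \ref{quadJointAsym}. Uniform integrability again holds because $\beta_N$ stays in a compact subcritical interval. Hence the ratio of expectations tends to $1$.

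\emph{Exponential factor.} Split $\vartheta_N,\eta_N$ into their centered parts $\overline{\vartheta}_N,\overline{\eta}_N$ and their means, as in Lemma \ref{lem:linearization3}. Write the centered exponent as
\begin{align*}
\overline{\zeta}_N=\frac{a_N}{2N\theta_N}\overline{\vartheta}_N+\frac{a_N^2+2\beta a_N}{4N^2\theta_N^2}\overline{\eta}_N .
\end{align*}
Its variance is $O(a_N^2/N)+O(a_N^2/(N^2\theta_N^3))$. Since $a_N\ll\sqrt{\theta_N}$ and $N^2\theta_N^3\gtrsim 1$, this is $o(\theta_N)\to0$, so $\overline{\zeta}_N\pto 0$ by Chebyshev and no Bernstein/$L^p$ bound is needed.

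The means contribute three deterministic terms:
\begin{itemize}
\item $\frac{a_N}{2N}\sum_i W(\frac iN,\frac iN)=O(a_N)\to0$;
\item $\frac{a_N^2}{4N^2\theta_N}\sum_{i,j}W=O(a_N^2/\theta_N)\to0$;
\item $\frac{\beta a_N}{2N^2\theta_N}\sum_{i,j}W(\frac iN,\frac jN)$, which is exactly the term cancelled by the normalizing exponential in the statement.
\end{itemize}
Multiplying the three factors gives the claim.

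The main obstacle is the first factor. Lemma \ref{var_sum} is stated for fixed $\beta$, so I must verify its proof is locally uniform in $\beta$. This amounts to checking that the constants in the Hanson--Wright tails and in Lemma \ref{expUniInt} depend on $\beta$ only through $\beta\|W\|_{\mathrm{op}}<1$ and boundedness. I expect this to be routine, since the paper already uses the same uniformity for $\beta+t\sqrt{\theta_N}$.
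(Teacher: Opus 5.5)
Your proof is correct, but it takes a different route from the paper. You re-run the three-factor decomposition \eqref{eq:ZNbetat} with $t$ replaced by $a_N$, and show that three pieces each tend to $1$: the $J_N$-ratio, the $\mathbb{E}\hat Z_N$-ratio, and the centered exponential. Only the deterministic term $\frac{\beta a_N}{2N^2\theta_N}\sum_{i,j}W(\frac iN,\frac jN)$ survives.

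The paper instead uses the fixed-$t$ result as a black box. It rewrites the left side as the conditional moment generating function $\mathbb{E}_\beta[e^{s_N B_N}\mid A_{G_N}]$ of the centered Hamiltonian $B_N=-\sqrt{\theta_N}H_N(\bs)-\frac{\beta}{2N^2\sqrt{\theta_N}}\sum_{i,j}W(\frac iN,\frac jN)$, evaluated at the vanishing argument $s_N=a_N/\sqrt{\theta_N}$. Conditional tightness of $B_N$ (from the proof of Theorem \ref{thm:HNsigma} (1) and Lemma \ref{l2ratio762}) gives $e^{s_NB_N}\to 1$ in probability. Uniform integrability comes from Jensen's inequality, $\mathbb{E}_\beta[e^{2s_NB_N}\mid A_{G_N}]\le(\mathbb{E}_\beta[e^{B_N}\mid A_{G_N}])^{2s_N}=1+o_P(1)$, because the conditional MGF at the fixed argument $1$ already has a finite limit.

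What each route buys:
\begin{itemize}
\item The paper's argument is shorter and never revisits $J_N$ or $\mathbb{E}\hat Z_N$ along a new parameter sequence. Its Jensen step as written needs $a_N>0$; for $a_N<0$ one uses the MGF at $-1$ instead.
\item Your argument is more explicit and works for either sign of $a_N$. It also shows that $a_N\ll\sqrt{\theta_N}$ is used exactly to kill the deterministic quadratic term $\frac{a_N^2}{4N^2\theta_N}\sum_{i,j}W(\frac iN,\frac jN)$, the analogue of $\frac{t^2}{4}\int W$ in \eqref{ref2707}.
\item The price of your route is local uniformity in $\beta$ of Lemmas \ref{var_sum} and \ref{thetanperturb6} along $\beta+a_N$. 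This is routine, and the paper tacitly uses the same thing for $\beta+t\sqrt{\theta_N}$. Still, you should state it as an explicit claim, e.g.\ that $J_N(\beta_N)\to 1$ in probability for every sequence $\beta_N\to\beta$, rather than leave it as an expectation.
\end{itemize}

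One small correction: the variance of $\frac{a_N}{2N\theta_N}\overline{\vartheta}_N$ is $O(a_N^2/(N\theta_N))$, not $O(a_N^2/N)$. This is still $o(1/N)$, so your conclusion is unaffected.
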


\begin{proof} 
Note that 
\begin{align}\label{eq:znanint6pf}
\frac{Z_N(\beta+ a_N)}{Z_N(\beta)} e^{ \textstyle -\frac{\beta a_N}{2N^2 \theta_N} \sum_{1 \leq i,j \leq N} W\left(\frac{i}{N}, \frac{j}{N}\right) } = \E_\beta \left [ e^{ \frac{a_N}{\sqrt{\theta_N}} B_N(\bs) } |A_{G_N} \right ], 
\end{align}
where 
$$B_N(\bs) :=  -\sqrt{\theta_N} H_N(\bs) - \frac{\beta}{2N^2\sqrt{\theta_N}} \sum_{1 \leq i,j \leq N} W\left(\frac{i}{N}, \frac{j}{N}\right).$$ From the proofs of Theorem \ref{thm:HNsigma} (1) and Lemma \ref{l2ratio762} it follows that $B_N(\bs)$ converges weakly under the measure $\P_\beta(\cdot|A_{G_N})$. This means, since $a_N \ll \sqrt{\theta_N}$, 
$$e^{ \frac{a_N}{\sqrt{\theta_N}} B_N(\bs) } \pto 1 , $$ 
under  $\P_{\beta}(\cdot|A_{G_N})$.  Furthermore, from the proof of Theorem \ref{thm:HNsigma} (1), it follows by an application of Jensen's inequality on the concave function $x \mapsto x^{2a_N/\sqrt{\theta_N}}$, that 
$$\E_\beta\left[e^{ \frac{2a_N}{\sqrt{\theta_N}} B_N(\bs)}\Big|A_{G_N}\right] \le \left(\E_\beta \left[e^{B_N(\bs)}\Big|A_{G_N}\right]\right)^{\frac{2a_N}{\sqrt{\theta_N}}} = 1+o_P(1).$$ 
Hence, by uniform integrability, the RHS of \eqref{eq:znanint6pf} converges to 1. 
\end{proof}

Since $\beta_0 \in (0, \frac{1}{\|W\|_{\mathrm{op}}})$, applying Lemma \ref{znanint6} to the terms in \eqref{l2exp998} gives, 
$$\frac{Z_N^2(\beta_0)}{Z_N^2(\beta_0 + \delta_N)} e^{ \textstyle \frac{\beta_0 \delta_N}{N^2 \theta_N} \sum_{1 \leq i,j \leq N} W\left(\frac{i}{N}, \frac{j}{N}\right) } \xrightarrow{P} 1$$
and 
$$\frac{Z_N(\beta_0 +2\delta_N)}{Z_N(\beta_0)} e^{ \textstyle -\frac{\beta_0 \delta_N}{N^2 \theta_N} \sum_{1 \leq i,j \leq N} W\left(\frac{i}{N}, \frac{j}{N}\right)} \xrightarrow{P} 1.$$ 
This implies, 
\begin{align}\label{l2lncv6}
   \E_{\beta_0} [ L_N^2(\bs)| A_{G_N} ]  \xrightarrow{P} 1.
\end{align}
Now, the risk of any test function $\phi_N$ (recall \eqref{eq:RphiN}) can be easily bounded below as: 
\begin{align*}
    \mathcal{R}(\phi_N) & \geq \E_{\beta_0} [\phi_N(\bs)|A_{G_N}] + \E_{\beta_{1}}[1-\phi_N(\bs)|A_{G_N}]\\
    &= \E_{\beta_0} [\phi_N(\bs)|A_{G_N}] + \E_{\beta_{0}}[(1-\phi_N(\bs)) L_N(\bs)|A_{G_N}]\\
    &= 1+ \E_{\beta_{0}}[\phi_N(\bs)(1- L_N(\bs))|A_{G_N}].
\end{align*} 
By the Cauchy-Schwarz inequality, 
$$|\E_{\beta_{0}}[\phi_N(\bs)(1- L_N(\bs))|A_{G_N}]| \le \sqrt{\E_{\beta_0} [(1-L_N(\bs))^2|A_{G_N}]} = \sqrt{\E_{\beta_0} [L_N^2(\bs)|A_{G_N}] -1}. $$
Therefore, almost surely, 
$$ 1- \sqrt{\E_{\beta_0} [L_N^2(\bs)|A_{G_N}] -1} \leq \inf_{\phi_N} \mathcal{R}(\phi_N) \leq 1 , $$ 
where the upper bound follows by taking the trivial test function that always rejects $H_0$ (that is, $\phi_N = 1$ everywhere). Hence, by \eqref{l2lncv6}, 
$$\inf_{\phi_N} \mathcal{R}(\phi_N) \pto 1,$$ 
when $\delta_N \ll \sqrt{\theta_N}$. This completes the proof of Theorem \ref{testminimax} (1).

\subsubsection{Proof of Theorem \ref{testminimax} $(2)$} 

 Note that the test \eqref{eq:testcd} can be written as:
$$\phi^\alpha_N = \bm 1\left\{ |\tilde{\beta}_N -  \beta_0 | \geq \hat {\sigma}_{N, \alpha} \sqrt{\theta_N} \right\}, $$ 
where $\tilde{\beta}_N$ is the estimate in \eqref{newestimate} and   
\begin{align}\label{eq:sigmaestimate}
        \hat {\sigma}_{N, \alpha} = z_{\alpha/2} \sqrt{\frac{2 \theta_N N^2}{\sum_{1 \leq i,j \leq N} A_{G_N}(i,j)}} \pto z_{\alpha/2} \sqrt{ \frac{ 2 }{\int_{[0, 1]^2} W(x, y) \mathrm d x \mathrm d y} } := \sigma_{\alpha}. 
      \end{align} 
Then we have (see Corollary \ref{31cor266} and the proof of Lemma \ref{l2ratio762}): 
\begin{align}\label{eq:TypeI}
\P_{H_0}(\phi^\alpha_N = 1 | A_{G_N}) =  \P_{H_0}(  | \tilde{\beta}_N -\beta_0 |  \geq \hat {\sigma}_{N, \alpha} \sqrt{\theta_N}  | A_{G_N} )\pto \alpha . 
\end{align}

Towards this, first note that $\P_{\beta}(\cdot|A_{G_N})$ has monotone likelihood ratio in $-H_N(\bs)$ (and hence, in $\tilde{\beta}_N$). This means, for any $t \in \R$ and $0< \beta < \beta' < \frac{1}{\|W\|_{\mathrm{op}}}$, 
\begin{align}\label{eq:sd}
\P_{\beta'}(\tilde{\beta}_N < t | A_{G_N}) \leq \P_{\beta}(\tilde{\beta}_N < t | A_{G_N} ).
\end{align} 
Now, suppose $\beta_{1, N} \in \Theta(\beta_0, \delta_N)$ (as defined in \eqref{eq:RphiN}). First, we consider the case $\beta_{1, N} \geq \beta_0 + \delta_N$. Since $\delta_N \gg \sqrt {\theta_N}$, for any $M > 1$ there exists $N_0(M)$ such that $\delta_N \geq M \sigma_{\alpha} \sqrt{\theta_N}$, for all $N \geq N_0(M)$. Hence, $\beta_{1, N} \geq \beta_0 + \delta_N \geq \beta_0 + M \sigma_{\alpha} \sqrt{\theta_N}$, when $N \geq N_0(M)$. Then using \eqref{eq:sd} gives, 
\begin{align*}
\P_{\beta_{1, N}}(\phi^\alpha_N = 0 | A_{G_N}) & =  \P_{\beta_{1, N} }(  |\tilde{\beta}_N -\beta_0 | <  \hat {\sigma}_{N, \alpha} \sqrt{\theta_N} | A_{G_N}) \nonumber \\ 
& \leq  \P_{\beta_{1, N} }(  \tilde{\beta}_N  <  \beta_0 + \hat {\sigma}_{N, \alpha} \sqrt{\theta_N} | A_{G_N}) \nonumber \\ 
& \leq  \P_{\beta_0 + M \sigma_{\alpha} \sqrt{\theta_N} }(  \tilde{\beta}_N  <  \beta_0 + \hat {\sigma}_{N, \alpha} \sqrt{\theta_N} | A_{G_N}) \nonumber \\
 & =  \P_{\beta_0 + M \sigma_{\alpha} \sqrt{\theta_N} }(  \tilde{\beta}_N  - ( \beta_0 + M \sigma_{\alpha} \sqrt{\theta_N} ) < - ( M \sigma_{\alpha} \sqrt{\theta_N} -  \hat {\sigma}_{N, \alpha} \sqrt{\theta_N} )  | A_{G_N}) \nonumber \\  
& :=  A_{N,M} .  
\end{align*}
Next, suppose  $\beta_{1, N} \leq \beta_0 - \delta_N \leq \beta_0 - M \sigma_{\alpha} \sqrt{\theta_N}$, when $N \geq N_0(M)$. Then using \eqref{eq:sd} gives, 
\begin{align*}
\P_{\beta_{1, N}}(\phi^\alpha_N = 0 | A_{G_N}) & =  \P_{\beta_{1, N} }(  |\tilde{\beta}_N -\beta_0 | <  \hat {\sigma}_{N, \alpha} \sqrt{\theta_N} | A_{G_N}) \nonumber \\ 
& \leq  \P_{\beta_{1, N} }(  \tilde{\beta}_N  >  \beta_0 - \hat {\sigma}_{N, \alpha} \sqrt{\theta_N} | A_{G_N}) \nonumber \\ 
& \leq  \P_{\beta_0 - M \sigma_{\alpha} \sqrt{\theta_N} }(  \tilde{\beta}_N  >  \beta_0 - \hat {\sigma}_{N, \alpha} \sqrt{\theta_N} | A_{G_N}) \nonumber \\
 & =  \P_{\beta_0 - M \sigma_{\alpha} \sqrt{\theta_N} }(  \tilde{\beta}_N  - ( \beta_0 - M \sigma_{\alpha} \sqrt{\theta_N} ) >  M \sigma_{\alpha} \sqrt{\theta_N} -  \hat {\sigma}_{N, \alpha} \sqrt{\theta_N}   | A_{G_N}) \nonumber \\ 
& := B_{N,M} 
\end{align*} 
Hence, for every $M>1$, we have the following for all $N \ge N_0(M)$:
\begin{equation}\label{riskambm78}
    \sup_{\beta_{1, N} \in \Theta(\beta_0, \delta_N)} \P_{\beta_{1, N}}(\phi^\alpha_N = 0 | A_{G_N}) \leq \max\{A_{N,M}, B_{N,M}\}.
\end{equation}
Note that by Proposition \ref{mlelemd4y} and \eqref{eq:sigmaestimate}, we have the following, for every $M > 1$ and as $N\rightarrow \infty$, 
$$A_{N,M} \pto \Phi(-(M-1)z_{\alpha/2})\quad\text{and}\quad B_{N,M} \pto 1-\Phi((M-1)z_{\alpha/2})$$
and hence,
$$ \max\{A_{N,M}, B_{N,M} \} \pto c_M := \max\left\{\Phi(-(M-1)z_{\alpha/2}), 1-\Phi((M- 1)z_{\alpha/2})\right\}.$$
Now, fix $\varepsilon >0$ and choose $M$ large enough, such that $c_M < \frac{\varepsilon}{2}$. Then we have from \eqref{riskambm78}
\begin{align*}
    \limsup_{N\rightarrow \infty}~\p\left(\sup_{\beta_{1, N} \in \Theta(\beta_0, \delta_N)} \P_{\beta_{1, N}}(\phi^\alpha_N = 0 | A_{G_N}) > \varepsilon\right) & \le \limsup_{N\rightarrow \infty} ~\p\left( \max\{A_{N,M}, B_{N,M}\} > \varepsilon\right)\\
    &\le \limsup_{N\rightarrow \infty}~\p\left(\max\{A_{N,M}, B_{N,M}\} - c_M > \frac{\varepsilon}{2}\right) \nonumber \\ 
    & = 0,
\end{align*}
thereby showing that
$$\sup_{\beta_{1, N} \in \Theta(\beta_0, \delta_N)} \P_{\beta_{1, N}}(\phi^\alpha_N = 0 | A_{G_N}) \pto 0.$$
Combining this with \eqref{eq:TypeI} shows that $\mathcal{R}(\phi_N^\alpha) \pto \alpha$. \hfill $\Box$

\section{Proof of Theorem \ref{th:limexp}}\label{proof_limexp8}

\subsection{Proof of Theorem \ref{th:limexp} (1)}
For $h \in \R$, denote by $Q_{\infty,h}$ the normal probability distribution with mean $\frac{h}{2}\int_{[0,1]^2} W(x,y) \mathrm{d}x \mathrm{d}y$ and variance $\frac{1}{2}\int_{[0,1]^2} W(x,y) \mathrm{d}x \mathrm{d}y$. Then it is straightforward to verify that:
  \begin{align}\label{expinfinite6}
      \frac{\mathrm{d} Q_{\infty,h}}{\mathrm{d} Q_{\infty,h_0}} (x) = e^{\textstyle (h-h_0)x + \frac{1}{4} (h_0^2-h^2)\int_{[0,1]^2} W(x,y) \mathrm{d}x \mathrm{d}y } . 
  \end{align}
  By Proposition \ref{limexp1} (1), the RHS of \eqref{expinfinite6} evaluated at $x=Z \sim Q_{\infty,h_0}$, is precisely the weak limit of $\mathrm{d} \P_{\beta_0+h\sqrt{\theta_N}}/\mathrm{d} \P_{\beta_0+ h_0\sqrt{\theta_N}}(\bs)$, for $\bs \sim \P_{\beta_0+ h_0\sqrt{\theta_N}}$. Moreover, this weak convergence happens jointly for every finite collection of the parameter $h$. Hence, the experimental limit in this case is $\{Q_{\infty,h}\}_{h\in \mathbb{R}}$ (recall Definition \ref{definition:experiment}). Now, it is straightforward to verify that if $Q_{\infty,h}'$ denotes the normal probability distribution with mean $h$ and variance $\frac{2}{\int_{[0,1]^2} W(x,y) \mathrm{d}x \mathrm{d}y}$, then 
  $$\frac{\mathrm{d} Q_{\infty,h}}{\mathrm{d} Q_{\infty,h_0}} (Z) \stackrel{D}{=} \frac{\mathrm{d} Q_{\infty,h}'}{\mathrm{d} Q_{\infty,h_0}'} (Z') \sim \cN\left(-\frac{\sigma^2}{2},\sigma^2\right)$$ where $Z\sim Q_{\infty,h_0}$, $Z'\sim Q_{\infty, h_0}'$, and $$\sigma^2 := \frac{(h-h_0)^2}{2}\int_{[0,1]^2} W(x,y) \mathrm{d}x \mathrm{d}y .$$ This completes the proof of part (1) of Theorem \ref{th:limexp}. \hfill $\Box$

\subsection{Proof of Theorem \ref{th:limexp} (2)}

In this case, by Proposition \ref{limexp1} (2) we know that for any finite set $I \subset (0,\frac{1}{\|W\|_{\mathrm{op}}} )$, 
    \begin{align*}
        \left(\frac{\mathrm{d} \P_{\beta}}{\mathrm{d} \P_{\beta_0}} (\bs)\right)_{\beta\in I} \xrightarrow[\bs \sim \P_{\beta_0}]{D} \left( e^{ \textstyle \frac{\beta_0^2-\beta^2}{2}\sigma_W^2 -(\beta_0-\beta) V_{\beta_0} } \prod_{\lambda \in \mathrm{Spec}(W)} e^{ \textstyle \frac{(\beta-\beta_0)\lambda }{2} } \sqrt{\frac{1-\beta\lambda }{1-\beta_0\lambda }}\right)_{\beta\in I} , 
    \end{align*}
   where $V_{\beta_0}$ is as in \eqref{eq:Vbeta} and $\sigma_W^2:=\frac{1}{2\theta} \int_{[0,1]^2} W(x,y)(1-\theta W(x,y))\mathrm{d}x\mathrm{d}y$. The result in Theorem \ref{th:limexp} (2) now follows from the lemma below.

\begin{lemma}\label{frinv}
    Let $f_{V_\beta}$ denote the density of the random variable $V_\beta$ defined in \eqref{eq:Vbeta}, with respect to the Lebesgue measure. Then, for all $\beta_0,\beta\in (0,\frac{1}{\|W\|_{\mathrm{op}}})$ and all $x\in \mathbb{R}$,    
    $$\frac{f_{V_\beta}(x)}{f_{V_{\beta_0}}(x)} = e^{ \textstyle \frac{\beta_0^2-\beta^2}{2}\sigma_W^2 -(\beta_0-\beta)x } \prod_{\lambda \in \mathrm{Spec}(W)} e^{\textstyle \frac{(\beta-\beta_0)\lambda}{2} } \sqrt{\frac{1-\beta\lambda}{1-\beta_0\lambda }}.$$
\end{lemma}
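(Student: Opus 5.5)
The plan is to identify the exponentially tilted law of $V_{\beta_0}$ with the law of $V_\beta$ by comparing moment generating functions. Set $\delta:=\beta-\beta_0$, and let $M_b(s):=\E[e^{sV_b}]$ for $b\in(0,\frac{1}{\|W\|_{\mathrm{op}}})$. Then define the probability measure $\nu$ by
\begin{align*}
\frac{\mathrm d\nu}{\mathrm d x}(x) := \frac{e^{\delta x} f_{V_{\beta_0}}(x)}{M_{\beta_0}(\delta)}.
\end{align*}
The lemma is equivalent to two claims: $\nu$ is the law of $V_\beta$, and $M_{\beta_0}(\delta)^{-1}$ equals the displayed constant $e^{\frac{\beta_0^2-\beta^2}{2}\sigma_W^2}\prod_{\lambda}e^{\frac{\delta\lambda}{2}}\sqrt{\frac{1-\beta\lambda}{1-\beta_0\lambda}}$. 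Both will follow from an explicit formula for $\log M_b$.

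\textbf{Step 1 (explicit MGF).} Using independence of $Z_0$ and the $Z_\lambda$'s, together with the one-dimensional identity $\E e^{uZ^2}=(1-2u)^{-1/2}$ for $u<\frac12$, I would show that
\begin{align*}
\log M_b(s) = s b\sigma_W^2+\frac{s^2\sigma_W^2}{2}+\sum_{\lambda\in\mathrm{Spec}(W)}\Big[-\frac{s\lambda}{2}-\frac12\log\Big(1-\frac{s\lambda}{1-b\lambda}\Big)\Big]
\end{align*}
whenever $1-b\lambda-s\lambda>0$ for all $\lambda$. The infinite sum has to be handled by truncation: the summands are $O(s^2\lambda^2)$ uniformly, and $\sum_\lambda\lambda^2<\infty$. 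The exchange of limit and expectation is justified by monotone or dominated convergence applied to the partial sums, in the same way as Lemma \ref{lemma:eigenvalue_summability} and the proof of Lemma \ref{lm:HNmgf}. In particular $M_{\beta_0}$ is finite on an open interval containing both $0$ and $\delta$. This is because $1-\beta_0\lambda-\delta\lambda=1-\beta\lambda>0$ for every $\lambda$, including negative ones, since $|\lambda|\le\|W\|_{\mathrm{op}}$. Hence $\nu$ is well defined.

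\textbf{Step 2 (algebraic identity).} Next I would check that $\log M_{\beta_0}(s+\delta)-\log M_{\beta_0}(\delta)=\log M_\beta(s)$ for $s$ near $0$.
\begin{itemize}
\item Gaussian terms: these give $\sigma_W^2\big[s\beta_0+s\delta+\tfrac{s^2}{2}\big]=\sigma_W^2\big(s\beta+\tfrac{s^2}{2}\big)$.
\item Logarithmic terms: they use the identity
\begin{align*}
\frac{1-\frac{(s+\delta)\lambda}{1-\beta_0\lambda}}{1-\frac{\delta\lambda}{1-\beta_0\lambda}}=\frac{1-\beta\lambda-s\lambda}{1-\beta\lambda}=1-\frac{s\lambda}{1-\beta\lambda}.
\end{align*}
\item Linear terms: they reduce to $-\frac{s\lambda}{2}$.
\end{itemize}
Since the left side is the log-MGF of $\nu$, the MGFs of $\nu$ and $V_\beta$ agree on a neighbourhood of $0$. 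Uniqueness of the MGF then gives $\nu=\mathrm{Law}(V_\beta)$, so $f_{V_\beta}=e^{\delta x}f_{V_{\beta_0}}/M_{\beta_0}(\delta)$ almost everywhere. This can be upgraded to every $x$ by choosing the continuous versions of the densities.

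\textbf{Step 3 (constant).} Evaluating Step 1 at $b=\beta_0$, $s=\delta$ gives
\begin{align*}
M_{\beta_0}(\delta)=e^{\delta\beta_0\sigma_W^2+\frac{\delta^2\sigma_W^2}{2}}\prod_\lambda e^{-\frac{\delta\lambda}{2}}\sqrt{\frac{1-\beta_0\lambda}{1-\beta\lambda}}.
\end{align*}
Since $\delta\beta_0+\frac{\delta^2}{2}=\frac{\beta^2-\beta_0^2}{2}$, taking the reciprocal and combining with $e^{\delta x}=e^{-(\beta_0-\beta)x}$ yields exactly the stated ratio.

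The main obstacle is the analytic bookkeeping rather than the algebra. First, $V_b$ must genuinely have a density. When $\sigma_W>0$ this follows from convolution with a Gaussian. When $\sigma_W=0$ (e.g.\ the Curie--Weiss case) it must come from a nonzero eigenvalue, since the $\lambda$ with $|\lambda|$ maximal gives a scaled $\chi^2_1$ summand; I would use absolute continuity of that single summand convolved with an independent remainder. Second, the infinite products and series must converge absolutely and the termwise manipulations must be legitimate. I would handle this with the square-summability $\sum\lambda^2=\|W\|_2^2$ and the uniform bound $(1-b\lambda)^{-1}\le(1-b\|W\|_{\mathrm{op}})^{-1}$, citing Lemma \ref{lemma:eigenvalue_summability}.
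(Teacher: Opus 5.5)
Your proof is correct, and it takes a different route from the paper. The paper writes the characteristic function as $\mathbb{E}e^{itV_\beta}=C(\beta)\,g(\beta+it)$ for an explicit $g$ that is analytic in a vertical strip containing $\beta$ and $\beta_0$, with $C(\beta)=1/g(\beta)$. It represents both densities by Fourier inversion and pulls out $e^{(\beta-\beta_0)x}$ via $e^{-itx}=e^{\beta x}e^{-(\beta+it)x}$. It then moves the line of integration from $\mathrm{Re}\,z=\beta$ to $\mathrm{Re}\,z=\beta_0$ by Cauchy's theorem, which leaves the constant $C(\beta)/C(\beta_0)$. In that notation $M_b(s)=g(b+s)/g(b)$, so your Step~2 identity $M_{\beta_0}(s+\delta)/M_{\beta_0}(\delta)=M_\beta(s)$ is the real-variable version of the same structural fact. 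You then close with exponential tilting and uniqueness of MGFs instead of inversion and a contour shift.

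What your route buys is robustness. It only needs $M_{\beta_0}$ to be finite on an open interval containing $0$ and $\beta-\beta_0$, which is automatic in the subcritical range. The paper's Lebesgue-integral inversion instead needs $t\mapsto g(\beta+it)$ to be integrable. That holds when $\sigma_W>0$ (Gaussian decay), but fails when $\sigma_W=0$ and $T_W$ has only one or two nonzero eigenvalues. For example, in the Curie--Weiss case $|g(\beta+it)|\asymp|t|^{-1/2}$. Your argument covers these cases with no change. It also exhibits $\{V_\beta\}$ directly as an exponential family in $x$ with normalizing constant $1/M_{\beta_0}(\beta-\beta_0)$, which is exactly the form used in Theorem~\ref{th:limexp}(2). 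The paper's route, when inversion applies, gives the identity in one step for the continuous versions of the densities at every $x$.

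Three small corrections:
\begin{enumerate}
\item The summands of $\log M_b(s)$ are $\frac{sb\lambda^2}{2(1-b\lambda)}+O(s^2\lambda^2)=O(|s|\lambda^2)$, not $O(s^2\lambda^2)$, because $\frac{\lambda}{2}\bigl(\frac{Z_\lambda^2}{1-b\lambda}-1\bigr)$ has nonzero mean. Summability is unaffected.
\item When passing from finitely many eigenvalues to the full series, monotone convergence does not apply directly, since the truncated exponentials are not monotone in the truncation level. Use uniform integrability instead: the truncated MGFs at $ps$, for some $p>1$, are bounded uniformly in the truncation level, and this is available because the domain of finiteness is open.
\item When $\sigma_W=0$ the densities need not be continuous or positive. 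In the Curie--Weiss case both vanish on $(-\infty,-\frac12)$ and blow up at $-\frac12$, so your ``continuous versions'' remark does not apply there. The conclusion should be read in the product form $f_{V_\beta}(x)=e^{(\beta-\beta_0)x}f_{V_{\beta_0}}(x)/M_{\beta_0}(\beta-\beta_0)$. This is a caveat on the statement's ``for all $x$'', not on your argument.
\end{enumerate}
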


\begin{proof} From \eqref{eq:UV} and Lemma \ref{lm:HNmgf} the characteristic function of $V_\beta$ can be computed as follows: 
\begin{align*}
  \E e^{it V_\beta} &= e^{\textstyle -\frac{ (t^2 -2it\beta) \sigma_W^2 }{2} } \prod_{\lambda \in \mathrm{Spec}(W)} e^{\textstyle -\frac{it\lambda}{2}} \sqrt{\frac{1-\beta\lambda}{1-(\beta+it)\lambda }}\\ 
  &= \left[e^{\textstyle -\frac{\beta^2\sigma_W^2}{2}}\prod_{\lambda \in \mathrm{Spec}(W)} e^{\textstyle \frac{\beta \lambda}{2}} \sqrt{1-\beta\lambda}\right] g(\beta+it),  
\end{align*}
where $$g(y) := e^{\textstyle \frac{y^2\sigma_W^2}{2}} \prod_{ \lambda \in \mathrm{Spec}(W) }  \frac{e^{\textstyle -\frac{y\lambda}{2}}}{\sqrt{1-y\lambda}}.$$
Therefore, by the Fourier inversion formula,
\begin{align}\label{eq:fbeta}
    \frac{f_{V_\beta}(x)}{f_{V_{\beta_0}}(x)} &= \frac{\int_{-\infty}^\infty e^{ -itx} \E e^{ it Y_\beta} \mathrm{d} t}{\int_{-\infty}^\infty e^{ -itx} \E e^{ it Y_{\beta_0}} \mathrm{d} t} \nonumber \\ 
    &= \left[e^{\textstyle -\frac{(\beta^2-\beta_0^2)\sigma_W^2}{2}}\prod_{\lambda \in \mathrm{Spec}(W)} e^{\textstyle \frac{(\beta-\beta_0) \lambda}{2}} \sqrt{\frac{1-\beta\lambda}{1-\beta_0\lambda}}\right]\frac{\int_{-\infty}^\infty e^{ -itx} g(\beta+it) \mathrm{d} t}{\int_{-\infty}^\infty e^{ -itx} g(\beta_0+it) \mathrm{d} t} \nonumber \\
    &= \left[e^{\textstyle (\beta-\beta_0)x -\frac{(\beta^2-\beta_0^2)\sigma_W^2}{2}}\prod_{\lambda \in \mathrm{Spec}(W)} e^{\textstyle \frac{(\beta-\beta_0) \lambda}{2}} \sqrt{\frac{1-\beta\lambda}{1-\beta_0\lambda}}\right]\frac{\int_{-\infty}^\infty h(\beta+it) \mathrm{d} t}{\int_{-\infty}^\infty h(\beta_0+it) \mathrm{d} t} , 
\end{align}
where $h(y) := e^{-yx}g(y)$. Note that $g(z) \ll 1$, as $|\mathrm{Im}(z)| \rightarrow \infty$. Hence, 
$$\lim_{\gamma \rightarrow \infty} \int_{\beta_0+i \gamma}^{\beta+i \gamma} h(z) \mathrm{d}z = \lim_{\gamma \rightarrow \infty} \int_{\beta-i \gamma}^{\beta_0-i \gamma} h(z) \mathrm{d}z = 0.$$ Further, since $h$ is analytic on the region bounded by the closed rectangle formed by the vertices $\beta_0\pm i \gamma$ and $\beta\pm i \gamma$, its contour integral along the edges of the rectangle is $0$ (by Cauchy's integral theorem). This shows that:
$$\lim_{\gamma \rightarrow\infty} \int_{\beta_0-i \gamma}^{\beta_0+i \gamma} h(z) \mathrm{d} z + \lim_{\gamma \rightarrow\infty} \int_{\beta +i \gamma}^{\beta-i \gamma} h(z) \mathrm{d} z = 0\quad  \Rightarrow \quad \int_{-\infty}^\infty h(\beta+it) \mathrm{d}t = \int_{-\infty}^\infty h(\beta_0+it) \mathrm{d}t.$$ 
Applying this to \eqref{eq:fbeta} completes the proof of Lemma \ref{frinv}. 
\end{proof}

\section{Additional Technical Results}

In this section we collect the proofs of various technical lemmas that are used in the proofs of the main results.

\subsection{Asymptotic Expansions for Expectations and Covariances}
In this section, we obtain the asymptotic expansions for the expectations and covariance of $T(\bs)$ and $T(\bt)$ (recall \eqref{Tstatdef}) for two spin configurations $\bs$ and $\bt \in \{-1,+1\}^N$, under the assumption $\theta_N \gtrsim N^{-\frac{2}{3}}$. Throughout this section $\E$ will denote the expectation with respect to the randomness of the adjacency matrix $A_{G_N}$. 

\begin{lemma}\label{lem:expvariance}
 Suppose $\theta_N  \gtrsim N^{-\frac{2}{3}}$ and let $Q_N$ and $R_N$ be as defined in \eqref{eq:QN} and \eqref{eq:RN}, respectively. Then for all $\bs \in \{-1,+1\}^N$,     \begin{align}\label{eq:expectationsigma}
        \E T\left(\bs\right) = e^{\textstyle  \mathcal{E}_N(\beta, W)+\frac{\beta}{N}\sum_{1 \leq i < j \leq N}W\!\left(\frac{i}{N},\frac{j}{N}\right)\sigma_i\sigma_j
+o\!\left(\frac1{N\theta_N}\right)+O\!\left(Q_N(\boldsymbol{\sigma})\right)} , 
    \end{align}
   where 
   \begin{align}\label{eq:JN}
   & \mathcal{E}_N(\beta, W) \nonumber \\ 
   & := -\frac{\beta^2}{4N^2\theta_N}\sum_{i=1}^{N}W\left(\frac{i}{N},\frac{i}{N}\right) - \frac{\beta^2}{2N^2}\sum_{1 \leq i < j \leq N}W^2\left(\frac{i}{N},\frac{j}{N}\right)-\frac{\beta^4}{12 N^4\theta_N^3}\sum_{1 \leq i < j \leq N} W\left(\frac{i}{N},\frac{j}{N}\right). 
   \end{align} 
        Furthermore, for all $\bs, \bt \in \{-1,+1\}^N$,     
        \begin{align}\label{eq:variancesigma}
        & \E T\left(\bs\right) T\left(\bt\right) \nonumber \\ 
        & = e^{\textstyle 2 \mathcal{E}_N(\beta, W) + \frac{\beta}{N}\sum_{1 \leq i < j \leq N}W\left(\frac{i}{N},\frac{j}{N}\right)(\sigma_i \sigma_j + \tau_i \tau_j)    +      
        R_N(\bs, \bt) +  o\left(\frac 1{N\theta_N}\right) + O\left(Q_N^{(2)}(\bs, \bt) \right) } . 
    \end{align} 
    where $ Q_N^{(2)}(\bs, \bt) := Q_N(\bs) + Q_N(\bt) +  \frac{1}{ N \theta_N } Q_N(\bs\bt)$. 
\end{lemma}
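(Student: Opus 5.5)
Since the edge indicators $A_{G_N}(i,j)$, $i\le j$, are independent, the expectation factorizes over pairs, and we compute edge by edge. Write $\gamma=\frac{\beta}{2N\theta_N}$ and $w_{ij}=W(\frac iN,\frac jN)$. The diagonal contributes $\gamma\sum_i A(i,i)$, which cancels against $-\gamma\vartheta_N$, and $-\gamma^2 A(i,i)$ from $-\gamma^2\eta_N$. Each off-diagonal pair $i<j$ contributes $e^{2\gamma A(i,j)\sigma_i\sigma_j-2\gamma^2A(i,j)}$. Hence
\[
\E T(\bs)=\prod_i\bigl(1+\theta_N w_{ii}(e^{-\gamma^2}-1)\bigr)\prod_{i<j}\bigl(1+\theta_N w_{ij}(e^{2\gamma\sigma_i\sigma_j-2\gamma^2}-1)\bigr).
\]
We then take logarithms and expand. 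Using $\log(1+x)=x-\frac{x^2}{2}+O(x^3)$ and $\gamma\asymp (N\theta_N)^{-1}\to0$, the $(i,j)$ factor has log
\[
\theta_N w_{ij}\bigl(2\gamma s-2\gamma^2+2\gamma^2+\tfrac{4}{3}\gamma^3 s+\dots\bigr)-\tfrac{\theta_N^2w_{ij}^2}{2}(2\gamma s)^2+\dots,
\]
where $s=\sigma_i\sigma_j$ and $s^2=1$. The order-$\gamma^2$ terms need care. The exponent contains $2\gamma s-2\gamma^2$, and $e^x-1=x+x^2/2+x^3/6+x^4/24$. The $x^2/2$ term gives $2\gamma^2$ (cancelling $-2\gamma^2$), plus $-4\gamma^3 s$. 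The cubic term gives $\frac{4}{3}\gamma^3 s$, and the quartic gives $\frac{2}{3}\gamma^4-4\gamma^4+\dots$.

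We then sort the resulting terms into four groups.
\begin{itemize}
\item The linear term $2\gamma\theta_N w_{ij}s$ equals $\frac{\beta}{N}w_{ij}\sigma_i\sigma_j$, which is the displayed term.
\item The terms $-2\theta_N^2w_{ij}^2\gamma^2=-\frac{\beta^2}{2N^2}w_{ij}^2$ give the $W^2$ part of $\mathcal E_N$.
\item The diagonal gives $-\theta_N w_{ii}\gamma^2=-\frac{\beta^2}{4N^2\theta_N}w_{ii}$.
\item The $\gamma^4$ term $\theta_N w_{ij}(\tfrac{2}{3}-4+\dots)\gamma^4$ must be checked to total $-\frac{\beta^4}{12N^4\theta_N^3}w_{ij}$, i.e.\ coefficient $-\frac{4}{3}\gamma^4\theta_N$.
\end{itemize}
The last group is the one nontrivial bookkeeping step. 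It is cleaner via cumulants: $\log\E e^{uA}$ for $A\sim$Bernoulli$(p)$ with $u=2\gamma s$, minus $2\gamma^2 p$.

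Cubic-in-$\gamma$ terms carry $s$ and sum to $O(\theta_N\gamma^3)\sum_{i<j}w_{ij}\sigma_i\sigma_j$. Since $\theta_N\gamma^3\asymp (N^3\theta_N^2)^{-1}$, this is exactly $O(Q_N(\bs))$. Remaining errors are $O(N^2\theta_N\gamma^5)$, $O(N^2\theta_N^2\gamma^4)$, $O(N^2\theta_N^3\gamma^3)$ and $O(N\theta_N\gamma^4)$. We must check each is $o(1/(N\theta_N))$ under $N^2\theta_N^3\gtrsim1$; that assumption is needed precisely to keep the $\gamma^4$ term as a tracked constant rather than negligible. The check is: $N^2\theta_N\gamma^5\asymp N^{-3}\theta_N^{-4}$, versus $(N\theta_N)^{-1}$, has ratio $(N^2\theta_N^3)^{-1}\lesssim1$. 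This is borderline, so the fifth-order term needs its $s$ factor: it is odd in $s$ and absorbed into $O(Q_N)$. Sixth-order terms are then fine. The term $\theta_N^2\gamma^3 s$ is again odd, hence $O(Q_N)$ scaled by $\theta_N$. The term $\theta_N^3\gamma^4$ is $\ll$.

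For the second moment $\E T(\bs)T(\bt)$ we repeat the computation with the per-pair exponent $2\gamma A(s+s')-4\gamma^2A$, where $s'=\tau_i\tau_j$. The new ingredient is the cross term in the $x^2$ cumulant. The variance part $\frac{1}{2}\theta_N w(1-\theta_N w)(2\gamma)^2(s+s')^2$ contains $ss'\cdot4\gamma^2\theta_N w(1-\theta_N w)$. Summed over pairs, this is exactly $R_N(\bs,\bt)$. Odd-order cross terms such as $\gamma^3\theta_N s^2 s'=\gamma^3\theta_N s'$ go to $O(Q_N)$. Terms with $ss'$ at order $\gamma^4$ give $\frac{1}{N\theta_N}Q_N(\bs\bt)$, which collects into $Q_N^{(2)}$.

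The main obstacle is the exact order-by-order bookkeeping at the borderline $\theta_N\asymp N^{-2/3}$: verifying the $-\frac{1}{12}$ constant and that all unsigned remainders are $o(1/(N\theta_N))$ while signed ones fit into $Q_N$.
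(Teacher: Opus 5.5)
Your proposal follows the paper's proof. You factor $\mathbb{E}T(\bs)$ over the independent edges and expand $\log(1-p+pe^{2\gamma s-2\gamma^2})$ jointly in $p=\theta_N w_{ij}$ and $\gamma$. You then split by parity in $s=\sigma_i\sigma_j$ (odd powers of $\gamma$ carry a factor $s$). This is exactly the decomposition $a_0(i,j)+a_1(i,j)\sigma_i\sigma_j$ that the paper gets from Lemma~\ref{lm:F1} with $z=2\gamma$, and from Lemma~\ref{lm:F2} for the second moment. Your direct expansion does close the step you left open. Write $x=2\gamma s-2\gamma^2$. The $\gamma^4$ terms of $e^{x}-1$ are $+2\gamma^4$ from $x^2/2$ (the piece hidden in your ellipsis), $-4\gamma^4$ from $x^3/6$, and $\tfrac23\gamma^4$ from $x^4/24$. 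They total $-\tfrac43\gamma^4$, i.e.\ $-\tfrac1{12}pz^4$ as in Lemma~\ref{lm:F1}, which gives the $-\frac{\beta^4}{12N^4\theta_N^3}$ term of $\mathcal{E}_N(\beta,W)$.

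One correction: the ``cleaner'' cumulant shortcut, as you phrase it, is wrong. The shift $-2\gamma^2$ multiplies the same Bernoulli variable $A$, so $\log\mathbb{E}e^{(u-2\gamma^2)A}\neq\log\mathbb{E}e^{uA}-2\gamma^2p$. The right-hand side has $\gamma^4$-coefficient $+\tfrac23 p$ instead of $-\tfrac43 p$. It therefore misses $-2p\gamma^4$ per pair, i.e.\ $-\frac{\beta^4}{8N^4\theta_N^3}\sum_{i<j}w_{ij}$ in total. When $\theta_N\asymp N^{-2/3}$ this is of order one, so it would corrupt $\mathcal{E}_N(\beta,W)$. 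You must evaluate the Bernoulli cumulant generating function at the full exponent: $2\gamma s-2\gamma^2$ for the first moment, $2\gamma(s+s')-4\gamma^2$ for the second.

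For the second moment, your unshifted variance term is harmless for extracting $R_N(\bs,\bt)$. The shift changes the $ss'$ coefficient only at order $\gamma^4$, and that contribution lands in $\frac{1}{N\theta_N}Q_N(\bs\bt)$. However, the $s,s'$-free $\gamma^4$ constant must equal twice the first-moment one to produce $2\mathcal{E}_N(\beta,W)$. That constant again requires the shift inside the exponential.
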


\begin{proof} Recalling \eqref{Tstatdef}, note that 
    \begin{align}\label{tsgexp}
        T\left(\bs\right)= e^{\textstyle  -\gamma^2 \sum_{i=1}^N A_{G_N}\left(i,i\right) + \sum_{1 \leq i < j \leq N} ( 2\gamma \sigma_i \sigma_j - 2\gamma^2 ) A_{G_N}\left(i,j\right) } .
    \end{align} 
    For $1 \leq i, j \leq N$, define 
    \begin{align*}
        f^{(1)}_i & := \log\left(1-\theta_NW\left(\frac{i}{N},\frac{i}{N}\right)+\theta_N W\left(\frac{i}{N},\frac{i}{N}\right) e^{ -\gamma^2 }\right) , \nonumber \\ 
    f^{(2)}_{ij}\left(x\right)& :=\log\left(1-\theta_N W\left(\frac{i}{N},\frac{j}{N}\right)+\theta_N W\left(\frac{i}{N},\frac{j}{N}\right)e^{ 2 \gamma x-2\gamma^2}\right) ,
    \end{align*} 
    for $x \in \R$. 
    In these notations, it follows from \eqref{tsgexp}, 
    \begin{align}\label{wr81}
        \E T\left(\bs\right)= e^{\textstyle  \sum_{i=1}^N f^{(1)}_i + \sum_{1 \leq i < j \leq N} f^{(2)}_{ij}\left(\sigma_i \sigma_j\right) } .
    \end{align}
    Expanding the exponential and the logarithm in $f_i^1$ and using the fact $N\theta_N \gg 1$ gives, 
    \begin{align}\label{wr92}
        \sum_{i=1}^{N} f^{(1)}_i & = -\sum_{i=1}^{N}\theta_N W\left(\frac{i}{N},\frac{i}{N}\right)\gamma^2 +o\left(\frac{1}{N\theta_N}\right)  \nonumber \\ 
        & = -\frac{\beta^2}{4N^2\theta_N}\sum_{i=1}^{N}W\left(\frac{i}{N},\frac{i}{N}\right)+o\left(\frac{1}{N\theta_N}\right).
    \end{align}
    To analyze the second term in exponent in \eqref{wr81}, note that since $\sigma_i\sigma_j$ can only take the values $\pm 1$, we can write: 
    \begin{align}\label{eq:f2sigma}
        f^{(2)}_{ij}\left(\sigma_i\sigma_j\right)=a_0(i,j)+a_1(i,j) \sigma_i \sigma_j,
    \end{align}
    where $a_0(i,j) := \frac {f^{(2)}_{ij}\left(1\right)+f^{(2)}_{ij}\left(-1\right)}{2}$ and $a_1(i,j):= \frac {f^{(2)}_{ij}\left(1\right)-f^{(2)}_{ij}\left(-1\right)}{2}$. Now, define the function $F_1 : \{ -1, 1\} \times \mathbb{C}^2 \rightarrow \mathbb{C}$
    as:
    \begin{align}\label{eq:F1}
        F_1(x,p, z) := \log \left(1 - p + pe^{xz- \frac{z^2}{2}}\right).
\end{align}
  Then,  $$f^{(2)}_{ij}\left(\pm 1\right)=F_1\left(\pm 1,\theta_N W\left(\frac{i}{N},\frac{j}{N}\right),2\gamma\right).$$ 
  The following result gives power series expansions for linear combinations of $F_1$. The proof follows by a direct Taylor expansion in the variables $p$ and $z$. The details are omitted. (A similar result appears in \cite[Lemma 2.1]{kabluchko2021fluctuations}.)   
  
\begin{lemma}\label{lm:F1} Let $F_1$ be as defined in \eqref{eq:F1}. For each fixed $x$, the function $F_1(x,p,z)$ is analytic on the set $$\mathcal D = \{(p,z)\in\C^2\colon |p| < 2, |z|<z_0\}. $$
    Further, we have the following power series expansions around the point $(0,0)$:
    \begin{align*}
        \frac {F_1(1, p, z) + F_1(-1, p, z)}2 = & -\frac 12 p^2z^2 - \frac 1{12} pz^4 +  \mathcal{O}(p^2z^4) +  \mathcal{O}(pz^6), \\
        \frac {F_1(1, p, z) - F_1(-1, p, z)}2 = & pz +  \mathcal{O}(pz^3).
    \end{align*}
\end{lemma}

Lemma \ref{lm:F1} gives, 
\begin{align*} 
a_0(i,j) &:= \frac {f^{(2)}_{ij}\left(1\right)+f^{(2)}_{ij}\left(-1\right)}{2} \nonumber \\ 
&=  -2\gamma^2 \theta_N^2 W^2\left(\frac{i}{N},\frac{j}{N}\right)-\frac 4{3} \theta_N W\left(\frac{i}{N},\frac{j}{N}\right)\gamma^4 +O\left(\theta_N^2 \gamma^4\right)+O\left(\gamma^6 \theta_N \right).
\end{align*} 
    The assumption $\theta_N \gtrsim N^{-\frac{2}{3}} $ now implies that:
    $$
        \sum_{1 \leq i < j \leq N} a_0(i,j)= -\frac{\beta^2}{2N^2}\sum_{1 \leq i < j \leq N}W^2\left(\frac{i}{N},\frac{j}{N}\right)-\frac{\beta^4}{12 N^4\theta_N^3}\sum_{1 \leq i < j \leq N} W\left(\frac{i}{N},\frac{j}{N}\right)+o\left(\frac 1{N\theta_N}\right).
    $$
Similarly, 
\begin{align*}
        a_1(i,j) & =  \frac {f^{(2)}_{ij}\left(1\right)-f^{(2)}_{ij}\left(-1\right)}{2} \nonumber \\ 
        & =  2\gamma \theta_N W\left(\frac{i}{N},\frac{j}{N}\right) + O\left(\theta_N W\left(\frac{i}{N},\frac{j}{N}\right)\gamma^3\right).
\end{align*} 
   Hence, 
    $$
        \sum_{1 \leq i < j \leq N}a_1(i,j)\sigma_i \sigma_j=\frac{\beta}{N}\sum_{1 \leq i < j \leq N}W\left(\frac{i}{N},\frac{j}{N}\right)\sigma_i\sigma_j+O\left(\frac{1}{N^3 \theta_N^2}\left|\sum_{1 \leq i < j \leq N}W\left(\frac{i}{N},\frac{j}{N}\right)\sigma_i \sigma_j\right| \right).
    $$
    Therefore, recalling \eqref{eq:f2sigma}, 
    \begin{align}\label{wr93}
        & \sum_{1 \leq i < j \leq N} f_{ij}^{(2)}(\sigma_i \sigma_j) \nonumber \\ 
        &=  -\frac{\beta^2}{2N^2}\sum_{1 \leq i < j \leq N}W^2\left(\frac{i}{N},\frac{j}{N}\right)-\frac{\beta^4}{12 N^4\theta_N^3}\sum_{1 \leq i < j \leq N} W\left(\frac{i}{N},\frac{j}{N}\right)+o\left(\frac 1{N\theta_N}\right)\nonumber\\
        & \hspace{0.35in} + \frac{\beta}{N}\sum_{1 \leq i < j \leq N}W\left(\frac{i}{N},\frac{j}{N}\right)\sigma_i\sigma_j+O\left(\frac{1}{N^3 \theta_N^2}\left|\sum_{1 \leq i < j \leq N}W\left(\frac{i}{N},\frac{j}{N}\right)\sigma_i \sigma_j\right| \right).
    \end{align}
    Combining \eqref{wr81}, \eqref{wr92}, and \eqref{wr93}, the result in \eqref{eq:expectationsigma} follows.

Next, we prove \eqref{eq:variancesigma}. To begin with, note that 
    \begin{align}\label{tsgexp41}
        T\left(\bs\right) T(\bt)= e^{\textstyle -2\gamma^2 \sum_{i=1}^N A_{G_N}\left(i,i\right) + \sum_{1 \leq i < j \leq N} \left( 2\gamma \sigma_i \sigma_j + 2\gamma \tau_i \tau_j - 4\gamma^2 \right) A_{G_N}\left(i,j\right) } . 
    \end{align}
    Next, define 
    \begin{align*}
        f^{(3)}_i & := \log\left(1-\theta_N W\left(\frac{i}{N},\frac{i}{N}\right)+\theta_N W\left(\frac{i}{N},\frac{i}{N}\right) e^{ -2\gamma^2 }\right) , \nonumber \\   
        f^{(4)}_{ij}\left(x\right) & := \log\left(1-\theta_N W\left(\frac{i}{N},\frac{j}{N}\right)+\theta_N W\left(\frac{i}{N},\frac{j}{N}\right)e^{ 2\gamma x-4\gamma^2 }\right) , 
    \end{align*} 
    for $x \in \R$. In these notations, we have the following from \eqref{tsgexp41}, 
    \begin{align}\label{eq:variancesigma1} 
        \E T\left(\bs\right) T\left(\bt\right)= e^{\textstyle \sum_i f^{(3)}_i + \sum_{1 \leq i < j \leq N} f^{(4)}_{ij}\left(\sigma_i \sigma_j+\tau_i\tau_j\right) } . 
    \end{align}
    An exactly similar computation as in the proof of \eqref{wr92} gives, 
    \begin{align}\label{eq:variancesigma2} 
        \sum_i f^{(3)}_i= -\frac{\beta^2}{2N^2\theta_N}\sum_{i=1}^{N}W\left(\frac{i}{N},\frac{i}{N}\right)+o\left(\frac 1{N\theta_N}\right).
    \end{align}
    Next, since $\sigma_i \sigma_j+\tau_i\tau_j \in \{-2,0,2\}$ for all possible choices of $\bs$ and $\bt$, we can write:
    \begin{align}\label{eq:variancesigma3} 
        f^{(4)}_{ij}\left(\sigma_i\sigma_j +\tau_i\tau_j\right)= b_0(i,j)+b_1(i,j)\sigma_i\sigma_j  +b_2(i,j) \tau_i\tau_j +b_{12}(i,j) \sigma_i\sigma_j\tau_i\tau_j , 
    \end{align} 
    with coefficients $b_0, b_1,b_2,b_{12}(i,j)$ given by:
    $$
        b_0(i,j)= \frac{f^{(4)}_{ij}\left(2\right)+f^{(4)}_{ij}\left(-2\right)+2f^{(4)}_{ij}\left(0\right)}4, \quad b_1(i,j)=b_2(i,j)=\frac{f^{(4)}_{ij}\left(2\right)-f^{(4)}_{ij}\left(-2\right)}4,
    $$
    and
    $$
        b_{12}(i,j)= \frac{f^{(4)}_{ij}\left(2\right)+f^{(4)}_{ij}\left(-2\right)-2f^{(4)}_{ij}\left(0\right)}4.
    $$
    Now, define the function $F_2 : \{ -2, 0, 2\} \times \mathbb{C}^2 \rightarrow \mathbb{C}$
    as:
    \begin{align}\label{eq:F2}
        F_2(x,p, z) := \log \left(1 - p + pe^{ xz- z^2 }\right).
\end{align}
Then, 
$$f^{(4)}_{ij}\left(\pm 2\right)=F_2\left(\pm 2, \theta_N W\left(\frac{i}{N},\frac{j}{N}\right), 2\gamma\right)\quad\text{and}\quad f^{(4)}_{ij}\left(0\right)= F_2\left(0,\theta_N W\left(\frac{i}{N},\frac{j}{N}\right), 2\gamma\right).$$ 
Now, similar to Lemma \ref{lm:F1}, we have the following power series expansions for linear combinations of $F_2$.  

\begin{lemma}\label{lm:F2} 
  For each fixed $x$, the function $F_m(x,p,z)$, as defined in \eqref{eq:F2}, is analytic on the set
    $$
        \mathcal D = \{(p,z)\in\C^2\colon |p| < 2, |z|<z_0\} . 
    $$ 
    Further, we have the following power series expansions around the point $(0,0)$:
      \begin{align*}
        \frac {F_2(2, p, z) + F_2(-2, p, z)+2F_2(0, p, z)}4
         & =-p^2z^2 - \frac 1{6} pz^4 +  \mathcal{O}(p^2z^4) +  \mathcal{O}(pz^6),    \\
        \frac {F_2(2, p, z) + F_2(-2, p, z)-2F_2(0, p, z)}4
         & = p(1-p)z^2 - \frac 2{3} pz^4 +  \mathcal{O}(p^2z^4) +  \mathcal{O}(pz^6), \\
        \frac {F_2(2, p, z) - F_2(-2, p, z)}4
         & =
        pz + O(pz^3).
    \end{align*}
\end{lemma}

Lemma \ref{lm:F2} now gives, 
    $$
        \sum_{1 \leq i < j \leq N} b_0(i,j)= -\frac{\beta^2}{N^2}\sum_{i<j}W^2\left(\frac{i}{N},\frac{j}{N}\right)-\frac{\beta^4}{6 N^4\theta_N^3}\sum_{i<j}W\left(\frac{i}{N},\frac{j}{N}\right)+o\left(\frac 1{N\theta_N}\right),
    $$
  Similarly,   
    \begin{multline*}
        \sum_{1 \leq i < j \leq N} b_{12}(i,j)\sigma_i \sigma_j \tau_i\tau_j = \frac{\beta^2}{\theta_N N^2} \sum_{1 \leq i < j \leq N}W\left(\frac{i}{N},\frac{j}{N}\right)\left(1-\theta_N W\left(\frac{i}{N},\frac{j}{N}\right)\right)\sigma_i\sigma_j\tau_i\tau_j+ o\left(\frac 1{N\theta_N}\right)
        \\ + O\left(\frac{1}{\theta_N^3 N^4}\sum_{1 \leq i < j \leq N}W\left(\frac{i}{N},\frac{j}{N}\right)\sigma_i\sigma_j\tau_i\tau_j\right),
    \end{multline*}
    and
    $$
        b_1(i,j)= b_2(i,j)=\frac{\beta}{N}W\left(\frac{i}{N},\frac{j}{N}\right)+ O\left(\frac{1}{\theta_N^2 N^3} W\left(\frac{i}{N},\frac{j}{N}\right)\right).
    $$
    Combining the above with \eqref{eq:variancesigma1},  \eqref{eq:variancesigma2}, and  \eqref{eq:variancesigma3} gives the result in \eqref{eq:variancesigma}. 
\end{proof}

\subsection{Joint Convergence of Rademacher Quadratic Forms }\label{sec:jointconv99}

In this section we prove a result about the  joint weak convergence of quadratic forms in Rademacher random variables.

\begin{ppn}
    \label{quadJointAsym}
    Let $W, W'$ be Riemann integrable graphons. Suppose $\{\sigma_i\}_{1 \leq i \leq N}$ and $\{\tau_i\}_{1 \leq i \leq N}$ are i.i.d. Rademacher random variables with mean $0$. Then,
    \begin{align*} 
        \begin{pmatrix} 
          \displaystyle 
            \frac{1}{2 N} \sum_{1 \leq i \neq j \leq N}W\left(\frac{i}{N}, \frac{j}{N}\right) \left(\sigma_{i} \sigma_{j} + \tau_{i} \tau_{j}\right) \\
              \displaystyle 
            \frac{1}{2 N} \sum_{1 \leq i \neq j \leq N}W'\left(\frac{i}{N}, \frac{j}{N}\right) \sigma_{i}\tau_i \sigma_{j}\tau_j.
        \end{pmatrix}
        \xrightarrow{D}  
        \begin{pmatrix} 
          \displaystyle  
           \frac{1}{2} \sum_{\lambda \in \mathrm{Spec}(W)} \lambda  \left(X_\lambda^2-1\right) + \frac{1}{2} \sum_{\lambda \in \mathrm{Spec}(W)} \lambda  \left(Y_\lambda^2-1\right) \\
               \displaystyle 
            \frac{1}{2} \sum_{\lambda \in \mathrm{Spec}(W')} \lambda \left(Z_\lambda^2-1\right)
        \end{pmatrix} .
    \end{align*}
    where $\{X_i\}_{1 \leq i \leq N}, \{Y_i\}_{1 \leq i \leq N}, \{Z_i\}_{1 \leq i \leq N}$ are i.i.d. collections of $\mathcal N(0, 1)$ random variables.
\end{ppn}

\begin{proof}
    Fix $a, b \in \R$ and define, 
    \begin{align}\label{eq:XNab}
        X_{N, a, b}
         & :=\frac{a}{N}\sum_{i < j}W\left(\frac{i}{N}, \frac{j}{N}\right) \left(\sigma_{i} \sigma_{j} + \tau_{i} \tau_{j}\right) + \frac{b}{N}\sum_{i< j} W^\prime\left(\frac{i}{N},\frac{j}{N}\right) \sigma_{i}\tau_i \sigma_{j}\tau_j . 
    \end{align}
    For notational convenience, define 
    \begin{align*}
        U_N(i, j) = a W\left(\frac{i}{N}, \frac{j}{N}\right) \left(\sigma_{i} \sigma_{j} + \tau_{i} \tau_{j}\right) + b W^\prime\left(\frac{i}{N}, \frac{j}{N}\right) \sigma_{i}\tau_i \sigma_{j}\tau_j.
    \end{align*}
    Then from the proof of \cite[Corollary 2.2 ]{o1993asymptotic} it follows that $X_{N, a, b}$ converges in distribution to a limit with $r$-th cumulant $\frac{1}{2} (r-1)! w_r$, for $r \geq 2$, where 
    \begin{align}\label{wkexpression8}
        w_r := \lim_{N\ra\infty}\frac{1}{N^r}\sum_{1 \leq i_1 \ne i_2 \ne \ldots i_r \leq N } \E\left[ U_N(i_1, i_2)U_N(i_2, i_3)\cdots U_N(i_r, i_1)\right] . 
    \end{align}  
    To identify the asymptotic distribution, note that for each $1 \leq i_1 \ne i_2 \ne \ldots i_r \leq N$,
   \begin{align}\label{eq:circle_moment_expand}
        & \E\left[U_N(i_1, i_2)U_N(i_2, i_3)\cdots U_N(i_r, i_1)\right] \nonumber \\ 
        & = \sum_{(J_1,\ldots,J_r)\in \{0,1\}^r} \prod_{s = 1}^{r} a^{J_s}b^{1-J_s}W\left(\frac{i_s}{N}, \frac{i_{s+1}}{N}\right)^{J_s}W^\prime\left(\frac{i_s}{N}, \frac{i_{s+1}}{N}\right)^{1-J_s}\nonumber\\
        & \hspace{1.25in} \E\left[\prod_{s = 1}^{r}(\sigma_{i_s}\sigma_{i_{s+1}} + \tau_{i_s}\tau_{i_{s+1}})^{J_s}(\sigma_{i_s}\sigma_{i_{s+1}}\tau_{i_s}\tau_{i_{s+1}})^{1-J_s}\right] , 
    \end{align} 
    where $i_{r+1} = i_1$. To compute the inner expectation, first suppose that $(J_1,\ldots,J_r)$ is such that $J_{+} := \sum_{s=1}^{r}J_s \not\in \{0, r\}$, that is, not all entries of this vector are the same. Then, 
    \begin{align*}
        & \E
        \left[\prod_{s = 1}^{r}(\sigma_{i_s}\sigma_{i_{s+1}} + \tau_{i_s}\tau_{i_{s+1}})^{J_s}(\sigma_{i_s}\sigma_{i_{s+1}}\tau_{i_s}\tau_{i_{s+1}})^{1-J_s}\right]\\
        & = \frac{1}{2^{r - J_{+} }}\E\left[\prod_{s = 1}^{r}\left(\sigma_{i_s}\sigma_{i_{s+1}}(\tau_{i_s}\tau_{i_{s+1}})^{1-J_s} + (\sigma_{i_s}\sigma_{i_{s+1}})^{1-J_s}\tau_{i_s}\tau_{i_{s+1}}\right)\right]\\
        & = \frac{1}{2^{r - J_{+} }} \sum_{(\tilde J_1,\ldots, \tilde J_r)\in \{0,1\}^r}\E\left[\prod_{s = 1}^{r}(\sigma_{i_s}\sigma_{i_{s+1}})^{\tilde{J}_s}(\tau_{i_s}\tau_{i_{s+1}})^{\tilde{J}_s(1-J_s)} (\sigma_{i_s}\sigma_{i_{s+1}})^{(1-\tilde{J}_s)(1-J_s)}(\tau_{i_s}\tau_{i_{s+1}})^{1-\tilde{J}_s}\right]\\
        & = \frac{1}{2^{r - J_{+} }} \sum_{(\tilde J_1,\ldots, \tilde J_r)\in \{0,1\}^r} \E \left[\prod_{s = 1}^{r}\sigma_{i_{s+1}}^{\tilde{J}_s + (1-\tilde{J}_s)(1-J_s) + \tilde{J}_{s+1} + (1-\tilde{J}_{s+1})(1-\tilde{J}_{s+1})} \tau_{i_{s+1}}^{\tilde{J}_s(1-J_s) + (1-\tilde{J}_s) + \tilde{J}_{s+1}(1-\tilde{J}_{s+1}) + (1-\tilde{J}_{s+1})} \right]\\
        & = \frac{1}{2^{r - J_{+} }} \sum_{(\tilde J_1,\ldots, \tilde J_r)\in \{0,1\}^r} \prod_{s = 1}^{r}\E\left[\sigma_{i_{s+1}}^{2-J_s(1-\tilde{J}_s) - \tilde{J}_{s+1}(1-\tilde{J}_{s+1})}\tau_{i_{s+1}}^{2-J_s\tilde{J}_s - \tilde{J}_{s+1}\tilde{J}_{s+1}}\right] .
    \end{align*} 
    Now, suppose $J_1 = 0$. Then since $\sum_{s=1}^{r} J_s \neq 0$, there exists $1\leq s'<r$ such that $J_{s'} = 0$ and $J_{s'+1} = 1$, and 
    \begin{align*}
        \E\left[\sigma_{i_{s'+1}}^{2-J_{s'} (1-J_{s'} ) - J_{s'+1}(1-J_{s'+1})}\tau_{i_{s'+1}}^{2-J_{s'} J_{s'}  - J_{s'+1}J_{s'+1}}\right] = \E\left[\tau_{i_{s'+1}}\right] = 0.
    \end{align*}
    Similarly, if $J_{1} = 1$, then since $\sum_{s=1}^{r} J_s \neq r$, there exists $1\leq s'<r$, such that 
    \begin{align*}
        \E\left[\sigma_{i_{s'+1}}^{2-J_{s'} (1-J_{s'} ) - J_{s'+1}(1-J_{s'+1})}\tau_{i_{s'+1}}^{2-J_{s'} J_{s'}  - J_{s'+1}J_{s'+1}}\right] = 0.
    \end{align*}
    Hence, combining the above gives, 
    \begin{align*}
        \E
         & \left[\prod_{s = 1}^{r}(\sigma_{i_s}\sigma_{i_{s+1}} + \tau_{i_s}\tau_{i_{s+1}})^{J_s}(\sigma_{i_s}\sigma_{i_{s+1}}\tau_{i_s}\tau_{i_{s+1}})^{1-J_s}\right] = 0 , 
    \end{align*}
    whenever $\sum_{s=1}^{r} J_r\notin \{0, r\}$. It now follows from \eqref{eq:circle_moment_expand}, that:
    \begin{align*}
        &\E
        \left[U_N(i_1, i_2)U_N(i_2, i_3)\cdots U_N(i_r, i_1)\right]\\
        & = a ^r \prod_{s = 1}^{r}W\left(\frac{i_s}{N}, \frac{i_{s+1}}{N}\right)\E\left[\prod_{s = 1}^{r}\left(\sigma_{i_s}\sigma_{i_{s+1}} + \tau_{i_s}\tau_{i_{s+1}}\right)\right] + b^r\prod_{s = 1}^{r}W^\prime\left(\frac{i_s}{N}, \frac{i_{s+1}}{N}\right)\E\left[\prod_{s = 1}^{r}\sigma_{i_s}\sigma_{i_{s+1}}\tau_{i_s}\tau_{i_{s+1}}\right] .  
    \end{align*}
    \normalsize
    Now, it is straightforward to verify that 
    \begin{align*}
        \E\left[\prod_{s = 1}^{r}\left(\sigma_{i_s}\sigma_{i_{s+1}} + \tau_{i_s}\tau_{i_{s+1}}\right)\right]= 2 \quad\text{and}\quad \E\left[\prod_{s = 1}^{r}\sigma_{i_s}\sigma_{i_{s+1}}\tau_{i_s}\tau_{i_{s+1}}\right] = 1.
    \end{align*}
Thus, 
    \begin{align*}
        \E
         & \left[U_N(i_1, i_2)U_N(i_2, i_3)\cdots U_N(i_r, i_1)\right] = 2a^r\prod_{s = 1}^{r}W\left(\frac{i_s}{N}, \frac{i_{s+1}}{N}\right) + b^r \prod_{s = 1}^{r}W^\prime\left(\frac{i_s}{N}, \frac{i_{s+1}}{N}\right) 
    \end{align*}
    Taking limits on both sides and recalling \eqref{wkexpression8} and \eqref{eq:cycle} now gives, 
    \begin{align}\label{eq:wrab}
        w_r = 2a^r t\left(C_r, W\right) + b^r t\left(C_r, W^\prime\right) . 
    \end{align}
Note that for any graphon $W$ and $r \geq 3$, $t(C_r, W)$ is as defined in \eqref{eq:cycle}. Further, for $r=2$, we define $t(C_2, W) := \int_{[0, 1]^2}  W(x, y)^2 \mathrm d x \mathrm dy = \|W\|_2^2$.
 
    Now, for $a, b \in \R$, define 
    \begin{align}\label{eq:Xab}
    X_{a, b}:=  a \left(\frac{1}{2} \sum_{\lambda \in \mathrm{Spec}(W)}  \lambda  \left(X_\lambda^2-1\right) + \frac{1}{2} \sum_{\lambda \in \mathrm{Spec}(W)}  \lambda  \left(Y_\lambda^2-1\right)\right) + \frac{b}{2} \sum_{\lambda \in \mathrm{Spec}(W')}  \lambda  \left(Z_\lambda^2-1\right). 
    \end{align}
  If we show that the $r$-th cumulant of $X_{a,b}$ equals $\frac{1}{2}(r-1)! w_r$, for $r \ge 2$, then this would imply that $X_{N,a,b} \dto X_{a,b}$ (recall \eqref{eq:XNab}) and, hence, the result in Lemma~\ref{quadJointAsym}, by the Cram\'{e}r-Wold device. To this end, define 
      $$V := \frac{1}{2}\sum_{\lambda \in \mathrm{Spec}(W)} \lambda  (X_\lambda^2-1),$$ where $\{X_i\}_{i \geq 1}$ are i.i.d. $\mathcal N(0, 1)$ random variables. Note that $V$ is a well-defined real-valued random variable by \eqref{eq:eigenvalue_l2_sum} and Kolmogorov's convergence criterion (see Theorem 5.2 in Chapter 6 from \cite{gut2006probability}). Further, from \cite[Proposition 7.1]{bhattacharya2017universal}, 
    \begin{align}
        \E\left[e^{t V} \right]=\prod_{ \lambda \in \mathrm{Spec}(W) } \frac{ e^{-\frac{1}{2}\lambda t} }{\sqrt{1-\lambda t}} , \nonumber 
    \end{align} 
 for $|t|\leq \frac{1}{4\|W\|_{2}}$. Note that: 
    \begin{align}\label{eq:mgfseries}
            \log \E[e^{t V}] & =- \sum_{\lambda\in \mathrm{Spec}(W)}\frac{t}{2}\lambda +\frac{1}{2}\sum_{\ell=1}^{\infty} \frac{\left(\lambda  t\right)^{\ell}}{\ell} =\frac{1}{2} \sum_{\lambda \in \mathrm{Spec}(W)}  \sum_{\ell=2}^{\infty} \frac{\left(\lambda  t\right)^{\ell}}{\ell}. 
            \end{align}
    The above series is absolutely summable by Lemma \ref{lemma:eigenvalue_summability} (c) for all $|t|<\frac{1}{4\|W\|_{2}}$. Hence, interchanging the order of the summations  in \eqref{eq:mgfseries} and recalling \eqref{eq:cycle} gives, 
    \begin{align*}
        \log \E[e^{t V}] =\frac{1}{2} \sum_{\ell=2}^{\infty} \sum_{\lambda \in \mathrm{Spec}(W)}  \frac{\left(\lambda  t\right)^{\ell}}{\ell}   & =\sum_{\ell=2}^{\infty} \frac{t\left(C_{\ell}, W\right)}{2 \ell} t^{\ell} ,  
    \end{align*} 
  for all $|t|< \frac{1}{4\|W\|_{2}}$. Recalling \eqref{eq:Xab}, this implies, 
    $$\log \E\left[e^{tX_{a ,b}}\right] = \sum_{\ell=2}^{\infty} \frac{t\left(C_{\ell}, W\right)}{\ell} (a  t)^{\ell} + \sum_{\ell=2}^\infty  \frac{t\left(C_{\ell}, W'\right)}{2 \ell} (b t)^{\ell}, $$
     for all $|t|< \frac{1}{4\|W\|_{2}}$. Hence, the cumulants of $X_{a ,b}$ are given by:
    $$\eta_r := (r-1)! \left(a^r t(C_r, W) + \frac{b^r}{2} t(C_r,W')\right) = \frac{(r-1)! w_r}{2}, $$
    where the last step uses \eqref{eq:wrab}. This completes the proof of Proposition \ref{quadJointAsym}. 
\end{proof}

\subsection{Finiteness of Moment Generating Function}

In this section we show that the moment generating function (MGF) of quadratic forms in Rademacher variables (as considered in Appendix \ref{sec:jointconv99}) is finite in the high-temperature regime.

\begin{lemma}
    \label{expUniInt} 
    Suppose $\{\sigma_i\}_{1 \leq i \leq N}$ are i.i.d. Rademacher random variables with mean $0$. Let $0<\beta <\frac{1}{\|W\|_{\mathrm{op}}} $. Then there exists $\delta >0$, such that:
    \begin{align*}
        \limsup_{N \to \infty} \E \left[ e^{\textstyle \frac{\beta+\delta}{2N}\sum_{1 \leq i\neq j \leq N}W\left(\frac{i}{N},\frac{j}{N}\right)\sigma_i \sigma_j} \right] < \infty.
    \end{align*}
    Consequently, $e^{ \textstyle \frac{\beta}{2 N} \sum_{1 \leq i \neq j \leq N}W\left(\frac{i}{N}, \frac{j}{N}\right) \sigma_{i} \sigma_j }$ is uniformly integrable. 
\end{lemma}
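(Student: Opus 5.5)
We bound the moment generating function spectrally, via Gaussian linearization (the Hubbard--Stratonovich trick) followed by a Rademacher moment bound. Write $\bm W_N$ for the matrix with entries $W(\frac iN,\frac jN)\bm 1\{i\ne j\}$ and set $M_N:=\bm W_N/N$. The exponent is then $\frac{\beta+\delta}{2}\bs^\top M_N\bs$. Choose $\delta>0$ with $(\beta+\delta)\|W\|_{\mathrm{op}}<1$. As already used in the proof of Theorem \ref{thm:ZN} (5), the Perron--Frobenius bound $\max_i|\lambda_i(\bm W_N)|=\lambda_1(\bm W_N)$ and the convergence $\frac1N\lambda_1(\bm W_N)\to\|W\|_{\mathrm{op}}$ hold (the latter by Riemann integrability). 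Shrinking $\delta$ if necessary, we get $\kappa:=(\beta+\delta)\|M_N\|_{\mathrm{op}}\le 1-\varepsilon$ for all large $N$, for some fixed $\varepsilon>0$.

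The key step is linearization. The quadratic form is not positive, so we split $M_N=M_N^+-M_N^-$ into positive and negative spectral parts and drop $-M_N^-$, which only decreases the exponent. This leaves $\E e^{\frac{\beta+\delta}{2}\bs^\top M_N^+\bs}$ with $M_N^+\succeq0$. Write $M_N^+=B B^\top$. For $g\sim\mathcal N(0,I_N)$ independent of $\bs$,
\[
e^{\frac{\beta+\delta}{2}\bs^\top M_N^+\bs}=\E_g\, e^{\sqrt{\beta+\delta}\,g^\top B^\top\bs}.
\]
Taking expectation over $\bs$ and using $\cosh x\le e^{x^2/2}$ coordinatewise gives
\[
\E_g\prod_i\cosh\big((\sqrt{\beta+\delta}Bg)_i\big)\le \E_g\, e^{\frac{\beta+\delta}{2}g^\top M_N^+ g}=\det\big(I-(\beta+\delta)M_N^+\big)^{-1/2}.
\]
This is finite because $\kappa<1$, and equals $\prod_{\lambda_i>0}(1-(\beta+\delta)\lambda_i(M_N))^{-1/2}$.

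This bound alone does not stay bounded in $N$: the factors $(1-x)^{-1/2}$ must be controlled by an $e^{x/2}$ correction. The issue is that $\sum_i\lambda_i(M_N)=\operatorname{tr}M_N=0$ while $\sum_{\lambda_i>0}\lambda_i$ can diverge. This is the main obstacle. To handle it, we would not drop $M_N^-$. Instead we shift: $\bs^\top M_N\bs=\bs^\top(M_N+cI)\bs-cN$ with $c=\|M_N\|_{\mathrm{op}}$, so that $M_N+cI\succeq0$. The Gaussian bound then gives
\[
e^{-\frac{(\beta+\delta)cN}{2}}\det\big(I-(\beta+\delta)(M_N+cI)\big)^{-1/2}.
\]
However, $(\beta+\delta)(M_N+cI)$ may now have norm up to $2\kappa$, which is not below $1$. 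So we would refine the plan by using the exact identity $\E_{\bs}e^{a^\top\bs}=\prod\cosh a_i$ together with $\log\cosh x\le x^2/2-\eta\min(x^2,x^4)$-type corrections. This is delicate, so as a fallback we would pass to the limit via cumulants.

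The cumulant route goes as follows. Using \cite[Corollary 2.2]{o1993asymptotic} exactly as in Proposition \ref{quadJointAsym}, the quadratic form $\frac12\bs^\top M_N\bs$ converges in distribution to $\frac12\sum_\lambda\lambda(X_\lambda^2-1)$, whose MGF is finite at $\beta+\delta'$ for any $\delta'$ with $(\beta+\delta')\|W\|_{\mathrm{op}}<1$. To upgrade this to convergence of MGFs we need a uniform bound. We would obtain it from a uniform tail estimate: a Hanson--Wright bound gives $\P(\frac12\bs^\top M_N\bs>t)\le Ce^{-ct}$ with $c$ determined by $\|M_N\|_{\mathrm{op}}$. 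The sharp constant in this bound is what is needed, namely a constant approaching $1/\|W\|_{\mathrm{op}}$. Getting that sharp constant is precisely the crux. The approach we would commit to is to compare with the Gaussian chaos (Rademacher $\to$ Gaussian via Lindeberg/universality for degree-two chaos with vanishing influences, using $\max_{i,j}|M_N(i,j)|\le1/N$) and to control the exponential moment by truncating large eigenvalues. Uniform integrability of $e^{\frac\beta2\bs^\top M_N\bs}$ then follows immediately from the $(\beta+\delta)$-MGF bound via de la Vallée Poussin.
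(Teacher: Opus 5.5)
There is a genuine gap. The proposal never produces a bound on $\mathbb{E}\, e^{\frac{\beta+\delta}{2}\bs^\top M_N\bs}$ that is uniform in $N$ at the sharp threshold $(\beta+\delta)\|W\|_{\mathrm{op}}<1$.

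Your diagnosis of the first two attempts is correct. Discarding $M_N^-$ throws away the negative eigenvalues. Their factors $(1+(\beta+\delta)|\lambda_i|)^{-1/2}$ are exactly what cancel $\exp\bigl(\frac{\beta+\delta}{2}\sum_{\lambda_i>0}\lambda_i\bigr)$ through $\operatorname{tr}M_N=0$. The shift by $cI$ overshoots the critical value.

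The fallback, however, is where all the difficulty lies, and it stays a plan:
\begin{itemize}
\item Hanson--Wright carries an unspecified absolute constant in the exponent. It therefore gives a finite MGF only for $\gamma\le c_0/\|W\|_{\mathrm{op}}$ with some small $c_0$, not all the way up to $1/\|W\|_{\mathrm{op}}$.
\item Convergence of cumulants via \cite{o1993asymptotic} gives convergence in law and of moments, but not of exponential moments.
\item Lindeberg-type universality for low-influence quadratic chaos controls test functions with bounded derivatives. Extending it to $e^{x}$ needs exactly the uniform integrability you are trying to prove, so the argument is circular.
\end{itemize}
``Truncating large eigenvalues'' is not specified well enough to close this gap.

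The missing idea is that $\bm W_N$ has \emph{nonnegative entries}, which your argument never uses. Expand $\mathbb{E}(\bs^\top\bm W_N\bs)^k$. It is a nonnegative combination of mixed moments $\mathbb{E}\prod_m\sigma_m^{k_m}=\mathbf{1}\{\text{all }k_m\text{ even}\}$. Each of these is at most the Gaussian analogue $\mathbb{E}\prod_m Z_m^{k_m}$. Summing the exponential series then gives, for every $N$ and $\gamma:=\beta+\delta$,
\[
\mathbb{E}\, e^{\frac{\gamma}{2N}\bs^\top\bm W_N\bs}\le\mathbb{E}\, e^{\frac{\gamma}{2N}Z^\top\bm W_N Z},\qquad Z\sim\mathcal N(0,I_N).
\]
The Gaussian MGF is analytic for $|t|<N/\lambda_1(\bm W_N)$, so its Taylor series converges to it at $t=\gamma$.

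This comparison holds for the full indefinite matrix, with no PSD splitting. It is the comparison the paper imports from \cite{bhattacharya2018inference}. Your Hubbard--Stratonovich step is its PSD special case; nonnegativity of the entries is what lets you keep the negative eigenvalues.

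The Gaussian side is explicit: it equals $\prod_i(1-\gamma\lambda_i/N)^{-1/2}$ over \emph{all} eigenvalues. Since $\sum_i\lambda_i=\operatorname{tr}\bm W_N=0$, the linear term can be subtracted at no cost:
\[
\log\mathbb{E}\, e^{\frac{\gamma}{2N}Z^\top\bm W_N Z}=\frac12\sum_{i=1}^N\bigl(-\log(1-x_i)-x_i\bigr)\le\frac{C_\varepsilon}{2}\sum_{i=1}^N x_i^2\le\frac{C_\varepsilon\gamma^2}{2},\qquad x_i:=\frac{\gamma\lambda_i(\bm W_N)}{N}.
\]
This uses three facts:
\begin{itemize}
\item $|x_i|\le 1-\varepsilon$ for large $N$, by Perron--Frobenius and $\frac1N\lambda_1(\bm W_N)\to\|W\|_{\mathrm{op}}$.
\item $-\log(1-x)-x\le C_\varepsilon x^2$ on that range.
\item $\operatorname{tr}\bm W_N^2\le N^2$.
\end{itemize}
This is a bound at each fixed large $N$ and needs no limiting or universality argument. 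Your de la Vall\'ee Poussin step then gives the uniform integrability, as you say.
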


\begin{proof}  
    Choose $\delta>0$ such that $\gamma :=\beta+\delta<\frac{1}{\|W\|_{\mathrm{op}}}$. Now, define the matrix $\bm{W}_N$ as follows:
    \begin{align*}
        \bm{W}_N(i,j)=\begin{cases}
            W\left(\frac{i}{N}, \frac{j}{N}\right) & \text{ if } i\neq j \\
            0                                      & \text{ otherwise}.
        \end{cases}
    \end{align*}
    Let $\left\{\lambda_i(\bm{W}_N)\right\}_{i=1}^N$ denote the eigenvalues of $\bm{W}_N$. 
    Since the entries of $\bm{W}_N$ are non-negative, from the proof of \cite[Lemma 7.1]{bhattacharya2018inference} it follows that 
    $$
        \E\left[ e^{ \frac{\beta+\delta}{2N} \sum_{1 \leq i \neq j \leq N}W\left(\frac{i}{N}, \frac{j}{N}\right) \sigma_{i} \sigma_{j} } \right] \leq \E\left[ e^{\frac{\gamma}{2N}  \bm Z^{\top} \bm{W}_N \bm Z } \right], 
    $$
    where $\bm Z \sim \mathcal{N}(\bm 0, \bm I_N)$. Consider the spectral decomposition $\bm{W}_N=\bm U \bm \Lambda \bm U^{\top}$, where $\bm U$ is an orthogonal matrix and $\bm \Lambda$ is a diagonal matrix with diagonal entries $\left\{\lambda_i(\bm{W}_N)\right\}_{i=1}^N$. Then, 
    \begin{align*}
        \E\left[ e^{\textstyle \frac{\gamma}{2N}  \bm Z^{\top} \bm{W}_N \bm Z } \right] = \E\left[ e^{\textstyle \frac{\gamma}{2N} \sum_{i=1}^{N}\lambda_i(\bm{W}_N)\tilde{Z}^2_i } \right] , 
    \end{align*} 
    where $\tilde{\bm Z} = (\tilde{Z}_{1} \ldots \tilde{Z}_N)^{\top} := \bm U^{\top} \bm Z$.     
    Clearly, $\tilde{\bm Z} \sim \mathcal N( \bm 0, \bm I_N)$. Also, by \cite[Theorem 11.54]{lovasz2012large}, we know that $\left\|\frac{1}{N} \bm{W}_N \right\|_{\mathrm{op}} \to\|W\|_{\mathrm{op}}$.\footnote{For a symmetric matrix $\bm A$, we denote its operator norm by $\|\bm{A}\|_{\mathrm{op}} = \sup_{\| \bm x \|_2 = 1} \| \bm A \bm x \|_2$.} Hence,
    \begin{align*}
        \frac{\gamma}{N} \|\bm{W}_N\|_{\mathrm{op}} \rightarrow \gamma \|W\|_{\mathrm{op}} < 1 \quad \implies \quad  \frac{\gamma}{N} \|\bm{W}_N\|_{\mathrm{op}} < 1-\varepsilon,~\text{for some}~\varepsilon>0~\text{and all large}~N.
    \end{align*}  
    Now, using \cite[Lemma D.1]{bhattacharya2018inference} and the observation that $\sum_i{\lambda_i(\bm{W}_N)} = 0$ gives, 
    \begin{align}\label{logexpan788}
        \log \E\left[ e^{ \textstyle \frac{\gamma}{2N}\sum_{i=1}^{N}\lambda_i(\bm{W}_N)\tilde{Z}_i^2 } \right]=\frac{1}{2}\sum_{i=1}^{N}\left(-\log \left(1-\frac{\gamma \lambda_{i}\left(\bm{W}_N\right)}{N}\right) - \frac{\gamma \lambda_i(\bm{W}_N)}{N}\right) . 
    \end{align}
    Now, it is easy to check that for every $\varepsilon>0$, there exists  $M>0$ (depending on $\varepsilon$) such that $$-\log (1-x)-x \leq M x^2\quad \text{ whenever } ~|x| < 1-\varepsilon.$$ Applying this to the RHS of \eqref{logexpan788} gives, 
    \begin{align*}
        \log \E\left[ e^{\textstyle \frac{\gamma}{2N}\sum_{i=1}^{N}\lambda_i(\bm{W}_N)\tilde{Z}_i^2 } \right] \leq \frac{M\gamma^2}{2N^2} \mathrm{tr}(\bm{W}_N^2) \leq \frac{M\gamma^2}{2} \lesssim 1 . 
    \end{align*}
This completes the proof of Lemma \ref{expUniInt}. 
\end{proof}

\subsection{Convergence of Riemann Integrable Graphons}

Given a graphon $W: [0, 1]^2 \rightarrow [0, 1]$, its {\it cut norm} is defined as: 
\begin{align*}
\|W \|_{\square} :=\sup_{S,T\subset [0,1]}\left|\int_{S\times T} W(x,y) \mathrm dx\mathrm dy\right| . 
\end{align*} 
To obtain a metric that is invariant to labeling, one defines the  {\it cut distance} between two graphons $U, W$ as follows: 
\begin{align}\label{eq:delta_box_graphon}
\delta_\square(U, W):=\inf_{\sigma \in \mathcal{F}} \| U - W^{\sigma}\|_{\square} , 
\end{align}  
where $\mathcal{F}$ is the set of measure-preserving bijections from $[0,1]$ to $[0,1]$ and $W^{\sigma}(x, y):=W(\sigma(x), \sigma(y))$, for $\sigma \in \mathcal{F}$.

Given a graphon $W$, suppose $G_N \sim G(N, \theta_N, W)$ with adjacency matrix $A_{G_N}$ as in \eqref{eq:sufficientstatistics}. Define the empirical graphon associated with $G_N$ as follows: 
 \begin{align}\label{eq:WGN}
       W^{G_N}(x,y) := \sum_{1 \leq u, v \leq N} A_{G_N}(u, v) \one\left\{(x,y)\in I_u \times I_v \right\} \text{ for all }(x,y)\in [0,1]^2,
    \end{align}
    where $I_1 = \left(0, \frac{1}{N}\right]$ and $I_u = \left(\frac{u-1}{N}, \frac{u}{N}\right]$ for all $2\leq u \leq N$. The following lemma establishes the convergence of $W^{G_N}$ in the cut distance for Riemann integrable graphons.

\begin{lemma}\label{lemma:A_{G_N}_convg_W}
 Suppose $W:[0,1]^2\ra[0,1]$ is a Riemann integrable graphon and $\theta_N\ra\theta\in (0,1]$. Then for $W^{G_N}$ as defined above, 
    \begin{align*}
        \delta_\Box\left(W^{G_N}, \theta W\right)\ra 0 , 
    \end{align*}
almost surely. 
\end{lemma}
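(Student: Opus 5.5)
The plan is to combine the mean and variance convergence of the rescaled empirical graphon with a standard concentration argument. The cut-distance conclusion then follows through a two-step comparison: first between $W^{G_N}$ and its expectation, then between that expectation and $\theta W$.

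\emph{Step 1: Comparison with the expectation.} I would introduce the deterministic step graphon
\[
\bar W_N(x,y) := \sum_{u,v} \E A_{G_N}(u,v)\,\one\{(x,y)\in I_u\times I_v\} = \theta_N \sum_{u\ne v} W\left(\tfrac uN,\tfrac vN\right)\one\{\cdot\} + \theta_N\sum_u W\left(\tfrac uN,\tfrac uN\right)\one\{I_u\times I_u\}.
\]
By the triangle inequality,
\[
\delta_\square(W^{G_N},\theta W)\le \|W^{G_N}-\bar W_N\|_\square+\|\bar W_N-\theta W\|_\square .
\]
The diagonal contributes area at most $1/N$, so it can be ignored.

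\emph{Step 2: The deterministic term.} The second term satisfies
\[
\|\bar W_N-\theta W\|_\square \le \|\bar W_N-\theta W\|_1 \le |\theta_N-\theta| + \theta\sum_{u,v}\int_{I_u\times I_v}\Big|W\left(\tfrac uN,\tfrac vN\right)-W(x,y)\Big| + O(1/N).
\]
The middle sum is bounded by the difference of upper and lower Darboux sums on the uniform $N\times N$ grid. This tends to $0$ because $W$ is Riemann integrable, which is exactly where that hypothesis enters.

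\emph{Step 3: The random term, which is the main step.} For step graphons on the grid, the cut norm is attained up to a constant factor on unions of grid intervals. Hence
\[
\|W^{G_N}-\bar W_N\|_\square\le \frac{1}{N^2}\max_{S,T\subseteq[N]}\Big|\sum_{u\in S,v\in T}(A_{G_N}(u,v)-\E A_{G_N}(u,v))\Big|
\]
(one can take the max over $S,T$ directly, since the bilinear form is extremized at vertices of the cube). For fixed $S,T$, the sum involves at most $N^2$ independent centered bounded variables, with each edge appearing at most twice by symmetry. Hoeffding's inequality therefore gives a tail probability of at most $2e^{-c\varepsilon^2N^2}$ for a deviation of $\varepsilon N^2$. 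A union bound over the $4^N$ pairs yields
\[
\P(\cdot>\varepsilon)\le 2\cdot 4^N e^{-c\varepsilon^2N^2},
\]
which is summable in $N$. Borel--Cantelli then gives almost sure convergence to $0$.

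Combining the three steps, and noting that the identity permutation is admissible in \eqref{eq:delta_box_graphon}, completes the argument. The only delicate points are the reduction of the cut norm to grid-aligned sets, which holds since both functions are constant on grid cells and the supremum of a bilinear form over fractional indicators is attained at $0/1$ vectors, and the symmetric dependence $A(u,v)=A(v,u)$, which is handled by splitting into $u<v$, $u>v$, and diagonal parts.
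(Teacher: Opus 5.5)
Your proof is correct. It shares the deterministic half with the paper: your Step 2, bounding $\|W_N-W\|_1$ by the gap between upper and lower Darboux sums on the uniform grid, is exactly the paper's argument. Incidentally, your $O(1/N)$ term is unnecessary, since self-loops also have probability $\theta_N W(\frac uN,\frac uN)$ and hence $\bar W_N=\theta_N W_N$ exactly.

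Where you differ is in controlling the random fluctuation.

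\emph{The paper's route.} It never works with the cut norm of the noise directly. It invokes a random-matrix norm bound (Vershynin's Corollary 4.4.8 for symmetric matrices with independent sub-gaussian entries) to get $\bigl\|\frac{1}{N\theta_N}A_{G_N}-\bm W_N\bigr\|_{\mathrm{op}}\to 0$. It identifies this with the operator norm of the step kernel $W^{G_N}/\theta_N-W_N$. It then passes to cut distance via $\|U\|_\square\le\|T_U\|_{\mathrm{op}}$, in the spirit of Lov\'asz's Lemma 8.11.

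\emph{Your route.} You use the exact identity $\|D\|_\square=N^{-2}\max_{S,T\subseteq[N]}\bigl|\sum_{u\in S,v\in T}d_{uv}\bigr|$ for step kernels. Your remark about vertices of the cube gives equality here, not just ``up to a constant factor.'' You then apply Hoeffding for each fixed pair $(S,T)$, take a union bound over $4^N$ pairs, and finish with Borel--Cantelli.

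\emph{What each buys.} Your argument is fully elementary and self-contained. It also makes the almost-sure claim explicit through a summable tail bound, whereas the paper's appeal to a high-probability norm bound leaves the Borel--Cantelli step implicit. The paper's route produces a strictly stronger intermediate statement: spectral (operator-norm) closeness of $\frac{1}{N\theta_N}A_{G_N}$ to $\bm W_N$. That is closer in spirit to the eigenvalue arguments used later, e.g.\ in the proof of Theorem~\ref{thm:mb}. After rescaling by $\theta_N$, it also remains informative down to $N\theta_N\gg 1$. Your bounded-range Hoeffding bound gives a cut-norm error of order $N^{-1/2}$ for $W^{G_N}$ itself. That is ample here since $\theta>0$, but sparse rescalings would require a variance-sensitive inequality such as Bernstein.
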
 

\begin{proof}
   To begin with, define the matrix $\bm W_N  = \frac{1}{N} \left(\left(W(\frac{u}{N}, \frac{v}{N})\right)\right)_{1 \leq u, v \leq N}$ and consider the function,
    \begin{align*}
        W_N(x,y) = \sum_{1 \leq u, v \leq N} W\left(\frac{u}{N}, \frac{v}{N} \right)\one\left\{(x,y)\in I_u\times I_v\right\}, \text{ for all }(x,y)\in [0,1]^2.
    \end{align*}
    Then by \cite[Corollary 4.4.8]{vershynin2018high} we know that 
    \begin{align}\label{eq:AGn_Wn_convg}
        \left\|\frac{1}{N \theta_N} A_{G_N} - \bm W_N\right\|_{\mathrm{op}} \ra 0 \text{  almost surely.}
    \end{align}
    Moreover, by definition it is easy to conclude that 
    $$\left\|\frac{1}{N \theta_N} A_{G_N} - \bm W_N\right\|_{\mathrm{op}} = \left\|\frac{W^{G_N}}{\theta_N} - W_N\right\|_{\text{op}} =\frac{1}{\theta_N}\left\|W^{G_N} - \theta_NW_N\right\|_{\rm op}.$$
    Consequently, using the convergence from \eqref{eq:AGn_Wn_convg} and recalling $\theta_N\ra\theta\in (0,1]$ we get,
    \begin{align*}
        \left\|W^{G_N} - \theta_NW_N\right\|_{\rm op}\ra0 \text{ almost surely.}
    \end{align*}
    Then similar to \cite[Lemma 8.11]{lovasz2012large} we can conclude that $\delta_\Box\left(W^{G_N}, \theta_NW_N\right)\ra 0$ almost surely. To complete the proof, it is enough to show that $\delta_\Box\left(\theta_NW_N, \theta W\right)\ra 0$. To that end by triangle inequality and definition of $\delta_\Box$ from \eqref{eq:delta_box_graphon} we have the upper bound,
    \begin{align*}
        \delta_\Box\left(\theta_NW_N, \theta W\right)\leq \theta_N\delta_{\Box}\left(W_N,W\right) + |\theta_N - \theta| \cdot \|W\|_{\Box}
    \end{align*}
    Once again leveraging the convergence $\theta_N\ra\theta\in (0,1]$ it is now enough to show that $\delta_\Box\left(W_N, W\right)\ra 0$. Indeed, in the following we will show,
    \begin{align*}
        \|W_N - W\|_1\ra 0
    \end{align*}
    which will complete the proof by using the inequality $\delta_\Box\left(W_N, W\right)\leq \|W_N-W\|_1$, where $\| \cdot \|_1$ is the $L^1$ norm (see \cite[Chapter 8]{lovasz2012large}). Towards that define,
    \begin{align*}
        U_N(W) := \frac{1}{N^2} \sum_{1 \leq u, v \leq N}  \sup_{(x,y)\in I_u\times I_v}W(x,y) \text{ and } L_N(W) := \frac{1}{N^2} \sum_{1 \leq u, v \leq N} \inf_{(x,y)\in I_u\times I_v}W(x,y) .
    \end{align*}
    Then we can upper bound $\|W_N-W\|$ as,
    \begin{align}\label{eq:L1_bdd_UL}
        \|W_N-W\|_1 = \int \left|W_N(x,y)-W(x,y)\right| \mathrm{d} x\mathrm{d} y \leq U_N(W) - L_N(W).
    \end{align}
    Since $W$ is Riemann-integrable, for every $\vep>0$ there is $N_0$ large enough, such that for all $N>N_0$,
    \begin{align*}
        U_N(W) - L_N(W)\leq \vep . 
    \end{align*}
     This combined with \eqref{eq:L1_bdd_UL} completes the proof of Lemma \ref{lemma:A_{G_N}_convg_W}.
\end{proof}
\subsection{From Conditional to Joint Convergence}

The following result allows us to establish joint (unconditional) convergence from the convergence of the conditional moment generating function.

\begin{lemma}\label{l2ratio762} 
   Suppose that $\{X_N\}_{N\ge 1}, \{Y_N\}_{N\ge 1}$ are sequences of random variables all defined on a common probability space $\Omega$. Assume that for some $\delta >0$ and for every $t \in (-\delta,\delta)$, as $N \rightarrow \infty$, $$\E \left[e^{tX_N}\Big |Y_N\right] \xrightarrow{P} \E e^{tV},$$ for some random variable $V$. Then,
   \begin{enumerate}
       \item[$(1)$] $X_N \xrightarrow{D} V$,
       \item[$(2)$] For every $r \ge 1$, $\E [X_N^r|Y_N] \xrightarrow{P} \E V^r$. 
   \end{enumerate}
\end{lemma}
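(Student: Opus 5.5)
The plan is to pass to almost surely convergent subsequences and apply, conditionally on $Y_N$, the classical fact that pointwise convergence of moment generating functions on an open interval around $0$ implies weak convergence and convergence of all moments. Write $M_N(t) := \E[e^{tX_N}\mid Y_N]$ and $M(t) := \E e^{tV}$. We fix a countable dense set $\{t_k\}\subset(-\delta,\delta)$ that contains $\pm\delta/2$ and the rationals in $(-\delta,\delta)$.

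\textbf{Step 1 (reduction to a deterministic statement).} Let $\{N_s\}$ be any subsequence. A diagonal argument gives a further subsequence $\{N_{s_a}\}$ and a probability-one event $\mathcal{A}$ on which $M_{N_{s_a}}(t_k)\to M(t_k)$ for every $k$. Fix $\omega\in\mathcal{A}$ and let $\mu_a$ be a regular conditional law of $X_{N_{s_a}}$ given $Y_{N_{s_a}}$ at $\omega$. Then $\int e^{t x}\,\mu_a(\mathrm d x)\to M(t)$ for every $t=t_k$.

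By convexity of MGFs together with the bound at $\pm\delta/2$, the family $\int e^{tx}\mu_a(\mathrm dx)$ is uniformly bounded for $|t|\le\delta/2$. This gives tightness of $\{\mu_a\}$. It also gives uniform integrability of $|x|^r$ and of $e^{tx}$ for $|t|<\delta/2$.

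\textbf{Step 2 (identification of the limit).} Any weak subsequential limit $\mu$ of $\{\mu_a\}$ satisfies $\int e^{tx}\mu(\mathrm dx)=M(t)$ for all rational $|t|<\delta/2$, by uniform integrability. Both sides are analytic on the strip $|\mathrm{Re}\,z|<\delta/2$, so the MGFs agree on an interval. By uniqueness of distributions with a finite MGF near $0$, we get $\mu=\mathcal L(V)$. Hence $\mu_a\Rightarrow\mathcal L(V)$, and by uniform integrability $\int x^r\mu_a(\mathrm dx)\to\E V^r$ for each $r\ge1$.

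\textbf{Step 3 (conclusions).} For part (2), we have shown that every subsequence of $\E[X_N^r\mid Y_N]$ has a further subsequence converging almost surely to $\E V^r$. This is equivalent to convergence in probability.

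For part (1), take bounded continuous $f$. Step 2 gives $\E[f(X_N)\mid Y_N]\to\E f(V)$ almost surely along the sub-subsequence. Bounded convergence then yields $\E f(X_N)\to\E f(V)$ along it, and the subsequence principle gives the full convergence $X_N\dto V$.

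\textbf{Main obstacle.} The main technical point is that the hypothesis gives convergence of the conditional MGF only in probability, and only pointwise in $t$. The diagonal almost sure subsequence over a countable set of $t$'s, combined with convexity to obtain uniform bounds, handles this. It also yields the "further subsequence, probability one set" form of conditional weak convergence that is used in the proof of Proposition \ref{mlelemd4y}.
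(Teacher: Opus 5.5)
Your proposal is correct and follows essentially the same route as the paper. Both arguments take a diagonal subsequence along which the conditional MGFs converge almost surely at countably many $t$ on a probability-one set, apply the classical MGF-convergence argument conditionally, and then use the subsequence principle plus bounded/dominated convergence to obtain convergence in probability and unconditional weak convergence. The differences are minor: the paper first extends convergence from the rationals to all $t\in(-\delta,\delta)$ via log-convexity and sandwiching before invoking the classical theorem, whereas you prove tightness and uniform integrability directly from the bounds at $\pm\delta/2$, identify the limit by continuity and MGF uniqueness, and use regular conditional laws to make the conditional step explicit.
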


\begin{proof}
   Enumerate the set of rational numbers in $(-\delta,\delta)$ as $\mathbb Q \bigcap (-\delta,\delta) = \{q_1,q_2,\ldots\}$. Let $\{N^{(0)}_s\}_{s \geq 1}$ be any subsequence  of positive integers. Then, since $\E [ e^{q_1X_{N_s}^{(0)}}|Y_{N_s^{(0)}} ] \rightarrow \E e^{q_1V}$ converges in probability, there exists a subsequence $\{N^{(1)}_s\}_{s \geq 1}\subseteq \{N^{(0)}_s\}_{s \geq 1}$ and an $A^{(1)}\subseteq \Omega$ with $\P(A^{(1)})=1$, such that 
   $$\E \left[ e^{q_1X_{N_s}^{(1)}}|Y_{N_s^{(1)}} \right] \rightarrow \E e^{q_1V} \text{ on } A^{(1)}.$$ Again, since $\E [e^{q_2X_{{N_s}^{(1)}}}|Y_{{N_s}^{(1)}}] \xrightarrow{P} \E e^{q_2V}$ converges in probability, there exists a subsequence $\{ N_s^{(2)} \}_{s \geq 1} \subseteq \{N_s^{(1)} \}_{s \geq 2}$ and an $A_2\subseteq \Omega$ with $\P(A_2)=1$, such that 
   $$\E\left[e^{q_2X_{N_s^{(2)}}}|Y_{N_s^{(2)}} \right] \rightarrow \E e^{q_2V} \text{ on } A_2.$$ 
   In this way, for every $t \ge 1$, we get a subsequence $\{N_s^{(t)}\}_{s \geq 1}\subseteq \{N_s^{(t-1)}\}_{s \geq 1}$ and a probability $1$ set $A_s$, such that 
   $$\E \left[e^{q_s X_{N_s^{(t)}}} |Y_{N_s^{(t)}} \right] \rightarrow \E e^{q_s V} \text{ on }A_s. $$ 
Define $\{M_t\}_{t \geq 1}$ to be the diagonal subsequence of the sequences $\{ \{N_s^{(t)}\}_{s \geq 1} : t \geq 1\}$, that  is, $M_t := N_t^{(t)}$, for all $t\ge 1$. Then by Cantor's diagonalization argument, on the probability $1$ set $A := \bigcap_{t \ge 1} A_t$, $$\E \left[e^{qX_{M}}\Big |Y_{M}\right] \rightarrow \E e^{qV}, \quad \text{for all}~q\in \mathbb{Q}\bigcap(-\delta,\delta). $$ 
  
  Next, note that for each $M$, by H\"older's inequality, the functions
\[
m_M(t) := \E\big(e^{tX_M}\mid Y_M\big)\quad\text{and}\quad m(t) := \E e^{tV}
\]
are log-convex in $t$, and hence continuous on $(-\delta,\delta)$. Since $m_M(q) \to m(q)$ for every $q \in \mathbb{Q} \bigcap (-\delta,\delta)$, it follows by the log-convexity of $m_M$, continuity of $m$ and a sandwiching argument, that $m_M(t) \to m(t)$ for all $t \in (-\delta,\delta)$. Hence, on the set $A$, 
   $\P(X_M \le t| Y_M) \rightarrow \P(V\le t)$ for all continuity points $t$ of the distribution of $V$, and also, $\E(X_M^r|Y_M)\rightarrow \E V^r$, for all $r \ge 1$. Now, recall that $\{N_s^{(0)}\}_{s\geq 1}$ was an arbitrarily chosen sequence and $\{ M_s\}_{s \geq 1} \subseteq \{N_s^{(0)}\}_{s\geq 1}$. Hence,
   for every fixed $t$ in the set of continuity points of the distribution of $V$, 
   $$\P(X_N\le t| Y_N) \xrightarrow{P} \P(V\le t)\quad\text{and}\quad \E[X_N^r |Y_N]\rightarrow \E V^r.$$
   Lemma \ref{l2ratio762} now follows by an application of the dominated convergence theorem. 
\end{proof}

\subsection{Absolute Convergence of Eigenvalue Series}

In this section we collect a few results about the absolute convergence of series indexed by eigenvalues of the graphon $W$. Towards this, recall the operator $T_W$ from \eqref{eq:TW}. The countable multiset of eigenvalues of $W$ is denoted by $\mathrm{Spec}(W)$. Enumerate the eigenvalues  in decreasing order of magnitude such that $|\lambda_1(W)|\geq |\lambda_2(W)|\geq\cdots$.  
\begin{lemma}\label{lemma:eigenvalue_summability}
Let $W$ be a graphon as above and assume $0<\beta< \frac{1}{\|W\|_{\rm op}}$. Then there exists $\tau>0$ such that the following hold: 
    \begin{enumerate}
        \item [$(a)$] For $|t|<\tau$, $$\sum_{i=1}^{\infty}\sum_{s=2}^{\infty}\left|\frac{\lambda_i^s(W)}{\left(1-\beta\lambda_i(W)\right)^s}\frac{t^s}{s}\right|<\infty.$$
        \item [$(b)$] For $|t|<\tau$, 
        $$\sum_{i=1}^{\infty}\left|\log\left(e^{\textstyle -\frac{t \lambda_i(W)}{2} } \sqrt{\frac{1-\beta \lambda_i(W)}{1-(\beta+t) \lambda_i(W) }}\right)\right|<\infty.$$
       \item [$(c)$] For $|t|<\frac{1}{4\|W\|_{2}}$, $$\sum_{i=1}^{\infty}\sum_{\ell=2}^{\infty}\frac{\left|\lambda_i(W) t\right|^\ell}{\ell}<\infty.$$ 
        \item [$(d)$] For all $s\geq 3$ and $t\geq 1$, 
        $$\sum_{i=1}^{\infty}\sum_{a=0}^{\infty}{\binom{t+a-1}{a}}\beta^a|\lambda_i^{a+s}(W)|<\infty .$$
    \end{enumerate}
\end{lemma}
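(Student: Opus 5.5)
Each of the four claims of Lemma \ref{lemma:eigenvalue_summability} reduces to the single fact $\sum_i \lambda_i(W)^2 = \|W\|_2^2 < \infty$ from \eqref{eq:eigenvalue_l2_sum}, together with the uniform bound $|\lambda_i(W)| \le \|W\|_{\mathrm{op}}$. The latter holds because $T_W$ is self-adjoint and, by Perron--Frobenius for nonnegative kernels, its spectral radius equals $\lambda_{\max}(W) = \|W\|_{\mathrm{op}}$, so $|\beta\lambda_i(W)| \le \beta\|W\|_{\mathrm{op}} =: \rho < 1$. Hence $|1-\beta\lambda_i(W)|^{-1} \le (1-\rho)^{-1}$ uniformly in $i$. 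I will choose $\tau := \tfrac12 (1-\rho)/\|W\|_{\mathrm{op}}$, possibly shrunk further so that also $\tau \le \frac{1}{4\|W\|_2}$. Then $x_i := t\lambda_i/(1-\beta\lambda_i)$ satisfies $|x_i| \le \tfrac12$ for all $i$ and all $|t|<\tau$.

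For (a), bound the inner sum over $s\ge 2$ by a geometric series:
\[
\sum_{s\ge2} |x_i|^s/s \le |x_i|^2/(1-|x_i|) \le 2x_i^2 \le 2t^2\lambda_i^2/(1-\rho)^2,
\]
which is summable over $i$. Part (c) is identical with $x_i = \lambda_i t$: since $|\lambda_i| \le \|W\|_2$ and $|t| < 1/(4\|W\|_2)$, we get $|x_i| \le 1/4$, and the same estimate applies.

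For (b), rewrite the logarithm as $-\tfrac{t\lambda_i}{2} - \tfrac12\log(1-x_i)$, using $1-(\beta+t)\lambda = (1-\beta\lambda)(1-x)$ and choosing the principal branch (legitimate since $|x_i|\le\tfrac12$). Since
\[
-\tfrac{t\lambda_i}{2} + \tfrac{x_i}{2} = \tfrac{t\beta\lambda_i^2}{2(1-\beta\lambda_i)},
\]
the quantity equals $\tfrac{t\beta\lambda_i^2}{2(1-\beta\lambda_i)} + \tfrac12\big(-\log(1-x_i) - x_i\big)$. The first term is $O(\lambda_i^2)$. For the second, $|{-\log(1-x)} - x| \le x^2$ when $|x|\le\tfrac12$, so it is $O(\lambda_i^2)$ as well. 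Summability then follows from \eqref{eq:eigenvalue_l2_sum}.

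For (d), sum over $a$ first via the negative binomial series:
\[
\sum_a \binom{t+a-1}{a}(\beta|\lambda_i|)^a = (1-\beta|\lambda_i|)^{-t} \le (1-\rho)^{-t}.
\]
This leaves $\sum_i |\lambda_i|^s \le \|W\|_{\mathrm{op}}^{s-2}\sum_i\lambda_i^2 < \infty$ for $s\ge3$. Tonelli's theorem justifies the order of summation, since all terms are nonnegative.

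The only delicate point is the uniform bound $|\lambda_i| \le \|W\|_{\mathrm{op}}$, i.e.\ that negative eigenvalues do not exceed $\lambda_{\max}$ in modulus. This holds because $\|W\|_{\mathrm{op}}$ is the operator norm, which dominates every eigenvalue's modulus; this is how it is used in \eqref{eq:Wmaxeigenvalue}. Everything else is routine.
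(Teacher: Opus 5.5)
Your proof is correct. It differs from the paper's mainly in the one estimate used to control the higher-order terms.

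\textbf{Parts (a), (c), (d).} The paper enumerates the eigenvalues by decreasing modulus and invokes the decay bound $|\lambda_i(W)|\le \|W\|_2/\sqrt{i}$ (Lov\'asz, (7.20)). The $s\ge 3$ tails are then summed via $\sum_i i^{-s/2}\le\sum_i i^{-3/2}$. You instead factor out $\lambda_i^2$ and control the remaining geometric factor uniformly through $|\lambda_i|\le\|W\|_{\mathrm{op}}$ (equivalently $|x_i|\le\frac12$). Everything then reduces to $\sum_i\lambda_i^2=\|W\|_2^2$.

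\textbf{Part (b).} The paper expands $(1-\beta\lambda_i)^{-1}$ as a geometric series and bounds three pieces separately, reusing (a). You use the exact identity $-\frac{t\lambda_i}{2}+\frac{x_i}{2}=\frac{t\beta\lambda_i^2}{2(1-\beta\lambda_i)}$ together with $|{-\log(1-x)}-x|\le x^2$ for $|x|\le\frac12$. This gives the $O(\lambda_i^2)$ bound in one line.

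\textbf{What each route buys.} Yours is more self-contained: it needs no ordering of the spectrum and no external decay estimate. It yields explicit constants, and it even gives (d) for all $s\ge 2$. It also makes the smallness condition on $\tau$ explicit, namely $\tau\|W\|_{\mathrm{op}}/(1-\beta\|W\|_{\mathrm{op}})\le\frac12$. In the paper's write-up of (a), the constant $C_0(\beta,W)$ sits on the wrong side; what is actually needed there is $\tau\|W\|_2\,C_0(\beta,W)<1$. The paper's decay bound carries more information than the lemma needs, but nothing in the statement requires it.
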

\begin{proof}
    Given the enumeration of the eigenvalues of $T_W$ according to decreasing order of magnitude, Equation (7.20) from \cite{lovasz2012large} shows that
    \begin{align}\label{eq:eigenvalue_bdd}
        \left|\lambda_i(W)\right|\leq \frac{\|W\|_{2}}{\sqrt{i}} , 
    \end{align} 
    for all $i\geq 1$. To prove (a) recall that $\beta<\frac{1}{\|W\|_{\rm op}}$ and hence note that for all $i\geq 1$,  \begin{align}\label{eq:def_C0betaW}
        0< \frac{1}{1-\beta\lambda_i(W)} \leq \frac{1}{ 1-\beta\|W\|_{\rm op} } =: C_0(\beta,W).
    \end{align}
    Then using the upper bound from \eqref{eq:eigenvalue_bdd}, 
    \begin{align*}
    \sum_{i=1}^{\infty}\sum_{s=2}^{\infty}\left|\frac{\lambda_i^s(W)}{\left(1-\beta\lambda_i(W)\right)^s}\frac{t^s}{s}\right|
        & \leq \sum_{i=1}^{\infty}\frac{\lambda_i^2(W)t^2}{2C_0(\beta,W)^2} + \sum_{i=1}^{\infty}\sum_{s=3}^{\infty}\frac{|t|^s}{s}\frac{\|W\|_{2}^s}{i^{\frac{s}{2}}C_0(\beta,W)^s}\\
        & \lesssim t^2\sum_{i=1}^{\infty}\lambda_i^2(W) + \sum_{s=3}^{\infty}\frac{|t|^s\|W\|_{2}^s}{C_0(\beta,W)^s}\sum_{i=1}^{\infty}\frac{1}{i^{\frac{3}{2}}} , 
    \end{align*}
    where the first inequality follows using Fubini's theorem. Now, we can choose $\tau>0$ small enough such that $\tau\|W\|_{2} < C_0(\beta,W)$. Then for all $|t|<\tau$ we get,
    \begin{align}\label{eq:part_a_finite}
    \sum_{i=1}^{\infty}\sum_{s=2}^{\infty}\left|\frac{\lambda_i^s(W)}{\left(1-\beta\lambda_i(W)\right)^s}\frac{t^s}{s}\right|\lesssim \sum_{i=1}^{\infty}\lambda_i^2(W) + \sum_{s=3}^{\infty}\left|\frac{\tau\|W\|_{2}}{C_0(\beta,W)}\right|^s<\infty,
    \end{align}
    where the finiteness follows by recalling that $\sum_{i=1}^{\infty}\lambda_i(W)^2 = \|W\|_2^2$ from \eqref{eq:eigenvalue_l2_sum}. This completes the proof of (a).\\
    To prove (b) first recall that $\beta\|W\|_{\rm op}< 1$. Hence, we can take $\tau>0$ small enough such that for all $|t|<\tau$, $|(\beta+t)\lambda_i(W)|<1$, for all $i\geq 1$. Then we can expand the logarithm as,
    \begin{align*}
        \sum_{i=1}^{\infty}\left|\log\left(e^{\textstyle -\frac{t \lambda_i(W)}{2} } \sqrt{\frac{1-\beta \lambda_i(W)}{1-(\beta+t) \lambda_i(W) }}\right)\right| 
        & =\frac{1}{2} \sum_{i=1}^{\infty}\left|-t\lambda_i(W) - \log\left(1-\frac{t\lambda_i(W)}{1-\beta\lambda_i(W)}\right)\right|\\
        & = \frac{1}{2}\sum_{i=1}^{\infty}\left|t\lambda_i(W) - \sum_{s=1}^{\infty}\left(\frac{t\lambda_i(W)}{1-\beta\lambda_i(W)}\right)^s\right|\\
        & = \frac{1}{2}\sum_{i=1}^{\infty}\left|t\lambda_i(W) - \frac{t\lambda_i(W)}{1-\beta\lambda_i(W)} - \sum_{s=2}^{\infty}\frac{1}{s}\left(\frac{t\lambda_i(W)}{1-\beta\lambda_i(W)}\right)^s\right| . 
    \end{align*}
    Now, to further simplify the summands we expand $(1-\beta\lambda_i(W))^{-1}$ and observe,
    \begin{align}\label{eq:b_bd_1}
        \sum_{i=1}^{\infty}
        & \left|\log\left(e^{\textstyle -\frac{t \lambda_i(W)}{2} } \sqrt{\frac{1-\beta \lambda_i(W)}{1-(\beta+t) \lambda_i(W) }}\right)\right|\nonumber\\
        & = \frac{1}{2}\sum_{i=1}^{\infty}\left|t\lambda_i(W) - t\lambda_i(W)\sum_{s=0}^{\infty}(\beta\lambda_i(W))^s - \sum_{s=2}^{\infty}\frac{1}{s}\left(\frac{t\lambda_i(W)}{1-\beta\lambda_i(W)}\right)^s\right|\nonumber\\
        &\lesssim \sum_{i=1}^{\infty}\left|t\beta\lambda_i(W)^2\right| + \sum_{i=1}^{\infty}\sum_{s=2}^{\infty}\left|t\beta^s\lambda_i^{s+1}(W)\right| + \sum_{i=1}^{\infty}\sum_{s=2}^{\infty}\left|\frac{1}{s}\frac{t\lambda_i(W)}{1-\beta\lambda_i(W)}\right|^s\nonumber\\
        &\lesssim \|W\|_2^2 + \sum_{i=1}^{\infty}\sum_{s=2}^{\infty}\left|\beta^s\lambda_i^{s+1}(W)\right| + \sum_{i=1}^{\infty}\sum_{s=2}^{\infty}\left|\frac{1}{s}\frac{t\lambda_i(W)}{1-\beta\lambda_i(W)}\right|^s.
\end{align}
where the final bound follows from $|t|<\tau$ and recalling that $\sum_{i=1}^{\infty}\lambda_i(W)^2 = \|W\|_2^2$ from \eqref{eq:eigenvalue_l2_sum}. To further bound the right hand side of \eqref{eq:b_bd_1}, in the following we begin with the second term:
\begin{align}\label{eq:b_second_bd}
    \sum_{i=1}^{\infty}\sum_{s=2}^{\infty}\left|\beta^s\lambda_i^{s+1}(W)\right| = \sum_{i=1}^{\infty}\sum_{s=3}^{\infty}\left|\beta^{s-1}\lambda_i(W)^s\right| 
    & = \beta\sum_{i=1}^{\infty}\lambda_i(W)^2\sum_{s=3}^{\infty}\left|\beta\lambda_i(W)\right|^{s-2}\nonumber\\
    & \lesssim \|W\|_2^2\sum_{s=3}^{\infty}\left|\beta\|W\|_{\rm op}\right|^{s-2}<\infty,
\end{align}
where the final bound follows by recalling $\beta\|W\|_{\rm op}<1$ and invoking the identity from \eqref{eq:eigenvalue_l2_sum}. To complete the proof of (b) we choose $\tau$ small enough such that $\tau\|W\|_2<C_0(\beta,W)$, where $C_0(\beta,W)$ is defined in \eqref{eq:def_C0betaW}. Now, combining the bounds from \eqref{eq:b_bd_1}, \eqref{eq:b_second_bd} and \eqref{eq:part_a_finite} we conclude,
\begin{align*}
    \sum_{i=1}^{\infty}
        & \left|\log\left(e^{\textstyle -\frac{t \lambda_i(W)}{2} } \sqrt{\frac{1-\beta \lambda_i(W)}{1-(\beta+t) \lambda_i(W) }}\right)\right|\\
        &\lesssim \|W\|_2^2 + \|W\|_2^2\sum_{s=3}^{\infty}\left|\beta\|W\|_{\rm op}\right|^{s-2} + \sum_{i=1}^{\infty}\sum_{s=2}^{\infty}\left|\frac{1}{s}\frac{t\lambda_i(W)}{1-\beta\lambda_i(W)}\right|^s<\infty.
\end{align*}
This completes the proof of (b).\\
    To prove part (c) note that,
    \begin{align*}
        \sum_{i=1}^{\infty}\sum_{\ell=2}^{\infty}\frac{\left|\lambda_i(W) t\right|^\ell}{\ell} = \sum_{i=1}^{\infty}\frac{t^2}{2}\lambda_i^2(W) + \sum_{i=1}^{\infty}\sum_{\ell=3}^{\infty}\frac{\left|\lambda_i(W) t\right|^\ell}{\ell}
        & \lesssim\|W\|_2^2 + \sum_{i=1}^{\infty}\sum_{\ell=3}^{\infty}\frac{1}{4^\ell i^{\frac{\ell}{2}}\ell}\\
        & \lesssim\|W\|_2^2 + \sum_{\ell=3}^{\infty}\frac{1}{4^\ell}\sum_{i=1}^{\infty}\frac{1}{i^{\frac{3}{2}}}<\infty , 
    \end{align*}
    where the penultimate inequality follows by recalling that $|t|<\frac{1}{4\|W\|_{2}}$. This proves (c).\\
To prove part (d) we begin with the following decomposition.
\begin{align*}
    \sum_{i=1}^{\infty}\sum_{a=0}^{\infty}{\binom{t+a-1}{a}}\beta^a|\lambda_i^{a+s}(W)| = \sum_{i=1}^{\infty}|\lambda_i(W)|^s\sum_{a=0}^{\infty}{t+a-1\choose a}\beta^a|\lambda_i(W)|^a.
\end{align*}
Now using the bounds $|\lambda_i(W)|\leq \|W\|_{\rm op}$ and \eqref{eq:eigenvalue_bdd} we get,
\begin{align*}
    \sum_{i=1}^{\infty}\sum_{a=0}^{\infty}{\binom{t+a-1}{a}}\beta^a|\lambda_i^{a+s}(W)|\leq \sum_{i=1}^{\infty}\frac{\|W\|_2^s}{i^{\frac{s}{2}}}\sum_{a = 0}^{\infty}{t+a-1\choose a}(\beta\|W\|_{\rm op})^a.
\end{align*}
Moreover by definition $\|W\|_2\leq 1$. Hence we can simplify the above bound as,
\begin{align*}
    \sum_{i=1}^{\infty}\sum_{a=0}^{\infty}{\binom{t+a-1}{a}}\beta^a|\lambda_i^{a+s}(W)|\leq\frac{1}{(1-\beta\|W\|_{\rm op})^t}\sum_{i=1}^{\infty}\frac{1}{i^{\frac{s}{2}}}<\infty,
\end{align*}
where the final bound follows by recalling that $s\geq 3$ and $\beta\|W\|_{\rm op}<1$. This completes the proof of part (d).
\end{proof}

\end{document}